\newcommand{\cT}{{\mathcal{T}}}
\newtheorem{theorem}{Theorem}[section]
\newtheorem{lemma}[theorem]{Lemma}
\newtheorem{proposition}[theorem]{Proposition}
\newtheorem{corollary}[theorem]{Corollary}
\theoremstyle{definition}
\newtheorem{definition}[theorem]{Definition}
\newtheorem{ipotesi}[theorem]{Assumption}
\numberwithin{equation}{section}
\numberwithin{subsection}{section}
\newcommand{\sing}{{\rm Sing}}
\newcommand{\reg}{{\rm Reg}}
\newcommand\supp{{\rm spt}}
\newcommand\res{\mathop{\hbox{\vrule height 7pt width .3pt depth 0pt
\vrule height .3pt width 5pt depth 0pt}}\nolimits}
\newcommand\ser{\mathop{\hbox{\vrule height .3pt width 5pt depth 0pt
\vrule height 7pt width .3pt depth 0pt}}\nolimits}
\newcommand{\mass}{{\mathbf{M}}}
\newcommand{\cone}{{\times\hspace{-0.6em}\times\,}}
\newcommand{\bOmega}{{\mathbf{\Omega}}}
\newcommand{\de}{\partial}
\def\a#1{\left\llbracket{#1}\right\rrbracket}
\newcommand{\bE}{{\mathbf{E}}}
\newcommand{\gr}{{\rm Gr}}
\newcommand{\bh}{{\mathbf{h}}}
\newcommand{\im}{{\rm Im}}
\newcommand{\bT}{\mathbf{T}}
\newcommand{\cH}{{\mathcal{H}}}
\def\I#1{{\mathcal{A}}_{#1}}
\newcommand{\Iq}{{\mathcal{A}}_Q}
\newcommand{\Iqs}{{\mathcal{A}}_Q(\R^{n})}
\newcommand{\bG}{{\mathbf{G}}}
\newcommand{\cG}{{\mathcal{G}}}
\newcommand{\etaa}{{\bm{\eta}}}
\newcommand{\D}{\textup{Dir}}
\newcommand{\bB}{{\mathbf{B}}}
\newcommand{\bC}{{\mathbf{C}}}
\newcommand{\bP}{\mathbf{P}}
\newcommand\bS{\mathbf{S}}
\def\Xint#1{\mathchoice
{\XXint\displaystyle\textstyle{#1}}%
{\XXint\textstyle\scriptstyle{#1}}%
{\XXint\scriptstyle\scriptscriptstyle{#1}}%
{\XXint\scriptscriptstyle\scriptscriptstyle{#1}}%
\!\int}
\def\XXint#1#2#3{{\setbox0=\hbox{$#1{#2#3}{\int}$ }
\vcenter{\hbox{$#2#3$ }}\kern-.6\wd0}}
\def\mint{\Xint-}
\newcommand\Z{{\mathbb Z}}
\newcommand\N{{\mathbb N}}
\newcommand\C{{\mathbb C}}
\newcommand\R{{\mathbb R}}
\newcommand{\eps}{{\varepsilon}}
\newcommand{\bA}{\mathbf{A}}
\newcommand{\bmax}{\mathbf{m}}
\newcommand{\Lip}{{\rm {Lip}}}
\newcommand{\diam}{{\rm {diam}}}
\newcommand{\dist}{{\rm {dist}}}
\renewcommand{\d}{{\rm d}}
\newcommand{\p}{{\mathbf{p}}}
\newcommand{\q}{{\mathbf{q}}}
\renewcommand{\P}{{\mathbf{P}}}
\newcommand{\gira}{{\mathfrak{B}}}
\newcommand{\disco}{{\mathfrak{D}}}
\newcommand{\bV}{{\mathbf{V}}}
\newcommand{\bef}{\mathbf{f}}
\newcommand{\beg}{\mathbf{g}}
\newcommand{\bGam}{{\bm \Gamma}}
\newcommand\bmo{{\bm m}_0}
\newcommand{\cM}{{\mathcal{M}}}
\newcommand{\bU}{{\mathbf{U}}}
\newcommand{\bL}{{\mathbf{L}}}
\newcommand{\phii}{{\bm{\varphi}}}
\newcommand{\Phii}{{\bm{\Phi}}}
\newcommand{\bc}{{\bm{c}}}
\newcommand{\sep}{{\rm sep}}
\newcommand{\cU}{{\mathcal{U}}}
\newcommand{\cV}{{\mathcal{V}}}
\newcommand{\cJ}{{\mathcal{J}}}
\newcommand{\cL}{{\mathcal{L}}}
\newcommand{\cK}{{\mathcal{K}}}
\newcommand{\cB}{{\mathcal{B}}}
\newcommand{\cA}{{\mathcal{A}}}
\newcommand{\sC}{{\mathscr{C}}}
\newcommand{\sS}{{\mathscr{S}}}
\newcommand{\sW}{{\mathscr{W}}}
\newcommand{\sP}{{\mathscr{P}}}
\newcommand{\bW}{{\mathbf{W}}}
\newcommand{\Psii}{{\bm{\Psi}}}
\newcommand{\bD}{{\mathbf{D}}}
\newcommand{\bH}{{\mathbf{H}}}
\newcommand{\bF}{{\mathbf{F}}}
\newcommand{\bLambda}{{\mathbf{\Lambda}}}
\DeclareMathAlphabet{\mathpzc}{OT1}{pzc}{m}{it}
\newcommand{\NN}{{\mathpzc{N}}}
\newcommand{\FF}{{\mathpzc{F}}}
\title[Branched center manifold]
{Regularity theory for $2$-dimensional almost minimal currents II: branched center manifold}
\author{Camillo De Lellis, Emanuele Spadaro and Luca Spolaor}
\begin{document}

\begin{abstract}
We construct a branched center manifold in a neighborhood of a singular point of a $2$-dimensional integral current which
is almost minimizing in a suitable sense. Our construction is the first half of an argument which shows the discreteness of
the singular set for the following three classes of $2$-dimensional currents: area minimizing in Riemannian manifolds, semicalibrated and spherical cross sections of $3$-dimensional
area minimizing cones.
\end{abstract}

\maketitle

This paper is the third in a series of works aimed at establishing an optimal regularity theory
for $2$-dimensional integral currents which are almost minimizing in a suitable sense. Building upon the monumental
work of Almgren \cite{Alm}, Chang in \cite{Chang} established that $2$-dimensional area minimizing currents in
Riemannian manifolds are classical minimal surfaces, namely they are regular (in the interior) except for a discrete
set of branching singularities. The argument of Chang is however not entirely complete since a key starting point 
of his analysis, the existence of the so-called ``branched center manifold'', is only sketched in the appendix of
\cite{Chang} and requires the understanding (and a suitable modification) of the most involved portion of the monograph \cite{Alm}. 

An alternative proof of Chang's theorem has been found by Rivi\`ere and Tian in \cite{RT1} for the special case
of $J$-holomorphic curves. Later on the approach of Rivi\`ere and Tian has been generalized by Bellettini and Rivi\`ere
in \cite{BeRi} to handle a case which is not covered by \cite{Chang}, namely that of special Legendrian cycles in
$\mathbb S^5$ (see also \cite{Be3} for a further generalization).

Meanwhile the first and second author revisited Almgren's theory giving a much shorter version of his program
for proving that area minimizing currents are regular up to a set of Hausdorff codimension $2$, cf. \cite{DS1,DS2,DS3,DS4,DS5}.
In this note and its companion papers \cite{DSS2,DSS4} we build upon the latter works in order to give a
complete regularity theory which includes both the theorems of Chang and Bellettini-Rivi\`ere as special cases. 
In order to be more precise, we introduce the following terminology (cf. \cite[Definition 0.3]{DSS1}).

\begin{definition}\label{d:semicalibrated}
Let $\Sigma \subset R^{m+n}$ be a $C^2$ submanifold and $U\subset \R^{m+n}$ an open set.
\begin{itemize}
  \item[(a)] An $m$-dimensional integral current $T$ with finite mass and $\supp (T)\subset \Sigma\cap U$ is area minimizing in $\Sigma\cap U$
if $\mass(T + \partial S)\geq \mass(T)$ for any $m+1$-dimensional integral current $S$ with $\supp (S) \subset \subset \Sigma\cap U$.
 \item[(b)] A semicalibration (in $\Sigma$) is a $C^1$ $m$-form $\omega$ on $\Sigma$ such that 
  $\|\omega_x\|_c \leq 1$ at every $x\in \Sigma$, where $\|\cdot \|_c$ denotes the comass norm on $\Lambda^m T_x \Sigma$. 
  An $m$-dimensional integral current $T$ with $\supp (T)\subset \Sigma$ is {\em semicalibrated} by $\omega$ if $\omega_x (\vec{T}) = 1$ for $\|T\|$-a.e. $x$.
  \item[(c)]  An $m$-dimensional integral current $T$ supported in $\partial \bB_{\bar R} (p) \subset \R^{m+n}$ is a {\em spherical cross-section of an area minimizing cone} if ${p\cone T}$ is area minimizing. 
\end{itemize}
\end{definition}

In what follows, given an integer rectifiable current $T$, we denote by $\reg (T)$ the subset of $\supp (T)\setminus \supp (\partial T)$ consisting of those points $x$ for which there is a neighborhood $U$ such that $T\res U$ is a (constant multiple of) a $C^2$ submanifold. Correspondingly, $\sing (T)$ is
the set $\supp (T)\setminus (\supp (\partial T)\cup \reg (T))$. Observe that $\reg (T)$ is relatively open in $\supp (T) \setminus \supp (\partial T)$
and thus $\sing (T)$ is relatively closed.
The main result of this and the works \cite{DSS2,DSS4} is then the following

\begin{theorem}\label{t:finale}
Let $m=2$ and $T$ be as in (a), (b) or (c) of Definition \ref{d:semicalibrated}. Assume in addition that
$\Sigma$ is of class $C^{3,\eps_0}$ (in case (a) and (b)) and $\omega$ of class $C^{2,\eps_0}$ (in case (b)) for some positive $\eps_0$. Then $\sing (T)$
is discrete.
\end{theorem}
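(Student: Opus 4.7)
Since $\sing(T)$ is relatively closed in $\supp(T)\setminus\supp(\partial T)$, Theorem~\ref{t:finale} is equivalent to the statement that every $p\in\sing(T)$ is isolated. Fix such a $p$. Using the monotonicity formula, which is available in each of the three settings (a)--(c), one extracts tangent cones at $p$; for $m=2$ these are $2$-dimensional integral area minimizing cones in $\R^{2+n}$, hence finite sums of oriented $2$-planes with integer multiplicities meeting only at the origin. Upper semicontinuity of the density, together with the fact that at a tangent cone made of several distinct planes the current locally splits into graphical components of strictly smaller multiplicity, reduces matters by induction on $Q$ to showing that a point $p$ with some tangent cone equal to $Q\,\a{\pi}$, $Q\geq 2$, is isolated in $\sing(T)$. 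After rotation and rescaling we may then assume $\pi=\R^2\times\{0\}$ and that $T$ has arbitrarily small cylindrical excess in $\bB_1$.

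The central object is the branched center manifold $\cM$ constructed in this paper: a $C^{3,\alpha}$ $2$-dimensional surface in $\R^{2+n}$ that is topologically a $Q$-sheeted cover of a small disk $D\subset\pi$, branched over $p$, and that approximates $T$ much better than the ambient excess. Following the scheme of \cite{DS3}, $\cM$ is obtained through (i) a Whitney-type decomposition of $D\setminus\{0\}$ into dyadic regions whose sides decay toward the branch point at a rate tied to a frequency proxy, (ii) stopping conditions measuring local excess, height and separation between sheets, and (iii) a smooth interpolation of the Almgren Lipschitz approximation of $T$ on the non-stopped regions. In tandem one obtains a multivalued normal field $N:\cM\to\Iq$ parametrizing $T$ near $p$, with sharp Dirichlet, $L^2$ and $L^\infty$ estimates.

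The companion paper \cite{DSS4} will then show that $N$ is almost Dir-minimizing and that a suitable Almgren-type frequency $I(r)$ attached to $N$ is almost monotone, so that $\alpha:=I(0^+)$ exists and is positive. A subsequential blow-up of $N$ at scales $r_k\downarrow 0$ produces a nontrivial $\Iq$-valued Dir-minimizer $u$ on $D$, homogeneous of degree $\alpha$; the $2$-dimensional theory of \cite{DS1} forces $\sing(u)=\{0\}$. If $\sing(T)$ accumulated at $p$ along a sequence $p_k\to p$, rescaling at $r_k=|p_k-p|$ and passing to the blow-up would deposit a singular point of $u$ on $\partial\bB_1$, contradicting the discreteness of $\sing(u)$. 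The chief obstacle, and the content of the present paper, is the construction of $\cM$ in the branched regime: because the base is singular at $p$, the Whitney decomposition must degenerate near $0$ at a rate dictated by the frequency, and the excess, height and Dirichlet estimates must be coupled across all scales so as to extend continuously through the branch. This is precisely the step that was only sketched in the appendix of \cite{Chang}.
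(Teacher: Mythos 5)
Your proposal describes the overall Almgren--Chang program accurately at a high level, but the concluding logic is not the one the paper uses, and as written it has a genuine gap.

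The paper's own proof of Theorem~\ref{t:finale} is short and is organized around the dichotomy of Theorem~\ref{t:induttivo}: start with the trivial branching $u\equiv 0$, $\bar Q=1$; either alternative (a) holds, in which case $T$ is a $Q$-multiple of an admissible branching near $0$ and is therefore smooth away from $0$, or alternative (b) produces, after a rescaling, an admissible $\bar Q$-branching of strictly larger order $\bar Q$; since $\bar Q$ is bounded above by the density $\Theta(0,T)$, alternative (a) must eventually occur. The blow-up of the normal approximation $N$ (done in \cite{DSS4}) is used not to produce a direct contradiction, but to manufacture the new, higher-order branching in alternative (b). That is the whole content of the companion paper, and it is what makes the induction run.

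Your step (4) replaces this by a direct accumulation argument: ``if $\sing(T)$ accumulated at $p$ along $p_k\to p$, rescaling at $r_k=|p_k-p|$ would deposit a singular point of $u$ on $\partial\bB_1$, contradicting $\sing(u)=\{0\}$.'' This is the classical \emph{persistence of singularities} problem, and it does \emph{not} follow from the material you have described. A singular point $p_k$ of $T$ with density $\Theta(T,p_k)<Q$ need not converge to a singular point of the Dir-minimizing blow-up $u$: the current may split near $p_k$ into two separated pieces whose individual branch behavior occurs at a scale $\ll r_k$, so that in the limit $u$ simply sees two distinct, perfectly regular values at $q=\lim p_k/r_k$. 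This is precisely why Almgren's program needs the separation and splitting-before-tilting estimates (Propositions~\ref{p:separ} and~\ref{p:splitting} here) and why the conclusion in \cite{DSS4} is phrased as ``we build a new admissible smooth branching $v$ on some $\gira_{k\bar Q}$ with $k\geq 2$,'' rather than as persistence of singularities. Without the intermediate step that converts the blow-up information into a new branching of higher order, the induction has no base for termination, and the contradiction you claim at $\partial\bB_1$ is not justified.

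A secondary, smaller imprecision: your reduction to the one-plane case by ``upper semicontinuity of the density together with splitting at cones with several distinct planes'' is really the irreducibility decomposition of \cite[Prop.~2.2, Thm.~3.1]{DSS1}, which rests on the uniqueness of tangent cones in all three cases, not just on upper semicontinuity; this is how Assumption~\ref{ipotesi_base_2}(iii) is justified in the paper.
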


Clearly Chang's result is covered by case (a). As for the case of special Lagrangian cycles considered by Bellettini and Rivi\`ere in
\cite{BeRi} observe that they form a special subclass of both (b) and (c). Indeed these cycles arise as spherical cross-sections of $3$-dimensional special L.agrangian cones: as such they are then spherical cross sections of area minimizing cones but they are also semicalibrated by a specific smooth form
on $\mathbb S^5$.

Following the Almgren-Chang program, Theorem \ref{t:finale} will be established through a suitable ``blow-up argument'' which requires
several tools. The first important tool is the theory of multiple valued functions, for which we
will use the results and terminology of the papers \cite{DS1,DS2}. The second tool is a suitable approximation result for area minimizing currents with graphs of multiple valued functions, which for the case at hand has been established in the preceding
note \cite{DSS2}. 
The last tool is the so-called ``center manifold'': this will be constructed in the present paper, whereas the final argument for
Theorem \ref{t:finale} will then be given in \cite{DSS4}. We note in passing that all our arguments use heavily the uniqueness
of tangent cones for $T$. This result is a, by now classical, theorem of White for area minimizing $2$-dimensional currents in the euclidean
space, cf. \cite{Wh}. Chang extended it to case (a) in the appendix of \cite{Chang}, whereas Pumberger and Rivi\`ere covered case (b) in \cite{PuRi}. 
A general derivation of these results for a wide class of almost minimizers has been given in \cite{DSS1}: the theorems in there
cover, in particular, all the cases of Definition \ref{d:semicalibrated}.

The proof of Theorem \ref{t:finale} is based, as in \cite{Chang}, on an induction statement, cf. Theorem \ref{t:induttivo} below. This and the next paper \cite{DSS4} can be thought as the two main steps in its proof. For this reason, before detailing the construction of the branched center manifold, which is the main object of this note, we will state Theorem \ref{t:induttivo},
show how Theorem \ref{t:finale} follows from it and give a rough outline of the contributions of this and the next note
\cite{DSS4}.

\subsection{Acknowledgments}  The research of Camillo De Lellis and Luca Spolaor has been supported by the ERC grant RAM (Regularity for Area Minimizing currents), ERC 306247.

\section{Preliminaries and the main induction statement}

\subsection{Basic notation and first main assumptions}
For the notation concerning submanifolds
$\Sigma\subset \R^{2+n}$ we refer to \cite[Section 1]{DS3}. With $\bB_r (p)$ and $B_r (x)$ we denote, respectively, the open ball with radius $r$ and center $p$ in $\mathbb R^{2+n}$ and the open ball with radius $r$ and center $x$ in $\mathbb R^2$.  $\bC_r (p)$ and $\bC_r (x)$ will always denote the cylinder $B_r (x)\times \mathbb R^n$, where $p=(x,y)\in \R^2\times \R^n$. We will often need to consider cylinders whose bases are parallel to other $2$-dimensional planes, as well as balls in $m$-dimensional affine planes. We then introduce the notation $B_r (p, \pi)$ for $\bB_r (p) \cap (p+\pi)$ and $\bC_r (p, \pi)$ for $B_r (p, \pi) + \pi^\perp$. 
$e_i$ will denote the unit vectors in the standard basis,
$\pi_0$ the (oriented) plane $\R^2\times \{0\}$ and $\vec \pi_0$
the $2$-vector $e_1\wedge e_2$ orienting it.
Given an $m$-dimensional plane $\pi$, we denote by $\p_\pi$ and $\p_\pi^\perp$ the
orthogonal projections onto, respectively, $\pi$ and its orthogonal complement $\pi^\perp$. 
For what concerns integral currents we use the definitions and the notation of \cite{Sim}. Since $\pi$ is used recurrently for $2$-dimensional planes, the $2$-dimensional area of the unit circle in $\R^2$ will be denoted by $\omega_2$. 

By \cite[Lemma 1.1]{DSS2} in case (b) we can assume, without loss of generality, that the ambient manifold $\Sigma$ coincides with
the euclidean space $\R^{2+n}$. In the rest of the paper we will therefore always make the following

\begin{ipotesi}\label{ipotesi_base}
$T$ is an integral current of dimension $2$ with bounded support and it satisfies one of the three conditions (a), (b) or (c) in Definition \ref{d:semicalibrated}. Moreover
\begin{itemize}
\item In case (a),
$\Sigma\subset\R^{2+n}$ is a $C^{3, \eps_0}$
submanifold of dimension $2 + \bar n = 2 + n - l$, which is the graph of an entire
function $\Psi: \R^{2+\bar n}\to \R^l$ and satisfies the bounds
\begin{equation}\label{e:Sigma}
\|D \Psi\|_0 \leq c_0 \quad \mbox{and}\quad \bA := \|A_\Sigma\|_0
\leq c_0,
\end{equation}
where $c_0$ is a positive (small) dimensional constant and $\eps_0\in ]0,1[$. 
\item In case (b) we assume that $\Sigma = \R^{2+n}$ and that the semicalibrating form $\omega$ is $C^{2, \eps_0}$. 
\item In case (c) we assume that $T$ is supported in $\Sigma = \partial \bB_R (p_0)$ for some $p_0$ with $|p_0|=R$, so that $0\in \partial \bB_R (p_0)$. We assume also that $T_0 \partial \bB_R (p_0)$ is $\R^{2+n-1}$ (namely $p_0= (0, \ldots, 0, \pm |p_0|)$ and we let $\Psi: \R^{2+n-1}\to \R$ be a smooth extension to the whole space of the function which describes $\Sigma$ in $\bB_2 (0)$. We assume then that \eqref{e:Sigma} holds, which is equivalent to the requirement that $R^{-1}$ be sufficiently small.
\end{itemize}
\end{ipotesi}

In addition, since the conclusion of Theorem~\ref{t:finale} is local, by \cite[Proposition 0.4]{DSS1}
we can also assume to be always in the following situation.

\begin{ipotesi}\label{ipotesi_base_2}
In addition to Assumption \ref{ipotesi_base} we assume the following:
\begin{itemize}
\item[(i)] $\partial T \res \bC_2 (0, \pi_0)=0$;
\item[(ii)] $0\in \supp (T)$ and the tangent cone
at $0$ is given by $\Theta (T, 0) \a{\pi_0}$ where $\Theta (T, 0)\in \N\setminus \{0\}$;
\item[(iii)] $T$ is irreducible in any neighborhood $U$ of $0$ in the following sense:
it is not possible to find $S$, $Z$ non-zero integer rectifiable
currents in $U$ with $\partial S = \partial Z =0$ (in $U$), $T= S+Z$ and $\supp (S)\cap \supp (Z)=\{0\}$.
\end{itemize}
\end{ipotesi}

In order to justify point (iii), observe that we can argue as in the proof of \cite[Theorem 3.1]{DSS1}: assuming that in a certain neighborhood $U$ 
there is a decomposition $T= S+Z$ as above, it follows from \cite[Proposition 2.2]{DSS1} that both $S$ and $Z$ fall in one of the classes of Definition \ref{d:semicalibrated}. In turn this implies that $\Theta (S, 0), \Theta (Z, 0)\in \N\setminus \{0\}$ and thus $\Theta (S, 0) < \Theta (T, 0)$.
We can then replace $T$ with either $S$ or $Z$. Let $T_1=S$ and argue similarly if it is not irreducible: obviously we can apply the argument above one more time and find a $T_2$ which satisfies all the requirements and has $0<\Theta (T_2, 0) < \Theta (T_1, 0)$. This process must stop after at most $N = \Theta (T, 0)$ steps: the final current is then necessarily irreducible.

\subsection{Branching model} We next introduce an object which will play a key role in the rest of our work, because it is the basic local model of the
singular behavior of a $2$-dimensional area minimizing current: for each positive natural number $Q$ we will denote by $\gira_{Q,\rho}$ the flat Riemann surface which is a disk with a conical singularity, in the origin, of angle $2\pi Q$ and radius $\rho>0$. More precisely we have

\begin{definition}\label{d:Riemann_surface}
$\gira_{Q,\rho}$ is topologically an open $2$-dimensional disk, which we identify with the topological space $\{(z,w)\in \mathbb C^2 : w^Q=z, |z|<\rho\}$. For each $(z_0, w_0)\neq 0$ in $\gira_{Q,\rho}$ we consider the connected component $\disco (z_0, w_0)$ of $\gira_{Q,\rho}\cap \{(z,w):|z-z_0| < |z_0|/2\}$ which contains $(z_0, w_0)$. We then consider the smooth manifold given by the atlas 
\[
\{(\disco (z,w)), (x_1, x_2)): (z,w)\in \gira_{Q,\rho} \setminus \{0\}\}\, ,
\] 
where $(x_1, x_2)$ is the function which gives the real and imaginary part of the first complex coordinate of a generic point of $\gira_{Q,\rho}$. On such smooth manifold we consider the following flat Riemannian metric: on each $\disco (z,w)$ with the chart $(x_1, x_2)$ the metric tensor is the usual euclidean one $dx_1^2+dx_2^2$. Such metric will be called the {\em canonical flat metric} and denoted by $e_Q$.
\end{definition}

When $Q=1$ we can extend smoothly the metric tensor to the origin and we obtain the usual euclidean $2$-dimensional disk.
For $Q>1$ the metric tensor does not extend smoothly to $0$, but we can nonetheless complete the induced geodesic distance on $\gira_{Q,\rho}$ in a neighborhood of $0$: for $(z,w)\neq 0$ the distance to the origin will then correspond to $|z|$. The resulting metric space is a well-known object in the literature, namely a flat Riemann surface with an isolated conical singularity at the origin (see for instance \cite{Zorich}). Note that for each $z_0$ and $0 <r\leq \min \{|z_0|, \rho -|z_0|\} $ the set $\gira_{Q, \rho}\cap \{|z-z_0|<r\}$ consists then of $Q$ nonintersecting $2$-dimensional disks, each of which is a geodesic ball of $\gira_{Q, \rho}$ with radius $r$ and center $(z_0, w_i)$ for some $w_i\in \C$ with $w_i^Q = z_0$. We then denote each of them by $B_r (z_0,w_i)$ and treat it as a standard disk in the euclidean $2$-dimensional plane (which is correct from the metric point of view). We use however the same notation for the distance disk $B_r (0)$, namely for the set $\{(z,w): |z|<r\}$, although the latter is {\em not isometric} to the standard euclidean disk. Since this might be create some ambiguity, we will use the specification $\R^2 \supset B_r (0)$ (or $\R^2 \supset B_r$) when referring to the standard disk in $\R^2$. 

\subsection{Admissible $Q$-branchings}
When one of (or both) the parameters $Q$ and $\rho$ are clear from the context, the  corresponding subscript (or both) will be omitted.
We will always treat each point of $\gira$ as an element of $\C^2$, mostly using $z$ and $w$ for the horizontal and vertical complex coordinates. Often $\C$ will be identified with $\R^2$ and thus the coordinate $z$ will be treated as a two-dimensional real vector, avoiding the more cumbersome notation $(x_1, x_2)$.

\begin{definition}[$Q$-branchings]\label{d:admissible}
Let $\alpha\in ]0,1[$, $b>1$, $Q\in \N\setminus \{0\}$ and $n\in \N\setminus \{0\}$. 
An admissible $\alpha$-smooth and $b$-separated $Q$-branching in $\R^{2+n}$ (shortly a $Q$-branching) is the graph
\begin{equation}
\gr (u) := \{(z, u(z,w)): (z,w) \in \gira_{Q, 2\rho}\} \subset \R^{2+n}\, 
\end{equation}
of a map $u: \gira_{Q,2\rho} \to \mathbb R^n$ satisfying the following assumptions.
For some constant $C_i>0$ we have
\begin{itemize}
\item $u$ is continuous, $u\in C^{3,\alpha}$ on $\gira_{Q, 2\rho} \setminus \{0\}$ and $u(0)=0$;
\item $|D^j u (z,w)|\leq C_i |z|^{1-j+\alpha}$ $\forall (z,w)\neq 0$ and $j\in \{0,1,2,3\}$;
\item $[D^3 u]_{\alpha, B_r (z,w)} \leq C_i |z|^{-2}$ for every $(z,w)\neq 0$ with $|z|=2r$;
\item If $Q>1$, then there is a positive constant $c_s\in ]0,1[$ such that
\begin{equation}\label{e:separation}
\min \{|u (z,w)-u(z,w')|: w\neq w'\} \geq 4 c_s |z|^b \qquad \mbox{for all $(z,w)\neq 0$.}
\end{equation}
\end{itemize}
The map $\Phii (z,w):= (z, u (z,w))$ will be called the {\em graphical parametrization} of the $Q$-branching.
\end{definition}

Any $Q$-branching as in the Definition above is an immersed disk in $\R^{2+n}$ and can be given a natural structure as integer rectifiable current, which will be denoted by $\bG_u$. For $Q=1$ a map $u$ as in Definition \ref{d:admissible} is a (single valued) $C^{1,\alpha}$ map $u: \R^2 \supset B_{2\rho} (0)\to \R^n$. Although the term branching is not appropriate in this case, the advantage of our setup is that $Q=1$ will not be a special case in the induction statement of Theorem \ref{t:induttivo} below.  Observe that for $Q>1$
the map $u$ can be thought as a $Q$-valued map $u: \R^2 \supset B_{2\rho} (0) \to \Iqs$, setting $u(z)= \sum_{(z,w_i)\in \gira} \a{u(z,w_i)}$ for $z\neq 0$ and $u (0) = Q\a{0}$. The notation $\gr (u)$ and $\bG_u$ is then coherent with the corresponding objects defined in \cite{DS2} for general $Q$-valued maps.

\subsection{The inductive statement} Before coming to the key inductive statement, we need to introduce some more terminology.

\begin{definition}[Horned Neighborhood]\label{d:horned} Let $\gr (u)$ be a $b$-separated $Q$-branching.
For every $a>b$ we define the {\em horned neighborhood} $\bV_{u,a}$ of $\gr (u)$ to be
\begin{equation}
\bV_{u, a} := \{(x,y)\in \R^2\times \R^n: \exists (x,w) \in \gira_{Q, 2\rho} \mbox{ with } |y-u(x,w)|< c_s |x|^a\}\, ,
\end{equation}
where $c_s$ is the constant in \eqref{e:separation}.
\end{definition}

\begin{definition}[Excess]\label{d:excess}
Given an $m$-dimensional current $T$ in $\mathbb R^{m+n}$ with finite mass, its {\em excess} in the ball $\bB_r (x)$ and in the cylinder $\bC_r (p, \pi')$ with respect to the $m$-plane $\pi$ are
\begin{align}
\bE(T,\bB_r (p),\pi) &:= \left(2\omega_m\,r^m\right)^{-1}\int_{\bB_r (p)} |\vec T - \vec \pi|^2 \, d\|T\|\\
\bE (T, \bC_r(p, \pi'), \pi) &:= \left(2\omega_m\,r^m\right)^{-1}\int_{\bC_r (p, \pi')} |\vec T - \vec \pi|^2 \, d\|T\|\, .
\end{align}
For cylinders we omit the third entry when $\pi=\pi'$, i.e. $\bE (T, \bC_r (p, \pi)) := \bE (T, \bC_r (p, \pi), \pi)$.
In order to define the spherical excess we consider $T$ as in Assumption \ref{ipotesi_base} and we say that
$\pi$ {\em optimizes the excess} of $T$ in a ball $\bB_r (x)$ if
\begin{itemize}
\item In case (b)
\begin{equation}\label{e:optimal_pi}
\bE(T,\bB_r (x)):=\min_\tau \bE (T, \bB_r (x), \tau) = \bE(T,\bB_r (x),\pi);
\end{equation} 
\item In case (a) and (c) $\pi\subset T_x \Sigma$ and 
\begin{equation}\label{e:optimal_pi_2}
\bE(T,\bB_r (x)):=\min_{\tau\subset T_x \Sigma} \bE (T, \bB_r (x), \tau) = \bE(T,\bB_r (x),\pi)\, .
\end{equation} 
\end{itemize}
\end{definition}
Note in particular that, in case (a) and (c), $\bE (T, \bB_r (x))$ differs from the quantity defined in \cite[Definition 1.1]{DS5}, where, although $\Sigma$ does not coincide with the ambient euclidean space, $\tau$ is allowed to vary among {\em all} planes, as in case (b). Thus a notation more consistent with that of \cite{DS5} would be, in case (a) and (c), $\bE^\Sigma (T, \bB_r (x))$. However, the difference is a minor one and we prefer to keep our notation simpler.

Our main induction assumption is then the following

\begin{ipotesi}[Inductive Assumption]\label{induttiva}
$T$ is as in Assumption \ref{ipotesi_base} and \ref{ipotesi_base_2}. 
For some constants $\bar Q\in \mathbb N\setminus \{0\}$ and $0<\alpha< \frac{1}{2\bar Q}$ there is an $\alpha$-admissible $\bar Q$-branching
$\gr (u)$ with $u: \gira_{\bar{Q}, 2}\to \R^n$ such that
\begin{itemize}
\item[(Sep)] If $\bar Q>1$, $u$ is $b$-separated for some $b>1$; a choice of some $b>1$ is fixed also in the case
$\bar Q =1$, although in this case the separation condition is empty.
\item[(Hor)] $\supp (T) \subset \bV_{u,a}\cup \{0\}$ for some $a>b$;
\item[(Dec)] There exist $\gamma >0$ and a $C_i>0$ with the following property. Let $p = (x_0,y_0)\in \supp (T)\cap \bC_{\sqrt{2}} (0)$ and $4 d := |x_0|>0$, let $V$ be the connected component of 
$\bV_{u,a}\cap \{(x,y): |x-x_0| < d\}$ containing $p$ and let $\pi (p)$ be the plane tangent to $\gr (u)$ at the only point of the form $(x_0, u(x_0, w_i))$ which is contained in $V$. Then
\begin{equation}\label{e:effetto_energia}
\bE (T\res V,\bB_\sigma (p), \pi (p)) \leq C_i^2 d^{2\gamma -2} \sigma^2 \qquad \forall \sigma\in \left[{\textstyle{\frac{1}{2}}}d^{(b+1)/2}, d\right]\, .
\end{equation}
\end{itemize}
\end{ipotesi}

The main inductive step is then the following theorem, where we denote by $T_{p,r}$ the rescaled current $(\iota_{p,r})_\sharp T$, through the map $\iota_{p,r} (q) := (q-p)/r$.

\begin{theorem}[Inductive statement]\label{t:induttivo}
Let $T$ be as in Assumption \ref{induttiva} for some $\bar Q = Q_0$. Then,
\begin{itemize}
\item[(a)] either $T$ is, in a neighborhood of $0$, a $Q$ multiple of a $\bar{Q}$-branching $\gr (v)$;
\item[(b)] or there are $r>0$ and $Q_1>Q_0$ such that $T_{0, r}$ satisfies Assumption \ref{induttiva} with $\bar Q = Q_1$.
\end{itemize}
\end{theorem}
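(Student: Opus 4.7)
The plan is to build, inside the horned neighborhood $\bV_{u,a}$, a branched center manifold $\cM$ which is itself an $\alpha$-admissible $\bar Q$-branching lying close to the reference branching $\gr(u)$; then to construct a multi-valued normal approximation $N$ of $T$ over $\cM$; and finally to split along the dichotomy $N\equiv 0$ in a neighborhood of $0$ (yielding case (a)) versus $N\not\equiv 0$ (where an appropriate rescaling yields case (b)).

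\textbf{Construction of $\cM$ and $N$.} Following the scheme of \cite{DS3,DS4} I would carry out a Whitney-type stopping-time decomposition of the base $\gira_{\bar Q, 2}$ into dyadic disks, where the refinement condition is driven by (i) the excess of $T$ on the corresponding cylinder and (ii) the tilting of the optimal plane, both controlled thanks to the decay hypothesis (Dec). On each non-stopped disk the Lipschitz approximation of \cite{DSS2} gives a $Q$-valued graphical approximation of $T$; averaging it and gluing the averaged pieces by a partition of unity subordinated to the Whitney decomposition produces $\cM$. The exponent $\gamma$ in (Dec) provides exactly the excess decay needed to verify the pointwise bounds $|D^j u|\lesssim |z|^{1-j+\alpha}$ of Definition \ref{d:admissible} with the correct power-law behavior, while the horned neighborhood (Hor) and the $b$-separation (Sep) localize $\supp(T)$ sufficiently close to $\gr(u)$ that the graphical construction is unambiguous on each sheet. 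The normal approximation $N$ is then the (multi-valued) pointwise deviation of $T$ from $\cM$, satisfying $W^{1,2}$ and higher-order estimates weighted by powers of $|z|$ to account for the conical singularity of $\cM$ at the origin.

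\textbf{The dichotomy and the main obstacle.} With $\cM$ and $N$ in hand, if $N$ vanishes identically in some neighborhood of $0$ then $T$ coincides with a $Q$-fold cover of $\cM$ there (with $Q=\Theta(T,0)$), giving (a). Otherwise, one exploits the family $T_{0,r}$ as $r\to 0$: combining uniqueness of tangent cones at $0$ from \cite{DSS1} with an Almgren-type frequency monotonicity for $N$, adapted to the branched flat metric $e_{\bar Q}$, one extracts a scale $r>0$ at which the renormalized blowup of $N$ is a nontrivial Dir-minimizing $Q$-valued function whose homogeneity degree, combined with the $\bar Q$-fold structure of $\cM$, forces $T_{0,r}$ to satisfy Assumption \ref{induttiva} with some $Q_1>Q_0=\bar Q$. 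The hard part is precisely verifying this strict inequality: a priori the blowup could be a $Q$-valued harmonic function whose singular set is compatible with the $\bar Q$-cover, reproducing only a $\bar Q$-branching for $T_{0,r}$. Ruling this out requires the irreducibility of Assumption \ref{ipotesi_base_2}(iii) — which forbids splitting $T$ into non-interacting pieces — together with a sharp rigidity analysis of the frequency at $0$ showing that a frequency equal to $1/\bar Q$ forces $N\equiv 0$. One must also make sure that the constants $C_i$ and $c_s$ entering Definition \ref{d:admissible}, as well as the exponent $\gamma$ in (Dec), propagate with the correct scaling under $\iota_{0,r}$, so that the rescaled current genuinely satisfies the inductive hypothesis with the new parameter $Q_1$.
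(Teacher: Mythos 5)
The paper you are working from does not itself prove Theorem~\ref{t:induttivo}: the introduction and Section~\ref{s:center_manifold} state explicitly that this note accomplishes only step (CM) of a two-step program, constructing the branched center manifold $\cM$ and the normal approximation $N$ (culminating in Theorem~\ref{t:cm_final}), while step (BU) — the blow-up analysis of $N$ and the actual derivation of the dichotomy (a)/(b) — is deferred to the companion paper \cite{DSS4}. So there is no internal proof of the inductive statement to compare against; what one can assess is how closely your outline tracks the strategy the authors announce.

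Your (CM) half is essentially the paper's: a Whitney stopping-time decomposition of $\gira_{\bar Q,2}$ driven by excess and height (not tilting per se — the stopping conditions (EX), (HT), (NN) are excess, height and a neighbor condition), Lipschitz approximations from \cite{DSS2} on the stopped cubes, an elliptic regularization of the averages to produce interpolating sheets, and a partition-of-unity gluing to build $\phii$ and hence $\cM$. You are also right that (Dec), (Hor), (Sep) are exactly what make the excess decay, the $\pi$-approximation, and the sheet-wise projection well-posed, and that the $W^{1,2}$ and weighted $L^\infty$ estimates for $N$ must carry powers of $|z|$ coming from the conical singularity. One small but non-cosmetic correction: the paper defines $Q$ by $\Theta(T,0)=Q\bar Q$ (Lemma~\ref{l:density}), so in your case (a) the multiplicity is this $Q$, not $\Theta(T,0)$ itself.

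In the (BU) half, where you depart from what can be checked in this paper, there are two points worth flagging. First, the paper's announced plan is more specific than "find $Q_1>Q_0$": it says the new branching lives on $\gira_{k\bar Q}$ with $k\geq 2$ a factor of $Q$, i.e.\ $Q_1 = k\bar Q$. This arithmetic structure is forced by the fact that the blow-up of $N$ is an average-free Dir-minimizing $Q$-valued map on $\gira_{\bar Q}$ whose support decomposes into irreducible pieces, and the largest such piece determines $k$; recognizing this is not optional, as it is what guarantees the strict increase $Q_1>Q_0$ in alternative (b) and the termination of the induction. Second, the claim that "a frequency equal to $1/\bar Q$ forces $N\equiv 0$" is not a rigidity statement that is likely to hold as stated; the frequency function here is taken with respect to the flat metric $e_{\bar Q}$ on $\gira_{\bar Q}$, and what the standard Almgren-type analysis gives is a positive lower bound on the frequency (together with the average-free property of the blow-up), not a characterization of a specific threshold value. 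The dichotomy that actually distinguishes (a) from (b) is a decay-rate dichotomy for $N$ relative to the center-manifold estimates, rather than the pointwise condition "$N\equiv 0$ near the origin": (a) can hold with $N\not\equiv 0$, since $\gr(v)$ need not coincide with $\cM$, and the decay argument is what shows, in that case, that the blow-up collapses. Your invocation of irreducibility from Assumption~\ref{ipotesi_base_2}(iii) to rule out a degenerate splitting is, however, exactly the right ingredient and is the same one the authors highlight when motivating that assumption.
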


Theorem \ref{t:finale} follows then easily from Theorem \ref{t:induttivo} and \cite{DSS1}. 

\subsection{Proof of Theorem \ref{t:finale}} As already mentioned, without loss of generality we can assume that Assumption \ref{ipotesi_base} holds, cf. \cite[Lemma 1.1]{DSS1} (the bounds on $\bA$ and $\Psi$ can be achieved by a simple scaling argument). Fix now a point $p$
in $\supp (T)\setminus \supp (\partial T)$. Our aim is to show that $T$ is regular in a punctured neighborhood of $p$. Without loss of generality we can assume that $p$ is the origin. Upon suitably decomposing $T$ in some neighborhood of $0$ we can easily assume that (Sep) in Assumption \ref{induttiva} holds, cf. the argument of Step 4 in the proof of \cite[Theorem 3.1]{DSS1}. Thus, upon suitably rescaling and rotating $T$ we can assume that $\pi_0$ is the unique tangent cone to $T$ at $0$, cf. \cite[Theorem 3.1]{DSS1}. In fact, by \cite[Theorem 3.1]{DSS1} $T$ satisfies Assumption \ref{induttiva} with $\bar{Q}=1$: it suffices to chose $u\equiv 0$ as admissible smooth branching. If $T$ were not regular in any punctured neighborhood of $0$, we could then apply Theorem \ref{t:induttivo} inductively to find a sequence of rescalings $T_{0, \rho_j}$ with $\rho_j\downarrow 0$ which satisfy Assumption \ref{induttiva} with $\bar Q=Q_j$ for some strictly increasing sequence of integers.
It is however elementary that the density $\Theta (0, T)$ bounds $Q_j$ from above (see for instance the argument of the next section leading to Lemma \ref{l:density}), which is a contradiction.

\section{The branched center manifold}\label{s:center_manifold}

\subsection{The overall approach to Theorem \ref{t:induttivo}}
From now on we fix $T$ satisfying Assumption \ref{induttiva}. Observe that, without loss of generality, we are always free to replace $T$ by $T_{0,r}$ with $r$ sufficiently small (and ignore whatever portion falls outside $\bC_2 (0)$). Indeed we will do this several times. Hence, if we can prove that something holds in a sufficiently small neighborhood of $0$, then we can assume, without loss of generality, that it holds on $\bC_2 (0)$.
For this reason we can assume that the constant $C_i$ in Definition \ref{d:admissible} and Assumption \ref{induttiva} is as small as we want.
In turns this implies that there is a well-defined orthogonal projection $\P: \bV_{u,a}\cap \bC_1 \to \gr (u)\cap \bC_2$, which is a $C^{2,\alpha}$ map. 

By the constancy theorem, $(\P_\sharp (T\res \bC_1))\res \bC_{1/2}$ coincides with the current $Q \bG_u \res \bC_{1/2}$ (again,
we are assuming $C_i$ in Definition \ref{d:admissible} sufficiently small), where $Q\in \Z$. If $Q$ were $0$, condition (Dec) in Assumption \ref{induttiva} and a simple covering argument would imply that $\|T\| (\bC_{1/2} (0))\leq C_0 C_i^2$, where $C_0$ is a constant depending on $n$ and $\bar Q$. In particular, when $C_i$ is sufficiently small, this would violate, by the monotonicity formula, the assumption $0\in \supp (T)$. Thus $Q\neq 0$. On the other hand condition (Dec) in Assumption \ref{induttiva} implies also that $Q$ must be positive (again, provided $C_i$ is smaller than a geometric constant).

Now, recall from \cite[Theorem 3.1]{DSS1} that the density $\Theta (p, T)$ is a positive integer at any $p\in \supp (T)\setminus \supp (\partial T)$. Moreover, the rescaled currents $T_{0,r}$ converge to $\Theta (0,T) \a{\pi_0}$. It is easy to see that the rescaled currents $(\bG_u)_{0,r}$ converge to $\bar{Q} \a{\pi_0}$ and that
$(\bP_\sharp T)_{0,r}$ converges to $\Theta (0, T) \a{\pi_0}$. We then conclude that $\Theta (0, T) = \bar{Q} Q$. 

We summarize these conclusions in the following lemma, where we also claim an additional important bound on the density
of $T$ outside $0$, which will be proved later.

\begin{lemma}\label{l:density}
Let $T$ and $u$ be as in Assumption \ref{induttiva} for some $\bar  Q$ and sufficiently small $C_i$. Then the nearest point projection $\P: \bV_{u,a} \cap \bC_1 \to \gr (u)$ is a well-defined $C^{2, \alpha}$ map. In addition there is $Q\in \N\setminus \{0\}$ such that $\Theta (0, T) = Q \bar{Q}$ and the unique tangent cone to $T$ at $0$ is $Q\bar{Q} \a{\pi_0}$. Finally,
after possibly rescaling $T$, $\Theta (p, T) \leq Q$ for every $p\in \bC_2\setminus \{0\}$ and, for every $x\in B_2 (0)$, each
connected component of $(\{x\}\times \R^n) \cap \bV_{u,a}$ contains at least one point of $\supp (T)$.
\end{lemma}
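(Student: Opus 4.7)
\emph{Well-definedness of $\P$.} Fix $x\in B_2(0)\setminus\{0\}$. The separation condition \eqref{e:separation} places the $\bar Q$ sheet values $u(x,w_i)$ pairwise at distance $\geq 4c_s|x|^b$, whereas each sheet of the horned neighborhood at $x$ has radius $c_s|x|^a$ with $a>b$; consequently $(\{x\}\times\R^n)\cap\bV_{u,a}$ splits into $\bar Q$ disjoint open balls, one around each $u(x,w_i)$, and on each the orthogonal projection onto the corresponding local piece of $\gr(u)$ reduces to the standard tubular neighborhood theorem. The bound $|D^2u|\leq C_i|z|^{-1+\alpha}$ estimates the second fundamental form of $\gr(u)$ near $(x,u(x,w))$ by $C_i|x|^{-1+\alpha}$, so the transverse radius times this curvature is $\lesssim c_s C_i|x|^{a-1+\alpha}$, which is arbitrarily small once $C_i$ is small (achieved by a preliminary homothetic rescaling of $T$). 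The $C^{2,\alpha}$ regularity of $\P$ away from $0$ then follows from the $C^{3,\alpha}$ regularity of $u$ on $\gira_{\bar Q,2}\setminus\{0\}$ by applying the implicit function theorem to the orthogonality equation defining $\P$.

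\emph{Density at $0$ and tangent cone.} The paragraphs preceding the lemma already supply the skeleton. The constancy theorem on the connected surface $\gr(u)\setminus\{0\}$ gives $\P_\sharp(T\res\bC_1)\res\bC_{1/2}=Q\bG_u\res\bC_{1/2}$ for some $Q\in\Z$; (Dec) combined with the monotonicity formula at $0\in\supp T$ rules out $Q=0$, while orientation consistency (since $T$ is close to a $\bar Q$-fold copy of $\pi_0$ at small scales) forces $Q\geq 1$. To upgrade this to $\Theta(0,T)=Q\bar Q$ with tangent cone $Q\bar Q\a{\pi_0}$, rescale by $T_{0,r}$ with $r\downarrow 0$: the bound $|u|\leq C_i|z|^{1+\alpha}$ implies that $\gr(u)$ rescales to $\bar Q\a{\pi_0}$ and $\P$ rescales to the orthogonal projection onto $\pi_0$, so $\P_\sharp T_{0,r}\to Q\bar Q\a{\pi_0}$. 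Since $\supp T\subset\bV_{u,a}\cup\{0\}$ forces rescaled supports to collapse onto $\pi_0$, and since \eqref{e:effetto_energia} controls the oscillation of $T$ around its projection, every subsequential limit of $T_{0,r}$ must coincide with its pushforward $Q\bar Q\a{\pi_0}$; uniqueness of the tangent cone from \cite[Theorem 3.1]{DSS1} closes the argument.

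\emph{Density bound for $p\neq 0$ and non-emptiness of horns.} Fix $p=(x_0,y_0)\in\supp T\cap(\bC_2\setminus\{0\})$, set $4d:=|x_0|$, and let $V,\pi(p)$ be as in (Dec). Since $a>b$, the horn $V$ is well separated from the remaining horns at the scale $d$, hence $\supp T\cap\bB_d(p)\subset V$. Restricting $\P_\sharp(T\res\bC_1)\res\bC_{1/2}=Q\bG_u\res\bC_{1/2}$ to $V$ yields $\P_\sharp(T\res V)=Q\,\bG_u\res\P(V)$. At scales $\sigma\in[\tfrac12 d^{(b+1)/2},d]$ the excess estimate \eqref{e:effetto_energia} shows that $T\res V$ is $L^2$-close to $Q$ parallel copies of $\pi(p)$; an almost-minimizing competitor comparison of the type available in each of the three classes of Definition \ref{d:semicalibrated} by \cite{DSS1,DSS2} then gives $\|T\|(\bB_\sigma(p))\leq Q\omega_2\sigma^2(1+o(1))$, and the monotonicity formula, applied as $\sigma\downarrow 0$, yields $\Theta(p,T)\leq Q$. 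Finally, if some connected component $C$ of $(\{x\}\times\R^n)\cap\bV_{u,a}$ were disjoint from $\supp T$, a relatively open neighborhood of $C$ in $\bV_{u,a}\cap\bC_{1/2}$ would also be disjoint from $\supp T$ (using $\supp T\subset\bV_{u,a}\cup\{0\}$), so $\P_\sharp(T\res\bC_1)$ would vanish on a non-trivial open piece of $\gr(u)$, contradicting $Q\geq 1$.

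\emph{Main obstacle.} The genuinely delicate step is the upper mass bound at $p\neq 0$: turning the $L^2$-smallness of (Dec) into a sharp enough mass comparison to exclude multiplicity $>Q$, uniformly throughout the scale range $[\tfrac12 d^{(b+1)/2},d]$, requires the multi-valued Lipschitz approximation and the competitor constructions developed in \cite{DSS1,DSS2}. The remaining ingredients are essentially bookkeeping consequences of the constancy theorem, monotonicity, and the pointwise bounds built into Definition \ref{d:admissible}.
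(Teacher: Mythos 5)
Your treatment of the well-definedness of $\P$ and of $\Theta(0,T)=Q\bar Q$ with tangent cone $Q\bar Q\a{\pi_0}$ follows the same lines as the paper (tubular neighborhood radius versus curvature, constancy theorem, (Dec) to force $Q\geq 1$, convergence of $\P_\sharp T_{0,r}$), and is fine.

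For the last two assertions your route diverges from the paper's, and this is where there is a real gap. The paper argues by \emph{contradiction and compactness}: if a sequence $p_k\to 0$ had $\Theta(p_k,T)\geq Q+1$ (or an empty horn component over $x_k$), one rescales to $Z_k:=(T\res\bV_k)_{q_k,r_k}$, observes that $Z_k$ flat-converges to $Q\a{B_1(0)}$ (using the height bound from (Hor) and the already-established projection identity), and then applies two soft facts for almost minimizers from \cite{DSS1}: mass convergence $\|Z_k\|(\bB_{1/2})\to\frac{Q}{4}\omega_2$ and almost-monotonicity, giving $\Theta(\bar q_k,Z_k)\leq Q+o(1)$ — a contradiction; the empty-horn alternative is ruled out by the constancy theorem applied to the limit $(\p_{\pi_0})_\sharp Z_k$. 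You instead try to prove a \emph{direct mass upper bound} $\|T\|(\bB_\sigma(p))\leq Q\omega_2\sigma^2(1+o(1))$ at scales $\sigma\in[\tfrac12 d^{(b+1)/2},d]$ and conclude via monotonicity. This upper bound is not a consequence of \eqref{e:effetto_energia} alone: the excess controls $\int(1-\langle\vec T,\vec\pi(p)\rangle)\,d\|T\|$, but to bound $\int\langle\vec T,\vec\pi(p)\rangle\,d\|T\|$ by $Q\omega_2\sigma^2(1+o(1))$ one needs the strong Lipschitz approximation of \cite{DSS2} together with a Taylor expansion of the area integrand and a careful accounting of the error set — none of which you carry out; you defer it to ``an almost-minimizing competitor comparison.'' You flag this yourself as the delicate step, but as written it is asserted rather than established. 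The paper's compactness argument sidesteps the Lipschitz approximation machinery entirely and is genuinely lighter; if you pursue the direct route you must actually invoke \cite[Theorem 1.5]{DSS2} and the Taylor expansion from \cite{DS2} to close the estimate.

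Your argument for non-emptiness of the horn components is essentially correct, but note one small point: to conclude that $\P_\sharp T$ vanishes on an \emph{open} piece of $\gr(u)$ you need the whole $i$-th horn component over a small disk $B_\delta(x)$ to avoid $\supp(T)$, not just an ambient neighborhood of the single slice $C$. This does follow, since $\supp(T)\subset\bV_{u,a}\cup\{0\}$ forces $\supp(T)\cap\partial C=\emptyset$, hence $\dist(\bar C,\supp(T))>0$, and the nearby slices $\bar C'$ are Hausdorff-close to $\bar C$ by continuity of $u$ — but that chain of observations should be stated.
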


Since we will assume during the rest of the paper that the above discussion applies, we summarize the relevant conclusions in the following

\begin{ipotesi}\label{piu_fogli}
$T$ satisfies Assumption \ref{induttiva} for some $\bar{Q}$ and with $C_i$ sufficiently small. $Q\geq 1$
is an integer, $\Theta (0, T) = Q \bar{Q}$ and $\Theta (p, T) \leq Q$ for all $p\in \bC_2\setminus \{0\}$. 
\end{ipotesi}

The overall plan to prove Theorem \ref{t:induttivo} is then the following:
\begin{itemize}
\item[(CM)] We construct first a branched center manifold, i.e. a second admissible smooth branching $\phii$ on $\gira_{\bar{Q}}$, and a corresponding $Q$-valued map $N$ defined on the normal bundle of $\gr (\phii)$, which approximates $T$ with a very high degree of accuracy (in particular more accurately than $u$) and whose average $\etaa\circ N$ is very small;
\item[(BU)] Assuming that alternative (a) in Theorem \ref{t:induttivo} does not hold, we study the asymptotic behavior of $N$ around $0$ and use it to build a new admissible smooth branching $v$ on some $\gira_{k \bar{Q}}$ where $k\geq 2$ is a factor of $Q$: this map will then be the one sought in alternative (b) of Theorem \ref{t:induttivo} and a suitable rescaling of $T$ will lie in a horned neighborhood of its graph.
\end{itemize}
The first part of the program is the one achieved in this paper, whereas the second part will be completed in \cite{DSS4}: in the latter paper we then give the proof of Theorem \ref{t:induttivo}. Note that, when $Q=1$, from (BU) we will conclude that alternative (a) necessarily holds: this will be a simple corollary of the general case, but we observe that it could also be proved resorting to the classical Allard's regularity theorem.

\subsection{Smallness condition} In several occasions we will need that the ambient manifold $\Sigma$ is suitably flat and that the excess of the current $T$ is suitably small. This can, however, be easily achieved after scaling. More precisely we have the following

\begin{lemma}\label{l:piccolezza}
Let $T$  be as in the Assumptions \ref{induttiva}.
After possibly rescaling, rotating and modifying $\Sigma$ outside $\bC_2 (0)$ we can assume that, in case (a) and (c) of Definition \ref{d:semicalibrated},
\begin{itemize}
\item[(i)] $\Sigma$ is a complete submanifold of $\R^{2+n}$;
\item[(ii)] $T_0 \Sigma = \R^{2+\bar{n}}\times \{0\}$ and, $\forall p\in \Sigma$, $\Sigma$ is the graph of a $C^{3,\eps_0}$ map $\Psi_p: T_p \Sigma \to (T_p\Sigma)^\perp$.
\end{itemize}
Under these assumptions, we denote by $\bc$ and $\bmo$ the following quantities
\begin{align}
&\bc := \sup \{\|D\Psi_p\|_{C^{2,\eps_0}}:p\in \Sigma\}\qquad \mbox{in the cases (a) and (c) of Definition \ref{d:semicalibrated}}\\
&\bc :=\|d\omega\|_{C^{1,\eps_0}} \qquad \qquad\;\;\qquad\qquad\mbox{in case (b) of Definition \ref{d:semicalibrated}}\\
&\bmo:= \max \left\{ \bc^2, \bE (T, \bC_2, \pi_0), C_i^2, c_s^2\right\}\, ,
\end{align}
where $C_i$ and $c_s$ are the constants appearing in Definition \ref{d:admissible} and Assumption \ref{induttiva}. Then, for any $\eps_2>0$,
after possibly rescaling the current by a large factor, we can assume
\begin{equation}\label{e:smallness}
\bmo \leq \eps_2\, .
\end{equation}
\end{lemma}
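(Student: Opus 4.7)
The lemma is purely a rescaling/bookkeeping statement, so the plan is to first arrange the structural conditions (i)--(ii) by hand at fixed scale, and then shrink $T$ by a homothety $\iota_{0,r}$ with $r\in(0,1)$ small to force the simultaneous smallness of all four constants composing $\bmo$.

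\textbf{Setup of (i)--(ii).} In case (b), $\Sigma=\R^{2+n}$ and there is nothing to prove. In case (a), $\Sigma$ is by Assumption \ref{ipotesi_base} already the graph of an entire $\Psi:\R^{2+\bar n}\to\R^l$ with $\|D\Psi\|_0\le c_0$; so (i) is automatic and after rotating to align $T_0\Sigma$ with $\R^{2+\bar n}\times\{0\}$ the implicit function theorem, together with the smallness of $\|D\Psi\|_0$, realizes $\Sigma$ as a global $C^{3,\eps_0}$ graph $\Psi_p$ over every tangent plane $T_p\Sigma$. In case (c), $\Sigma=\partial\bB_R(p_0)$ is complete but not a global graph, so I would replace it outside a cylinder slightly larger than $\bC_2(0)$ by the entire graph obtained from a cut-off of its local defining function $\Psi$; since $\supp(T)\subset\subset\bC_2$, this modification affects neither $T$ nor the semicalibration/almost-minimality hypotheses, and the resulting $\Sigma$ satisfies (i)--(ii).

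\textbf{Rescaling.} For $r\in(0,1)$ I would replace $T$ by $T_{0,r}$, $\Sigma$ by $\iota_{0,r}(\Sigma)$, $\omega$ (in case (b)) by the $2$-form $\omega'_q:=r^2\omega_{rq}$ that semicalibrates $T_{0,r}$, and the branching $u$ by
\[
u_r(z,w):=r^{-1}u(rz,\,r^{1/\bar Q}w)\qquad\text{on }\gira_{\bar Q,2}.
\]
The key observation is that the pair $(z,w)\mapsto(rz,r^{1/\bar Q}w)$ preserves the relation $w^{\bar Q}=z$, so $\iota_{0,r}(\gr(u))$ is parametrized by $u_r$ and $\bG_{u_r}=(\iota_{0,r})_\sharp\bG_u$. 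Direct differentiation gives $|D^j u_r(z,w)|\le r^\alpha C_i|z|^{1-j+\alpha}$ (and the analogous Hölder bound), the separation constant rescales to $c_s'=r^{b-1}c_s$ which shrinks because $b>1$, and the horned/decay conditions (Hor) and (Dec) are preserved with new constants $a$ unchanged and $C_i'=C_i r^{\min(\alpha,\gamma)}$ (the excess being scale-invariant and the exponents matching). On the ambient side, in cases (a) and (c) the rescaled graphing map $\Psi'(y)=r^{-1}\Psi(ry)$ satisfies $\|D^k\Psi'\|_0=r^{k-1}\|D^k\Psi\|_0$ together with an extra factor $r^{\eps_0}$ on the Hölder seminorm, so $\bc'\le Cr\bc$; an analogous computation for $\omega'$ in case (b) yields $\|d\omega'\|_{C^{1,\eps_0}}\le Cr^{1+\eps_0}\|d\omega\|_{C^{1,\eps_0}}$. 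Finally, $\bE(T_{0,r},\bC_2,\pi_0)=\bE(T,\bC_{2r},\pi_0)\to 0$ as $r\downarrow 0$ by the tangent cone hypothesis \ref{ipotesi_base_2}(ii). All four quantities defining $\bmo$ therefore shrink to $0$ as $r\downarrow 0$, and \eqref{e:smallness} follows by choosing $r$ small enough.

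\textbf{Main difficulty.} The only non-routine point is verifying that the anisotropic scaling $(z,w)\mapsto(rz,r^{1/\bar Q}w)$ on $\gira_{\bar Q}$ is the correct partner to the ambient homothety $\iota_{0,r}$ and that it simultaneously shrinks $C_i$, $c_s$, $\bc$ and the excess while preserving admissibility, separation, and the conditions (Hor)--(Dec) of Assumption \ref{induttiva} — i.e.\ that the inductive structure is stable under rescaling. This reduces to checking that the exponents $\alpha,\gamma,\eps_0>0$ and $b-1>0$ all give strictly positive powers of $r$; the case (c) subtlety that modifying $\Sigma$ outside $\bC_2$ must not spoil almost-minimality is automatic because $T$ is compactly supported strictly inside $\bC_2$.
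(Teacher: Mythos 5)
Your strategy — pairing the ambient homothety $\iota_{0,r}$ with the anisotropic map $(z,w)\mapsto(rz,r^{1/\bar Q}w)$ on $\gira_{\bar Q}$ and then checking each constant in $\bmo$ individually — is correct and matches the paper's, which relegates the details to \cite[Lemma 1.5]{DS4}. The computations of $C_i' = r^{\min(\alpha,\gamma)}C_i$ and $c_s' = r^{b-1}c_s$ and the verification that (Hor), (Dec) and the Hölder bounds of Definition \ref{d:admissible} are preserved all check out.

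There is however a genuine gap in case (a): you claim that the rescaling alone makes $\bc=\sup_p\|D\Psi_p\|_{C^{2,\eps_0}}$ small, but your own scaling $\|D^k\Psi'\|_0 = r^{k-1}\|D^k\Psi\|_0$ gives, for $k=1$, $\|D\Psi'\|_0 = \|D\Psi\|_0$, which does \emph{not} shrink. The $C^{2,\eps_0}$-norm of $D\Psi_p$ is taken globally over $T_p\Sigma\cong\R^{2+\bar n}$, and after rescaling $\Sigma$ is nearly flat near $0$ but not necessarily far away. The modification of $\Sigma$ outside a compact set — which you apply only to case (c) — is in fact needed in case (a) as well. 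The paper explicitly rescales $\Sigma$, restricts to $\bC_4(0,T_0\Sigma)$ (where $D\Psi$ is small because $D\Psi(0)=0$ and $\|D^2\Psi\|_0$ shrinks), and then extends $\Psi$ outside $B_4$ with a cutoff that keeps the global $C^{3,\eps_0}$-norm small; only after this extension is $\bc\leq\eps_2$ justified in both cases (a) and (c).

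A secondary slip: your $\omega'_q := r^2\omega_{rq}$ is the differential-form pullback, but it does not semicalibrate $T_{0,r}$, since $\|\omega'_q\|_c\leq r^2<1$ and therefore $\omega'(\vec T_{0,r})\leq r^2<1$. The correct rescaled semicalibration samples the coefficients without the $r^2$ factor, $\omega'_q:=\omega_{rq}$, which satisfies $\|\omega'_q\|_c\leq 1$ and $\omega'_q(\vec T_{0,r}(q))=\omega_{rq}(\vec T(rq))=1$; the chain rule then still yields $\|d\omega'\|_{C^{1,\eps_0}}\leq Cr\|d\omega\|_{C^{1,\eps_0}}\to 0$, so the conclusion is unaffected.
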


We postpone the proof of this (simple) technical lemma to a later section.

\subsection{Conformal parametrization} In order to carry on the plan outlined in the previous subsection, it is convenient to use  parametrizations of $Q$-branchings which are not graphical but instead satisfy a suitable conformality property. To simplify our notation, the map $\Psi_0$ will be simply denoted by $\Psi$.

If we remove the origin, any admissible $Q$-branching is a Riemannian submanifold of $\R^{2+n}\setminus \{0\}$: this gives a Riemannian tensor $g := \Phii^\sharp e$ (where $e$ denotes the euclidean metric on $\R^{2+n}$) on the punctured disk $\gira_{Q, 2\rho}\setminus \{0\}$. Note that in $(z,w)$ the difference between the metric tensor $g$ and the canonical flat metric $e_Q$ can be estimated by (a constant times) $|z|^{2\alpha}$: thus, as it happens for the canonical flat metric $e_Q$, when $Q>1$ it is not possible to extend the metric $g$ to the origin. However, using well-known arguments in differential geometry, we can find a conformal map from $\gira_{Q, r}$ onto a neighborhood of $0$ which maps the conical singularity of $\gira_{Q,r}$ in the conical singularity of the $Q$-branching. In fact, we need the following accurate estimates for such a map.

\begin{proposition}[Conformal parametrization]\label{p:conformal}
Given an admissible $b$-separated $\alpha$-smooth $Q$-branching $\gr (u)$ with $\alpha <1/(2Q)$
there exist a constant $C_0 (Q, \alpha)>0$, a radius $r>0$ and functions $\Psii\colon \gira_{Q,r} \to \gr (u)$
and  $\lambda\colon \gira_{Q,r} \to \R_+$
such that
\begin{itemize}
\item[(i)] $\Psii$ is a homeomorphism of $\gira_{Q,r}$ with a neighborhood of $0$ in $\gr (u)$;
\item[(ii)] $\Psii\in C^{3,\alpha} (\gira_{Q,r}\setminus \{0\})$, with the estimates
\begin{align}
|D^l \big(\Psii(z,w) - (z,0)\big)| \leq & C_0 C_i |z|^{1+\alpha-l} \qquad \mbox{for $l=0,\dots,3$, $z\neq 0$}\, ,\label{e:conformal2}\\
[D^3 \Psii]_{\alpha, B_r (z,w)} \leq & C_0 C_i |z|^{-2} \qquad \mbox{for $z\neq 0$ and $r = |z|/2$}\, ;\label{e:conformal3}
\end{align}
\item[(iii)] $\Psii$ is a conformal map with conformal factor $\lambda$, namely, if we denote by $e$ the ambient euclidean metric
in $\R^{2+n}$ and by $e_Q$ the canonical euclidean metric of $\gira_{Q,r}$, 
\begin{equation}\label{e:conformal}
g:= \Psii^\sharp e = \lambda\, e_Q\, \qquad \mbox{on $\gira_{Q,r}\setminus \{0\}$.}
\end{equation}
\item[(iv)] The conformal factor $\lambda$ satisfies
\begin{align}
|D^l (\lambda -1) (z,w)| \leq &C_0 C_i |z|^{2\alpha-l} \qquad \mbox{for $l=0,1,\ldots, 2$}\label{e:conformal4}\\
[D^2 \lambda]_{\alpha, B_r (z,w)} \leq & C_0 C_i |z|^{\alpha-2}\qquad \mbox{for $z\neq 0$ and $r = |z|/2$}\, .\label{e:conformal5}
\end{align}
\end{itemize}
\end{proposition}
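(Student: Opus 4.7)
The strategy is to reduce the construction of $\Psii$ to the classical problem of finding isothermal coordinates for an almost--flat $2$-dimensional Riemannian metric on a standard disk, which will be solved via a Beltrami equation with a small coefficient that vanishes at the origin with a definite power rate. As a preliminary step I would identify $\gira_{Q,2\rho}$ with the Euclidean disk $\R^2\supset B_{(2\rho)^{1/Q}}(0)$ via the global homeomorphism $w\mapsto(w^Q,w)$. In this $w$-chart the canonical flat metric takes the conformal form $e_Q=Q^2|w|^{2(Q-1)}|dw|^2$, and the graphical parametrization becomes $\hat\Phii(w):=(w^Q,u(w^Q,w))$. Using the admissibility estimates of Definition \ref{d:admissible} together with the chain rule for $z=w^Q$, a direct calculation yields the factorization
\begin{equation*}
\hat\Phii^{\sharp}e\;=\;Q^2|w|^{2(Q-1)}\bigl(I+R(w)\bigr),\qquad |D^lR(w)|\le CC_i^2|w|^{2Q\alpha-l}\ \ (l=0,1,2),
\end{equation*}
with a matching H\"older bound on $D^2R$. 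Hence $h:=I+R$ is a $C^{2,\alpha}$ Riemannian metric on $B_{(2\rho)^{1/Q}}$, continuous up to the origin and close to the Euclidean one.

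\textit{Beltrami step.} I would next look for a $C^{3,\alpha}$ diffeomorphism $\tilde f\colon B_{r^{1/Q}}\to B_{(2\rho)^{1/Q}}$ with $\tilde f(0)=0$ and $\tilde f^{\sharp}h=\lambda^{\ast}|dw|^2$. This is the Beltrami equation $\partial_{\bar w}\tilde f=\mu\,\partial_w\tilde f$, whose coefficient $\mu$ is the Beltrami coefficient of $h$ and satisfies $|D^l\mu(w)|\le CC_i^2|w|^{2Q\alpha-l}$. Writing $\tilde f=\mathrm{id}+g$ and applying the Cauchy--Green operator $\cT$ together with the Beurling--Ahlfors transform $\mathcal{S}=\partial_w\cT$, the problem becomes the fixed point $g=\cT\bigl(\mu(1+\mathcal{S}g)\bigr)$. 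Thanks to the smallness of $\mu$ (guaranteed by Lemma \ref{l:piccolezza} after a preliminary rescaling) and to the standard mapping properties of $\cT$ and $\mathcal{S}$ in the weighted H\"older space $\bigl\{g\in C^{3,\alpha}_{\mathrm{loc}}(B_{r^{1/Q}}\setminus\{0\}):\ g(0)=0,\ |D^lg(w)|\lesssim|w|^{1+2Q\alpha-l}\bigr\}$, this is a contraction precisely when $2Q\alpha<1$, i.e.\ under the hypothesis $\alpha<1/(2Q)$. It therefore has a unique solution $g$, and thus $\tilde f$, of class $C^{3,\alpha}$ on the punctured disk, with $|D^lg(w)|\le CC_i^2|w|^{1+2Q\alpha-l}$ for $l=0,\dots,3$ and a corresponding H\"older bound on $D^3g$.

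\textit{Back to $\gira_Q$.} I would then set $\Psii:=\hat\Phii\circ\tilde f$ and read the result through $z=w^Q$ on each chart $\disco$. The pullback splits as
\begin{equation*}
\Psii^{\sharp}e\;=\;\tilde f^{\sharp}\hat g\;=\;Q^2|\tilde f(w)|^{2(Q-1)}\tilde f^{\sharp}h\;=\;\bigl(|\tilde f(w)|/|w|\bigr)^{2(Q-1)}\lambda^{\ast}\cdot e_Q,
\end{equation*}
which gives the conformality (iii) with $\lambda:=(|\tilde f|/|w|)^{2(Q-1)}\lambda^{\ast}$. The bound $|\lambda-1|\le C_0C_i|z|^{2\alpha}$ in (iv) and its differential counterparts follow from $|g(w)|\lesssim C_i^2|w|^{1+2Q\alpha}$, from $|\lambda^{\ast}-1|\lesssim C_i^2|w|^{2Q\alpha}$ (standard for the conformal factor of the isothermal change of a near--flat metric) and from the conversion $D_z=(D_wz)^{-1}D_w$, which turns $w$-weights into $z$-weights. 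For (ii) I would use
\begin{equation*}
\Psii(z,w)-(z,0)=\bigl(\tilde f(w)^Q-w^Q,\ u(\tilde f(w)^Q,\tilde f(w))\bigr),
\end{equation*}
the algebraic identity $\tilde f(w)^Q-w^Q=Qw^{Q-1}g(w)+O(g^2w^{Q-2})$ on the first component, Definition \ref{d:admissible} on the second, and the same $D_w\to D_z$ conversion: each $z$-derivative contributes a factor $|w|^{-(Q-1)}$ which is exactly absorbed by the factor $|w|^{Q-1}$ already present in $\tilde f(w)^Q-w^Q$, so the overall rate is governed by $|z|^{1+\alpha-l}$, with an extra power of $C_i$ to spare. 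The order-$3$ H\"older bound in (ii) and the order-$2$ H\"older bound in (iv) then come from running the same scheme on each ball $B_{|w|/2}(w)$ and invoking Schauder estimates for the Beltrami operator, while the homeomorphism claim (i) is inherited from $\tilde f$ being a near-identity diffeomorphism and $\hat\Phii$ being a homeomorphism onto its image.

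\textit{Main obstacle.} The soft existence of isothermal coordinates on the smooth punctured disk is classical; the real work is to push the construction all the way to the conical singularity with the \emph{sharp} quantitative rates of (ii)--(v). This forces one to solve the Beltrami equation in a weighted H\"older scale that tracks the vanishing rate $|w|^{2Q\alpha}$ of $\mu$ at the origin, and to check that both $\cT$ and $\mathcal{S}$ preserve this weighted scale---which is exactly what the hypothesis $\alpha<1/(2Q)$ is used for. A secondary subtlety is the permanent bookkeeping between the chart coordinate $z$ on $\gira_Q$ and the uniformizer $w$, which introduces factors of $|w|^{Q-1}$ that must be carefully combined with the near-identity size of $g$ in order to recover the clean powers of $|z|$ stated in (ii) and (iv).
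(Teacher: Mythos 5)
Your proposal is correct in spirit but takes a genuinely different route at the crucial step. You and the paper both begin the same way: pull back via $w\mapsto(w^{Q},w)$, factor the induced metric as $Q^{2}|w|^{2(Q-1)}(I+R)$ with $|D^{l}R|\lesssim C_i^{2}|w|^{2Q\alpha-l}$, and conclude by converting the $w$-estimates to $z$-estimates using $z=w^{Q}$. The divergence is in how the conformal coordinate for the near-flat metric $I+R$ is produced with quantitative decay at the origin. The paper black-boxes the existence of isothermal coordinates through Lemma \ref{l:conformal} applied at unit scale (which provides only $C^{1,2Q\alpha}$ control), then obtains the higher-order weighted bounds by a dyadic rescaling argument: apply Lemma \ref{l:conformal} again on each annulus, observe that the transition map $\Xi=\Lambda\circ\Lambda_{p}^{-1}$ between two conformal coordinate systems for the \emph{same} metric is holomorphic, and use Cauchy estimates for $\Xi$ to bootstrap the derivative decay. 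Your proposal instead solves the Beltrami equation in one shot on a weighted H\"older scale whose weight tracks the vanishing $|w|^{2Q\alpha}$ of the Beltrami coefficient, using boundedness of the Cauchy--Green and Beurling--Ahlfors operators in that scale. Both are legitimate; the paper's scheme trades the weighted operator theory you invoke for the more elementary (but fussier) annulus decomposition plus holomorphicity trick. Two points you should address to make the argument airtight, both of which the paper handles explicitly: (1) the Beltrami fixed point determines $\tilde f$ only up to post-composition with a holomorphic map, so a normalization (e.g.\ requiring $g(0)=0$ and $Dg(0)=0$, or an explicit rotation/dilation as in the paper's Step~1) is needed before the clean bound $|D^{j}(\Lambda-\mathrm{id})|\lesssim|w|^{1+2Q\alpha-j}$ holds; and (2) the claim that $\mathcal T$ and $\mathcal S$ preserve the weighted H\"older scale is the entire content of the estimate and deserves at least a reference, since it is exactly where the hypothesis $2Q\alpha<1$ enters (the paper uses the same hypothesis more quietly, to ensure $\tau$ is a $C^{0,2Q\alpha}$ metric so Lemma \ref{l:conformal} applies).
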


A proof of Proposition \ref{p:conformal} is given in Appendix \ref{a:conformal}.

\begin{definition}
A map $\Psii$ as in Proposition \ref{p:conformal} will be called a {\em conformal parametrization} of an admissible $Q$-branching.  
\end{definition}

\subsection{The center manifold and the approximation} We are finally ready to state the main theorem of this note. 

\begin{theorem}[Center Manifold Approximation]\label{t:cm_final}
Let $T$ be as in Assumptions~\ref{induttiva} and \ref{piu_fogli} and assume in addition that the conclusions of Lemma \ref{l:piccolezza} apply (in particular we might need to replace $T$ by $T_{0,r}$ for $r$ sufficiently small).
Then there exist $\eta_0, \gamma_0, r_0,C>0$, $b>1$, an admissible $b$-separated $\gamma_0$-smooth $\bar Q$-branching $\cM$, a corresponding conformal parametrization $\Psii: \gira_{\bar Q,2} \to \cM$ 
and a $Q$-valued map $\NN: \gira_{\bar Q,2} \to \I{Q}(\R^{2+n})$ with the following properties:
\begin{itemize}
\item[(i)] $\bar Q Q = \Theta (T, 0)$ and 
\begin{align}
|D(\Psii (z,w) -(z,0))|\leq &\, C \bmo^{1/2} |z|^{\gamma_0}\\ 
|D^2 \Psii (z,w)|+ |z| |D^3 \Psii (z,w)| \leq &\, C \bmo^{\sfrac{1}{2}} |z|^{\gamma_0-1}\, ;
\end{align}
in particular, if we denote by $A_\cM$ the second fundamental form of $\cM\setminus \{0\}$,
\[
|A_\cM (\Psii (z,w))| + |z| |D_{\cM} A_\cM (\Psii (z,w))| \leq C \bmo^{\sfrac{1}{2}} |z|^{\gamma_0-1}\, .
\]
\item[(ii)] $\NN\,_i (z,w)$ is orthogonal to the tangent plane, at $\Psii (z,w)$, to $\cM$.
\item[(iii)] If we define $S:= T_{0,r_0}$, then
$\supp (S)\cap \bC_1\setminus \{0\}$ is contained in a suitable horned neighborhood of the $\bar Q$-branching $\mathcal{M}$, where the orthogonal projection
$\P$ onto it is well-defined. Moreover, for every $r\in]0,1[$ we have
\begin{align}\label{e:Ndecay}
\|\NN\vert_{B_r}\|_0 + \sup_{p\in \supp (S) \cap \P^{-1} (\Psii(B_r))} |p- \P (p)|\leq C \bmo^{\sfrac{1}{4}} r^{1+\sfrac{\gamma_0}{2}}\, .
\end{align}
\item[(iv)] If we define
\[
\bD (r) := \int_{B_r} |D\NN|^2
\quad \text{and}\quad
\bH (r) := \int_{\de B_r} |\NN|^2\, ,
\]
\[
\bF(r) := \int_0^r\frac{\bH(t)}{t^{2-\,\gamma_0}}\,dt
\quad\text{and}\quad \bLambda(r):= \bD(r)+\bF(r)\, ,
\]
then the following estimates hold for every $r\in ]0,1[$:
\begin{align}
\Lip (\NN\vert_{B_r}) \leq & C \min\{\bLambda^{\eta_0}(r), \bmo^{\eta_0}r^{\eta_0}\} \label{e:Lip_N}\\
\bmo^{\eta_0} \,\int_{B_r}  |z|^{\gamma_0 -1} |\etaa \circ \NN(z,w)| 
\leq & C\, \bLambda^{\eta_0}(r)\,\bD(r)+C\,\bF(r)\, .\label{e:media_pesata}
\end{align}
\item[(v)] Finally, if we set
\[
\FF(z,w):= \sum_i\a{\Psii(z,w) + \NN\,_i(z,w)}\, ,
\]
then
\begin{align}
\|S-\bT_\FF\|\big(\P^{-1}(\Psii(B_r))\big) \leq  & C\,\bLambda^{\eta_0}(r)\,\bD(r)+ C\,\bF(r)\, . \label{e:diff masse} 
\end{align}
\end{itemize}
\end{theorem}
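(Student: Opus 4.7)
The plan is to follow closely the center manifold construction from \cite{DS3,DS4}, adapted to the branched setting modeled on $\gira_{\bar Q}$. The key observation is that Assumption \ref{induttiva} already provides a reference branched surface $\gr(u)$, a horned neighborhood $\bV_{u,a}\supset\supp(T)\setminus\{0\}$, and an excess decay \eqref{e:effetto_energia} valid down to scales $\sim d^{(b+1)/2}$ at distance $d$ from $0$; this is precisely the scale at which the $b$-separation starts to matter, so below that scale we can treat $T$ sheet-by-sheet over a single reference plane and directly import the unbranched theory.

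First, I would set up a Whitney-type decomposition of $\gira_{\bar Q,2}\setminus\{0\}$ in the conformal coordinates furnished by Proposition \ref{p:conformal} applied to $u$. A dyadic cube $L\subset \gira_{\bar Q}$ of sidelength $\ell(L)$ centered at distance $d(L)$ from $0$ is refined until one of three stopping conditions from \cite[Section 1]{DS3} triggers: an \emph{excess stop} $\bE(T\res V_L,\bB_{C\ell(L)}(p_L),\pi)>C_e\bmo\,\ell(L)^{2-2\delta_2}$ at the optimal plane $\pi$, an analogous \emph{height stop}, or a \emph{neighborhood stop} that bounds $\ell(L)$ from below by $c\,d(L)^{(b+1)/2}$. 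The last condition ensures that on each stopped cube $T$ is supported in a single connected component of $\bV_{u,a}\cap\bC_{2\ell(L)}(p_L)$ and, by Lemma \ref{l:density}, carries total multiplicity $Q$, so that the approximation result of \cite{DSS2} produces a $Q$-valued Lipschitz approximation $f_L$ on that cube with the sharp $L^2$ and Dirichlet bounds of \cite{DS3}.

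Second, I would smooth each average $\etaa\circ f_L$ by convolution, glue via a partition of unity subordinate to the Whitney family, and extend by $0$ to the branch point: the resulting map $\varphi:\gira_{\bar Q,2}\to\R^n$ defines a graph $\cM_0\subset \R^{2+n}$ which, thanks to the $\bmo$-smallness coming from Lemma \ref{l:piccolezza} and the degeneracy rates of the stopping conditions, one checks to be an admissible $\gamma_0$-smooth, $b$-separated $\bar Q$-branching. A final conformal reparametrization via Proposition \ref{p:conformal} yields $\Psii$ with the estimates in (i); the $Q$-valued normal map $\NN$ is then defined through the projection $\P$ onto $\cM$, which is well-defined on $\supp(S)\cap\bC_1\setminus\{0\}$ because of (Hor) and (Dec). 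The estimates (iii)--(v) are then verified cube-by-cube, exactly as in \cite[Sections 3--5]{DS4}: \eqref{e:Ndecay} from the height and excess stops, \eqref{e:Lip_N} from the $C^{1,\alpha}$ interpolation on the smoothed $\eta\circ f_L$'s, \eqref{e:diff masse} from the mass comparison between the graph of $f_L$ and $\bT_\FF$ on non-stopped regions (where $\NN=f_L-\Psii$) and the trivial estimate on stopped ones, and finally \eqref{e:media_pesata} from the key cancellation $|\etaa\circ f_L|\lesssim \text{Dir}(f_L)^{1+\eta}$ inherited from the first variation of $T$.

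The main obstacle I anticipate is carrying out this scheme uniformly as $(z,w)\to 0$. The reference $u$ is only $C^{3,\alpha}$ with derivatives degenerating like $|z|^{-2}$ at order three, and the Whitney cubes must simultaneously (a) become comparable to $d(L)$ far from $0$ to respect the Dirichlet-type scaling of \cite{DS3}, and (b) shrink down to $d(L)^{(b+1)/2}$ near $0$ to respect the $b$-separation \eqref{e:separation}. All interpolation and tilting estimates therefore have to be rewritten in the scale-invariant norms associated with the conical metric $e_{\bar Q}$, and the stopping thresholds must be chosen so that, after summing geometric series indexed by the Whitney generations, one recovers the single exponent $\gamma_0$ appearing in (i)--(v). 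Balancing these two scales while preserving both the separation rate $b$ and the decay rate $\gamma_0$ is the delicate bookkeeping step; once it is set up correctly, all remaining estimates reduce to their unbranched counterparts in \cite{DS3,DS4}.
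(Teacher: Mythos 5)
Your overall scheme—Whitney decomposition of $\gira_{\bar Q,2}\setminus\{0\}$, Lipschitz approximation of $T$ on the stopped cubes via \cite{DSS2}, smoothing of $\etaa\circ f_L$, gluing with a partition of unity, and a final conformal reparametrization via Proposition~\ref{p:conformal}—matches the paper's plan. But there are two genuine gaps.

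The decisive one is the smoothing step. You propose to regularize each $\etaa\circ f_L$ \emph{by convolution}, which is what \cite{DS4} does for genuinely area minimizing currents, and then you appeal to ``the key cancellation $|\etaa\circ f_L|\lesssim\textup{Dir}(f_L)^{1+\eta}$ inherited from the first variation of~$T$.'' That cancellation comes from $\delta T(\chi)=0$; but in case~(b) of Definition~\ref{d:semicalibrated} the first variation of $T$ is \emph{not} zero: $\delta T(\chi)=T(d\omega\ser\chi)$, and this term is of the same order as the quantity you want to bound. Convolution smoothing cannot see this extra term, and \eqref{e:media_pesata} would fail. The paper handles this by replacing convolution with the solution $\bar h_L$ of an elliptic system $\mathscr{L}_H\bar h_L=\mathscr{F}_H$ (Definition~\ref{d:smoothing}, Proposition~\ref{p:pde}), whose lower-order coefficients $\bL_1,\bL_2$ and right-hand side $\bL_3\cdot x+\bL_4$ are exactly what one gets by Taylor-expanding the first-variation identity $\delta T(\chi)=T(d\omega\ser\chi)$ (and its analogue in cases (a), (c)). The $L^1$ and $L^\infty$ estimates on $\bar h_L-\etaa\circ\bar f_L$ (Proposition~\ref{p:stime}) then come from the weak formulation \eqref{e:pde2}, not from convolution kernels. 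Without that change, there is no route to \eqref{e:media_pesata} in case~(b).

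The second gap concerns the stopping conditions. You propose thresholds of the form $C_e\bmo\,\ell(L)^{2-2\delta_2}$ together with an explicit lower cutoff $\ell(L)\gtrsim d(L)^{(b+1)/2}$ built into the decomposition. The paper's conditions (EX) and (HT) carry additional factors $\d(L)^{2\gamma_0-2+2\delta_1}$ and $\d(L)^{\gamma_0/2-\beta_2}$; it is these $\d(L)$-weights, combined with the decay hypothesis (Dec) of Assumption~\ref{induttiva}, that produce the single exponent $\gamma_0$ appearing uniformly in (i)--(v), and the bound $\ell(L)\le C\,\d(L)^{(b+1)/2}$ is \emph{derived} for all stopped cubes in $\sW$ (Step~1 of the proof of Theorem~\ref{t:cm}(iii)), not imposed. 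If you build the lower cutoff into the decomposition but drop the $\d(L)$-weights, you lose the mechanism that converts (Dec) into the uniform $|z|^{\gamma_0}$ and $|z|^{\gamma_0-1}$ decay of (i). Relatedly, the proof of \eqref{e:media_pesata} and \eqref{e:diff masse} in the paper goes through a ``special covering'' by triples $(L_j,B_j,U_j)$ together with the separation and splitting Propositions~\ref{p:separ} and~\ref{p:splitting}, which are used to trade the excess and height thresholds of each stopped cube for integrals of $|DN|^2$ or $|N|^2/|z|^{2-\gamma_0}$; this machinery is missing from your outline and is not a bookkeeping detail.

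Finally, a minor point: the Whitney decomposition in the paper is carried out in the flat coordinates of $\gira_{\bar Q,2}$ relative to the graphical parametrization $\Phii$, not in the conformal coordinates of $u$; Proposition~\ref{p:conformal} is applied only once, at the very end, to the constructed $\cM$.
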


The rest of this note is dedicated to prove the above theorem. We first outline how the center manifold is constructed. We then construct an approximating map $N$ taking values on its normal bundle. Finally we change coordinates using a conformal parametrization $\Psii$ and prove the above theorem for the map $\NN = N \circ \Psii$.

\section{Center manifold: the construction algorithm}

\subsection{Choice of some parameters and  smallness of some other constants}
As in \cite{DS4} the construction of the center manifold involves several parameters. We start by choosing three of them which
will appear as exponents of (two) lenghtscales in several estimates.

\begin{ipotesi}\label{esponenti} Let $T$ be as in Assumptions \ref{induttiva} and \ref{piu_fogli}, assume in addition that the conclusions of Lemma \ref{l:piccolezza} apply (we might therefore need to replace $T$ with $T_{0,r}$ for a sufficiently small $r$) and in particular recall the exponents
$\varepsilon_0,\alpha, b,a$ and $\gamma$ defined therein. 
We choose the positive exponents $\gamma_0$, $\beta_2$ and $\delta_1$ (in the given order) so that
\begin{align}
&\gamma_0 < \min \{\gamma, \alpha, a- b, b - \textstyle{\frac{b+1}{2}}, \log_2 \frac{6}{5}\}\, \label{e:cost_2}\\
&\beta_2 <\min \{\eps_0, \textstyle{\frac{\gamma_0}{4}}, \frac{a}{b} -1, \frac{\alpha}{2}, \frac{\beta_0 \gamma_0}{2}\} \qquad\quad\;\;b> \frac{1+b}{2} (1+\beta_2) &\label{e:cost_0}\\
&\beta_2 - 2\delta_1 \geq \textstyle{\frac{\beta_2}{3}}\qquad \beta_0 (2-2\delta_1) - 2\delta_1 \geq 2 \beta_2\label{e:cost_1}
\end{align}
(where $\beta_0$ is the constant of \cite[Theorem 5.2]{DSS2} and in this paper we assume it is smaller than $1/2$)
\end{ipotesi}

Having fixed $\gamma_0$, $\beta_2$ and $\delta_1$ we introduce five further parameters: $M_0, N_0, C_e, C_h$ and
$\eps_2$. We will impose several inequalities upon them, but following a very precise hierarchy, which ensures that all the conditions required in the remaining statements can be met. We will use the term ``geometric'' when such conditions depend only upon $\bar{n}, n, Q, \bar{Q}, \gamma_0, \beta_2$ and $\delta_1$, whereas we keep track of their dependence
on $M_0, N_0, C_e$ and $C_h$ using the notation $C=C(M_0), C(M_0, N_0)$ and so on. $\eps_2$ is always
the last parameter to be chosen: it will be small depending upon all the other constants, but constants will never depend upon it.

\begin{ipotesi}[Hierarchy of the parameters] In all the statements of the paper
 \begin{itemize}
  \item $M_0\geq 4$ is larger than a geometric constant and $N_0$ is a natural number larger than $C(M_0)$; one such condition is recurrent and we state it here:
\begin{equation}\label{e:cost_3}
\sqrt{2} M_0 2^{10-N_0} \leq 1\, ;
\end{equation}
  \item $C_e$ is larger than $C(M_0,N_0)$;
  \item $C_h$ is larger than $C(M_0,N_0,C_e)$;
  \item $\eps_2>0$ is smaller than $c(M_0,N_0,C_e,C_h)>0$.
 \end{itemize}
\end{ipotesi}

\subsection{Whitney decomposition of $\gira_{\bar Q, 2}$}\label{ss:whitney} From now on we will use $\gira$ for $\gira_{\bar Q, 2}$, since the
positive natural number $\bar Q$ is fixed for the rest of the paper. In this section we decompose $\gira\setminus \{0\}$ in a suitable way. More precisely, a closed subset $L$ of $\gira$ will be called a dyadic square if it is a connected component of $\gira \cap (H\times \mathbb \C)$ for some euclidean dyadic square $H = [a_1, a_1 + 2\ell]\times [a_2, a_2+2\ell] \subset \R^2 = \C$ with
\begin{itemize}
\item $\ell = 2^{-j}$, $j\in \mathbb N, j\geq 3$, and $a\in 2^{1-j} \Z^2$;
\item $H\subset [-1,1]^2$ and $0\not\in H$.
\end{itemize}
Observe that $L$ is truly a square, both from the topological and the metric point of view.
$2\ell$ is the sidelength of both $H$ and $L$. Note that $\gira\cap (H\times\C)$ consists then of $\bar Q$ distinct squares $L_1, \ldots, L_{\bar Q}$. $z_H := a + (\ell, \ell)$ is the center of the square $H$. Each $L$ lying over $H$ will then contain a point $(z_H, w_L)$, which is the center of $L$. Depending upon the context  we will then use $z_{L}$ rather than $z_H$ for the first (complex) component of the center of $L$.

The family of all dyadic squares of $\gira$ defined above will be denoted by $\sC$. 
We next consider, for $j\in \mathbb N$, the dyadic closed annuli 
\[
\cA_j := \gira \cap \big(([-2^{-j}, 2^{-j}]^2 \setminus ]-2^{-j-1}, 2^{-j-1}[^2)\times \C\big)\, .
\]
Each dyadic square $L$ of $\gira$ is then contained in exactly one annulus $\cA_j$ and we define $\d (L):= 2^{-j-1}$. Moreover $\ell (L) = 2^{-j-k}$ for some $k\geq 2$. 
We then denote by $\sC^{k,j}$ the family of those dyadic squares $L$ such that $L\subset \cA_j$ and $\ell (L) = 2^{-j-k}$. Observe that, for each $j\geq 1, k\geq 2$, $\sC^{k,j}$ is a covering of $\cA_j$ and that two elements of $\sC^{k,j}$ can only intersect at their boundaries. Moreover, any element of $\sC^{k,j}$ can intersect at most 8 other elements of $\sC^{k,j}$. Finally, we set
$\sC^k:= \bigcup_{j\geq 2} \sC^{k,j}$. Observe now that $\sC^k$ covers a punctured neighborhood of $0$ and that if $L\in \sC^k$, then 
\begin{itemize}
\item $L$ intersects at most $9$ other elements $J\in \sC^k$;
\item If $L\cap J \neq \emptyset$, then $\ell (J)/2 \leq \ell (L) \leq 2\ell (L)$ and $L\cap J$ is either a vertex or a side of the smallest among the two.
\end{itemize}
More in general if the intersection of two distinct elements $L$ and $J$ in $\sC=\bigcup_k \sC^k$ has nonempty interior, then one is contained in the other: if $L\subset J$ we then say that $L$ is a descendant of $J$ and $J$ an ancestor of $L$. If in addition $\ell (L) = \ell (J)/2$, then we say that $L$ is a son of $J$ and $J$ is the father of $L$. When $L$ and $J$ intersect only at their boundaries, we then say that $L$ and $J$ are adjacent.

Next, for each dyadic square $L$ we set $r_L := \sqrt{2} M_0 \ell (L)$. Note that, by our choice of $N_0$, we have that:
\begin{equation}\label{e:belli_lontani}
\mbox{if $L\in \sC^{k, j}$ and $k\geq N_0$, then $\bC_{64 r_L} (z_L) \subset \bC_{2^{1-j}}\setminus \bC_{2^{-2-j}}$.}
\end{equation}
In particular $\bV_{u,a}\cap \bC_{64 r_L} (z_L)$ consists of $\bar Q$ connected components and we can select the one containing $(z_L, u (z_L, w_L))$, which we will denote by $\bV_L$. We will then denote by $T_L$ the current $T\res \bV_L$. According to Lemma \ref{l:density},
$\bV_L \cap \{z_L\}\times \R^n$ contains at least one point of $\supp (T)$: we select any such point and denote it by $p_L=(z_L,y_L)$. Correspondingly we will denote by $\bB_L$ the ball $\bB_{64 r_L} (p_L)$. 

\begin{definition}\label{e:height}
The height of a current $S$ in a set $E$ with respect to a plane $\pi$ is given by
\begin{equation}
\bh (S, E, \pi) := \sup \{|\p_{\pi}^\perp (p-q)|:p,q\in \supp (S) \cap E\}\, .
\end{equation}
If $E = \bC_r (p, \pi)$ we will then set $\bh (S, \bC_r (p, \pi)) := \bh (S, \bC_r (p, \pi), \pi)$. If $E = \bB_r (p)$, $T$ is as in Assumption \ref{ipotesi_base} and $p\in \Sigma$ (in the cases (a) and (c) of Definition \ref{d:semicalibrated}), then
$\bh (T, \bB_r (p)) := \bh (T, \bB_r (p), \pi)$ where $\pi$ gives the minimal height among all $\pi$ for which $\bE (T, \bB_r (p), \pi) = \bE (T, \bB_r (p))$ (and such that $\pi\subset T_p \Sigma$ in case (a) and (c) of Definition \ref{d:semicalibrated}). Moreover, for such $\pi$ we say that it optimizes the excess and the height in $\bB_r (p)$.
\end{definition}

We are now ready to define the dyadic decomposition of $\gira\setminus 0$. 

\begin{definition}[Refining procedure]\label{d:refining} We build inductively the families of squares $\sS, \sW = \sW_e \cup \sW_h \cup \sW_n$ and their subfamilies $\sS^k = \sS\cap \sC^k$, $\sS^{k,j} = \sS\cap \sC^{k,j}$ and so on. First of all, we set $\sS^k = \sW^k = \emptyset$ for $k<N_0$.
For $k\geq N_0$ we use a double induction. Having defined $\sS^{k'}, \sW^{k'}$ for all $k' < k$ and $\sS^{k, j'}, \sW^{k, j'}$ for all $j' < j$, we pick all squares $L$ of $\sC^{k,j}$ which do not have any ancestor already assigned to $\sW$ and we proceed as follows.
\begin{itemize}
\item[(EX)] We assign $L$ to $\sW^{k,j}_e$ if
\begin{equation}\label{e:EX}
  \bE (T_{L},\bB_{L})>C_e\bmax_0 \d (L)^{2\gamma_0-2+2\delta_1}\ell (L)^{2-2\delta_1};
 \end{equation}
\item[(HT)] We assign $L$ to $\sW^{k,j}_h$ if we have not assigned it to $\sW_e$ and
 \begin{equation}\label{e:HT}
 \bh (T_L,\bB_L)> C_h \bmax_0^{\sfrac14}\d (L)^{\sfrac{\gamma_0}{2} - \beta_2}\ell(L)^{1+\beta_2};
 \end{equation}
\item[(NN)] We assign $L$ to $\sW^{k,j}_n$ if we have not assigned it to $\sW_e\cup \sW_h$ and it intersects a square $J$ already assigned to $\sW$ with $\ell (J) = 2 \ell (L)$.
\item[(S)] We assign $L$ to $\sS^{k,j}$ if none of the above occurs.
\end{itemize}
We finally set
\begin{equation}\label{e:bGamma}
\bGam:= ([-1,1]^2 \times \R^2) \cap \gira \setminus \bigcup_{L\in \sW} L = \{0\} \cup \bigcap_{k\geq N_0} \bigcup_{L\in \sS^k} L.
\end{equation}
\end{definition}

\begin{proposition}[Whitney decomposition]\label{p:whitney}
Let $T$, $\gamma_0$, $\beta_2$ and $\delta_1$ be as in the Assumptions \ref{induttiva}, \ref{piu_fogli} and \ref{esponenti}. If $M_0\geq C$, $N_0 \geq C(M_0)$, $C_e, C_h \geq C (M_0, N_0)$ (for suitably large constants) and $\eps_2$ is sufficiently small then:
\begin{itemize}
\item[(i)] $\ell (L) \leq 2^{-N_0 +1} |z_L|$ $\forall L\in \sS\cup \sW$;
\item[(ii)] $\sW^k = \emptyset$ for all $k\leq N_0+6$;
\item[(iii)] $\bGam$ is a closed set and $\sep (\bGam, L) := \inf \{|x-x'|: x\in \bGam, x'\in L\} \geq  2\ell (L)$ $\forall L\in \sW$.
\end{itemize}
Moreover, the following 
estimates hold with $C = C(M_0, N_0, C_e, C_h)$:
\begin{align}
&\bE (T_J, \bB_J) \leq C_e\bmax_0 \d (J)^{2\gamma_0-2+2\delta_1}\ell (J)^{2-2\delta_1}\qquad \forall J\in \sS\, ,\label{e:ex_ancestors}\\
&\bh (T_J, \bB_J) \leq C_h \bmax_0^{\sfrac14}\d (J)^{\sfrac{\gamma_0}{2} - \beta_2}\ell(J)^{1+\beta_2}
\qquad\;\;\; \forall J\in \sS\,, \label{e:ht_ancestors}\\ 
&\bE (T_H, \bB_H) \leq C\, \bmax_0 \d (H)^{2\gamma_0-2+2\delta_1}\ell (H)^{2-2\delta_1}\qquad \forall H\in \sW\, ,\label{e:ex_whitney}\\
&\bh (T_H, \bB_H) \leq C\, \bmax_0^{\sfrac14}\d (H)^{\sfrac{\gamma_0}{2} - \beta_2}\ell(H)^{1+\beta_2}
\qquad\;\;\;\, \forall H\in \sW\, . \label{e:ht_whitney}
\end{align}
\end{proposition}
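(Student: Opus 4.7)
The proof combines combinatorial properties of the dyadic decomposition with analytic input from Assumption \ref{induttiva}; only part (ii) is technically delicate.

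\emph{Part (i) and the size estimates.} For $L\in \sC^{k,j}$ with $k\geq N_0$ we have $\ell(L)=2^{-j-k}$ and $|z_L|\geq 2^{-j-1}=\d(L)$, since $L$ cannot intersect the inner sup-norm box $]-2^{-j-1},2^{-j-1}[^2$ by definition of $\cA_j$; hence $\ell(L)/|z_L|\leq 2^{1-k}\leq 2^{1-N_0}$, which is (i). The bounds \eqref{e:ex_ancestors}-\eqref{e:ht_ancestors} for $J\in \sS$ are literal negations of (EX) and (HT). For $H\in \sW$ the father $H'$ is necessarily in $\sS$ (otherwise $H$ would have an ancestor in $\sW$ and would not be processed), and since $|z_H-z_{H'}|\leq\sqrt{2}\ell(H)<r_H$ we get $\bB_H\subset\bB_{H'}$ and $\bV_H\subset\bV_{H'}$. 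This yields
\[
\bE(T_H,\bB_H)\leq 2^m\,\bE(T_{H'},\bB_{H'})
\]
(up to a controlled correction in cases (a) and (c) when one replaces the $T_{p_{H'}}\Sigma$-optimal plane by its projection onto $T_{p_H}\Sigma$) and an analogous inequality for the height. Combined with \eqref{e:ex_ancestors}-\eqref{e:ht_ancestors} at the father and with $\d(H')\leq 2\d(H)$, $\ell(H')=2\ell(H)$, this will produce \eqref{e:ex_whitney}-\eqref{e:ht_whitney} with $C=C(M_0,N_0,C_e,C_h)$.

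\emph{Part (ii).} We argue by induction on the pair $(k,j)$: $\sW^{k'}=\emptyset$ for $k'<N_0$ holds by definition, and if $\sW=\emptyset$ at all earlier levels for some $k\leq N_0+6$, then (NN) cannot fire on any $L\in \sC^{k,j}$, since the only candidate neighbor $J$ with $\ell(J)=2\ell(L)$ lies in $\sC^{k-1}\cup \sC^{k,j-1}$, whose $\sW$-parts are empty by inductive hypothesis. Hence only (EX) and (HT) must be defeated. The hierarchy condition $\sqrt{2}M_0\,2^{10-N_0}\leq 1$ gives $64r_L\leq |z_L|/4=:d$, so the decay assumption (Dec) (applied with $p=p_L$ and $V\supset\bV_L$) yields
\[
\bE(T_L,\bB_L)\leq \bE(T_L,\bB_L,\pi(p_L))\leq C_i^2\, d^{2\gamma-2}(64r_L)^2\leq C(M_0)\,\bmo\,\d(L)^{2\gamma-2}\ell(L)^2\, ,
\]
which sits comfortably below the (EX) threshold $C_e\bmo\,\d(L)^{2\gamma_0-2+2\delta_1}\ell(L)^{2-2\delta_1}$ once $C_e\geq C(M_0)$, because $\gamma>\gamma_0$ and $(\ell(L)/\d(L))^{2\delta_1}\leq 2^{(1-N_0)2\delta_1}$ is tiny. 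In parallel, combining the smoothness bounds of Definition \ref{d:admissible} on $u$ (hence on the tangent plane to $\gr(u)$) with the horn condition $|y-u(x,w)|<c_s|x|^a$ on $\supp(T_L)$ gives
\[
\bh(T_L,\bB_L)\leq C(M_0)\bigl(C_i\,\d(L)^{\alpha-1}\ell(L)^2+c_s\,\d(L)^a\bigr)\, ,
\]
which is beneath the (HT) threshold once $C_h\geq C(M_0)$, thanks to $\alpha>\gamma_0/2-\beta_2$ and $a>b>1+\beta_2$. The main obstacle I anticipate is that (Dec) is only stated for scales $\sigma\in[d^{(b+1)/2}/2,d]$: when $\ell(L)$ is much smaller than $\d(L)^{(b+1)/2}$, the natural choice $\sigma=64r_L$ drops below that range. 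In that regime, however, $\bB_L$ fits entirely inside a single sheet of the horned neighborhood whose transverse width is at most $c_s\d(L)^a$, so a direct comparison with $\gr(u)$ using the $C^{3,\alpha}$ bounds of Definition \ref{d:admissible} recovers the same estimates with additional bookkeeping.

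\emph{Part (iii).} Closedness of $\bGam$ follows from the representation $\bGam=\{0\}\cup\bigcap_k\bigcup_{L\in \sS^k}L$: for $\bGam\ni p_n\to p\neq 0$, only finitely many dyadic squares at any level $k\geq N_0$ meet a small neighborhood of $p$, so some $L\in \sS^k$ containing $p$ contains infinitely many $p_n$, whence $p\in L\in \sS^k$; letting $k$ vary gives $p\in\bGam$. The separation $\sep(\bGam,L)\geq 2\ell(L)$ for $L\in \sW$ follows by iterating (NN): the sidelength-$\ell(L)$ squares meeting $L$ are all placed in $\sW_n$, their sidelength-$\ell(L)/2$ neighbors are placed in $\sW_n$ by a further application of (NN), and so on. The cumulative outward thickness of this buffer is $\ell(L)\bigl(1+\tfrac12+\tfrac14+\cdots\bigr)=2\ell(L)$, so no point of $\bGam$ can lie closer to $L$ than $2\ell(L)$.
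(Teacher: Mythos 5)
Parts (i) and (iii) of your proposal are correct and agree with the paper's argument: (i) is immediate from the dyadic construction, and (iii) is the standard geometric-series buffer coming from iterating (NN).

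For part (ii), the concern you raise about the admissible range of $\sigma$ in (Dec) has a simpler resolution than the fallback you propose, and it is worth seeing why. At the levels $N_0\leq k\leq N_0+6$ one has $\ell(L)\geq C(N_0)^{-1}\d(L)$, so $\ell(L)$ and $\d(L)$ are \emph{comparable}. You therefore do not apply (Dec) at $\sigma=64r_L$ at all; instead take $\sigma$ at the \emph{top} of the admissible range, $\sigma\approx d\approx\d(L)$ (which always contains $\bB_L$ once \eqref{e:cost_3} is imposed, since $64r_L\leq d$). Then
\[
\bE(T_L,\bB_L,\pi(p_L))\leq \Bigl(\tfrac{d}{64r_L}\Bigr)^2\bE(T_L,\bB_d(p_L),\pi(p_L))\leq C(M_0,N_0)\,C_i^2\,\d(L)^{2\gamma},
\]
because the ratio $d/(64r_L)$ is bounded by $C(M_0,N_0)$ precisely at these initial levels; the exponent loss $\gamma\mapsto\gamma_0$ and the replacement of $\d(L)^{2-2\delta_1}$ by $\ell(L)^{2-2\delta_1}$ again cost only $C(N_0)$ factors. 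No direct comparison with $\gr(u)$ is needed. Your ``fallback'' reproves (Dec) at a scale at which it already applies.

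The real gap is in your treatment of \eqref{e:ex_whitney}--\eqref{e:ht_whitney}. You write ``since $|z_H-z_{H'}|\leq\sqrt{2}\ell(H)<r_H$ we get $\bB_H\subset\bB_{H'}$,'' but this only controls the \emph{horizontal} distance between the centers. The centers $p_H=(z_H,y_H)$, $p_{H'}=(z_{H'},y_{H'})$ are chosen on $\supp(T)$, and one must bound the vertical gap $|y_H-y_{H'}|$ by $O(\ell(H))$ to obtain the ball inclusion. The a priori horned-neighborhood width $c_s|z|^a$ does not scale with $\ell(H)$ and is therefore insufficient when $\ell(H)\ll\d(H)$; what is actually needed is the height bound $\bh(T_{H'},\bC_{36 r_{H'}}(p_{H'},\pi_0))\leq C\bmo^{1/4}\d(H')^{\gamma_0/2}\ell(H')$, whose right-hand side is proportional to $\ell(H')$. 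In the paper this height bound is item (vi) of Proposition \ref{p:tilting opt}, which is established jointly with the excess, height and tilt estimates at all ancestors by a double induction over scales; the ball inclusion $\bB_H\subset\bB_{H'}$ is a \emph{conclusion} of that induction, not an input. Your proposal invokes the inclusion as if it were elementary, thereby skipping the entire inductive machinery of Proposition \ref{p:tilting opt} and the accompanying Lemma \ref{l:tecnico}, which in cases (a) and (c) of Definition \ref{d:semicalibrated} is also what controls the change of optimizing plane from $T_{p_{H'}}\Sigma$ to $T_{p_H}\Sigma$ (you mention this parenthetically, but it is not a ``controlled correction'' — it is where the tilt estimate (iv) of Proposition \ref{p:tilting opt} enters). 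You should make explicit that \eqref{e:ex_whitney}--\eqref{e:ht_whitney} are proved \emph{after} the bootstrap argument in Proposition \ref{p:tilting opt} has been carried out.
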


\subsection{Approximating functions and construction algorithm} We will see below that in (a suitable portion of) each $\bB_L$ the current $T_L$ can be approximated efficiently with a graph of a Lipschitz multiple-valued map. The average of the sheets of this approximating map will then be used as a local model for the center manifold.

\begin{definition}[$\pi$-approximations]\label{d:pi-approximations}
Let $L\in \sS\cup \sW$ and $\pi$ be a $2$-dimensional plane. If $T_L\res\bC_{32 r_L} (p_L, \pi)$ fulfills 
the assumptions of \cite[Theorem 1.5]{DSS2} in the cylinder $\bC_{32 r_L} (p_L, \pi)$, then the resulting map $f: B_{8r_L} (p_L, \pi)  \to \Iq (\pi^\perp)$ given by \cite[Theorem 1.5]{DSS2} is a {\em $\pi$-approximation of $T_L$ in $\bC_{8 r_L} (p_L, \pi)$}. 
\end{definition}

As in \cite{DS4}, we wish to find a suitable smoothing of the average of the $\pi$-approximation $\etaa\circ f$. However the smoothing procedure is more complicated in the case (b) of Definition \ref{d:semicalibrated}: rather than smoothing by convolution, we need to solve a suitable
elliptic system of partial differential equations. This approach can in fact be used in cases (a) and (c) as well. In several instances regarding case (a) and (c) we will have to manipulate maps defined on some affine space $q+\pi$ and taking value on $\pi^\perp$, where $q\in \Sigma$ and $\pi\subset T_q \Sigma$. In such cases it is convenient to introduce the following conventions: the maps will be regarded as maps defined on $\pi$ (requiring a simple translation by $q$), the space $\pi^\perp$ will be decomposed into $\varkappa := \pi^\perp \cap T_q \Sigma$ and its orthogonal complement $T_q \Sigma^\perp$ and we will regard $\Psi_q$ as a map defined on $\pi\times \varkappa$ and taking values in $T_q \Sigma^\perp$. Similarly, elements of $\pi^\perp$ will be decomposed as $(\xi, \eta)\in \varkappa \times T_q \Sigma^\perp$.

\begin{lemma}\label{l:tecnico2}
Let the assumptions of Proposition \ref{p:whitney} hold and assume $C_e \geq C^\star$ and $C_h \geq C^\star C_e$ for a suitably large $C^\star (M_0, N_0)$. For each $L\in \sW\cup \sS$ we choose a plane $\pi_L$ in $\bB_L$ which optimizes the excess and which, among all the ones optimizing the excess, also optimizes the height in $\bB_L$. 
For any choice of the other parameters,
if $\eps_2$ is sufficiently small, then $T_L \res \bC_{32 r_L} (p_L, \pi_L)$ satisfies the assumptions of
\cite[Theorem 1.5]{DSS2} for any $L\in \sW\cup \sS$. 
\end{lemma}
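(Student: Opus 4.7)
\medskip

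\noindent\textbf{Proof plan for Lemma \ref{l:tecnico2}.} The plan is to verify one by one the hypotheses of the approximation theorem \cite[Theorem 1.5]{DSS2} for $T_L$ in $\bC_{32 r_L}(p_L,\pi_L)$: namely, (i) vanishing of the boundary in the cylinder, (ii) smallness of the cylindrical excess, (iii) smallness of the height, and (iv) the correct mass/multiplicity balance in the cylinder. Throughout, the input will be the spherical estimates \eqref{e:ex_ancestors}--\eqref{e:ht_whitney} coming from Proposition \ref{p:whitney}, combined with the horned neighborhood structure provided by Assumption \ref{induttiva} and the density bound of Lemma \ref{l:density}.

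\emph{Step 1 (tilting $\pi_L$).} Since $\pi_L$ optimizes the spherical excess in $\bB_L$, its tilt with respect to $\pi_0$ is controlled by $\bE(T_L,\bB_L)^{1/2}$ plus the average tilt of $\gr(u)$ in $\bC_{64 r_L}(z_L)$. By \eqref{e:ex_whitney} and the $C^{3,\alpha}$-bounds on $u$ in Definition \ref{d:admissible}, this tilt is a power of $\d(L)$ that can be made as small as we wish once $\eps_2$ is small enough. This is the preliminary fact that converts all spherical estimates centered at $p_L$ into cylindrical estimates over $B_{32 r_L}(p_L,\pi_L)$ with at most a fixed multiplicative constant loss.

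\emph{Step 2 (containment and boundary).} Because $r_L=\sqrt{2}M_0\ell(L)$ with $\ell(L)\le 2^{-N_0+1}|z_L|$, the condition \eqref{e:cost_3} and \eqref{e:belli_lontani} guarantee that the tilted cylinder $\bC_{32 r_L}(p_L,\pi_L)$ is contained in $\bC_{64 r_L}(z_L)\subset \bC_2\setminus\{0\}$. Using the height bound \eqref{e:ht_whitney} together with the separation \eqref{e:separation} and condition (Sep)--(Hor) of Assumption \ref{induttiva} (here one uses $b>(b+1)(1+\beta_2)/2$ from \eqref{e:cost_0} to compare $C_h\bmo^{1/4}\d^{\gamma_0/2-\beta_2}\ell^{1+\beta_2}$ with $c_s|z|^b$), we conclude that $\supp(T)\cap\bC_{32r_L}(p_L,\pi_L)\subset \bV_L$, so $T_L$ coincides with $T$ on this cylinder. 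Since $\partial T\res\bC_2(0,\pi_0)=0$ by Assumption \ref{ipotesi_base_2}(i), we get $\partial T_L\res \bC_{32r_L}(p_L,\pi_L)=0$.

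\emph{Step 3 (cylindrical excess and height).} Combining Step 1 with the spherical excess bound \eqref{e:ex_whitney} (or \eqref{e:ex_ancestors} when $L\in\sS$) yields
\[
\bE(T_L,\bC_{32r_L}(p_L,\pi_L))\le C(M_0,N_0,C_e,C_h)\,\bmo\,\d(L)^{2\gamma_0-2+2\delta_1}\ell(L)^{2-2\delta_1},
\]
so that by \eqref{e:cost_1} this is bounded by $C\bmo\,\ell(L)^{2\beta_2/3}$, arbitrarily small once $\eps_2\ll 1$. Similarly, \eqref{e:ht_whitney} plus Step 1 gives
\[
\bh(T_L,\bC_{32r_L}(p_L,\pi_L),\pi_L)\le C\bmo^{1/4}\ell(L)^{1+\beta_2/3}.
\]
These two estimates verify the smallness hypotheses of \cite[Theorem 1.5]{DSS2}, provided $C_e\ge C^\star$ and $C_h\ge C^\star C_e$ so that the powers of $\d(L)$ and $\ell(L)$ dominate the multiplicative constants coming from the tilt in Step 1.

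\emph{Step 4 (mass balance).} The natural projection $(\p_{\pi_L})_\sharp T_L\res\bC_{32r_L}(p_L,\pi_L)$ is an integer multiple of $\a{B_{32r_L}(p_L,\pi_L)}$ by the constancy theorem and Step 2. The multiplicity is $Q$ by Lemma \ref{l:density}: in each component of $\{x\}\times\R^n\cap\bV_L$ the density $\Theta(\cdot,T)\le Q$ combined with $\Theta(0,T)=Q\bar Q$ and the monotonicity formula pins down the multiplicity. This yields the required mass bound $\mass(T_L\res\bC_{32r_L}(p_L,\pi_L))\le (Q+\eta)\omega_2(32r_L)^2$ for any $\eta>0$ once $\eps_2$ is small.

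\emph{Expected main obstacle.} The delicate point is Step 2: the cylinder $\bC_{32r_L}(p_L,\pi_L)$ is tilted relative to $\pi_0$, and one must rule out that its ``sides'' protrude into an adjacent sheet of the horned neighborhood $\bV_{u,a}$, which would spoil the identification $T_L=T$ on the cylinder and destroy the constancy-theorem argument of Step 4. This is precisely where the quantitative separation $b>(b+1)(1+\beta_2)/2$ in \eqref{e:cost_0} is used, together with the hierarchy $C_h\gg C_e\gg 1$ and the smallness of $\eps_2$ relative to $c_s$: it ensures the height along $\pi_L$ remains much smaller than the separation $c_s|z|^b$ between distinct sheets of $\bV_{u,a}$.
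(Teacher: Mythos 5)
Your Steps 1--3 are essentially aligned with the paper's approach (which proves a more general Proposition covering the case $H=L$): tilting control comes from Proposition \ref{p:tilting opt}, the inclusion $\supp(T_L)\cap\bC_{32r_L}(p_L,\pi_L)\subset\bC_{36r_L}(p_L,\pi_0)$ follows from the height estimate, and the cylindrical excess bound follows from the spherical one plus the tilt estimate (Lemma \ref{l:tecnico} and Proposition \ref{p:tilting opt}(iv)\&(vii)). Your route to $\partial T_L\res\bC_{32r_L}(p_L,\pi_L)=0$ via Assumption \ref{ipotesi_base_2}(i) works, though the paper instead notes $\supp(\partial T_L)\subset\partial\bC_{64r_L}(p_L,\pi_0)$ because the only boundary created by restricting $T$ to $\bV_L$ can live on the lateral walls of the big cylinder (the set $\partial\bV_{u,a}$ never meets $\supp(T)$ by (Hor)).

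Step 4 has a genuine gap. The constancy theorem alone gives $(\p_{\pi_L})_\sharp(T_L\res\bC_{32r_L}(p_L,\pi_L))=Q'\a{B_{32r_L}}$ for \emph{some} integer $Q'$, and you then need to show $Q'=Q$ exactly; the hypothesis of \cite[Theorem 1.5]{DSS2} asks for the precise multiplicity, not merely an upper bound. Your claim that the density bound $\Theta(\cdot,T)\le Q$ together with $\Theta(0,T)=Q\bar Q$ and monotonicity ``pins down the multiplicity'' does not follow: the pointwise density bound says nothing a priori about the fibrewise projection multiplicity onto a tilted plane $\pi_L$, and there is no point in the tilted cylinder where you know the density equals $Q$. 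The information Lemma \ref{l:density} provides is that the \emph{nearest-point projection onto} $\gr(u)$ of $T\res\bC_1$ has multiplicity $Q$, which is a statement about a very different projection. The paper bridges the gap with a two-stage homotopy argument: first interpolate $v_t=tv$ from $\gr(u)\cap\bC_{64r_J}$ to the flat plane $\pi_0$ and track the projection multiplicity by continuity and the constancy theorem to transfer $Q$ to $\p_{\pi_0}$; then continuously deform $\pi_0$ to $\pi_H$ along a path $\pi_t$ of planes (keeping the cylinder $\bC_{36r_L}(p_L,\pi_0)$ fixed) and again use continuity plus constancy to preserve the multiplicity. Without an argument of this type you have only $Q'\ge 1$ and $Q'\le C$ for some crude $C$, which is not enough to invoke the approximation theorem.
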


Indeed Lemma \ref{l:tecnico2} is a cororollary of the much more general Proposition \ref{p:app} and we will not give a separate proof. 

\begin{definition}[Smoothing]\label{d:smoothing}
Let $L$ and $\pi_L$ be as in Lemma \ref{l:tecnico2} and denote by $f_L$ the corresponding $\pi_L$-approximation. In case of Definition \ref{d:semicalibrated} (a)\&(c) we let
$\bar{f} (x) := \sum_i \a{\p_{T_{p_L} \Sigma} (f_i)}$ be the projection of $f_L$ on the tangent $T_{p_L} \Sigma$, whereas in the other case (Definition \ref{d:semicalibrated}(b)) we set $\bar{f} = f$. We let $\bar{h}_L$ be a solution (provided it exists) of 
\begin{equation}\label{e:ellittico-10}
\left\{
\begin{array}{ll}
\mathscr{L}_{L} \bar{h}_L = \mathscr{F}_{L}\\ \\
\left.\bar{h}_L\right|_{\partial B_{8r_L} (p_L, \pi_L)} = \etaa\circ \bar{f}_L\, ,
\end{array}\right.
\end{equation}
where $\mathscr{L}_L$ is a suitable second order linear elliptic operator with constant coefficients and $\mathscr{F}_L$ a suitable affine map: the precise expressions for $\mathscr{L}_L$ and $\mathscr{F}_L$ depend on a careful Taylor expansion of the first variations formulae and are given in Proposition \ref{p:pde}. We then set $h_L (x):= (\bar{h}_L (x), \Psi_{p_L} (x, \bar{h}_L (x))$ in case (a) and (c) and $h_L (x) = \bar{h}_L (x)$ in case (b). The map $h_L$ is the {\em tilted interpolating function} relative to $L$. 
\end{definition} 

In what follows we will deal with graphs of multivalued functions $f$ in several system of coordinates. These objects can
be naturally seen as currents $\bG_f$ (see \cite{DS2}) and in this respect we will use extensively the notation and results of \cite{DS2} (therefore $\gr (f)$ will denote the ``set-theoretic'' graph).

\begin{lemma}\label{l:tecnico3}
Let the assumptions of Proposition \ref{p:whitney} hold and assume $C_e \geq C^\star$ and $C_h \geq C^\star C_e$ (where $C^\star$ is the
constant of Lemma \ref{l:tecnico2}). For any choice of the other parameters,
if $\eps_2$ is sufficiently small the following holds. For any $L\in \sW\cup \sS$, there is a unique solution $\bar{h}_L:  B_{8r_L} (p_L, \pi_L) \to \varkappa_L = \pi_L^\perp \cap T_{p_L} \Sigma$ of \eqref{e:ellittico-10} and there is a smooth $g_L: B_{4r_L} (z_L, \pi_0)\to \pi_0^\perp$ such that 
$\bG_{g_L} = \bG_{h_L}\res \bC_{4r_L} (p_L, \pi_0)$, where $h_L$ is the tilted interpolating function of Definition \ref{d:smoothing}.
Using the charts introduced in Definition \ref{d:Riemann_surface}, the map $g_L$ will be considered as defined on the ball
$B_{4r_L} (z_L, w_L)\subset \gira$.
\end{lemma}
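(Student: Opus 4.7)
The argument splits naturally into two parts: first the elliptic boundary value problem, then the change of parametrization.

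\textbf{Existence and uniqueness of $\bar h_L$.} By Proposition \ref{p:pde}, $\mathscr L_L$ is a linear second-order elliptic system with \emph{constant} coefficients whose ellipticity constants depend only on the dimensions and on $\|A_\Sigma\|_0\le c_0$; the datum $\mathscr F_L$ is affine. The boundary trace $\etaa\circ\bar f_L\vert_{\partial B_{8r_L}(p_L,\pi_L)}$ is Lipschitz by \cite[Theorem 1.5]{DSS2}, so \eqref{e:ellittico-10} is the Dirichlet problem for an inhomogeneous constant-coefficient elliptic system on a ball with Lipschitz boundary data. Unique solvability then follows from Lax--Milgram together with constant-coefficient Schauder theory, producing $\bar h_L\in C^\infty(B_{8r_L}(p_L,\pi_L))\cap C^{0,1}(\overline{B_{8r_L}(p_L,\pi_L)})$ with the linear bounds
\[
\|\bar h_L\|_0\le\|\etaa\circ\bar f_L\|_0+Cr_L^2\|\mathscr F_L\|_0,\qquad \|D\bar h_L\|_0\le C\bigl(\Lip(\etaa\circ\bar f_L)+r_L\|\mathscr F_L\|_0\bigr)\, .
\]
Combining these with the Lipschitz-approximation estimates of \cite[Theorem 1.5]{DSS2} applied at the Whitney bounds \eqref{e:ex_whitney}--\eqref{e:ht_whitney}, and the elementary estimate $\|\mathscr F_L\|_0\le C\bmo^{\sfrac12}$ (coming from the smallness of the mean curvature of $\Sigma$ or of $d\omega$), one obtains smallness of both $\|h_L\|_0/r_L$ and $\Lip(h_L)$ in terms of a positive power of $\bmo$.

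\textbf{Reparametrization as a $\pi_0$-graph.} The key geometric input is the tilt estimate
\begin{equation}\label{plan:tilt}
|\pi_L-\pi_0|\le C\bmo^{\sfrac12}\, ,
\end{equation}
which I would prove by combining: (i) the standard Allard-type tilting lemma $|\pi_L-\pi|^2\le C(\bE(T_L,\bB_L,\pi_L)+\bE(T_L,\bB_L,\pi))$ applied at $\pi=\pi(p_L)$, the tangent plane to $\gr(u)$ at the unique point of $\bV_L\cap\gr(u)$ above $z_L$; (ii) the Whitney excess bound \eqref{e:ex_whitney} to control the first excess on the right-hand side; (iii) the excess-decay assumption (Dec) of Assumption \ref{induttiva} to control $\bE(T_L,\bB_L,\pi(p_L))$; (iv) the admissibility estimate $|\pi(p_L)-\pi_0|\le CC_i\d(L)^\alpha\le C\bmo^{\sfrac12}$ from Definition \ref{d:admissible}. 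With \eqref{plan:tilt} and the smallness of $\Lip(h_L)$ from the previous step, the smooth map
\[
\Phi\colon B_{8r_L}(p_L,\pi_L)\longrightarrow\pi_0,\qquad x\longmapsto\p_{\pi_0}\bigl(x+h_L(x)\bigr)
\]
satisfies $|D\Phi-\mathrm{Id}|\le 1/2$ once $\eps_2$ is small enough, and is therefore a $C^\infty$ diffeomorphism onto its image; a quantitative inverse-function argument, using $r_L=\sqrt 2\,M_0\,\ell(L)$, shows that this image contains $B_{4r_L}(z_L,\pi_0)$. Setting $g_L:=\p_{\pi_0^\perp}\circ(\mathrm{Id}+h_L)\circ\Phi^{-1}$ defines the desired smooth graphing function, and the identity $\bG_{g_L}=\bG_{h_L}\res\bC_{4r_L}(p_L,\pi_0)$ then holds as oriented integer rectifiable currents, because both sides describe the same embedded oriented $C^\infty$ surface (the orientation being preserved by the near-identity diffeomorphism $\Phi$).

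The main obstacle is the tilt estimate \eqref{plan:tilt}: the subtlety is that in the cases (a) and (c) of Definition \ref{d:semicalibrated} the plane $\pi_L$ is constrained to lie inside $T_{p_L}\Sigma$, so the tilting lemma must be applied relative to planes in $T_{p_L}\Sigma$ and one must separately absorb the mismatch between $T_{p_L}\Sigma$ and $T_0\Sigma=\pi_0\oplus\varkappa$ using the $C^{3,\eps_0}$ bound \eqref{e:Sigma}. Modulo this geometric book-keeping, the remainder of the argument is a routine quantitative implicit-function theorem, and the uniqueness of $g_L$ in the smaller ball is automatic from the uniqueness of $\bar h_L$ provided by the PDE step.
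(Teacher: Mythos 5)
Your proposal follows the same three-step architecture as the paper: (1) Lax--Milgram for existence and uniqueness of $\bar h_L$, (2) a smallness bound for $h_L$ and $Dh_L$, and (3) the tilt estimate $|\pi_L-\pi_0|\le C\bmo^{1/2}\d(L)^{\gamma_0}$ plus a quantitative implicit-function argument (the paper packages step (3) as \cite[Lemma B.1]{DS4}, applied to the rescaled map $k_L(x)=\d(L)^{-1}h_L(\d(L)x)$; you reconstruct the IFT argument directly, which is fine). The tilt estimate you spend a paragraph re-deriving is in fact already available as Proposition \ref{p:tilting opt}(v), which holds under the hypotheses of Proposition \ref{p:whitney} assumed in the lemma, so that book-keeping is superfluous but harmless.

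There is, however, a genuine flaw in step (2) as you wrote it. You claim $\bar h_L\in C^{0,1}(\overline{B_{8r_L}(p_L,\pi_L)})$ and the bound $\|D\bar h_L\|_0\le C\bigl(\Lip(\etaa\circ\bar f_L)+r_L\|\mathscr F_L\|_0\bigr)$ on the \emph{closed} ball, attributing these to ``constant-coefficient Schauder theory.'' This is false: the harmonic extension of a merely Lipschitz boundary datum need not be Lipschitz up to the boundary (the gradient can have a logarithmic blow-up at the boundary, as one sees already for $\Delta u=0$ on the disk with $u\vert_{\partial B_1}(\theta)=|\theta|$), and the situation is no better for the perturbed system $\mathscr L_L$. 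What you actually need, and what the paper establishes in Lemma \ref{l:exist} and Proposition \ref{p:stime}, is an $L^2$ energy bound from Lax--Milgram followed by \emph{interior} $L^1$-to-$C^k$ estimates \eqref{e:L1-Linfty}, \eqref{e:higher} on the shrunk ball $B_{6r_L}$; these give the smallness of $\|h_L\|_0/r_L$ and $\Lip(h_L)$ on a ball strictly containing the region $\bC_{4r_L}(p_L,\pi_0)$ over which $g_L$ is defined, which is all the reparametrization requires. So replace your up-to-the-boundary Lipschitz claim by the interior estimate and the argument closes; as stated the gradient bound is incorrect.
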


The center manifold is defined by gluing together the maps $g_L$. 

\begin{definition}[Interpolating functions]\label{d:glued} The map $g_L$ in Lemma \ref{l:tecnico2} will be called the {\em $L$-interpolating function}.
Fix next a $\vartheta\in C^\infty_c \big([-\frac{17}{16}, \frac{17}{16}]^m, [0,1]\big)$ which is identically $1$ on $[-1,1]^m$.
For each $k$ let $\sP^k := \sS^k \cup \bigcup_{i=N_0}^k \sW^i$ and
for $L\in \sP^k$ define $\vartheta_L ((z,w)):= \vartheta (\frac{z-z_L}{\ell (L)})$. Set
\begin{equation}
\hat\varphi_j := \frac{\sum_{L\in \sP^j} \vartheta_L g_L}{\sum_{L\in \sP^j} \vartheta_L} \qquad \mbox{on $\{(z,w)\in \gira: z\in [-1,1]^2\setminus \{0\}\}$}\, 
\end{equation}
and extend the map to $0$ defining $\hat\varphi_j (0)=0$. 
In case (b) of Definition \ref{d:semicalibrated} we set $\varphi_j:= \hat\varphi_j$. In cases (a) and (c) we let
$\bar{\varphi}_j (z,w)$ be the first $\bar{n}$ components of $\hat{\varphi}_j (z,w)$ and define
$\varphi_j (z,w) = \big(\bar{\varphi}_j (z,w), \Psi (z, \bar{\varphi}_j (z,w))\big)$.
$\varphi_j$ will be called the {\em glued interpolation} at step $j$.
\end{definition}

We now come to the first main theorem, which yields the surface which we call ``branched center manifold'' (again notice that
for $\bar Q=1$ there is certainly no branching, since the surface is a classical $C^{1,\alpha}$ graph, but we keep nonetheless the same terminology). In the statement we will need
to ``enlarge'' slightly dyadic squares: given $L\in \sC$ let $H$ be the dyadic square of $\R^2=\C$ so that $L$ is a connected component
of $\gira \cap (H\times \C)$. Given $\sqrt{2} \sigma < |z_L|= |z_H|$, we let $H'$ be the closed euclidean square of $\R^2$ which has the same
center as $H$ and sides of length $2\sigma$, parallel to the coordinate axes. The square $L'$ concentric to $L$ and with sidelength
$2\ell (L')= 2\sigma$ is then defined to be that connected component of $\gira\cap (H'\times \C)$ which contains $L$.

\begin{theorem}\label{t:cm}
Under the same assumptions of Lemma \ref{l:tecnico2}, the following holds provided $\eps_2$ is sufficiently small. 
\begin{itemize}
\item[(i)] For $\kappa := \beta_2/4$ and $C=C(M_0,N_0, C_e, C_h)$ we have (for all $j$) 
\begin{align}
&|\varphi_j (z,w)| \leq C \bmo^{\sfrac14} \vert z\vert^{1+\sfrac{\gamma_0}{2}} \qquad\qquad\qquad \mbox{for all $(z,w)$\label{e:stima_C0}}\\
&|D^l\varphi_j(z,w)| \leq C \bmo^{\sfrac12} \vert z\vert^{1+\gamma_0-l} \qquad \mbox{for $l=1,\dots,3$ and $(z,w)\neq 0$\label{e:stima_C3}}\\
&[D^3 \varphi_j]_{\mathcal{A}_j, \kappa} \leq C \bmo^{\sfrac12} 2^{2j}\label{e:stima_Hoelder}\, .
\end{align}
 \item[(ii)] The sequence $\varphi_j$ stabilizes on every square $L\in \sW$: more precisely, if $L\in \sW^i$ and 
$H$ is the square concentric to $L$ with $\ell (H) = \frac{9}{8} \ell (L)$, then $\varphi_k = \varphi_j$ on $H$ for every $j,k\geq i+2$.
Moreover there is an admissible smooth branching $\phii:\gira\cap ([-1,1]^2\times \C)\to \R^n$
such that $\varphi_k\to\phii$ uniformly on $\gira\cap ([-1,1]^2\times \C)$ and in $C^3 (\mathcal{A}_j)$ for every $j\geq 0$. 
Note in particular that $\phii$ coincides with $g_L$ on a nonempty open subset of each $L\in \sW$.  
\item[(iii)]  For some constant $C= C(M_0, N_0, C_e, C_h)$ and for $a':= b + \gamma_0> b$ we have
\begin{equation}\label{e:cornuto!}
|u (z,w) - \phii (z,w)|\leq C \bmo^{\sfrac{1}{2}} |z|^{a'}\, .
\end{equation}
\end{itemize}
\end{theorem}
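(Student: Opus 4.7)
I would prove (i), (iii) and the non-stabilization content of (ii) roughly in that order, adapting the blueprint of \cite{DS4} to the singular parametrization by $\gira_{\bar Q,2}$. The technical core is (i), which requires Schauder theory for the interpolations $g_L$ on each square combined with partition-of-unity glueing. Part (iii) is a per-square comparison between $g_L$ and $u$ that uses (Hor), while (ii) combines a combinatorial stabilization on Whitney squares with a compactness argument for the limit; the admissibility of $\phii$ (in particular the separation condition) uses (iii).

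\textbf{Part (i).} For each $L\in\sS\cup\sW$ I would first plug the excess/height bounds of Proposition \ref{p:whitney} into \cite[Theorem 1.5]{DSS2} to obtain Lipschitz, Dirichlet-energy and $L^\infty$ controls on the $\pi_L$-approximation $f_L$. Its average $\etaa\circ\bar f_L$ serves as Dirichlet datum for \eqref{e:ellittico-10}, and boundary Schauder estimates for the constant-coefficient operator $\mathscr L_L$, together with the pointwise bound $\|\mathscr F_L\|_0\lesssim\bc$, give
\[
\|\bar h_L\|_0\lesssim\bmo^{1/4}\d(L)^{\gamma_0/2-\beta_2}\ell(L)^{1+\beta_2},\qquad \|D^l\bar h_L\|_0\lesssim\bmo^{1/2}\d(L)^{\gamma_0-1}\ell(L)^{1-l}\ (l\geq 1),
\]
together with the analogous $C^{0,\kappa}$-bound on $D^3\bar h_L$. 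Rotating back from $\pi_L$ to $\pi_0$ (a rotation of size $\bmo^{1/2}\d(L)^{\gamma_0}$) preserves these bounds for $g_L$, and on $\cA_j$ (where $\d(L)\sim|z|\sim 2^{-j}$ and $\ell(L)\leq|z|$) they become exactly \eqref{e:stima_C0}--\eqref{e:stima_Hoelder}. To glue: for $L,L'\in\sP^j$ with $\vartheta_L\vartheta_{L'}\not\equiv 0$ the sidelengths are comparable and both $g_L,g_{L'}$ approximate the same portion of $T$; rerunning \cite[Theorem 1.5]{DSS2} on a common larger ball yields $\|D^l(g_L-g_{L'})\|_0\lesssim\bmo^{1/2}|z|^{1+\gamma_0-l}$ on the overlap. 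Combining with $|D^l\vartheta_L|\lesssim|z|^{-l}$ and $\sum_L\vartheta_L\geq 1$, Leibniz' rule transfers the same bounds to $\hat\varphi_j$, and the $\Psi$-correction $\hat\varphi_j\leadsto\varphi_j$ in cases (a),(c) only adds smooth $\bmo^{1/2}$-small terms. I expect the main technical obstacle to be the combinatorial bookkeeping: one must check that at any point of $\cA_j$ only $O(1)$ squares contribute to the partition of unity, a fact ensured by the bounded-neighbor structure of $\sC$ and by the stopping scheme, and the implicit constants must be tracked carefully to avoid losing the sharp exponents.

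\textbf{Part (iii).} Fix $(z,w)\neq 0$ with $|z|$ small and let $L\in\sP^\infty$ contain $(z,w)$. By (Hor) every sheet of $f_L$ satisfies $|f_L^{(i)}(z)-u(z,w)|\leq c_s|z|^a$, hence $|\etaa\circ\bar f_L(z)-\p_{\pi_L}^\perp u(z,w)|\leq c_s|z|^a$. Passing from $\etaa\circ\bar f_L$ to $\bar h_L$ through \eqref{e:ellittico-10} introduces an error of size $r_L^2\|\mathscr F_L\|_0\lesssim\bmo^{1/2}\ell(L)^2$, and the rotation from $\pi_L$ back to $\pi_0$ contributes a tilt-times-height correction; using $c_s\leq\bmo^{1/2}$, the strict inequality $a>b+\gamma_0$ from \eqref{e:cost_2}, and the stopping-scale lower bound $\ell(L)\gtrsim\d(L)^{(b+1)/2}$ coming from the combination of (Dec) and the threshold in \eqref{e:EX}, all these errors are absorbed into $C\bmo^{1/2}|z|^{b+\gamma_0}$. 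Summing through the partition of unity preserves the bound, which then passes to the limit $\phii$ and gives \eqref{e:cornuto!}.

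\textbf{Part (ii).} If $L\in\sW^i$ no descendant of $L$ is ever assigned to $\sW\cup\sS$, so for $k\geq i+2$ the family $\{J\in\sP^k:J\cap H\neq\emptyset\}$ stabilizes (where $H$ is the $(9/8)$-enlargement of $L$), yielding $\varphi_k\equiv\varphi_{i+2}$ on $H$. For the existence of the limit $\phii$ on $\gira\cap((-1,1)^2\times\C)$, on each annulus $\cA_{j_0}$ the uniform $C^3$-bounds from (i), the stabilization over the Whitney portion of $\cA_{j_0}$, and a standard Arzel\`a--Ascoli argument on the residual set produce a Cauchy sequence in $C^3(\cA_{j_0})$, while continuity at $0$ is forced by \eqref{e:stima_C0}. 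Admissibility in the sense of Definition \ref{d:admissible} then follows from (i) with $C_i\sim\bmo^{1/2}$; the separation condition, with constant $c_s':=3c_s/4$, follows from (iii) via
\[
|\phii(z,w)-\phii(z,w')|\geq |u(z,w)-u(z,w')|-2C\bmo^{1/2}|z|^{b+\gamma_0}\geq 4c_s|z|^b - c_s|z|^b,
\]
valid for all $|z|$ below a fixed threshold, which the rescaling freedom already built into Assumption \ref{induttiva} absorbs.
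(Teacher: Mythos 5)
Your outline of parts (ii) and (iii) broadly matches the paper's structure, but the proof of part (i) — and in particular the gluing step — contains a gap that, as written, the argument cannot recover from.

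The cutoffs $\vartheta_L$ of Definition \ref{d:glued} are rescaled at the scale of the square: $\vartheta_L((z,w)) = \vartheta(\tfrac{z-z_L}{\ell(L)})$, so $|D^i\vartheta_L| \lesssim \ell(L)^{-i}$, \emph{not} $|z|^{-i}$ as you state. Since by Proposition \ref{p:whitney}(i) one has $\ell(L) \leq 2^{-N_0+1}|z_L|$, we have $\ell(L) \ll |z|$, so the two scalings differ by a large factor. This is precisely why the crude per-square bound $\|D^l g_L\|_{C^0}\lesssim \bmo^{1/2}\d(L)^{\gamma_0+1-l}$ does \emph{not} pass through the Leibniz rule: terms of the form $D^i g_L\, D^{l-i}\chi_L$ with $i<l$ pick up a factor $(\d(L)/\ell(L))^{l-i} \gg 1$. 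One must instead write, on a square $H$,
\[
\hat\varphi_j = g_H + \sum_{L\in\sP^j(H)}(g_L - g_H)\chi_L\,,
\]
and exploit the cancellation via a \emph{much} stronger difference estimate. The paper's Proposition \ref{p:main_est}(iii), i.e.\ \eqref{e:interpo_5}, gives
\[
\|D^l(g_L - g_H)\|_{C^0} \lesssim \bmo^{1/2}\d(H)^{2(1+\beta_0)\gamma_0-\beta_2-2}\,\ell(H)^{3+\kappa-l}\,;
\]
the power $\ell(H)^{3+\kappa-l}$ is the crucial feature, since $\ell(H)^{3+\kappa-i}\cdot\ell(H)^{i-l} = \ell(H)^{3+\kappa-l}$ is uniformly small, while your claimed bound $\bmo^{1/2}|z|^{1+\gamma_0-l}$ would, after multiplication by $\ell(L)^{i-l}$, be of order $\bmo^{1/2}|z|^{1+\gamma_0-i}\ell(L)^{i-l}$, far too large. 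Deriving \eqref{e:interpo_5} is in fact the technical core of the whole construction, and is \emph{not} obtainable by "rerunning \cite[Theorem 1.5]{DSS2} on a larger ball": the pointwise difference $|\etaa\circ f_L - \etaa\circ f_{L'}|$ on the overlap is only of height-bound size $\bmo^{1/4}\d^{\gamma_0/2-\beta_2}\ell^{1+\beta_2}$, which is insufficient. The paper instead observes that the two approximations coincide with $T$ (hence with each other) except on a set whose measure is a very high power of $\ell$ (order $\ell^4$), so their averages differ in $L^1$ by a very high power of $\ell$; it then passes from an $L^1$ bound on $\bar h_{HL}-\bar h_{HJ}$ (which solves the homogeneous elliptic equation $\mathscr{L}_H(\bar h_{HL}-\bar h_{HJ})=0$) to $C^k$ interior bounds with the right scaling via Proposition \ref{p:stime}. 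The careful design of the operator $\mathscr{L}_H$ in Proposition \ref{p:pde}, ensuring that the weak form $\mathscr{I}_H(\etaa\circ\bar f_{HL},\zeta)$ is small, is precisely what makes the $L^1$ comparison exploitable; it cannot be replaced by generic boundary Schauder estimates.

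A smaller issue in part (iii): you invoke a "stopping-scale \emph{lower} bound $\ell(L)\gtrsim\d(L)^{(b+1)/2}$," but what the paper proves (and what is actually needed to close the comparison with $u$) is the \emph{upper} bound $\ell(H)\leq C_0\d(H)^{(b+1)/2}$ for all $H\in\sW$, established in \eqref{e:stop} by combining (Dec) with the thresholds in (EX) and (HT). The direction is essential: it is the smallness of $\ell(H)$ relative to $\d(H)^{(b+1)/2}$ that lets one absorb terms like $\bmo^{1/2}\ell(H)^2$ into $\bmo^{1/2}\d(H)^{a'}$.
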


\begin{definition}[Center manifold, Whitney regions]\label{d:cm}
The manifold $\cM:={\rm Gr}(\phii)$, where $\phii$ is as in Theorem \ref{t:cm}, is called
{\em a branched center manifold for $T$ relative to $\bG_u$}. It is convenient to introduce the map
$\Phii: \gira\cap ([-1,1]^2\times \C) \to \R^{2+n}$ given by $\Phii (z,w) = (z, \phii (z,w))$. If we neglect the origin,
$\Phii$ is then a classical ($C^3$) parametrization of $\cM$. $\Phii (\bGam)$ will be called the contact set.
Moreover, to each $L\in \sW$ we associate a {\em Whitney region} $\cL$ on $\cM$ as follows:
\begin{itemize}
\item[(WR)] $\cL := \Phii (H\cap ([-1,1]^2\times \C))$, where $H$ is the square concentric to $L$ with $\ell (H) = \frac{17}{16} \ell (L)$.
\end{itemize}
\end{definition}

\section{The normal approximation}

In what follows we assume that the conclusions of Theorem \ref{t:cm} apply and denote by $\cM$ the corresponding
center manifold. For any Borel set $\cV\subset \cM$ we will denote 
by $|\cV|$ its $\cH^2$-measure and will write $\int_\cV f$ for the integral of $f$
with respect to $\cH^2$. 
$\cB_r (q)$ denotes the geodesic balls in $\cM$. Moreover, we refer to \cite{DS2}
for all the relevant notation pertaining to the differentiation of (multiple valued)
maps defined on $\cM$, induced currents, differential geometric tensors and so on.

We next define the  open set
\begin{itemize}
\item[(V)] $\bV:= \{(x,y)\in \R^2\times \R^n : x\in [-1,1]^2 \mbox{ and } |\phii (x,w) - y|< c_s |x|^b/2\}$.
\end{itemize}
$\bV$ is clearly a horned neighborhood of the graph of $\phii$. By \eqref{e:separation}, Assumption \ref{induttiva} and
Theorem \ref{t:cm} it is clear that the following corollary holds

\begin{corollary}\label{c:cover} Under the hypotheses of Theorem \ref{t:cm}, there is $r>0$ such that
\begin{itemize}
\item[(i)] For every $x\in \R^2$ with $0<|x| = 2\rho < 2r$, the set $\bC_{\rho} (x)\cap \bV$ consists of $\bar Q$ distinct connected components and $\supp (T)\cap \bC_{3r} \subset \bV$.
\item[(ii)] There is a well-defined nearest point projection $\p: \bV\cap \bC_{4r} \to \gr (\phii)$, which is a $C^{2, \kappa}$ map.
\item[(iii)] For every $L\in \sW$ with $\d (L) \leq 2r$ and every $q\in L$ we have 
\[
\supp (\langle T, \p, \Phii (q)\rangle) \subset 
\big\{y\in \R^{2+n}\, : |\Phii (q)-y|\leq C \bmo^{\sfrac14} \d (L)^{\sfrac{\gamma_0}{2}-\beta_2} 
\ell (L)^{1+\beta_2}\big\}\, .
\]
\item[(iv)] $\langle T, \p, p\rangle = Q \a{p}$ for every $p\in \Phii (\bGam)\cap \bC_{2r} \setminus \{0\}$.
\end{itemize}
\end{corollary}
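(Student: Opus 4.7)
The plan is to transfer the geometric information from the branching $\gr(u)$ (conditions (Sep) and (Hor) from Assumption~\ref{induttiva}) to the center manifold $\cM$ by means of the closeness estimate~\eqref{e:cornuto!} of Theorem~\ref{t:cm}(iii), and then to exploit the Whitney stopping conditions to control the height of $T$ over $\cM$. Throughout, $r>0$ is a parameter to be chosen small at the end; shrinking it is legitimate by the rescaling invariance recalled at the beginning of Section~\ref{s:center_manifold}.

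For (i), combining~\eqref{e:cornuto!} with the separation~\eqref{e:separation} and the triangle inequality gives, for $w\neq w'$ and $|z|$ small,
\[
|\phii(z,w)-\phii(z,w')|\ \geq\ 4 c_s |z|^b - 2C\bmo^{1/2}|z|^{a'}\ \geq\ 3 c_s |z|^b,
\]
because $a'=b+\gamma_0>b$. Thus the open tubes $\{|y-\phii(z,w)|<c_s|z|^b/2\}$ around distinct sheets of $\cM$ are disjoint over each $z$, which produces the claimed $\bar Q$ connected components of $\bV\cap\bC_\rho(x)$. For the inclusion $\supp(T)\cap\bC_{3r}\subset\bV$, combining (Hor) with~\eqref{e:cornuto!},
\[
|y-\phii(z,w)|\ \leq\ |y-u(z,w)| + |u(z,w)-\phii(z,w)|\ \leq\ c_s|z|^a + C\bmo^{1/2}|z|^{a'}\ <\ c_s|z|^b/2
\]
holds in $\bC_{3r}$ as soon as $r$ is small, since $a,a'>b$. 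For (ii), Theorem~\ref{t:cm}(i)-(ii) yields the bound $|A_\cM(\Phii(z,w))|\leq C\bmo^{1/2}|z|^{\gamma_0-1}$. The horn half-thickness $c_s|z|^b/2$ is dominated by the local reach $\sim |z|^{1-\gamma_0}$ of $\cM$ (because $b>1>1-\gamma_0$) once $r$ is small enough, so the standard tubular neighborhood theorem produces a well-defined $C^{2,\kappa}$ nearest-point projection on each sheet, and hence on $\bV\cap\bC_{4r}$ globally by~(i).

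For (iii), pick $q\in L\in\sW$ with $\d(L)\leq 2r$ and $y\in\supp\langle T,\p,\Phii(q)\rangle$. After possibly shrinking $r$, $y\in\supp(T_L)\cap\bB_L$, and $y-\Phii(q)$ is parallel to the normal of $\cM$ at $\Phii(q)$. This normal makes an angle with $\pi_L^\perp$ of order $\bE(T_L,\bB_L)^{1/2}$, so
\[
|y-\Phii(q)|\ \leq\ 2\,|\p_{\pi_L}^\perp(y-\Phii(q))|\ \leq\ 2\,\bh(T_L,\bB_L),
\]
and the bound in~(iii) follows from~\eqref{e:ht_whitney}, the remaining $r_L\bE^{1/2}$-correction being of higher order by~\eqref{e:ex_whitney} and the parameter hierarchy~\eqref{e:cost_0}-\eqref{e:cost_1}.

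For (iv), the constancy-theorem argument of Section~\ref{s:center_manifold} (the one that yielded $\P_\sharp(T\res\bC_1)\res\bC_{1/2}=Q\bG_u\res\bC_{1/2}$) applies verbatim with $\phii$ in place of $u$, using (i)-(ii) together with Lemma~\ref{l:density}, giving $\p_\sharp T=Q\bG_\phii$ on $\bC_{2r}$; every slice $\langle T,\p,p\rangle$ therefore has signed multiplicity $Q$. Write $p=\Phii(q)$ with $q\in\bGam\setminus\{0\}$. By~\eqref{e:bGamma}, $q$ lies in some element of $\sS^k$ for every $k\geq N_0$; since $q\neq 0$ forces $q$ into a fixed annulus $\cA_j$, the corresponding ancestors $J_k\in\sS$ satisfy $\d(J_k)=2^{-j-1}$ constant but $\ell(J_k)=2^{-j-k}\to 0$. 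Applying the argument of~(iii) to each $J_k$, now using \eqref{e:ex_ancestors}-\eqref{e:ht_ancestors} in place of the Whitney estimates, shows that $\supp\langle T,\p,p\rangle$ lies in a ball of radius $C\bmo^{1/4}\d(J_k)^{\gamma_0/2-\beta_2}\ell(J_k)^{1+\beta_2}\to 0$. Hence the slice is supported at the single point $p$, and comparison with $\p_\sharp T=Q\bG_\phii$ forces it to equal $Q\a{p}$. The main technical obstacle lies genuinely in this last step: one must justify that slicing commutes with pushforward by $\p$ (true here since $\p$ is a $C^{2,\kappa}$ submersion away from $0$), and, more delicately, that the height estimate of~(iii), stated for Whitney cubes, really transfers to stopped-but-not-refined ancestors in $\sS$ with constants independent of the generation; both points rely on the careful bookkeeping of constants for $\sS$ versus $\sW$ in Proposition~\ref{p:whitney}.
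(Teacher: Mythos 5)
Your outline of (i) and (ii) matches the paper (the (ii) argument is the same tubular-neighborhood / curvature-bound idea the paper refers back to from Lemma~\ref{l:density}), and your constancy-theorem route for (iv) is a legitimate alternative to the paper's proof of \eqref{e:claim-1000}--\eqref{e:claim-1001}. However, there is a genuine gap in (iii), which then propagates to (iv).

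In (iii) you assert ``after possibly shrinking $r$, $y\in\supp(T_L)\cap\bB_L$'' — this is the crux of the argument, not a shrinking-of-$r$ matter, and it is not a priori true. A point $y$ in the fiber $\p^{-1}(\Phii(q))$ lies in the horn $\bV$, hence at vertical distance up to $c_s\d(L)^b/2$ from $\cM$; this has no a priori relation to the radius $64 r_L\approx M_0\ell(L)$ of $\bB_L$. (By Step 1 of the proof of Theorem~\ref{t:cm}(iii) one only knows $\ell(L)\leq C_0\d(L)^{(b+1)/2}$ for $L\in\sW$, which does not compare $\ell(L)$ with $\d(L)^b$ in either direction.) The paper proves the inclusion $\supp\langle T,\p,p\rangle\subset\bB_{r_L}(p)$ (its \eqref{e:claim_intorno}) by a genuine contradiction argument: assuming $|p'-p|\geq r_J/2$ for some ancestor $J$ chosen with largest sidelength subject to that inequality, the height bound over $J$'s father $H$ (or over the starting cylinder if $J\in\sS^{N_0}$) yields $|p'-p|\lesssim\bmo^{1/4}\ell(H)^{1+\beta_2}$, incompatible with $|p'-p|\geq r_H/2$ for $\eps_2$ small. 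This step is absent from your proposal, and without it the height estimate $\bh(T_L,\bB_L)$ simply does not apply to $y$. A second, smaller issue: the inequality $|\p_{\pi_L}^\perp(y-\Phii(q))|\leq\bh(T_L,\bB_L)$ is not an application of the height definition, because $\Phii(q)\in\cM$ but in general $\Phii(q)\notin\supp(T)$; the paper instead chains through $\|\phii-g_L\|_{C^0}$, the estimate \eqref{e:sfava_tanto} relating $h_L$ to the vertical coordinate of $p_L$, and the height bound of $T_L$ in $\bC_{8r_L}(p_L,\pi_L)$.

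In (iv), you flag the right sort of worry but locate it in the wrong place: the transfer of height/excess bounds from $\sW$ to $\sS$ is unproblematic (it is precisely \eqref{e:ex_ancestors}--\eqref{e:ht_ancestors}). The real difficulty in iterating (iii) over the infinite chain of $\sS$-ancestors is again establishing $y\in\bB_{J_k}$ for each $k$, i.e.\ the same missing step. The paper's argument avoids the iteration: given $p'\in\supp\langle T,\p,p\rangle$ with $p'\neq p$, it fixes $j$ so that $2^{-j-1}|z|\leq|p-p'|\leq 2^{-j}|z|$, selects the ancestor $L_i$ in the chain with $\ell(L_i)$ comparable to $2^{-j}|z|$ so that $p'\in\bB_{L_i}$ automatically, and derives the contradiction from the height bound in that $\bB_{L_i}$. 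You would need to supply this (or the largest-sidelength selection) to make (iv) complete.
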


The main goal of this paper is to couple the branched center manifold of Theorem \ref{t:cm} with a good map defined on $\cM$ and taking values in its normal bundle, which approximates accurately $T$ in a neighborhood of the origin. 

\begin{definition}[$\cM$-normal approximation]\label{d:app}
Let $r$ be as in Corollary \ref{c:cover} and define 
\begin{itemize}
\item[(U)] $\bU:= \p^{-1} (\bC_{2r} \cap \mathcal{M})$.
\end{itemize}
An {\em $\cM$-normal approximation} of $T$ is given by a pair $(\cK, F)$ such that
\begin{itemize}
\item[(A1)] $F: \bC_{2r} \cap \cM \to \Iq (\bU)$ is Lipschitz and takes the form 
$F (x) = \sum_i \a{x+N_i (x)}$, with $N_i (x)\perp T_x \cM$ and $x+N_i (x) \in \Sigma$
for every $x$ and $i$.
\item[(A2)] $\cK\subset \cM$ is closed, contains $\Phii \big(\bGam\cap \bC_{2r})$ and $\bT_F \res \p^{-1} (\cK) = T \res \p^{-1} (\cK)$.
\end{itemize}
The map $N = \sum_i \a{N_i}:\cM\cap \bC_{2r} \to \Iq (\R^{2+n})$ is {\em the normal part} of $F$.
\end{definition}

In the definition above it is not required that the map $F$ approximates efficiently the current
outside the set $\Phii (\bGam)$. However, all the maps constructed
in this paper and used in the subsequent note \cite{DS5} will approximate $T$ with a high degree of accuracy
in each Whitney region: such estimates are detailed
in the next theorem. In order to simplify the notation, we will use $\|N|_{\cV}\|_{C^0}$ (or $\|N|_{\cV}\|_0$) to denote the number
$\sup_{x\in \cV} \cG (N (x), Q\a{0}) = \sup_{x\in \cV} |N(x)|$.

\begin{theorem}[Local estimates for the $\cM$-normal approximation]\label{t:approx}
Let $r$ be as in Corollary \ref{c:cover} and $\bU$ as in Definition \ref{d:app}. Then 
there is an $\cM$-normal approximation $(\cK, F)$ such that
the following estimates hold on every Whitney region $\cL$ associated to $L\in \sW$ with $\d (L)\leq r$:
\begin{align}
&\Lip (N|
_{\cL}) \leq C \bmo^{\beta_0} \d (L)^{\beta_0\,\gamma_0}\,\ell(L)^{\beta_0\gamma_0} \quad\mbox{and}\quad  \|N|
_{\cL}\|_{C^0}\leq C \bmo^{\sfrac14} \d (L)^{\sfrac{\gamma_0}{2}-\beta_2} \ell (L)^{1+\beta_2},\label{e:Lip_regional}\\
&|\cL\setminus \cK| + \|\bT_F - T\| (\p^{-1} (\cL)) \leq C \bmo^{1+\beta_0} \,\d (L)^{(1+\beta_0)(2\gamma_0-2+2\delta_1)}\,\ell (L)^{2+(1+\beta_0)(2-2\delta_1)}   ,\label{e:err_regional}\\
&\int_{\cL} |DN|^2 \leq C \bmo \,\d (L)^{2\gamma_0-2+2\delta_1}\,\ell (L)^{4-2\delta_1}\, .\label{e:Dir_regional}
\end{align}
Moreover, for every Borel $\cV\subset \cL$, we have
\begin{align}\label{e:av_region}
\int_\cV |\etaa\circ N| &\leq 
C \bmo \d (L)^{2(1+\beta_0)\gamma_0-2-\beta_2}\,\ell (L)^{5+\beta_2/4} \notag\\
&+ C \bmo^{\sfrac12+\beta_0}\,\d (L)^{2\beta_0 \gamma_0+\gamma_0-1-\beta_2}\, \ell (L)^{1+\beta_2}\,\int_\cV \cG \big(N, Q \a{\etaa\circ N}\big) \,.
\end{align} 
The constant $C=C(M_0,N_0,C_e,C_h)$ does not depend on $\eps_2$.
\end{theorem}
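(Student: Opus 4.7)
The proof follows the blueprint of \cite[Section 5]{DS4} adapted to the branched setting. On each Whitney square $L\in\sW$, Lemma \ref{l:tecnico2} produces a $\pi_L$-approximation $f_L\colon B_{8r_L}(p_L,\pi_L)\to \Iq(\pi_L^\perp)$ via \cite[Theorem 1.5]{DSS2}. I will convert each $f_L$ into a multi-valued map $N_L$ defined in a neighbourhood of the Whitney region $\cL$ on $\cM$ whose values are orthogonal to $\cM$, set $F(x):=\sum_i \a{x+N_i(x)}$ with $N$ obtained by pasting the $N_L$'s together, and define $\cK$ as $\Phii(\bGam\cap \bC_{2r})$ united with the nearest-point projection onto $\cM$ of the set where $f_L$ coincides sheet-by-sheet with $T_L$. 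Because the sheets of each $f_L$ are determined by $T_L$ via a canonical projection, the local maps $N_L$ agree on overlaps, producing a well-defined $\cM$-normal approximation in the sense of Definition \ref{d:app}.

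\textbf{Change of coordinates.} The conversion of $f_L$ into a normal map over $\cM$ exploits Theorem \ref{t:cm}: on $\cL$ the tangent $T_x\cM$ differs from $\pi_L$ by at most $C\bmo^{1/2}\d(L)^{\gamma_0}$, and $\cM$ is at distance no more than $C\bmo^{1/4}\d(L)^{\gamma_0/2-\beta_2}\ell(L)^{1+\beta_2}$ from $p_L+\pi_L$, by \eqref{e:stima_C0}--\eqref{e:stima_C3} and the very construction of $g_L$ from the tilted interpolating function $h_L$. A standard implicit-function argument (cf. the procedure in \cite[Section 5]{DS4}) then allows me to re-parametrise $\gr(f_L)$ as the graph $\{x+N_L(x)\}$ over $\cM$, with $N_{L,i}(x)\perp T_x\cM$. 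The condition $x+N_{L,i}(x)\in\Sigma$ holds because $\supp(T)\subset\Sigma$ in cases (a) and (c), while in case (b) $\Sigma=\R^{2+n}$ makes it vacuous.

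\textbf{The estimates \eqref{e:Lip_regional}--\eqref{e:Dir_regional}.} These follow by combining the corresponding bounds for $f_L$ from \cite[Theorem 1.5]{DSS2}, which are expressed in terms of the cylindrical excess $\bE(T,\bC_{32r_L}(p_L,\pi_L))$, with the Whitney estimates \eqref{e:ex_whitney}--\eqref{e:ht_whitney} of Proposition \ref{p:whitney}. The change of coordinates introduces multiplicative factors of the form $1+C|\pi_L-T_x\cM|^2$, which the hierarchy \eqref{e:cost_0}--\eqref{e:cost_1} renders negligible (in particular $\beta_0\gamma_0/2>\beta_2$). The set difference $|\cL\setminus \cK|$ is controlled using the bound on the complement of the good set of $f_L$, of size at most $C\bE^{1+\beta_0}\ell(L)^2$ with the excess again inserted from \eqref{e:ex_whitney}.

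\textbf{The key estimate \eqref{e:av_region} and the main obstacle.} This is the heart of the theorem. In tilted coordinates $\etaa\circ N$ coincides with $\etaa\circ f_L - h_L$ up to errors involving $|Df_L|^2$ and the tilt of $\pi_L$, because by construction $\cM$ is obtained by gluing the tilted interpolating functions $h_L$. The bound therefore reduces to an $L^1$-control on $\etaa\circ f_L - h_L$. Now $h_L$ solves the elliptic PDE \eqref{e:ellittico-10}, whose operator $\mathscr{L}_L$ and forcing $\mathscr{F}_L$ are chosen to be the leading-order Taylor expansion of the first variation of the relevant functional (area, semicalibration, or cone mass). Testing the almost-stationarity of $T$ against an appropriate extension of a test vector field on $\bC_{8r_L}(p_L,\pi_L)$, and subtracting the weak formulation of \eqref{e:ellittico-10}, produces an $L^1$-bound on $\etaa\circ f_L - h_L$ whose right-hand side is a sum of genuinely quadratic error terms in $Df_L$ (arising from the Taylor remainders of $\omega$ and of $\Psi$) together with a mass contribution bounded by $\|T-\bG_{f_L}\|(\bC_{8r_L})$. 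Integrating this over $\cL$ and invoking \eqref{e:ex_whitney}, \eqref{e:ht_whitney}, \eqref{e:err_regional} and \eqref{e:Dir_regional} yields \eqref{e:av_region}. The principal obstacle is the bookkeeping: one must verify that the Taylor remainders in the first-variation expansion indeed match $\mathscr{L}_L$ and $\mathscr{F}_L$, and then track carefully the powers of $\d(L)$ and $\ell(L)$ coming from the excess, height and Dirichlet estimates so as to reproduce exactly the exponents in \eqref{e:av_region}; this is precisely where the exponent constraints of Assumption \ref{esponenti} are used in a critical way.
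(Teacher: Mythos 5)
Your overall route --- reparametrize the $\pi_L$-approximations $f_L$ as normal graphs over $\cM$, patch them together, define $\cK$ via the good sets of the $f_L$'s, and then push the bounds of \cite[Theorem 1.5]{DSS2} through the Whitney estimates of Proposition~\ref{p:whitney} --- is indeed the paper's route. There is, however, a concrete gap in the construction step. Your claim that ``the local maps $N_L$ agree on overlaps'' is only correct on $\cK$: on $\cL'\cap\cL''\setminus\cK$ the maps $F_{L}$ and $F_{L'}$ can differ, precisely because there the slices of $\bG_{f_{L}}$ and $\bG_{f_{L'}}$ need not coincide with the slices of $T$. Consequently $N$ is defined by the $f_L$'s only on $\cK$, and the proof requires a genuine extension to all of $\cM\cap\bC_{2r}$ that simultaneously preserves the Lipschitz bound, the $C^0$ bound, the orthogonality $N_i(x)\perp T_x\cM$, and --- in cases (a) and (c) of Definition~\ref{d:semicalibrated} --- the constraint $x+N_i(x)\in\Sigma$. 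The paper carries this out in three separate stages: a Lipschitz extension over the skeleta of the Whitney decomposition via \cite[Theorem 1.7]{DS1}; a fiberwise orthogonal projection onto the normal bundle of $\cM$; and, for cases (a) and (c), a further correction pushing the values into $\Sigma$ via a degenerate retraction $\Omega(x,\cdot)$ whose $x$-derivative blows up like $|x|^{\gamma_0-1}$ near the branch point. Your remark that ``the condition $x+N_{L,i}(x)\in\Sigma$ holds because $\supp(T)\subset\Sigma$'' describes the situation only on $\cK$ and presents as automatic the very fact that the third extension step is designed to restore; off $\cK$ it fails without it.

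A second, less critical, imprecision concerns \eqref{e:av_region}. You reduce to an $L^1$ bound on $\etaa\circ f_L - h_L$ obtained by comparing the first variation of $T$ with the weak form of \eqref{e:ellittico-10}; this is indeed Proposition~\ref{p:stime}. But $\etaa\circ N$ is controlled (via \cite[Theorem 5.1]{DS2}) by $|\etaa\circ f_L-\phii'|$, where $\phii'$ is the graphical representation over $\pi_L$ of the \emph{glued} manifold $\cM$ and not of $\gr(h_L)$, and on the Whitney region $\phii\ne g_L$. The extra ingredient you need is the $L^1$ compatibility estimate $\|\phii-g_L\|_{L^1(H)}\le C\bmo\,\d(L)^{2(1+\beta_0)\gamma_0-2-\beta_2}\ell(L)^{5+\kappa}$ of Proposition~\ref{p:main_est}(iii), which compares $g_L$ to its neighbors $g_{L'}$ and is not implied by the single-cube PDE estimate alone.
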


\subsection{Separation and splitting} 
We conclude this section with two theorems which allow us to estimate the sidelengths of the squares of type $\sW_h$ and $\sW_e$. The squares in $\sW_n$ do not enjoy similar bounds, but they can be partitioned in families, each of which consists of squares sufficiently close to an element of $\sW_e$.

\begin{proposition}[Separation]\label{p:separ}
There is a dimensional constant $C^\sharp > 0$ with the following property.
Assume the hypotheses of Theorem \ref{t:approx}, and in addition
$C_h^{4} \geq C^\sharp C_e$. 
If $\eps_2$ is sufficiently small, then the following conclusions hold for every $L\in \sW_h$ with $\d (L)\leq r$
:
\begin{itemize}
\item[(S1)] $\Theta (T_L, p) \leq Q - 1$ for every $p\in \bB_{16 r_L} (p_L)$.
\item[(S2)] $L\cap H= \emptyset$ for every $H\in \sW_n$
with $\ell (H) \leq \frac{1}{2} \ell (L)$.
\item[(S3)] $\cG \big(N (x), Q \a{\etaa \circ N (x)}\big) \geq \frac{1}{4} C_h \bmo^{\sfrac{1}{4}}\, \d (L)^{\sfrac{\gamma_0}{2}-\beta_2}
\ell (L)^{1+\beta_2}$  $\forall x\in \Phii (B_{8 \ell (L)} (z_L, w_L))$.
\end{itemize}
\end{proposition}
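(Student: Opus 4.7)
The three assertions are proved in the order (S1)$\Rightarrow$(S3)$\Rightarrow$(S2). The core is (S1): it rules out density-$Q$ points close to $L$, and this allows the height violation that caused $L\in\sW_h$ to be transferred to the normal approximation $N$ (giving (S3)); the geometric separation (S2) is then a scale-comparison consequence of (S3) combined with the $\sW_n$-stopping rule.

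\textbf{Proof of (S1).} The plan is to argue by contradiction: assume $\Theta(T_L,p_0)=Q$ for some $p_0\in\bB_{16r_L}(p_L)$. Since $L\notin\sW_e$, the estimate \eqref{e:ex_whitney} controls the excess in $\bB_{32r_L}(p_0)\subset\bB_L$; for $\eps_2$ small enough one can therefore apply the Lipschitz approximation \cite[Theorem~1.5]{DSS2} in this ball with respect to the excess-optimal plane $\pi_L$ of $\bB_L$, obtaining a $Q$-valued Lipschitz map $f$ with $\Lip(f)\leq C\,\bE(T_L,\bB_L)^{\beta_0}$ and graphical error bounded by $\bE(T_L,\bB_L)+\bmo\,r_L^2$. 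Lemma~\ref{l:density} gives $\Theta(p,T)\leq Q$ on $\bC_2\setminus\{0\}$; combined with the density-$Q$ condition at $p_0$, this forces the collapse $f(\p_{\pi_L}(p_0))=Q\a{v_0}$. Interpolating this collapse with the Dirichlet bound $\int|Df|^2\leq C\,r_L^2\,\bE(T_L,\bB_L)$ and with the graph-to-current error (through a Morrey-type inequality for $Q$-valued maps at points of full collapse) upgrades the generic Almgren height bound to one of the form
\[
\bh(T_L,\bB_L)\leq C(M_0,N_0)\,\bmo^{1/2}\bigl(C_e^{1/2}\,\d(L)^{\gamma_0-1+\delta_1}\,\ell(L)^{2-\delta_1}+r_L^2\bigr).
\]
Using $\ell(L)\leq 2^{2-N_0}\d(L)$ from Proposition~\ref{p:whitney}(i) and the exponent choices in Assumption~\ref{esponenti}, the right-hand side becomes strictly smaller than the $\sW_h$-threshold $C_h\,\bmo^{1/4}\,\d(L)^{\gamma_0/2-\beta_2}\,\ell(L)^{1+\beta_2}$ as soon as $C_h^4\geq C^\sharp C_e$ for a suitable dimensional $C^\sharp$ and $\bmo$ is sufficiently small; this contradicts $L\in\sW_h$.

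\textbf{Proof of (S3) and (S2).} For (S3), fix $y\in B_{4\ell(L)}(z_L,w_L)$ and set $x=\Phii(y)$. The $\sW_h$-stopping of $L$ exhibits points $p,q\in\supp(T_L)\cap\bB_L$ with $|\p_{\pi_L}^\perp(p-q)|>C_h\,\bmo^{1/4}\,\d(L)^{\gamma_0/2-\beta_2}\,\ell(L)^{1+\beta_2}$. By Corollary~\ref{c:cover} together with \eqref{e:Lip_regional}--\eqref{e:err_regional}, the $Q$ values of $N(x)$ essentially recover the fibre of $T$ over $\Psii(y)$ under $\p$, up to an error negligible compared to the threshold above. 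By (S1) none of these fibres can carry density $Q$ at any single point, so the points realising the vertical spread must split among distinct sheets; a direct computation in the frame adapted to $\pi_L$ then bounds $\cG(N(x),Q\a{\etaa\circ N(x)})$ from below by $\tfrac14$ of the $\sW_h$-threshold, giving (S3). For (S2), assume $H\in\sW_n$ with $\ell(H)\leq\tfrac12\ell(L)$ and $L\cap H\neq\emptyset$; then $\d(H)\sim\d(L)$ and $\Phii(H)\subset\Phii(B_{4\ell(L)}(z_L,w_L))$, so (S3) holds throughout $\Phii(H)$. Re-expressing this spread as a height bound for $T_H$ in $\bB_H$ yields $\bh(T_H,\bB_H)>C_h\,\bmo^{1/4}\,\d(H)^{\gamma_0/2-\beta_2}\,\ell(H)^{1+\beta_2}$, which would have placed $H$ in $\sW_h$ rather than $\sW_n$, the desired contradiction.

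\textbf{Main obstacle.} The decisive ingredient is the improved density-$Q$ height bound: one must replace the generic Almgren exponent in $\bh\lesssim\bE^{1/(2m)}r$ by essentially $\bh\lesssim\bE^{1/2}r$, since only the latter produces the $\bmo^{1/2}$ factor needed to beat the $\bmo^{1/4}$ in the $\sW_h$-threshold. Deriving this sharp bound requires combining the sheet-collapse $f(\p_{\pi_L}(p_0))=Q\a{v_0}$ with a Dirichlet-to-$L^\infty$ inequality for $Q$-valued maps, and doing so uniformly across the three settings of Definition~\ref{d:semicalibrated} through the curvature bookkeeping provided by Lemma~\ref{l:piccolezza}.
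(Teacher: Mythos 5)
Your proposal inverts the paper's logical order—you try to establish (S1) first by a direct density argument and then derive (S3) and (S2)—but the crucial step is not available, and you are missing the tool that makes the paper's proof work.

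The paper does not prove (S1) by improving the height bound of $T_L$. Instead it applies the stripe decomposition of Theorem~\ref{t:height_bound} (Almgren's strong excess estimate) to $T_J$ in the cylinder $\bC_{36r_J}(p_J,\pi_J)$, where $J$ is the father of $L$: the support of the current is covered by pairwise disjoint slabs $\bS_i$ of thickness $\lesssim E^{1/4}r_J$, each carrying a projection degree $Q_i\geq 1$, and the theorem additionally guarantees $\Theta(T,p) < \max_i Q_i + \tfrac12$ in the cylinder. Because $L\in\sW_h$ produces two points at vertical distance $\geq C_h\bmo^{1/4}\d(L)^{\gamma_0/2-\beta_2}\ell(L)^{1+\beta_2}$, and the slab width is $\lesssim C_e^{1/4}\bmo^{1/4}\d(L)^{\gamma_0/2-\beta_2}\ell(L)^{1+\beta_2}$, the hypothesis $C_h^4\geq C^\sharp C_e$ forces those two points into \emph{distinct} slabs. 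Hence $k\geq 2$, so $\max_i Q_i\leq Q-1$, and since densities are integer-valued this gives $\Theta\leq Q-1$, i.e.~(S1). The condition $C_h^4\geq C^\sharp C_e$ is there precisely to compare $C_h\bmo^{1/4}$ against the $E^{1/4}\sim C_e^{1/4}\bmo^{1/4}$ slab thickness. The stripe picture then also yields (S3) directly (every fibre of $\p$ has points in both $\bS_1$ and $\bS_2$, so $N$ must split by the slab separation) and (S2) (the separation persists after tilting to $\pi_H$ and would have triggered the (HT) stopping rule for $H$, contradicting $H\in\sW_n$).

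The gap in your argument is the ``Morrey-type inequality'' that you yourself flag as the decisive ingredient. First, collapse of the Lipschitz approximation $f$ at $\p_{\pi_L}(p_0)$ does not follow automatically from $\Theta(T_L,p_0)=Q$; one would need to argue that the density-$Q$ condition is seen by the slice of $T_L$ and then that $f$ agrees with $T_L$ at that specific point, neither of which is immediate. Second, even granting the collapse, interpolation between the Lipschitz bound $\Lip(f)\lesssim E^{\beta_0}$ and the Dirichlet bound $\int|Df|^2\lesssim E r_L^2$ gives, for any $p>2$, $\operatorname{osc}(f)\lesssim E^{\beta_0(1-2/p)+1/p}r_L$; as $p\to 2$ the exponent tends to $1/2$ but never reaches it, with the Morrey constant blowing up, and since $\beta_0<1/2$ by standing assumption you cannot actually reach the $E^{1/2}$-exponent you wrote. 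Third, and most seriously, any such bound would only control $\operatorname{osc}(f)$, not $\bh(T_L,\bB_L)$: $T_L$ and $\bG_f$ differ over the bad set $B\setminus K$, and over the bad set you have no improved vertical control on $\supp(T_L)$ beyond the ordinary height bound. Finally, note that if your claimed upgrade actually held, the hypothesis $C_h^4\geq C^\sharp C_e$ would become superfluous for small $\eps_2$, which should have flagged the argument as suspicious. Similarly, the step from (S1) to (S3) is not sound as written: knowing merely that no point of the fibre has density $Q$ does not bound $\cG(N(x),Q\a{\etaa\circ N(x)})$ from below; you need the quantitative slab structure that the stripe theorem provides and that (S1) alone does not recover.
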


A simple corollary of the previous proposition is the following.

\begin{corollary}[Domains of influence]\label{c:domains}
For any $H\in \sW_n$ there is a chain $L=L_0,\dots,L_r=H$ such that
\begin{itemize}
\item[(a)] $L_0\in \sW_e$ and $L_k\in \sW_n$ for all $k>0$;
\item[(b)] $L_k\cap L_{k-1}\neq \emptyset$ and $\ell (L_k)=\frac{\ell (L_{k-1})}{2}$ for all $k>0$.
\end{itemize}
In particular $H\subset B_{3\sqrt{2}\ell(L)}(z_L, w_L)$.
\end{corollary}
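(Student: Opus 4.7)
The approach is to trace back \emph{parents} of $H$ using the rule (NN) of Definition \ref{d:refining}, repeatedly invoking the separation estimate (S2) of Proposition \ref{p:separ} to forbid any parent from belonging to $\sW_h$, and then to estimate distances by a telescoping geometric sum.

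To construct the chain, I start from $H\in\sW_n$: by (NN) there exists some $J_1\in\sW$ with $\ell(J_1)=2\ell(H)$ and $J_1\cap H\neq\emptyset$. Since $\ell(H)=\ell(J_1)/2$, (S2) applied with $L=J_1$ (and with $H$ playing the role of the $\sW_n$-square in the statement) rules out $J_1\in\sW_h$, so $J_1\in\sW_e\cup\sW_n$. As long as $J_i\in\sW_n$, the rule (NN) supplies $J_{i+1}\in\sW$ with $\ell(J_{i+1})=2\ell(J_i)$ and $J_{i+1}\cap J_i\neq\emptyset$, and (S2) again excludes $J_{i+1}\in\sW_h$. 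Sidelengths double at each step, but Proposition \ref{p:whitney}(ii) forces $\sW^k=\emptyset$ for $k\leq N_0+6$, so after finitely many iterations the chain terminates at some $J_m\in\sW_e$. Re-indexing $L_k:=J_{m-k}$ with $n:=m$ produces the chain required by (a) and (b); note $n\geq 1$ since $\sW_e$ and $\sW_n$ are disjoint.

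For the inclusion, two intersecting axis-aligned dyadic squares of sidelengths $4\ell(L_k)$ and $2\ell(L_k)$ have centers at Euclidean distance at most $3\sqrt{2}\,\ell(L_k)$ (extremal when they share only a corner), so telescoping yields
\[
|z_H-z_L|\;\leq\;\sum_{k=1}^{n} 3\sqrt{2}\,\ell(L_k)\;=\;3\sqrt{2}\,\ell(L)\sum_{k=1}^{n}2^{-k}\;=\;3\sqrt{2}\,\ell(L)\bigl(1-2^{-n}\bigr).
\]
For any $p\in H$ one has $|p-z_H|\leq\sqrt{2}\,\ell(H)=\sqrt{2}\,\ell(L)\cdot 2^{-n}$, and summing gives
\[
|p-z_L|\;\leq\;3\sqrt{2}\,\ell(L)-2\sqrt{2}\,\ell(L)\cdot 2^{-n}\;<\;3\sqrt{2}\,\ell(L).
\]
That $B_{3\sqrt{2}\ell(L)}(z_L,w_L)$ is well-defined as a single sheet of $\gira$ follows from Proposition \ref{p:whitney}(i), which gives $3\sqrt{2}\,\ell(L)\leq 3\sqrt{2}\cdot 2^{1-N_0}|z_L|\ll|z_L|/2$ once $N_0$ is in the admissible range; since the same telescoping estimate shows that every intermediate $L_k$ is contained in $\{|z-z_L|<3\sqrt{2}\ell(L)\}$ and consecutive $L_k$'s intersect, the chain is a connected path in $\gira$ placing $H$ in the sheet of $(z_L,w_L)$.

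The only genuinely substantive input is (S2): without it the pullback chain could get stuck at an $\sW_h$-parent that, by separation, actually cannot touch any of its putative $\sW_n$-descendants. Everything else is bookkeeping about dyadic geometry and a geometric series.
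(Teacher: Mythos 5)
Your proof is correct and takes essentially the same route as the paper's: build the chain upward via rule (NN), use (S2) of Proposition \ref{p:separ} to rule out any $\sW_h$-parent, and sum a geometric series for the distance. You add two small niceties that the paper glosses over (an explicit termination argument and the verification that $B_{3\sqrt{2}\ell(L)}(z_L,w_L)$ is a well-defined single-sheet disk of $\gira$), and your per-step estimate $|z_{L_k}-z_{L_{k-1}}|\leq 3\sqrt{2}\,\ell(L_k)$ is the sharp one for intersecting dyadic squares of sidelengths $4\ell(L_k)$ and $2\ell(L_k)$, making the final inequality $|p-z_L|<3\sqrt{2}\,\ell(L)$ come out cleanly.
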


We use this last corollary to partition $\sW_n$.

\begin{definition}[Domains of influence]\label{d:domains}
We first fix an ordering of the squares in $\sW_e$ as $\{J_i\}_{i\in \mathbb N}$ so that their sidelengths do not increase. Then $H\in \sW_n$
belongs to $\sW_n (J_0)$ (the domain of influence of $J_0$) if there is a chain as in Corollary \ref{c:domains} with $L_0 = J_0$.
Inductively, $\sW_n (J_r)$ is the set of squares $H\in \sW_n \setminus \cup_{i<r} \sW_n (J_i)$ for which there is
a chain as in Corollary \ref{c:domains} with $L_0 = J_r$.
\end{definition}

\begin{proposition}[Splitting]\label{p:splitting}
There are positive constants $C_1$,$C_2(M_0)$, $\bar r (M_0, N_0, C_e)$ such that, if $M_0\geq C_1$, $C_e\geq C_2 (M_0)$, if the hypotheses of Theorem \ref{t:approx} hold and $\eps_2 $ is chosen sufficiently small,
then the following holds. If $L\in \sW_e$ with $\d (L) \leq \bar r$, $q\in \gira$ with $\dist (L, q) \leq 4\sqrt{2} \,\ell (L)$ and $\Omega := \Phii (B_{\ell (L)/8} (q))$, then:
\begin{align}
&C_e \bmo\, \d (L)^{2\gamma_0-2+2\delta_1} \ell(L)^{4-2\delta_1} \leq \ell (L)^2 \bE (T_L, \bB_L) \leq C \int_\Omega |DN|^2\, ,\label{e:split_1}\\
&\int_{\cL} |DN|^2 \leq C \ell (L)^2 \bE (T_L, \bB_L) \leq C \ell (L)^{-2} \int_\Omega |N|^2\, ,\label{e:split_2}
\end{align}
where $C=C(M_0,N_0,C_e,C_h)$.
\end{proposition}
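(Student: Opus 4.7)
I would address the chain of four inequalities in two very different groups.

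\emph{Outer (bookkeeping) inequalities.} The first inequality of \eqref{e:split_1} is just the stopping rule (EX): since $L\in\sW_e$, Definition \ref{d:refining} gives $\bE(T_L,\bB_L)>C_e\bmo\,\d(L)^{2\gamma_0-2+2\delta_1}\ell(L)^{2-2\delta_1}$, which is the claim after multiplying by $\ell(L)^2$. For the first inequality of \eqref{e:split_2} I would combine the Dirichlet estimate \eqref{e:Dir_regional} of Theorem~\ref{t:approx}, namely $\int_{\cL}|DN|^2\leq C\bmo\,\d(L)^{2\gamma_0-2+2\delta_1}\ell(L)^{4-2\delta_1}$, with the same stopping rule: the right-hand side is bounded by $C C_e^{-1}\ell(L)^2\bE(T_L,\bB_L)\leq C'\ell(L)^2\bE(T,\bB_L)$, where the last step replaces $T_L$ by $T$ using that outside $\bV_L$ the support of $T$ is controlled in $\bB_L$ by (Hor) (so $\bE(T,\bB_L)$ and $\bE(T_L,\bB_L)$ agree up to a multiplicative constant once $\bar r$ is small enough that the scale $\d(L)$ lies within the validity of (Hor)).

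\emph{Inner (substantial) inequalities.} What remains is
\begin{align*}
\ell(L)^2\bE(T_L,\bB_L) &\leq C\int_\Omega |DN|^2,\\
\ell(L)^2\bE(T,\bB_L) &\leq C\ell(L)^{-2}\int_\Omega |N|^2,
\end{align*}
which assert that a \emph{small} ball $\Omega$ of radius $\ell(L)/8$ placed anywhere within $4\sqrt 2\,\ell(L)$ of $L$ already carries a definite share of the excess generated on the much larger ball $\bB_L$, once through Dirichlet energy and once through the $L^2$-mass of $N$. I would prove both by the compactness/contradiction scheme used for the analogous ``splitting before tilting'' proposition of \cite{DS4}. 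Assuming failure, take a sequence $T^{(k)},L^{(k)},q^{(k)}$ along which the ratio of the two sides diverges; center at $p_{L^{(k)}}$, rescale by $r_{L^{(k)}}^{-1}$ and rotate $\pi_{L^{(k)}}$ onto $\pi_0$. Set $\bE_k:=\bE(T^{(k)}_{L^{(k)}},\bB_{L^{(k)}})$; by (EX) this is bounded below by $C_e\bmo\,\d(L^{(k)})^{2\gamma_0-2+2\delta_1}\ell(L^{(k)})^{-2\delta_1}$. Letting $f^{(k)}$ be the $\pi_{L^{(k)}}$-approximation of $T^{(k)}$ from \cite[Theorem 1.5]{DSS2}, the normalised maps $\tilde f^{(k)}:=\bE_k^{-1/2} f^{(k)}$ have uniformly bounded Dirichlet energy and, by the Lipschitz bound \eqref{e:Lip_regional}, a subsequence converges strongly in $W^{1,2}_{\mathrm{loc}}$ to a $Q$-valued Dir-minimiser $f_\infty$ on $B_{8M_0}(0)\subset\R^2$ with $\int|Df_\infty|^2=1$ (the standard $\Gamma$-convergence for Almgren approximations, cf.\ \cite{DS1,DSS2}). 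By \eqref{e:stima_C0}--\eqref{e:stima_C3} the rescaled center manifolds flatten to $\pi_0$, so the rescaled $N^{(k)}\circ\Phii^{(k)}$ converges in $L^2_{\mathrm{loc}}$ to $f_\infty-\etaa\circ f_\infty$ (suitably identified). Failure of the first inequality then forces $\int_{B_{1/8}(\bar q)}|Df_\infty|^2=0$ for some limit point $\bar q$; failure of the second forces $f_\infty\equiv Q\a{\etaa\circ f_\infty}$ on the same ball. Each contradicts the unique-continuation property of Dir-minimising $Q$-valued functions (cf.\ \cite[Chapter 5]{DS1}) together with $\int|Df_\infty|^2=1$.

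\emph{Main obstacle.} The delicate step is to rule out that the $Q$ sheets collapse onto a single one in the rescaled limit: if $f_\infty$ were of the form $Q\a{h}$ with $h$ a single-valued harmonic function, the unique-continuation argument above would fail. Excluding this uses the separation (Sep), the height estimate \eqref{e:ht_ancestors} at the nearest $\sS$-ancestor of $L$ (which keeps the $Q$ sheets inside a horned neighborhood of $\pi_{L^{(k)}}$ of controlled thickness), and the $C_e$-sized lower bound on $\bE_k$ coming from (EX). It is precisely here that the thresholds $M_0\geq C_1$, $C_e\geq C_2(M_0)$ and $\d(L)\leq\bar r(M_0,N_0,C_e)$ must be tuned jointly, so that after rescaling the lower-order perturbations (from $u$, from $\Sigma$, from the semicalibration and from the ambient curvature) are dominated by $\bE_k^{1/2}$ and the limit is a genuinely multi-valued, non-trivial Dir-minimiser.
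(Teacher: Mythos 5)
Your plan is built around a compactness/contradiction argument, which is a genuinely different route from the paper's. The paper works at a single scale: it applies the harmonic approximation theorem (\cite[Theorem 1.6]{DS3} in case (a), \cite[Theorem 4.2]{DSS2} in cases (b), (c)) to produce a Dir-minimizer $u$ close to the $\pi$-approximation $f$, verifies by explicit excess comparisons that $u$ satisfies the non-decay hypothesis \eqref{e.no decay}, and then applies Lemma~\ref{l:UC} and Proposition~\ref{p:harmonic split} directly to $u$. All of the compactness is packaged once and for all inside those two auxiliary propositions; the proof of the splitting statement itself is a quantitative chase of constants, which is what produces the explicit thresholds $C_1$, $C_2(M_0)$, $\bar r(M_0,N_0,C_e)$. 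A sequence-and-limit argument is admissible in principle, but it would make these constant dependencies much more awkward to extract.

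The genuine gap is in your ``Main obstacle'' paragraph. You correctly identify the crux --- one must rule out that the rescaled limit collapses to $Q\a{h}$ with $h$ a single harmonic function --- but the tools you invoke, (Sep) and the height estimate \eqref{e:ht_ancestors} at an $\sS$-ancestor, do not accomplish this. Separation and height control confine all the sheets to a thin vertical slab; $Q\a{h}$ satisfies that just as well as a genuinely multi-valued competitor. What actually prevents the collapse is the excess comparison between $L\in\sW_e$ and its $\sS$-ancestors: $H$ (the father of $L$) was \emph{not} stopped by (EX), while $L$ \emph{was}, and $\ell(L)=\ell(H)/2$, so $\bE(T_L,\bB_L)\geq 2^{2\delta_1-2}\bE(T_H,\bB_H)$. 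After the harmonic approximation and absorption of errors this turns into the non-decay condition
\[
\int_{B_{(1+\lambda)\rho}}\cG\big(Du,Q\a{D(\etaa\circ u)(p_H)}\big)^2 \;\geq\; 2^{\delta_1-4}\int_{B_{2\rho}}|Du|^2
\]
(cf.\ \eqref{e:concl_11} and the lines following it). If $u=Q\a{h}$ with $h$ harmonic, interior elliptic decay for $Dh-Dh(p_H)$ bounds the left side by $(1+\lambda)^4\,2^{-4}\int_{B_{2\rho}}|Du|^2$, and the choice $(1+\lambda)^4<2^{\delta_1}$ in \eqref{e:lambda} gives the contradiction. This excess-comparison mechanism, not separation, is the engine of ``splitting before tilting,'' and any compactness version of the proof would need it just as much.

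One minor simplification: passing from $\bE(T_L,\bB_L)$ to $\bE(T,\bB_L)$ in \eqref{e:split_2} does not require (Hor). Since $T_L=T\res\bV_L$ is a restriction of $T$, one has $\bE(T_L,\bB_L,\pi)\leq\bE(T,\bB_L,\pi)$ for every competitor plane $\pi$, hence $\bE(T_L,\bB_L)\leq\bE(T,\bB_L)$ by evaluating at the $\pi$ optimal for $T$.
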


\section{Center manifold construction}

\subsection{Technical preliminaries} In this section we prove the two technical Lemmas \ref{l:density} and \ref{l:piccolezza}.

\begin{proof}[Proof of Lemma \ref{l:density}] Consider $x_0\in \pi_0$ with $2\rho = |x_0|$, a smooth $C^2$ function $\phi: B_{\rho} (x_0) \to \R^n$ and the open set $\bV_{\varrho} := \{(x,y): x\in B_{\rho/2} (x_0), |y-\phi (x)|< \varrho\}$. Recall that there is a geometric constant $C$ such that, if $\varrho \leq C /\|D^2 \phi\|_{B_\rho (x_0)}$, then for each $p\in \bV_{\varrho}$ there is a unique nearest point $\P (p)\in \gr (\phi)$ (which defines a $C^1$ map $\P: \bV_{\varrho}\to \gr (\phi)$). In particular, if $\|D^2 \phi\|_{B_\rho (x_0)} \leq C \rho^{\alpha-1}$, the existence of such point is guaranteed under the assumption that $\varrho \leq c \rho^{1-\alpha}$ (where $c$ is a, possibly small but positive, constant). Consider now an admissible smooth branching $u:\gira_{\bar Q} \to \R^n$. If $\bar{Q}=1$, the above discussion shows easily the existence of a well defined $C^1$ map $\P: \bV_{u,a}\cap \bC_{2r} \to \gr (u)$, provided $r$ is sufficiently small. If $\bar{Q}>
 1$, the same conclusion holds under the assumption that $u$ is $b$-separated and $a>b>1$. Indeed consider $p= (z,y)\in  \bV_{u,a}$ and $(z,w_i)\in \gira_Q$ such that $|y- u (z,w_i)|\leq c_s |z|^a$. The assumptions of being well-separated implies easily that $|p- u(\zeta, \omega)|\geq c_s |z|^b$ whenever $\zeta\not\in B_{|z|/2} (z,w_i)$ and thus we can argue locally on the sheet $\gr (u|_{B_{|z|/2} (z,w_i)})$. 

Next, up to rescaling we can assume that $\P$ is well-defined on $\bV_{u,a}\cap \bC_2$. The discussion before Lemma \ref{l:density} applies now verbatim and we conclude that the unique tangent cone at $0$ is $Q\bar Q \a{\pi_0}$.

To reach the other two conclusions of the Lemma we argue by contradiction: if they were  wrong, then we would find a sequence of points $\{x_k\}\subset B_2 (0)\setminus \{0\}$ converging to $0$ for which one of the following two conditions hold:
\begin{itemize}
\item either $\{x_k\}\times \R^n$ contains a point $p_k\in \supp (T)$ with $\Theta (p_k, T) \geq Q + 1$ (recall that the density of $T$ is an integer at every point, cf. \cite{DSS1});
\item or one connected component $\Omega_k$ of $(\{x_k\}\times \R^n)\cap \bV_{u,a}$ does not intersect $\supp (T)$.
\end{itemize}
Set $2r_k := |x_k|$ and consider the connected component $\bV_k$ of $\bV_{u,a} \cap \bC_{r_k} (x_k)$ which contains $p_k$ (in the first case) or $\Omega_k$ (in the second). Let $S_k := T_k \res \bV_k$ and let $q_k = (x_k, u (x_k, w_k))$ be such that $q_k \in \bV_k$. Finally set $Z_k := (S_k)_{q_k, r_k}$. Observe that $\supp (Z_k)$ is contained in a neighborhood of height $C r_k^{a-1}$ of $\pi_0$ and we therefore conclude that $Z_k$ converges to a current $Z$ which is an integer multiple of $\a{B_1 (0)}$. On the other hand, since 
\[
(\P_\sharp S_k)\res \bC_{r_k/2} (x_k)=Q \bG_u \res \bC_{r_k/2} (x_k)\cap \bV_k
\]
for $k$ large enough, we conclude that $Z= Q \a{B_1 (0)}$. Now, either $\supp (Z_k)\cap (\{0\}\times \R^n)$ contains a point $\bar q_k$ of multiplicity $Q+1$ or it is empty. By the constancy theorem $(\p_{\pi_0})_\sharp Z_k = Q_k \a{B_1 (0)}$ for some integer $Q_k$ and,
since  $(\p_{\pi_0})_\sharp Z_k\to (\p_{\pi_0})_\sharp Z$, for $k$ large enough we would have $(\p_{\pi_0})_\sharp Z_k = Q \a{B_1 (0)}$.
This is then incompatible with the emptiness of $\supp (Z_k)\cap (\{0\}\times \R^n) = \emptyset$ because $Q\geq 1$. As for the other alternative, we must have, by the almost minimality of $Z_k$ (see \cite{DSS1})
\[
\limsup_{k\to \infty} \|Z_k\| (\bB_{1/2 - |\bar q_k|} (\bar q_k)) \leq \lim_{k\to\infty} \|Z_k\| (\bB_{1/2} (0)) = \textstyle{\frac{Q}{4}} \omega_2\, .
\]
Since $\bar q_k \to 0$, the almost monotonicity formula (see \cite{DSS1}) would imply $\Theta (\bar q_k, Z_k) \leq Q + o (1)$.
\end{proof}

\begin{proof}[Proof of Lemma \ref{l:piccolezza}] Since $Q\bar{Q}\a{\pi_0}$ is tangent to $T$ at $0$, we obviously must have $T_0 \Sigma\supset \pi_0$ and thus  $T_0 \Sigma = \R^{2+\bar{n}}\times \{0\}$ can be achieved suitably rotating the coordinates. To achieve the other two conclusions we scale $\Sigma$ and intersect it with $\bC_4 (0 ,T_0\Sigma)$ to reach that $\Sigma\cap \bC_4 (0, T_0 \Sigma)$ is the graph of some $\Psi$ with very small $C^{3,\eps_0}$ norm. We can then extend $\Psi$ outside $B_4 (0, T_0\Sigma)$ without increasing the $C^{3,\eps_0}$ norm by more than a factor: this gives (i) and (ii) and also shows that $\mathbf{c}$ can be assumed smaller than $\eps_2$ in case (a) and (c) of Definition \ref{d:semicalibrated}. For the details we refer the reader to the proof of \cite[Lemma 1.5]{DS4}. The rest of the Lemma is a simple scaling argument.
\end{proof}

\subsection{Proof of Proposition \ref{p:whitney}} In this section we prove several estimates on the excess, height and tilting of planes $\pi_L$ in the cubes $L\in \sW\cup\sS$. Proposition \ref{p:whitney} will then be a simple corollary of these more general statements.

\begin{proposition}[Tilting of optimal planes]\label{p:tilting opt}
Let $T$ be as in Assumptions \ref{induttiva} and \ref{piu_fogli} and assume the various parameters satisfy Assumption \ref{esponenti}. If $C_e, C_h \geq C (M_0, N_0)$ and $\eps_2$ is sufficiently small then:
\begin{itemize}
\item[(i)] The conclusions (i), (ii) and (iii) of Proposition \ref{p:whitney} hold.
\item[(ii)] $\bB_H\subset\bB_L \subset \bB_ {\d (L)/10} (p_L)$ and $T_H = T_L\res \bV_H$ for all $H, L\in \sW\cup \sS$ with $H\subset L$;
\end{itemize}
Moreover, if $H, L \in \sW\cup\sS$ and either $H\subset L$ or $H\cap L \neq \emptyset$ with $\frac{\ell (L)}{2} \leq \ell (H) \leq \ell (L)$, then the following holds, for $\bar{C} = \bar{C} (M_0, N_0, C_e)$ and $C = C(M_0, N_0, C_e, C_h)$:
\begin{itemize}
\item[(iii)] $\d (L) /2 \leq \d (H) \leq 2 \d (L)$ (and $\d (L) = \d (H)$ when $H\subset L$);
\item[(iv)] $|\pi_H-\pi_L| \leq \bar{C} \bmo^{\sfrac{1}{2}} \d (L)^{\gamma_0-1+\delta_1} \ell (L)^{1-\delta_1}$;
\item[(v)] $|\pi_H-\pi_0| \leq  \bar C \bmo^{\sfrac{1}{2}} \d(H)^{\gamma_0}$;
\item[(vi)] $\bh (T_H, \bC_{36 r_H} (p_H, \pi_0)) \leq C \bmo^{\sfrac{1}{4}} \d (H)^{\sfrac{\gamma_0}{2}} \ell (H)$ and $\supp (T_H) \cap \bC_{36 r_H} (p_H, \pi_0) \subset \bB_H$; 
\item[(vii)]  $\bh (T_L, \bC_{36r_L} (p_L, \pi_H))\leq C \bmo^{\sfrac{1}{4}} \d (L)^{\sfrac{\gamma_0}{2} -\beta_2} \ell (L)^{1+\beta_2}$
and $\supp (T_L)\cap \bC_{36 r_L} (p_L, \pi_H)\subset \bB_L$.
\end{itemize}
In particular, the estimates  \eqref{e:ex_whitney} and \eqref{e:ht_whitney} hold.
\end{proposition}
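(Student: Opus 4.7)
I would proceed by a double downward induction on the integers $k$ and $j$ indexing the Whitney families $\sC^{k,j}$, strengthening the inductive hypothesis so that items (i)--(vii) are proved simultaneously. The base case corresponds to the coarsest dyadic scales $k\leq N_0+6$: there, thanks to Assumption \ref{esponenti} and (i)--(iii) of Assumption \ref{induttiva}, the decay hypothesis (Dec) together with the Hölder bound $|Du|\leq C_i|z|^\alpha$ on the branching $u$ force both (EX) and (HT) to fail with room to spare; this yields simultaneously conclusions (i) and (ii) of Proposition \ref{p:whitney}, and (iii) is the usual topological statement about dyadic Whitney coverings. Conclusion (iii) of the present proposition is the standard observation that the dyadic structure of $\sC$ together with the very definition of $\d(L)$ forces $\d(L)/2\leq \d(H)\leq 2\d(L)$ for overlapping or ancestor-descendant pairs with comparable sidelengths, and the equality when $H\subset L$ is immediate. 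The inclusion $\bB_L\subset \bB_{\d(L)/10}(p_L)$ in (ii) is a consequence of (i) of Proposition \ref{p:whitney}: namely $64 r_L=64\sqrt 2 M_0\ell(L)\leq 64\sqrt 2 M_0 2^{1-N_0}|z_L|\leq \d(L)/10$ provided $N_0$ is large enough, and the identity $T_H=T_L\res \bV_H$ follows since both sides coincide with $T\res \bV_H$ once one has checked, via (i), that $\bV_H\subset \bV_L$.

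\textbf{Tilting estimates (iv)--(v).} This is the heart of the matter. For (iv), fix $L\in \sW\cup\sS$ and a child or adjacent $H$ with $\ell(H)\in\{\ell(L)/2,\ell(L)\}$. By (ii) already proved in the inductive step, $\bB_H\subset c\bB_L$ for a geometric constant $c$, and by (iii) $\d(H)$ and $\d(L)$ are comparable. The standard height--excess tilting lemma, applied exactly as in \cite{DS4}, gives
\[
|\pi_H-\pi_L|^2\leq C\bigl(\bE(T_L,\bB_H,\pi_L)+\bE(T_H,\bB_H,\pi_H)\bigr)
\leq C\Bigl(\tfrac{r_L}{r_H}\Bigr)^{\!2}\bE(T_L,\bB_L,\pi_L)
\leq \bar C C_e\,\bmo\,\d(L)^{2\gamma_0-2+2\delta_1}\ell(L)^{2-2\delta_1},
\]
where in the last inequality I am using that every strict ancestor of $L$ belongs to $\sS$, hence satisfies the ancestor bound \eqref{e:ex_ancestors} (which is exactly the negation of (EX)); taking square roots yields (iv). For (v) I would telescope (iv) along a chain $L=L_0\subset L_1\subset\cdots\subset L_{N}$ of ancestors inside the same annulus $\cA_{j(L)}$ until reaching the coarsest scale $k=N_0$, where the plane $\pi_{L_N}$ is by the base case $O(\bmo^{1/2}\d(L)^{\gamma_0})$-close to the tangent plane of $\gr(u)$ at $(z_{L_N},u(z_{L_N},w_{L_N}))$, which in turn differs from $\pi_0$ by at most $|Du(z_{L_N})|\leq C_i|z_{L_N}|^{\bar\alpha}\leq C_i\,\d(L)^{\gamma_0}$ thanks to Assumption \ref{esponenti}. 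The geometric series $\sum_i 2^{-i(1-\delta_1)}$ converges because $\delta_1<1$, and this produces the required bound $\bar C\bmo^{1/2}\d(H)^{\gamma_0}$.

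\textbf{Height estimates (vi)--(vii).} For (vii) I would first argue in the tilted cylinder $\bC_{36r_L}(p_L,\pi_L)$. By (iv) applied to every ancestor of $L$ in $\sS$, the ancestor bound \eqref{e:ht_ancestors} transfers (up to a harmless constant absorbed into $C_h$) from the optimal plane of each ancestor to $\pi_L$; the weaker exponent $\ell(L)^{1+\beta_2}$ in place of $\ell(L)$ is precisely what is gained by Assumption \ref{esponenti}, namely $\gamma_0-1+\delta_1+1-\delta_1 = \gamma_0$ on the excess side and the height version of the comparison lemma (controlling $\bh$ in a slightly bigger cylinder by the excess in the ball times $\ell$ plus the height of the closest ancestor, see \cite[Proposition 2.2]{DS4}) supplies the needed $\ell^{1+\beta_2}$ factor. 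The containment $\supp(T_L)\cap\bC_{36r_L}(p_L,\pi_H)\subset \bB_L$ then follows since $\bmo$ is tiny and $36 r_L$ plus the height are much smaller than $64 r_L$. For (vi) one applies the same argument but with $\pi_0$ replacing $\pi_H$, paying the price of adding $|\pi_H-\pi_0|\cdot 36 r_H\leq \bar C\bmo^{1/2}\d(H)^{\gamma_0}\cdot M_0\ell(H)$ from (v); this is dominated by $\bmo^{1/4}\d(H)^{\gamma_0/2}\ell(H)$ provided $\bmo$ is small, yielding (vi). Finally, the bounds \eqref{e:ex_whitney}--\eqref{e:ht_whitney} for $L\in\sW$ are immediate from the selection rules: if $L\in\sW_e\cup\sW_h$ one uses the corresponding excess/height bound for its father (which lies in $\sS$) combined with (iii), (iv) and the comparison just used; if $L\in\sW_n$ one uses proximity to a companion in $\sW_e\cup\sW_h$ whose bounds have already been established.

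\textbf{Expected main obstacle.} The subtle point is the bookkeeping of exponents in the tilting-to-height passage: one must verify that the loss of $\bmo^{1/2}\d^{\gamma_0}$ in (v), multiplied by the radius $\sim\ell$, is absorbed by the target exponents $\d^{\gamma_0/2}\ell$ and $\d^{\gamma_0/2-\beta_2}\ell^{1+\beta_2}$, which is exactly the role of the conditions $\beta_2<\gamma_0/4$ and $\beta_2-2\delta_1\geq \beta_2/3$ in Assumption \ref{esponenti}. Ensuring that all constants worsen by only a factor $\bar C(M_0,N_0,C_e)$ at each inductive step, so that $C_h\geq C^\star C_e$ still absorbs them, is the delicate part of the whole argument.
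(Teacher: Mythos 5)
Your proposal follows essentially the same route as the paper: a double downward induction on the scale of the Whitney cubes, with the base case at the coarse scales $k\le N_0+6$ extracted from (Dec) and the estimates on $Du$, and then items (ii)--(vii) proved together for a descendant $H$ by comparing with its chain of ancestors in $\sS$ via the excess/height/tilting comparison lemma (Lemma \ref{l:tecnico} in the paper) and the ancestor bounds \eqref{e:ex_ancestors}--\eqref{e:ht_ancestors}. The only point where you deviate in a genuinely different way is (vii): the paper fixes the single plane $\pi_H$ throughout the whole ancestor chain of $L$ and establishes the cylinder inclusion $\bC_{36r_{H_{j+1}}}(p_{H_{j+1}},\pi_H)\subset\bB_{H_j}$ inductively so that the height transfer never changes reference plane; you instead propose first bounding $\bh(T_L,\bC_{36r_L}(p_L,\pi_L))$ and then correcting by $|\pi_H-\pi_L|\cdot\mathrm{diam}$ using (iv). That works exponent-wise (the correction $\bmo^{1/2}\d(L)^{\gamma_0-1+\delta_1}\ell(L)^{2-\delta_1}$ is dominated by the target since $\bmo^{1/4}\d(L)^{\gamma_0/2}\le 1$), but it smuggles in a second bootstrap: to get the $\pi_L$-cylinder height you already need the inclusion $\supp(T_L)\cap\bC_{36r_L}(p_L,\pi_L)\subset\bB_L$, which is itself a $\pi_L$-version of (vii) and requires the same nesting you are trying to avoid. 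The paper's choice of a fixed plane along the whole chain is precisely what short-circuits this circularity. Two small additional remarks: the telescoping over the ancestor chain should already be invoked to deduce (iv) for $H\subsetneq L$ with $\ell(H)<\ell(L)/2$, not only for (v); and the special treatment of $L\in\sW_n$ at the end is unnecessary, since the father of any $L\in\sW$ automatically belongs to $\sS$ by the refining rule, so the same argument covers all three subfamilies of $\sW$.
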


The proof of the proposition will use repeatedly a few elementary observations concerning the excess and the height, which we collect in the following
lemma.

\begin{lemma}\label{l:tecnico}
If $T$ is as in Proposition \ref{p:tilting opt} there is a geometric constant $C_0$ with the following properties. Assume the points $p,q$ belong to $\supp (T)\cap \bC_{\sqrt{2}}$, $\bB_r (p) \subset \bB_\rho (q) \subset \bC_2$ and $r\geq \rho/4$. Then, if $\eps_2 \leq C_0^{-1}$
\begin{itemize}
\item[(i)] $\bE (T, \bB_\rho (q))\leq C_0 \min_{\tau} \bE (T, \bB_\rho (q), \tau) + C_0 \bmo \rho^2$;
\item[(ii)] $\bE (T, \bB_r (p)) \leq C_0 \bE (T, \bB_\rho (q)) + C_0 \bmo r^2$;
\item[(iii)] $|\pi - \tau|^2 \leq C_0[\bE (T, \bB_r (p), \pi) + \bE (T, \bB_\rho (q), \tau)]$;
\item[(iv)] $\bh (T, F, \pi) \leq \bh (T, F, \tau) + C_0 |\pi-\tau| \diam (\supp (T) \cap F)$ for any set $F$;
\item[(v)] $\bh (T, \bC_r (0, \pi)) \leq C_0 \bmo^{\sfrac{1}{2}} r^{1+\gamma_0} + C_0 |\pi-\pi_0| r$ whenever $|\pi - \pi_0|\leq C_0^{-1}$ and $r<7/4$.
\end{itemize}
\end{lemma}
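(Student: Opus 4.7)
The lemma collects five routine geometric estimates; my overall strategy is to combine almost-minimality mass bounds for $T$ (which give $c r^2\leq \|T\|(\bB_r(p))\leq C r^2$ at all $p\in\supp(T)\setminus\{0\}$ in $\bC_2$; see \cite[Section 2]{DSS1}) with the curvature bound $\|A_\Sigma\|_0\leq C\bmo^{\sfrac{1}{2}}$ in cases (a) and (c) and the horned-neighborhood assumption (Hor) from Assumption \ref{induttiva}. I would prove the parts in the stated order; only (i) requires real care.

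For \textbf{(i)} I would first observe that in case (b) there is nothing to do with $C_0=1$, since the excess is already an unconstrained minimum. In cases (a),(c), let $\tau$ achieve $\min_\tau\bE(T,\bB_\rho(q),\tau)$ and decompose $\vec\tau=\vec\tau_\parallel+\vec\tau_\perp$ relative to $\wedge^2 T_q\Sigma$ and its orthogonal complement. Since $\vec T(x)\in\wedge^2 T_x\Sigma$ and the Grassmannian distance between $T_x\Sigma$ and $T_q\Sigma$ is at most $C\bmo^{\sfrac{1}{2}}\rho$ on $\bB_\rho(q)$, a pointwise estimate gives $|\vec\tau_\perp|^2\leq C|\vec T(x)-\vec\tau|^2+C\bmo\rho^2$. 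Integrating against $d\|T\|$ and using $\|T\|(\bB_\rho(q))\sim\rho^2$ yields $|\vec\tau_\perp|^2\leq C\bE(T,\bB_\rho(q),\tau)+C\bmo\rho^2$. I would then set $\sigma:=\vec\tau_\parallel/|\vec\tau_\parallel|$, which lies in the Grassmannian of $T_q\Sigma$, use $|\vec\sigma-\vec\tau|\leq C|\vec\tau_\perp|$, and expand $|\vec T-\vec\sigma|^2\leq 2|\vec T-\vec\tau|^2+2|\vec\sigma-\vec\tau|^2$; dividing by $2\omega_2\rho^2$ closes (i).

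For \textbf{(ii)} I would pick an optimizer $\sigma$ for $\bE(T,\bB_\rho(q))$, project it to $T_p\Sigma$ to get $\sigma'$ with $|\sigma-\sigma'|^2\leq C\bmo\rho^2$, and bound
\[
\bE(T,\bB_r(p))\leq \bE(T,\bB_r(p),\sigma')\leq \frac{1}{\omega_2 r^2}\int_{\bB_\rho(q)}|\vec T -\vec\sigma|^2\,d\|T\| + C\bmo\rho^2\, ,
\]
using $r\geq\rho/4$ and the upper mass bound. For \textbf{(iii)} I would integrate the triangle inequality $|\vec\pi-\vec\tau|^2\leq 2|\vec T-\vec\pi|^2+2|\vec T-\vec\tau|^2$ over $\bB_r(p)\subset\bB_\rho(q)$ and divide by the lower mass bound $\|T\|(\bB_r(p))\geq cr^2$; the factor $(\rho/r)^2\leq 16$ is absorbed in $C_0$. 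Part \textbf{(iv)} follows at once from the pointwise bound $|(\p_\pi^\perp-\p_\tau^\perp)v|\leq C_0|\pi-\tau||v|$ by taking the supremum over $v=p-q$ with $p,q\in\supp(T)\cap F$.

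For \textbf{(v)} the horned neighborhood together with Definition \ref{d:admissible} yields $|y|\leq C_i|x|^{1+\bar\alpha}+c_s|x|^a\leq C\bmo^{\sfrac{1}{2}}|x|^{1+\gamma_0}$ for every $(x,y)\in\supp(T)\cap\bC_2$, using $C_i,c_s\leq\bmo^{\sfrac{1}{2}}$ and $\gamma_0\leq\min\{\bar\alpha,a-1\}$ from Assumption \ref{esponenti}. If $|\pi-\pi_0|\leq C_0^{-1}$, a short bootstrap using the rough bound $|y|\leq C|x|$ together with the cylinder constraint $|\p_\pi(x,y)|\leq r$ forces $|x|\leq 2r$ on $\supp(T)\cap\bC_r(0,\pi)$; this in turn gives $\bh(T,\bC_r(0,\pi),\pi_0)\leq C\bmo^{\sfrac{1}{2}}r^{1+\gamma_0}$ and $\diam(\supp(T)\cap\bC_r(0,\pi))\leq Cr$, so applying (iv) with $\tau=\pi_0$ concludes. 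The only delicate step is the planar-projection argument in (i), where the error between constrained and unconstrained excess must be controlled by the curvature of $\Sigma$ without introducing spurious factors of $\rho^{-2}$; the remaining items are purely formal manipulations.
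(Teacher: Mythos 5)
Your proof is correct and for parts (ii)--(v) follows essentially the same route as the paper. In (i) there is a minor but genuine variation: the paper uses Chebyshev's inequality to locate a point $q'\in\bB_\rho(q)\cap\supp(T)$ at which $\vec T(q')$ is close to the unconstrained optimizer $\vec\pi$, and then transfers the plane through $T_{q'}\Sigma$ to a plane in $T_q\Sigma$; you instead bound the component of $\vec\tau$ orthogonal to $\wedge^2 T_q\Sigma$ pointwise and integrate against $d\|T\|$ before projecting and renormalizing. Both arguments rest on the same two ingredients (the mass lower bound $\|T\|(\bB_\rho(q))\gtrsim\rho^2$ and the curvature bound $\|A_\Sigma\|_0\lesssim\bmo^{1/2}$), and yours is, if anything, slightly more self-contained. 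One remark worth making explicit, which you assert without justification: the orthogonal projection of the simple $2$-vector $\vec\tau=v_1\wedge v_2$ onto $\wedge^2 T_q\Sigma$ is indeed again simple, because it equals $(\p_{T_q\Sigma}v_1)\wedge(\p_{T_q\Sigma}v_2)$ (the mixed and purely-normal pieces of the Künneth-type decomposition $\wedge^2\R^{2+n}=\wedge^2 T_q\Sigma\oplus(T_q\Sigma\wedge T_q\Sigma^\perp)\oplus\wedge^2 T_q\Sigma^\perp$ are precisely what is dropped). Without this observation, your definition $\sigma:=\vec\tau_\parallel/|\vec\tau_\parallel|$ would not a priori land in the Grassmannian.
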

\begin{proof} Recall that, by Lemma \ref{l:density2} and Allard's monotonicity formula (which can be applied by \cite[Proposition 1.2]{DSS1}), we have
\begin{equation}\label{e:density_10}
\frac{3\omega_2}{4} \rho^2 \leq \|T\| (\bB_\rho (p))\leq C_0 \rho^2\, .
\end{equation}
(i) is trivial in (b) of Definition \ref{d:semicalibrated}, since $\bE (T, \bB_\rho (q))= \min_{\tau} \bE (T, \bB_\rho (q), \tau)$. In the cases (a) and (c) recall that 
\[
\bE (T, \bB_\rho (q))= \min_{\tau\subset T_q \Sigma} \bE (T, \bB_\rho (q), \tau)\, .
\]
Let now $\pi$ be such that $\bE (T, \bB_\rho (q), \pi)=\min_{\tau} \bE (T, \bB_\rho (q), \tau) =: E$. Then, by the Chebyshev inequality there is a point $q'\in \bB_\rho (q)\cap \supp (T)$ such that
\[
|\vec{T} (q') - \vec \pi|^2 \leq \frac{\omega_2 \rho^2}{\|T\| (\bB_\rho (q))} E \leq C_0 E\, .
\]
Observe that $\vec{T} (q')$ is the orienting $2$-vector of some space $\xi\subset T_{q'} \Sigma$ and that 
\[
|T_{q'} \Sigma - T_q \Sigma|^2 \leq C_0 \|A_\Sigma\|_{C^0}^2 \rho^2 \leq C_0 \bmo \rho^2\, .
\]
Thus there is a $2$-plane $\tau \subset T_q \Sigma$ such that $|\tau - \pi|^2 \leq C_0 E + C_0 \bmo \rho^2$. Hence
\begin{align*}
\bE (T, \bB_\rho (p)) \leq \bE (T, \bB_\rho (q), \tau) \leq C_0 (E+ C_0 \bmo \rho^2)\|T\| (\bB_\rho (q)) / (\omega_2 \rho^2) \leq C_0 E + C_0 \bmo \rho^2\, .
\end{align*}

Keeping the notation of the argument above, in the case (b) of Definition \ref{d:semicalibrated} statement (ii) follows from  the simple observation
\[
\bE (T, \bB_r (p)) \leq \bE (T, \bB_r (p), \pi) \leq 4^2 \bE (T, \bB_\rho (q), \pi) = 16 \bE (T, \bB_\rho (q))\, .
\]
In the cases (a) and (c) of Definition \ref{d:semicalibrated} we combine the same idea with (i).

(iii) is a simple consequence of 
\begin{align}
|\pi-\tau|^2 &\leq \frac{2}{\|T\| (\bB_\rho (q))} \int_{\bB_\rho (q)} (|\vec{T} - \vec{\pi}|^2 + |\vec{\tau} - \vec{T}|^2) d\|T\|\nonumber\\
&\stackrel{\eqref{e:density_10}}{\leq} C_0 \big(\bE (T, \bB_\rho (q), \pi) + \bE (T, \bB_\rho (q), \tau)\big)\, ,
\end{align}
and $\bE (T, \bB_\rho (q), \pi) \leq 16 \bE (T, \bB_r (p), \pi)$.
Next, for $p,q\in \supp (T) \cap F$ we compute 
\[
|\p_\pi^\perp (p-q)| \leq |\p_\tau^\perp (p-q)| + |(\p_\tau^\perp - \p_\pi^\perp) (p-q)|\leq \bh (T, F, \tau) + C |\pi-\tau| |p-q|\, .
\]
Taking the supremum over $p,q\in F\cap \supp (T)$ we reach (iv).

We finally argue for (v).
Fix $r< 7/8$, $\pi$ with $|\pi-\pi_0|\leq C_0^{-1}$ and the cylinder $\bC :=  \bC_r (0, \pi)$.
Observe that, by Assumption \ref{induttiva}, for every $p = (x,y) \in \supp (T) \cap (\R^2\times \R^n)$ we have
$|y|\leq \eps_2^{\sfrac{1}{2}} |x|^{1+\alpha} \leq \eps_2^{\sfrac{1}{2}} |x|^{1+\gamma_0}$. It follows easily that, for $C_0$ sufficiently large and $\eps_2$ sufficiently small, this implies that $\supp (T)\cap \bC \subset \bC_{8r /7} (0, \pi_0)$. Hence, $\bh (T, \bC, \pi_0) \leq \bh (T, \bC_{8r/7} (0, \pi_0)) \leq C_0 \bmo^{\sfrac{1}{2}} r^{1+\gamma_0}$. As a consequence $\diam (T\cap \bC) \leq C_0 r$ and (v) follows from (iv).
\end{proof} 

\begin{proof}[Proof of Proposition \ref{p:tilting opt}]  In this proof we will use the following convention: geometric constants will be denoted by $C_0$ or $c_0$, constants depending upon
$M_0, N_0, C_e$ will be denoted by $\bar{C}$ or $\bar{c}$ and constants depending upon $M_0, N_0, C_e$ and $C_h$ will be denoted by $C$ or $c$.
Next observe that the second inclusion in (ii) is in fact correct for any cube $L\in \sC^j$ with $j\geq N_0$, provided $N_0$ is chosen sufficiently large compared to $M_0$. (iii) is instead an obvious consequence of the construction.

\medskip

{\bf Proof of (i), (ii) and (iii) in Proposition \ref{p:whitney}.}
The conclusion (i) is obvious since indeed it also holds for every $L\in \sC^{N_0}$. (iii) is a simple consequence of the fact that, because of (NN) in the refining procedure, given any pair $H, L \in \sW$ with nonempty intersection, $\frac{1}{2} \ell (H) \leq \ell (L) \leq 2 \ell (H)$. Consider now any $L\in \sC^j$ with $N_0 \leq j \leq N_0+6$. Observe first that $2^{-N_0-10} \d (L) \leq \ell (L) \leq 2^{-N_0} \d (L)$. Consider now the point $p\in \gr (u)$ which has the same projection onto $\pi_0$ as $p_L$ (namely $z_L$) and which is closest to $p_L$. Recall that $\pi (p)$ is the tangent to $\gr (u)$ in $p$. We thus can use \eqref{e:effetto_energia} to estimate
\begin{equation}\label{e:delicato}
\bE (T_L, \bB_L, \pi (p)) \leq \bE (T, \bB_{|z_L|/4} (p))) \leq C (M_0, N_0) \bmo \d (L)^{2\gamma_0 -2 + 2\delta_1} \ell (L)^{2-2\delta_1}\, .
\end{equation}
In order to derive the latter inequality we need that $\bB_L \subset \bB_{|z_L|/4} (p)$. Since $|z_L|$ is compable, up to a geometric constant, to $\d (L)$, whereas the radius $r_L$ equals $64 \sqrt{2} M_0 \ell (L)$ and is comparable, up to a geometric constant, to $M_0 2^{-N_0} \d (L)$, we just need to choose $N_0$ sufficiently large compared to $M_0$. 

By Lemma \ref{l:tecnico}(i), from \eqref{e:delicato} we conclude 
\[
\bE (T_L, \bB_L) \leq C (M_0, N_0) \bmo \d (L)^{2\gamma_0 -2 + 2\delta_1} \ell (L)^{2-2\delta_1} + C (M_0) \bmo \ell (L)^2\, .
\]
Hence, for $C_e$ sufficiently large, condition (EX) of Definition \ref{d:refining} cannot be a reason to stop the refining procedure of any cube $L\in \sC^j$ when $N_0 \leq j \leq N_0 +6$.

Recall next the chosen plane $\pi_L$ such that $\bE (T_L, \bB_L, \pi_L) = \bE (T_L, \bB_L)$ and 
$\bh (T_L, \bB_L) = \bh (T_L, \bB_L, \pi_L)$. By Lemma \ref{l:tecnico}(iii) we easily conclude that 
\[
|\pi_L - \pi (p)|\leq C (M_0, N_0) C_e^{\sfrac{1}{2}}
\bmo^{\sfrac{1}{2}} \d (L)^{\gamma_0}\, .
\]
On the other hand $|\pi (p) - \pi_0|\leq C_0 [Du]_{0, \alpha, B_{C_0 \d (L)}} \d (L)^\alpha \leq C_0 \bmo^{\sfrac{1}{2}} \d (L)^{\gamma_0}$ and thus 
\begin{equation}\label{e:mostrato}
|\pi_L - \pi_0| \leq C (M_0, N_0) \,C_e^{\sfrac{1}{2}} \bmo^{\sfrac{1}{2}}\d (L)^{\gamma_0}\qquad \forall L\in \sC^{N_0}\, .
\end{equation}
Setting $\rho:= (2\sqrt{2}+\sfrac{1}{10}) \d (L)$ we have
\[
\bB_L \subset \bC_{\d (L)/10} (p_L, \pi_0) \subset \bC_\rho (0, \pi_0)\, .
\] 
Note that $\rho \leq (2\sqrt{2}+\frac{1}{10}) \frac{1}{2} \leq \frac{3}{2}$ and we can apply Lemma \ref{l:tecnico}(v) to conclude
\begin{equation}\label{e:mostrato2}
\bh (T, \bB_L, \pi_0) \leq \bh (T, \bC_\rho (0, \pi_0), \pi_0) \leq C_0 \bmo^{\sfrac{1}{2}} \d (L)^{1+\gamma_0}\, .
\end{equation}
In particular ${\rm diam}\, \supp (T)\cap \bB_L)\leq \d (L)$, provided $\eps_2$ is small enough. We can therefore 
apply Lemma \ref{l:tecnico}(iv) and use \eqref{e:mostrato} and \eqref{e:mostrato2} to infer
\begin{align*}
& \bh (T_L, \bB_L)\leq \bar C \bmo^{\sfrac{1}{2}} \d(L)^{1+\gamma_0}\leq \bar C  \bmo^{\sfrac{1}{4}} \d (L)^{\sfrac{\gamma_0}{2} -\beta_2} \ell (L)^{1+\beta_2}\, .
\end{align*}
Thus, choosing $C_h$ large depending upon $M_0, N_0$ and $C_e$, we conclude that condition (HT) in Definition \ref{d:refining} cannot be a reason to stop the refining procedure of a cube $L\in \sC^j$ when $N_0\leq j\leq N_0+6$. 

This means that: for $k = N_0$ and $j=0$ all cubes of $\sC^{N_0,0}$ are assigned to $\sS$ and refined at the subsequent step (the condition (NN) is empty here).
But then the same happens for $k=N_0$ and $j=1$, since $\sW^{N_0, 0}$ is empty. Proceeding inductively we conclude this for every $j$ and thus obtain that $\sW^{N_0}$ is empty. We now repeat the argument with $\sW^{N_0+1, j}$ to conclude that $\sW^{N_0+1}$ is also empty. Proceeding for other 5 steps we conclude then that (ii) holds.

\medskip

{\bf Proof of (ii)-(iv)-(v)-(vi)-(vii) when $H\subset L$.} The proof is by induction over $i$, where $H\in \sC^i$. We thus prove first the claims when $i = N_0$. 
Under this assumption $H=L$ and hence (iv) is trivial. The second inclusion in (ii) has already been proved above and the remaining assertions of (ii) are obvious because $H=L$. (v) has been shown above, cf. \eqref{e:mostrato}. The first conclusion in (vi) follows easily, since $\bh (T_H, \bC_{36 r_H} (p_H, \pi_0)) \leq C_0 \bmo^{\sfrac{1}{2}} \d (H)^{1+\gamma_0}$ by Lemma \ref{l:tecnico}(v) and $\ell (H) \geq \d (H) / C(N_0)$. The inclusion in (vi) follows then trivially from this bound when $\bmo\leq\eps_2$ is small enough, because $p_H \in \supp (T_H)$. As for (vii), recall that $L=H$ in our case. First observe that $|\pi_H-\pi_0|\leq C_0 C_e \d (L)^{\gamma_0}$, simply by \eqref{e:mostrato} (assuming $C_e \geq C (M_0, N_0)$). Thus we can apply Lemma \ref{l:tecnico}(iv)\&(v): since $\d (L)$ and $\ell (L)$ are comparable up to a constant $C(N_0)$, we conclude that $\bh (T_L, \bC_{36r_L} (p_L, \pi_H))\leq C \bmo^{\sfrac{1}{4}} \d (L)^{\sfrac{\gamma_0}{2} -\beta_2} \ell (L)^{1+\beta_2}$. As we already argued for (vi), the inclusion is a consequence of the bound.

We now pass to the inductive step. Thus fix some $H_{i+1} \in \sS^{i+1}\cup \sW^{i+1}$ and consider a 
chain $H_{i+1}\subset H_i \subset \ldots \subset H_{N_0}$ with $H_l\in \sS^l$ for $l\leq i$.
We wish to prove all the conclusions (ii)-(iv)-(v)-(vi)-(vii) when $H = H_{i+1}$ and $L = H_j$ for some $j \leq i+1$, recalling that, by inductive assumption, all the statements hold when $H =H_k$ and $L= H_l$ for $l\leq k \leq i$. 
Note also that $\d (H_k) = \d (H_{i+1})$ for all $k$.

With regard to (ii), it is enough to prove that $\bB_{H_{i+1}} \subset \bB_{H_{i}}$ and $\bV_{H_{i+1}} \subset \bV_{H_i}$. Note that $|z_{H_i}-z_{H_{i+1}}| \leq 2\sqrt{2}\,\ell(H_i)$ (recall the notation $p_H = (z_H, y_H)$).   In particular notice that $\bC_{r_{H_{i+1}}} (p_{H_{i+1}}, \pi_0) \subset \bC_{r_{H_i}} (p_{H_i}, \pi_0)$.  Recall the open sets $\bV_{H_i}$ and $\bV_{H_{i+1}}$ defined in Section \ref{ss:whitney}. Since $H_i$ and $H_{i+1}$ are nearby cubes in $\gira$, it is clear that 
the points $(z_{H_{i+1}}, u (z_{H_{i+1}}, w_{H_{i+1}}))$ and $(z_{H_i}, u (z_{H_i}, w_{H_i}))$ must be in the same connected component of $\bV_{u,a} \cap \bC_{r_{H_{i}}} (p_{H_{i}}, \pi_0)$. It then follows that $\bV_{H_{i+1}}\subset \bV_{H_{i}}$.
In particular $p_{H_{i+1}}\in \supp (T_{H_i})$
and (vi) applied to $H= H_i$ implies then that $|p_{H_{i+1}}-p_{H_i}|\leq 2 (\sqrt{2} + C \bmo^{\sfrac{1}{4}}) \ell (H_{i})$. In particular, assuming
that $\eps_2 \leq c$ for some positive constant $c=c (M_0, N_0, C_e, C_h)$, we conclude $|p_{H_{i+1}} - p_{H_i}|\leq 3 \sqrt{2} \ell (H_i)$ and $\bB_{H_{i+1}}\subset \bB_{H_i}$ follows from the fact that $M_0$ is assumed larger than a suitable geometric constant.

We now come to (iv). Recall that $H_i$ belongs to $\sS$ (indeed $H_{i+1}$ is a son of $H_i$). Hence, from the inclusion $\bB_{H_{i+1}}\subset \bB_{H_i}$, from the identity $T_{H_{i+1}} = T_{H_i} \res \bB_{H_{i+1}}$ and from Lemma \ref{l:tecnico}(ii) we easily infer that 
\[
\bE (T_{H_{i+1}}, \bB_{H_{i+1}}) \leq C_0 \bE (T_{H_i}, \bB_{H_i}) + C_0 \bmo \ell (H_{i+1})^2
\leq \bar{C} \bmo \d (H_{i+1})^{2\gamma_0 -2 + 2 \delta_1} \ell (H_{i+1})^{2-2\delta_1}\, .
\]
We thus have, from Lemma \ref{l:tecnico}(iii), 
\[
|\pi_{H_i} - \pi_{H_{i+1}}| \leq \bar{C} \bmo^{\sfrac{1}{2}} 
\d (H_{i+1})^{\gamma_0 -1 + \delta_1} \ell (H_{i+1})^{1-\delta_1}\, .
\]
On the other hand, since $\d (H_l)= \d (H_j)$ for every $l\geq j$, by the same argument with $l$ in place of $i$ we also get
\[
|\pi_{H_l} - \pi_{H_{l+1}}| \leq \bar{C} \bmo^{\sfrac{1}{2}} 
\d (H_{i+1})^{\gamma_0 -1 + \delta_1} \ell (H_{l+1})^{1-\delta_1}\, .
\]
Summing the latter estimates for $l$ between $i$ and $j$, we easily reach (iv) for $H= H_{i+1}$ and $L = H_j$. 

As for (v), note that it holds for $H_{N_0}$ and moreover we just proved (iv) for $H= H_{i+1}$ and $L= H_{N_0}$, and thus, by triangular inequality, we get (v) (with a constant independent of the index $i$!). 

As for (vi), note first that $\supp (T_{H_{i+1}}) \cap \bC_{36 r_{H_{i+1}}} (p_{H_{i+1}}, \pi_0) \subset \supp (T_{H_i}) \cap \bC_{36 r_{H_i}} (p_{H_i}, \pi_0) \subset \bB_{H_i}$ (the latter because (vi) holds for $H = H_i$ by inductive hypothesis). Thus we can apply Lemma \ref{l:tecnico}(iv) to conclude
\[
\bh (T_{H_{i+1}}, \bC_{36 r_{H_{i+1}}} (p_{H_{i+1}}, \pi_0) )\leq C \bh (T_{H_i}, \bB_{H_i}) + C_0 |\pi_{H_i} - \pi_0|\, \diam (\supp (T_{H_i})\cap \bB_{H_i})\, .
\]
On the other hand we already noticed that $H_i \in \sS$. Taking into account (v) we then conclude the inequality of (vi) for $H={H_{i+1}}$ and, as already noticed in other cases, the inclusion follows from the estimate and $p_{H_{i+1}}\in \bB_{H_{i+1}}\cap \supp (T_{H_{i+1}})$. 

We finally come to (vii). Fix $H= H_{i+1}$. First we prove it for $L = H_{N_0}$. Observe that by the bound from (iv) on $|\pi_H - \pi_0|$, we can bound 
$\bh (T_{H_{N_0}}, \bC_{36r_{H_{N_0}}} (p_{H_{N_0}}, \pi_H))$ with the same argument used for $\bh (T_{H_{N_0}}, \bC_{36r_{H_{N_0}}} (p_{H_{N_0}}, \pi_{H_{N_0}}))$. As already argued several times, we then conclude the inclusion $\supp (T_{H_{N_0}}) \cap \bC_{36r_{H_{N_0}}} (p_{H_{N_0}}, \pi_H)\subset \bB_{H_{N_0}}$. We now argue inductively on $j$: assuming that we know (vii) for $H$ and $L= H_j$, we now wish to conclude it for $L= H_{j+1}$.
Notice that $\bC_{36 r_{H_{j+1}}} (p_{H_{j+1}}, \pi_H) \subset \bC_{36 r_{H_{j}}} (p_{H_{j}}, \pi_H)$. Then the inductive assumption gives
$\supp (T_{H_j}) \cap \bC_{36 r_{H_{j+1}}} (p_{H_{j+1}}, \pi_H)\subset \bB_{H_j}$ and recalling that $T_{H_{j+1}} = T_{H_j} \res \bB_{H_{j+1}}$ and that
$H_j\in \sS$, we can use Lemma \ref{l:tecnico}(iv) to bound
\[
\bh (T_{H_{j+1}}, \bC_{36 r_{H_{j+1}}} (p_{H_{j+1}}, \pi_H)) \leq \bh (T_{H_j}, \bB_{H_j}) + C_0 |\pi_H - \pi_{H_{j}}| \diam (\supp (T_{H_j}) \cap \bB_{H_j})\, .
\]
However, having already shown (iv), this easily shows the bound in (vii). The inclusion then follows with the usual argument used above.

\medskip

{\bf Proof of \eqref{e:ex_whitney} and \eqref{e:ht_whitney}.} Fix $H\in \sW$ and let $L$ be its father (recall Proposition \ref{p:whitney}(ii)). Having shown (ii), we know that $\bB_H\subset \bB_L$. We then use $\d (L) = \d (H)$, $\ell(L)= 2\ell(H)$, and $L\in \mathscr{S}$ to estimate 
\[
\bE (T_L, \bB_L)\leq C_e \bmax_0 \d (H)^{2\gamma_0-2+2\delta_1}\ell (H)^{2-2\delta_1}
\] 
and Lemma \ref{l:tecnico}(i) to conclude \eqref{e:ex_whitney} as follows
\begin{align*}
\bE(T_H,\bB_H)\leq & C_0 \bE(T_H,\bB_H,\pi_L) +\bar C \bmo r_L^2 \leq \bar C \bE (T_L, \bB_L)+\bar C \bmo \ell(H)^2\\
\leq & \bar C\bmax_0 \d (H)^{2\gamma_0-2+2\delta_1}\ell (L)^{2-2\delta_1}
\end{align*}
Next, we use Lemma \ref{l:tecnico}, (iv) and
\[
\bh (T_L, \bB_L) \leq C_h \bmax_0^{\sfrac14}\d (L)^{\sfrac{\gamma_0}{2} - \beta_2}\ell(L)^{1+\beta_2}
\] 
to conclude \eqref{e:ht_whitney}.

\medskip

{\bf Proof of (iv) and (vii) when $H$ and $L$ are neighbors.} Without loss of generality assume $\ell (L)\geq \ell (H)$. If $L\not\in \sC^{N_0}$, then let $J$ be the father of $L$ (and observe that it belongs to $\mathscr{S}$). Observe that $|z_H - z_J|, |z_L - z_J| \leq 2\sqrt{2} \ell (J)$. On the other hand, observe that $p_H, p_L$ are both elements of $\bC_{36 r_J} (p_J, \pi_0)$ (provided $M_0$ is larger than a geometric constant). Thus, by (vi) (applied to $J$), for $\eps_2$ sufficiently small we easily conclude
$|p_H - p_J|, |p_L-p_J| \leq 3 \sqrt{2} \ell (J)$. Since $\ell (L), \ell (H) \leq \ell (J)/2$, again assuming that $M_0$ is larger than a geometric constant we have the inclusion $\bB_H\cup \bB_L \subset \bB_J$. It is also easy to see that $\bV_H\cup \bV_L \subset \bV_J$. Now, we can use \eqref{e:ex_ancestors}, \eqref{e:ex_whitney} and (iii) to achieve
\[
|\pi_H - \pi_J|, |\pi_L-\pi_J| \leq  \bar{C} \bmo^{\sfrac{1}{2}} \d (J)^{\gamma_0-1+\delta_1} \ell (J)^{1-\delta_1}\, .
\]
Next we use again (iii), the triangle inequality and $\ell (H) \leq \ell (L) \leq \ell (J) \leq 4 \ell (H)$ to show (iv). The case $L\in \sC^{N_0}$ can be handled similarly, just using a ball concentric to $\bB_L$ and slightly larger so to include $\bB_H$: the excess and the height in this ball is then estimated with the same argument used for estimating them in $\bB_L$.

As for (vii) we fix a chain of ancestors $L = L_j, L_{j-1}, \ldots, L_i, \ldots \ldots , L_{N_0}$ and, as in the proof of (vii) for the case $H\subset L$, we argue inductively over $i$. The argument is precisely the same and can be applied because, using (iv) for $H$ and $L$ and for $L_i$ and $L_{i+1}$, we can sum the corresponding estimate to show that
\[
|\pi_H - \pi_{L_i}| \leq  \bar{C} \bmo^{\sfrac{1}{2}} \d (L_i)^{\gamma_0-1+\delta_1} \ell (L_i)^{1-\delta_1}\, . \qedhere
\]
\end{proof}

\section{$\pi$-approximations and elliptic regularizations}

In this section we introduce the $\pi$-approximations and define the corresponding elliptic regularizations of their averages, which in turn will be the building blocks of the center manifold. We begin with the following:

\begin{proposition}\label{p:app}
Assume the hypotheses and the conclusions of Proposition \ref{p:tilting opt} apply and let $\eps_2$ be sufficiently small.
If $H, L \in \sW\cup\sS$ and either $H\subset L$ or $H\cap L \neq \emptyset$ with $\frac{\ell (L)}{2} \leq \ell (H) \leq \ell (L)$, then 
\begin{align}
& (\p_{\pi_H})_\sharp (T_L \res \bC_{32 r_L} (p_L, \pi_H)) = Q \a{B_{32r_L} (\p_{\pi_H} (p_L), \pi_H)}\, ,\label{e:app2}\\
&\partial T_L \res \bC_{32 r_L} (p_L, \pi_H) = 0\, .\label{e:app3}
\end{align}
Moreover 
\begin{equation}\label{e:app4}
\bE (T_L, \bC_{32 r_L} (p_L, \pi_H)) \leq \bar C \bmo \d (L)^{2\gamma_0 -2 + 2 \delta_1} \ell (L)^{2-2\delta_1}
\end{equation}
and hence \cite[Theorem~1.5]{DSS2} applies to the current $T_L\res \bC_{32r_L} (p_L, \pi_H)$ in $\bC_{32 r_L} (p_L, \pi_H)$.
\end{proposition}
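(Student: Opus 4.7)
The three claims \eqref{e:app2}, \eqref{e:app3}, \eqref{e:app4} are essentially automatic consequences of the tilting/height/excess estimates already packaged into Proposition \ref{p:tilting opt}. The single most important input is conclusion (vii) of that proposition, namely the inclusion $\supp(T_L)\cap \bC_{36 r_L}(p_L,\pi_H)\subset \bB_L$, which localizes the current into the ball $\bB_L$ where we fully control excess, height, and the tilt $|\pi_L-\pi_H|$. The outline has three independent steps, plus a fourth bookkeeping step to verify the hypotheses of \cite[Theorem~1.5]{DSS2}.

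\textbf{Step 1 (boundary, proof of \eqref{e:app3}).} Since $T_L=T\res \bV_L$ with $\bV_L$ open, and $\partial T \res \bC_2=0$ by Assumption \ref{ipotesi_base_2}(i), the boundary $\partial T_L$ is supported on $\partial \bV_L\cap \supp(T)$. The boundary $\partial \bV_L$ has a radial piece inside $\partial \bC_{64 r_L}(z_L)$ and a horn piece inside $\partial \bV_{u,a}$. From (vii) and the tilting estimate (v) of Proposition \ref{p:tilting opt}, the support of $T_L$ in the cylinder $\bC_{32r_L}(p_L,\pi_H)$ lies inside $\bB_L\subset \bB_{\d(L)/10}(p_L)$, which is strictly away from $\partial \bC_{64 r_L}(z_L)$; and since $\ell(L)\leq 2^{1-N_0}\d(L)$ and $a>b>1$, the ball $\bB_L$ also stays strictly inside the horn, provided $\eps_2$ is small enough to make $M_0\ell(L)\ll c_s \d(L)^a$. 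Hence $\partial T_L$ puts no mass in $\bC_{32r_L}(p_L,\pi_H)$.

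\textbf{Step 2 (constancy and identification of the degree, proof of \eqref{e:app2}).} By Step 1 and the constancy theorem the pushforward $S:=(\p_{\pi_H})_\sharp (T_L\res \bC_{32 r_L}(p_L,\pi_H))$ equals $k\a{B_{32r_L}(\p_{\pi_H}(p_L),\pi_H)}$ for some $k\in \Z$. To show $k=Q$ I would run a homotopy of base planes $\pi_t$, $t\in[0,1]$, joining $\pi_0$ to $\pi_H$ (which is possible since $|\pi_H-\pi_0|\leq \bar C\bmo^{1/2}\d(L)^{\gamma_0}$ by (v)). The argument of Step 1 applies uniformly along the homotopy, so each $\pi_t$ yields an integer degree $k(t)$ which is continuous in $t$ and therefore constant. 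For $t=0$ the identification $k(0)=Q$ follows from the structural identity $\bP_\sharp T\res\bC_{1/2}=Q\bG_u\res\bC_{1/2}$ established in the discussion preceding Lemma \ref{l:density}: restricting both sides to the single sheet $\bV_L$ and projecting to $\pi_0$ gives $(\p_{\pi_0})_\sharp T_L = Q\a{B}$ on the relevant disk.

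\textbf{Step 3 (excess bound, proof of \eqref{e:app4}).} From the inclusion in (vii), triangle inequality, and $\|T_L\|(\bB_L)\leq C r_L^2$ (Allard's monotonicity plus Lemma \ref{l:density}):
\begin{align*}
2\omega_2(32 r_L)^2\,\bE(T_L,\bC_{32 r_L}(p_L,\pi_H))
&=\int_{\bC_{32 r_L}(p_L,\pi_H)}|\vec T-\vec\pi_H|^2\, d\|T_L\|\\
&\leq 2\int_{\bB_L}|\vec T-\vec \pi_L|^2\, d\|T_L\| + 2|\pi_L-\pi_H|^2\,\|T_L\|(\bB_L).
\end{align*}
The first summand is bounded by $C r_L^2\,\bE(T_L,\bB_L)$, hence controlled by \eqref{e:ex_whitney}; the second is controlled by the tilting bound (iv) of Proposition \ref{p:tilting opt} (which is automatic when $H=L$). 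Both contributions are $\leq \bar C\bmo\,\d(L)^{2\gamma_0-2+2\delta_1}\ell(L)^{2-2\delta_1}\cdot r_L^2$, and dividing by $(32 r_L)^2$ yields \eqref{e:app4}.

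\textbf{Step 4 (applicability).} The hypotheses of \cite[Theorem~1.5]{DSS2}—vanishing boundary on the chosen cylinder, a constant projected multiplicity, and excess smaller than a fixed threshold—are precisely \eqref{e:app3}, \eqref{e:app2}, and \eqref{e:app4}; the last quantity is made arbitrarily small by choosing $\eps_2$ small. The \emph{main obstacle} in this proof is Step~2: the degree computation, which needs a homotopy argument whose uniformity in $t$ requires a careful verification that Step~1's geometric inclusions survive along the entire family of planes $\pi_t$, a point that rests delicately on (v) and the smallness of the parameters.
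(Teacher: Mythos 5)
Your Steps 1, 3, and 4 track the paper's argument faithfully: \eqref{e:app3} from localization via Proposition \ref{p:tilting opt}(vii) together with $\supp(\partial T_L)\subset\partial\bC_{64 r_L}(p_L,\pi_0)$, and \eqref{e:app4} from the triangle-inequality splitting with inputs (iv), (vii) and \eqref{e:ex_whitney}/\eqref{e:ex_ancestors}.

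The gap is in Step 2, and it is earlier than the point you flag as the ``main obstacle.'' You write that $k(0)=Q$ follows by ``restricting both sides of $\P_\sharp T\res\bC_{1/2}=Q\bG_u\res\bC_{1/2}$ to the single sheet $\bV_L$ and projecting to $\pi_0$.'' This does not close: the nearest-point projection $\P$ onto the \emph{curved} graph $\gr(u)$ is not the linear projection $\p_{\pi_0}$, and applying $(\p_{\pi_0})_\sharp$ to the identity only yields $(\p_{\pi_0}\circ\P)_\sharp T_L = Q(\p_{\pi_0})_\sharp\bG_u$, from which you cannot read off $(\p_{\pi_0})_\sharp T_L$. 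The paper bridges this with a \emph{first} homotopy, which your proposal omits: writing $\gr(u)\cap\bC_{64 r_J}(p_J,\pi_0)$ as $\gr(v)$ for a $C^{1,\alpha}$ function $v$ with small norm, it considers the family $v_t:=tv$ and the nearest-point projections $\p_t$ onto $\gr(v_t)$; the pushforwards $(\p_t)_\sharp(T_J\res\bC_{64 r_J}(p_J,\pi_0))$ carry constant integer degree by the constancy theorem plus continuity in $t$, and since $\p_1=\P$ (degree $Q$) and $\p_0=\p_{\pi_0}$, one gets $(\p_{\pi_0})_\sharp(T_J\res\bC_{36 r_J}(p_J,\pi_0))=Q\a{B_{36 r_J}(z_J,\pi_0)}$. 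Only after this, and after the small bookkeeping step $T_L\res\bC_{36 r_L}(p_L,\pi_0)=T_J\res\bC_{36 r_L}(p_L,\pi_0)$, does the paper run the homotopy over planes $\pi_t$ joining $\pi_0$ to $\pi_H$, which is the only homotopy in your outline. So your Step 2 currently assumes the base-case degree identification rather than proving it; you need to add this first homotopy (projections onto $\gr(tv)$) to pass from the nonlinear projection $\P$ to the linear one $\p_{\pi_0}$.
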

\begin{proof}
\eqref{e:app3} is rather straightforward: by the height estimate in Proposition \ref{p:tilting opt}(vii) we conclude easily 
$\supp (T_L) \cap \bC_{32 r_L} (p_L, \pi_H)\subset \bC_{36r_L} (p_L, \pi_0)$. On the other hand by definition of $T_L = T \res \bV_L$
and by Assumption \ref{induttiva}(Hor), we have $\supp (\partial T_L) \subset \partial \bC_{64 r_L} (p_L, \pi_0)$, implying 
$\supp (\partial T_L) \cap \bC_{36 r_L} (p_L, \pi_0) = \emptyset$ and thus also  $\supp (\partial T_L) \cap \bC_{32 r_L} (p_L, \pi_H) = \emptyset$. 

In order to prove \eqref{e:app2} we argue as follows. First consider the chain of ancestors of $L$:= $L= L_j \subset L_{j-1} \subset
\ldots \subset L_{N_0} =: J$, where $J\in \sS^{N_0}$. We first show that $(\p_{\pi_0})_\sharp (T_J \res \bC_{36 r_J} (p_J, \pi_0)) =
Q \a{B_{36 r_J} (z_J, \pi_0)}$. This is done in the following way: consider that $\gr (u)\cap \bC_{64r_J} (p_J, \pi_0)$ is the graph of a $C^{1,\alpha}$ function $v$ with $\|v\|_{C^{1,\alpha}}\leq C_0 \bmo^{\sfrac{1}{2}}$. 
Define the function $v_t (x) := t v (x)$ and let $\p_t$ be the orthogonal projection onto $\gr (v_t)$, which is well-defined on $\bV_J$ provided $\bmo$ is sufficiently small (the smallness being independent of $J$). The currents 
$S_t := (\p_t)_\sharp (T_J \res  \bC_{64r_J} (p_J, \pi_0))$
are easily seen to coincide with $Q_t \bG_v \res \bC_{36 r_J} (z_J, \pi_0)$ for some integers $Q_t$ in the cylinder $\bC_{36 r_J} (p_J, \pi_0)$ by the constancy theorem. On the other hand such currents vary continuously and thus the integer $Q_t$ must be constant. This implies that $Q_0 = Q_1 = Q$. On the other hand $\p_0 = \p_{\pi_0}$ and we have thus proved our claim.

Observe that $(\p_{\pi_0})_\sharp (T_L \res \bC_{36 r_L} (p_L, \pi_0)) = Q \a{B_{36r_L} (z_L, \pi_0)}$ because  $T_L \res \bC_{36r_L} (p_L, \pi_0) = T_J \res \bC_{36 r_L}  (p_L, \pi_0)$.
Choose next a continuous path of planes $\pi_t$ which connects $\pi_0$ and $\pi_H$ and satisfies the bound $|\pi_t - \pi_0|\leq C_0 |\pi_H-\pi_0|$ for some geometric constant $C_0$. We then look at $Z_t = (\p_{\pi_t})_\sharp (T_L \res \bC_{36r_L} (p_L, \pi_0))$ and conclude, similarly to the previous paragraph, that
$((\p_{\pi_H})_\sharp (T_L \res \bC_{36r_L} (p_L, \pi_0)))\res \bC_{32 r_L} (p_L, \pi_H) = Q \a{B_{32r_L} (\p_{\pi_H} (p_L), \pi_H)}$. On the other hand since $(T_L \res \bC_{36r_L} (p_L, \pi_0)))\res \bC_{32 r_L} (p_L, \pi_H) = T_L \res \bC_{32 r_L} (p_L, \pi_H)$, this concludes the proof of \eqref{e:app2}.

Now, by Proposition \ref{p:tilting opt}(iv)\&(vii) 
\[
\bE (T_L, \bC_{32 r_L} (p_L, \pi_H)) \leq \bar C \bE (T_L, \bB_L) + \bar C |\pi_L-\pi_H|^2
\leq \bar C \bmo \d (L)^{2\gamma_0 -2 + 2 \delta_1} \ell (L)^{2-2\delta_1} \leq \bar C \bmo
\]
and in order to apply \cite[Theorem~1.5]{DSS2} we just need to choose $\eps_2$ sufficiently small.
\end{proof}

We next generalize slightly the terminology of Section \ref{ss:whitney}. 

\begin{definition}\label{d:f_HL}
Let $H$ and $L$ be as in Proposition \ref{p:app}. After applying \cite[Theorem~1.5]{DSS2} to $T_L\res \bC_{32r_L} (p_L, \pi_H)$ in the cylinder $\bC_{32 r_L} (p_L, \pi_H)$ we denote by $f_{HL}$ the corresponding $\pi_H$-approximation. However, rather then defining $f_{HL}$ on the disk $B_{8 r_L}  (p_L, \pi_H)$, by applying a translation we assume that the domain of $f_{HL}$ is the disk $B_{8r_L} (p_{HL}, \pi_H)$ where $p_{HL} = p_H + \p_{\pi_H} (p_L - p_H)$. Note in particular that $\bC_r (p_{HL}, \pi_H)$ equals $\bC_r (p_L, \pi_H)$, whereas $B_{8r_L} (p_{HL}, \pi_H) \subset p_H + \pi_H$ and $p_H\in \bB_{8r_L}(p_{HL},\pi_H)$. 
\end{definition}

Observe that $f_{LL} = f_L$.

\subsection{First variations}\label{ss:ellittico} The next proposition is the core in the construction of the center manifold and it is the main reason behind the $C^{3,\gamma_0}$ estimate for the glued interpolation. It is also the place where our proof differs most from that of \cite{DS4}. 

\begin{definition}\label{d:bar}
Let $H$ and $L$ be as in Proposition \ref{p:app}. In the cases (a) and (c) of Definition \ref{d:semicalibrated} we denote by $\varkappa_H$ the orthogonal complement in $T_{p_H} \Sigma$ of $\pi_H$ and we denote by $\bar{f}_{HL}$ the map $\p_{\varkappa_H} \circ f_{HL}$.
\end{definition}

In what follows we will consider elliptic systems of the following form. Given a vector valued map $v: p_H+ \pi_H\supset \Omega \to \varkappa_H$ and after introducing an orthonormal system of coordinates $x^1, x^2$ on $\pi_H$ and $y^1, \ldots , y^{\bar n}$ on $\varkappa_H$,  the system is given by the $\bar{n}$ equations
\begin{equation}\label{e:elliptic}
\Delta v^k + \underbrace{({\bf L}_1)^k_{ij} \partial_j v^i + ({\bf L}_2)^k_i v^i}_{=:\mathscr{E}^k (v)} = \underbrace{({\bf L}_3)^k_i (x-x_H)^i +
({\bf L}_4)^k}_{=:\mathscr{F}^k} \, ,
\end{equation}
where we follow Einstein's summation convention and the tensors $\bL_i$ have constant coefficients. After introducing the operator $\mathscr{L} (v) = \Delta v + \mathscr{E} (v)$ we summarize the corresponding elliptic system \eqref{e:elliptic} as
\begin{equation}\label{e:ellittico}
\mathscr{L} (v) = \mathscr{F}\, .
\end{equation}
We then have a corresponding  weak formulation for $W^{1,2}$ solutions of \eqref{e:ellittico}, namely $v$ is a weak solution in a domain $D$ if the integral
\begin{equation}\label{e:weak_form}
\mathscr{I} (v, \zeta) := \int (Dv : D\zeta + (\mathscr{F}-  \mathscr{E} (v))\cdot \zeta)\, 
\end{equation}
vanishes for smooth test functions $\zeta$ with compact support in $D$.

\begin{proposition}\label{p:pde}
Let $H$ and $L$ be as in Proposition \ref{p:app} (including the possibility that $H=L$) and
let $f_{HL}$, $\bar{f}_{HL}$ and $\varkappa_{H}$ be as in Definition \ref{d:f_HL} and Definition \ref{d:bar}.
Then, there exist tensors with constant coefficients 
$\bL_1, \ldots, \bL_4$ and a constant $C=C(M_0,N_0,C_e,C_h)$,  with the following properties:
\begin{itemize}
\item[(i)] The tensors depend upon $H$ and $\Sigma$ (in the cases (a) and (c) of Definition \ref{d:semicalibrated}) or $\omega$ (in case (b) of Definition \ref{d:semicalibrated}) and $|{\bf L}_1| + |{\bf L}_2| + |{\bf L}_3| + |{\bf L_4}|\leq C \bmo^{\sfrac{1}{2}}$.
\item[(ii)] If $\mathscr{L}_H$, $\mathscr{I}_H$, and $\mathscr{F}_H$ are defined through \eqref{e:elliptic}, \eqref{e:ellittico} and \eqref{e:weak_form}, then
\begin{align}\label{e:pde2}
\mathscr{I}_{H} (\etaa \circ \bar{f}_{HL}, \zeta) &\leq C \bmo \,\d (L)^{2(1+\beta_0)\gamma_0-2-\beta_2}\,
r_L^{4+\beta_2}\,\|D\zeta\|_{0}
\end{align}
for all $\zeta\in C^\infty_c (B_{8r_L}(p_{HL}, \pi_H), \varkappa_H)$.
\end{itemize}
\end{proposition}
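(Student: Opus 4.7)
The overall plan is to derive the PDE from the first variation formulae for $T$, tested against a suitably extended vector field, and then to extract the principal linear operator by Taylor expansion around $p_H$, treating all remaining terms as errors controlled by the excess and height bounds of Proposition \ref{p:app} and Proposition \ref{p:tilting opt}. The tensors $\bL_1,\dots,\bL_4$ arise respectively as: Christoffel-type contributions coming from the derivative of $\Psi_{p_H}$ (cases (a) and (c)) or from $d\omega$ evaluated at $p_H$ (case (b)) — giving $\bL_1,\bL_2$; the Taylor expansion of $d\omega$ or of $D^2\Psi_{p_H}$ in the base variable $x-x_H$ — giving $\bL_3$; and the constant term coming from the mean curvature of $\Sigma$ at $p_H$ or from $d\omega_{p_H}$ — giving $\bL_4$. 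All four are bounded by $C\bc\leq C\bmo^{1/2}$, which is precisely (i).

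For (ii), I would fix $\zeta\in C^\infty_c(B_{8r_L}(p_{HL},\pi_H),\varkappa_H)$ and build the ambient test field $X(x,y):=\zeta(x-x_H)$ after making it tangent to $\Sigma$ (in cases (a),(c)) by subtracting the $(T_p\Sigma)^\perp$ component; the correction is of size $O(\bmo^{1/2}|\zeta|)$. The semicalibration or minimality gives
\[
\delta T_L(X)=\int \Omega(X,\vec T)\,d\|T\|
\]
where $\Omega$ stands either for $d\omega$ (case (b)) or for the mean curvature vector of $\Sigma$ contracted with the orthogonal component of $X$ (cases (a),(c)). Replacing $T_L$ inside $\bC_{8r_L}(p_{HL},\pi_H)$ by the graph $\bG_{f_{HL}}$ and using \cite[Theorem 1.5]{DSS2}, the error in the mass measure is bounded by $\bE(T_L,\bC_{32r_L}(p_L,\pi_H))^{1+\beta_0}r_L^2$, which by \eqref{e:app4} is at most $C\bmo^{1+\beta_0}\d(L)^{2(1+\beta_0)(\gamma_0-1+\delta_1)}\ell(L)^{2+(1+\beta_0)(2-2\delta_1)}$; multiplied by $\|D\zeta\|_0$ and by the Lipschitz/height bounds of $\zeta$ on the cylinder (giving an extra $r_L^0$ and $r_L$ respectively), and using the relation $\beta_2<\beta_0\gamma_0/2$ together with $\beta_0(2-2\delta_1)-2\delta_1\geq 2\beta_2$ from \eqref{e:cost_1}, this absorbs into the right-hand side of \eqref{e:pde2}.

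Once $T_L$ is replaced by $\bG_{f_{HL}}=\sum_i\bG_{f_{HL,i}}$, the first variation becomes an integral over $B_{8r_L}(p_{HL},\pi_H)$ of an expression involving $Df_{HL,i}$, the area element and $\Omega$ restricted to the graph. Taylor-expanding this integrand around $(p_H,0)\in\pi_H\times \varkappa_H$, the zeroth-order term in $Df_{HL,i}$ gives
\[
\int D\bar f_{HL,i}:D\zeta - \int\bigl(({\bf L}_1)^k_{ij}\partial_j \bar f_{HL,i}^i+({\bf L}_2)^k_i\bar f_{HL,i}^i\bigr)\zeta^k + \int \mathscr F^k\zeta^k\,,
\]
summed in $i$ and divided by $Q$ after applying $\etaa$. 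Terms of order $\geq 2$ in $Df_{HL,i}$ are estimated pointwise by $|Df_{HL}|^2$ times $\|D\zeta\|_0$, and the integral is bounded via the Dirichlet estimate $\int |Df_{HL}|^2\leq C\bmo\d(L)^{2\gamma_0-2+2\delta_1}\ell(L)^{4-2\delta_1}$ from \cite[Theorem 1.5]{DSS2}. The Taylor remainder in $\Sigma$ or $\omega$ produces factors of $|x-x_H|^{1+\eps_0}$ or $|\bar f_{HL}|^{1+\eps_0}$ (using $\bc\in C^{1,\eps_0}$), controlled through $\beta_2<\eps_0$ and the height bound $\|f_{HL}\|_0\leq C\bmo^{1/4}\d(L)^{\gamma_0/2-\beta_2}\ell(L)^{1+\beta_2}$ from Proposition \ref{p:tilting opt}(vii).

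The main obstacle is the bookkeeping in the last step: all error contributions — the graph replacement error, the quadratic remainder in the area expansion, and the Taylor remainder for the non-constant curvature data — must be matched against the single exponent $\d(L)^{2(1+\beta_0)\gamma_0-2-\beta_2}r_L^{4+\beta_2}$, which requires repeated use of the parameter inequalities \eqref{e:cost_0}--\eqref{e:cost_1} and of the relation $r_L=\sqrt{2}M_0\ell(L)$. Once this bookkeeping is done, averaging over the $Q$ sheets and noting that $\etaa\circ\bar f_{HL}=Q^{-1}\sum_i\bar f_{HL,i}$ yields \eqref{e:pde2}.
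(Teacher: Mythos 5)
Your outline reproduces the paper's argument: test the first variation against a field made tangent to $\Sigma$ (or the ambient field in the semicalibrated case), use the mass of $\bG_{f_{HL}}-T_L$ to pass from the current to the Lipschitz graph, Taylor-expand the resulting first-variation integrand around $p_H$ to extract the constant-coefficient operator, and bookkeep all remainders against $\bmo\,\d(L)^{2(1+\beta_0)\gamma_0-2-\beta_2}r_L^{4+\beta_2}$ using the parameter inequalities. The only cosmetic slips are in the sourcing of the tensors: in the paper's treatment of cases (a) and (c) only $\bL_3$ appears (a quadratic expression in $D^2\Psi_{p_H}(0,0)$, with $\bL_1,\bL_2,\bL_4$ vanishing), all four arise only in case (b) from the coefficients of $d\omega$, and the $|x|^{1+\eps_0}$ H\"older remainder belongs to case (b), whereas in cases (a), (c) the $C^{3,\eps_0}$ regularity of $\Psi$ yields an $O(|x|^2)$ Taylor remainder for $D\Psi$.
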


\begin{proof}
Set for simplicity $\pi= \pi_{H}$, $\varkappa:=\varkappa_{H}$
$r= r_L$, $p= p_{HL}$, $f=f_{HL}$, $B= B_{8r} (p, \pi)$ and $T=T_{L}$. 

\smallskip

\noindent\textit{Cases (a) and (b) of Definition \ref{d:semicalibrated}}. The proof is very similar to the one
of \cite[Proposition~5.2]{DS4}. Nevertheless, for the sake of completeness,
we give here all the details.
We fix  a system of coordinates 
$(x,y,w)\in \pi\times \varkappa \times (T_{p_{H}} \Sigma)^\perp$
so that $p_{H} = (0,0,0)$.
We drop the subscript $p_{H}$ for the map $\Psi_{p_{H}}$.
Recall that, by Lemma \ref{l:piccolezza}, 
\[
\Psi (0,0)=0, \quad D\Psi (0,0)=0\quad\text{and}\quad
\|D \Psi\|_{C^{2,\eps_0}}\leq C \bmo^{\sfrac{1}{2}}. 
\]
Let $\zeta \in C_c(B_{8r}(p, \pi), \varkappa)$ be a test function.
We consider the vector field $\chi:\Sigma \to \R^{2+ n}$ given by
$\chi (q) = (0, \zeta (x), D_y \Psi (x,y) \cdot  \zeta(x))$
for every $q = (x,y,\Psi (x,y))\in \Sigma$.
Note that $\chi$ is tangent to $\Sigma$. 
Therefore we infer that $\delta T (\chi) =0$ and 
\begin{align}
|\delta \bG_f (\chi)| = |\delta \bG_f (\chi) - \delta T (\chi)| 
\leq C \int_{\bC_{8r} (p, \pi)} |D\chi|\, d \|\bG_f - T\|\, .\label{e:var_prima}
\end{align}
Observe also that $|\chi| \leq C |\zeta|$ and $|D\chi| \leq C |\zeta| + C |D\zeta|
\leq C |D\zeta| $. Set $E:= \bE \big(T, \bC_{32 r} (p, \pi)\big)$. 
Thus, by \eqref{e:app4} and Proposition~\ref{p:tilting opt}(vii) we have
\begin{align}
E \leq & C \bmo \d (L)^{2\gamma_0-2+2\delta_1} \ell (L)^{2-2\delta_1}\, ,
\bh (T, \bC_{32r} (p, \pi)) \leq & C \bmax_0^{\sfrac14}\d (L)^{\sfrac{\gamma_0}{2} - \beta_2}\ell(L)^{1+\beta_2}\, .
\end{align}
Recall that, by \cite[Theorem 1.5]{DSS2} we have
\begin{gather}
|Df| \leq C E^{\beta_0} + C \bmo^{\sfrac{1}{2}} r
\leq C \bmo^{\beta_0}\,\d (L)^{(2\gamma_0-2+2\delta_1)\beta_0}
r^{\beta_0(2-2\delta_1)}\label{e:lip f}\\
|f| \leq C \bh (T, \bC_{32 r} (p, \pi)) + (E^{\sfrac{1}{2}} + r\,\bmo^{\sfrac{1}{2}}) r
 \leq C \bmo^{\sfrac{1}{4}}\,\d (L)^{\sfrac{\gamma_0}{2}-\beta_2} r^{1+\beta_2},\label{e:Linf}\\
\int_B |Df|^2 \leq C\, r^2 E \leq C \bmo \,\d (L)^{2\gamma_0-2+2\delta_1}\, r^{4-2\delta_1},\label{e:Dir f}
\end{gather}
(although \eqref{e:Dir f} is not explicitely stated in \cite[Theorem 1.5]{DSS2} it follows from the estimates therein as in
\cite[Remark 4.5]{DS3}) and 
\begin{align}
&|B\setminus K| \leq C \bmo^{1+\beta_0}\,\d(L)^{(1+\beta_0)(2\gamma_0-2+2\delta_1)} r^{2 +(1+\beta_0) (2 - 2\delta_1)}\, ,\label{e:aggiuntiva_1}\\
&\left| \|T\| (\bC_{8r} (p, \pi)) - |B| - \frac{1}{2} \int_B |Df|^2\right| \leq C \bmo^{1+\beta_0}\,\d(L)^{(1+\beta_0)(2\gamma_0-2+2\delta_1)} r^{2 +(1+\beta_0) (2 - 2\delta_1)}&\, , \label{e:aggiuntiva_2}
\end{align}
where $K\subset B$ is the set
\begin{equation}\label{e:recall_K}
B\setminus K = \p_\pi \left(\left(\supp (T)\Delta \supp(\bG_f)\right)\cap \bC_{8r_L} (p_L, \pi)\right)\, .
\end{equation}
Writing  $f= \sum_i \a{f_i}$ and $\bar{f} = \sum_i \a{\bar{f}_i}$,
since $\gr (f) \subset \Sigma$, we have
$f = \sum_i \a{(\bar{f}_i, \Psi (x, \bar{f}_i))}$.
From \cite[Theorem~4.1]{DS2} we can infer that
\begin{align}
\delta \bG_f (\chi) &= 
\int_B \sum_i \Big( \underbrace{D_{xy}\Psi (x, \bar{f}_i)\cdot \zeta}_{(A)} + 
\underbrace{(D_{yy} \Psi (x, \bar{f}_i)\cdot D\bar{f}_i) \cdot \zeta}_{(B)} + \underbrace{D_y \Psi (x, \bar{f}_i) \cdot D_x \zeta}_{(C)}\Big)\nonumber\\
&\qquad\quad
: \Big(\underbrace{D_x \Psi (x, \bar{f}_i)}_{(D)} + \underbrace{D_y \Psi (x, \bar{f}_i)\cdot D \bar{f}_i}_{(E)}\Big)
+ \underbrace{\int_B \sum_i D\zeta : D\bar f_i}_{(F)} + {\rm Err}\, ,\label{e:tayloraccio}
\end{align}
where the error term ${\rm Err}$ in 
\eqref{e:tayloraccio} satisfies the inequality
\begin{align}\label{e:tayloraccio2}
|{\rm Err}|&\leq C \int |D\chi| |Df|^3 \leq \|D\zeta\|_{L^\infty} \||Df|\|_{L^\infty} \int |Df|^2 \notag \\
&\leq C
\|D\zeta\|_{0} \bmo^{1+\beta_0}\,
\d (L)^{(1+\beta_0)(2\gamma_0-2+2\delta_1)} r^{4-2\delta_1+\beta_0(2-2\delta_1)}\, .
\end{align}
The integral (F) in \eqref{e:tayloraccio} is 
\begin{equation}\label{e:whatisF}
(F) = Q \int_B D\zeta : D (\etaa \circ \bar f)\, .
\end{equation}
We therefore expand the product in the other integral and estimate
all terms separately, using the Taylor expansion
\[
D \Psi (x,y) = D_x D \Psi (0,0) \cdot x + D_y D\Psi (0,0) \cdot y + O \big(\bmo^{\sfrac{1}{2}} (|x|^2 + |y|^2)\big)
\]
so that
\begin{gather}
|D \Psi (x, \bar{f}_i)| \leq C \bmo^{\sfrac{1}{2}} r\notag\\
D\Psi (x, \bar{f}_i) = D_x D\Psi (0,0)\cdot x + O \big(\bmo^{\sfrac{1}{2}+\sfrac{1}{4}} 
\d (L)^{\sfrac{\gamma_0}{2}-\beta_2} r^{1+\beta_2}\big),\notag\\\
|D^2 \Psi (x, \bar{f}_i)| \leq C \bmo^{\sfrac{1}{2}}\quad\mbox{and}\quad D^2 \Psi (x, \bar{f}_i) = D^2 \Psi (0,0) + O \big(\bmo^{\sfrac{1}{2}} r\big)\, .\nonumber
\end{gather}
We compute as follows:
\begin{align}
\int \sum_i (A):(D) &= \int \sum_i (D_{xy} \Psi (0,0) \cdot \zeta) : D_x \Psi (x, \bar{f}_i) + O\Big( \bmo\, r^2 \int |\zeta|\Big) \nonumber\\
&= \int Q (D_{xy} \Psi (0,0)\cdot \zeta) : (D_{xx} \Psi (0,0)\cdot x) \label{e:(AD)}\\
& \quad +  O \Big( \bmo\,\d (L)^{\sfrac{\gamma_0}{2}-\beta_2}\, r^{1+\beta_2} \int |\zeta|\Big).\notag
\end{align}
The  integral in \eqref{e:(AD)} has the form $\int \bL_{AD}\,x \cdot \zeta$.
Next, we estimate
\begin{align}
\int \sum_i \big((A):(E) + &(B):(D)+ (B) : (E) \big) \notag\\
&  = O \Big(\bmo^{1+\beta_0} \d (L)^{\beta_0(2\gamma_0-2+2\delta_1)}r^{1+\beta_0(2-2\delta_1)} \int |\zeta|\Big)\label{e:(AE)}
\end{align}
and
\begin{gather}
\int \sum_i (C):(E) = O \Big(\bmo^{1+\beta_0} \d (L)^{\beta_0(2\gamma_0-2+2\delta_1)}r^{2+\beta_0(2-2\delta_1)}\int |D\zeta|\Big).\label{e:(CE)}
\end{gather}
Finally we compute 
\begin{align*}
\int \sum_i (C):(D) &= \int \sum_i ((D_{xy}\Psi (0,0) \cdot x) \cdot D_x \zeta) : D_x \Psi (x, \bar{f}_i) \\
& \quad + O \Big(\bmo\,\d (L)^{\sfrac{\gamma_0}{2}-\beta_2}r^{2+\beta_2} \int |D\zeta|\Big)\\
&= Q \int (D_{xy}\Psi (0,0) \cdot x) \cdot D_x \zeta) : (D_{xx} \Psi (0,0) \cdot x) \\
& \quad + O \Big(\bmo\,\d (L)^{\sfrac{\gamma_0}{2}-\beta_2}r^{2+\beta_2} \int |D\zeta|\Big).
\end{align*}
Integrating by parts in the last integral we reach
\begin{equation}\label{e:(CD)}
\int \sum_i (C):(D) = \int \bL_{CD}\,x \cdot \zeta + O \Big(\bmo\,\d (L)^{\sfrac{\gamma_0}{2}-\beta_2} r^{2+\beta_2} \int |D\zeta|\Big)\, .
\end{equation}
Set next $\bL_3 :=\bL_{AD} + \bL_{CD}$. Clearly $\bL_3$ is a quadratic
function of $D^2 \Psi (0,0)$, i.e.~a
quadratic function of the tensor $A_\Sigma$ at the point $p_H$.
From \eqref{e:var_prima}, \eqref{e:tayloraccio2}, \eqref{e:(AD)} -- \eqref{e:(CD)},
we infer \eqref{e:pde2} and (i). Indeed we have to compare the following three types of errors
\begin{gather}
\mathcal{E}_1:= \bmo^{1+\beta_0} \d (L)^{(1+\beta_0)(2\gamma_0-2+2\delta_1)}r^{4-2\delta_1+\beta_0(2-2\delta_1)}\\
\mathcal{E}_2:= \bmo^{1+\beta_0} \d (L)^{\beta_0(2\gamma_0-2+2\delta_1)}r^{4+\beta_0(2-2\delta_1)}\label{e:calE2}\\
\mathcal{E}_3:= \bmo \d (L)^{\sfrac{\gamma_0}{2}-\beta_2}r^{4+\beta_2}
\end{gather}
with $\bmo \d (L)^{2(1+\beta_0)\gamma_0-2 - \beta_2}
r^{4+\beta_2}$.
It is easy to see that if
\begin{equation}\label{e:condizione beta2}
-2\delta_1+\beta_0(2-2\delta_1) - \beta_2 >0
\end{equation}
then
\begin{align}\label{e:finale}
\mathcal{E}_2 \leq \mathcal{E}_1 & \leq \bmo^{1+\beta_0} \d (L)^{(1+\beta_0)(2\gamma_0-2+2\delta_1)-2\delta_1+\beta_0(2-2\delta_1) - \beta_2}r^{4+\beta_2}\notag\\
& \leq \bmo^{1+\beta_0} \d (L)^{2(1+\beta_0)\gamma_0-2 - \beta_2} r^{4+\beta_2}
\end{align}
Therefore
\begin{equation}\label{e:calE123}
 \mathcal{E}_1,\mathcal{E}_2,\mathcal{E}_3\leq \bmo \d (L)^{2(1+\beta_0)\gamma_0-2 - \beta_2}
r^{4+\beta_2}\,.
\end{equation}
To conclude the proof we observe that, by the bound \eqref{e:Dir f} and \cite[Theorem 5.2]{DSS2}
\begin{align*}
\int_{\bC_{8r} (p, \pi)} |D\chi|\, d \|\bG_f - T\| &\leq C \|D\zeta\|_0 \mass (T\res\bC_{8r} (p,\pi) -\bG_f)\\
& \leq C_0 \|D\zeta\|_0 r^2 E^{\beta_0} (E+ \bmo r^2) \leq C \mathcal{E}_2\,. 
\end{align*}

\medskip

\noindent\textit{Case (b) of Definition \ref{d:semicalibrated}}.
Fix coordinates $(x,y) \in \R^2\times\R^n$ such that $p_{H} = (0,0)$.
Consider the vector field $\chi(x,y) := (0, \zeta(x))$
for some $\zeta$ as in the statement. Recalling \cite[Proposition 1.2]{DSS1} we infer
\[
 \delta \bG_f (\chi) = \delta T (\chi) + \textup{Err}_0=T(d\omega\ser \chi)  +\textup{Err}_0= \bG_f(d\omega\ser \chi)+\textup{Err}_0 +\textup{Err}_1
\]
with
\begin{align}
|\textup{Err}_0| + |\textup{Err}_1| &= |\delta T(\chi) - \delta \bG_f(\chi)| 
+ \big\vert T(d\omega \ser \chi) - \bG_f(d\omega \ser \chi)\big\vert\notag\\
&\leq C\, \big(\|D\zeta\|_{0} + \|d\omega\ser \chi\|_{0} \big)
\| T - \bG_f\|(\bC_{8r}(p,\pi))\notag\\
& \leq C\,\big(\|D\zeta\|_{0} + \|\zeta\|_{0} \big)
\,E^{\beta_0}\, (E + r^2\,\bmo)\, r^2 \notag\\
&\leq C\,\|D\zeta\|_{0}\,\bmo^{1+\beta_0}
\,\d (H)^{(2\gamma_0-2+2\delta_1)(1+\beta_0)}
\,r^{2 + (2 - 2\delta_1) (1+ \beta_0)}.
\end{align}
From \cite[Theorem~4.1]{DS2}
\[
\delta \bG_f(\chi) = Q \int D(\etaa\circ f) \colon D\zeta + \textup{Err}_2
\]
with
\begin{align*}
|\textup{Err}_2| &\leq C\,\int |D\zeta|\, |Df|^3\leq C\,\|D\zeta\|_{0} E^{1+\beta_0}\,r^2\\
& \stackrel{\eqref{e:Dir f}}{\leq} C\, \|D\zeta\|_{0}\,\bmo^{1+\beta_0}\,\d (H)^{(2\gamma_0-2+2\delta_1)(1+\beta_0)}\,
r^{2 + (2-2\delta_1)(1+\beta_0)}.
\end{align*}
Next we proceed to expand $\bG_f(dw\ser \chi)$. To this aim we write
\begin{align}
d\omega(x,y) & = \sum_{l=1}^n a_l (x,y)\, dy^l \wedge dx^1\wedge dx^2  +
\sum_{j=1,2}\sum_{l<k} b_{lk,j}(x,y)\, dy^l \wedge dy^k \wedge dx^j\notag\\
&
+\sum_{l<k<j} c_{lkj}(x,y)\,dy^l \wedge dy^k \wedge dy^j
\end{align}
and get
\begin{align}
d\omega \ser \chi & = \underbrace{\sum_{l=1}^n a_l\,\zeta^l\,
dx^1\wedge dx^2}_{\omega^{(1)}}
+ \underbrace{\sum_{j=1,2}\sum_{l<k} b_{lk,j}\, \zeta^l dy^k \wedge dx^j}_{\omega^{(2)}}
+\underbrace{\sum_{l<k<j} c_{lkj}\,\zeta^l \, dy^k \wedge dy^j}_{\omega^{(3)}}.
\end{align}
We consider separately $\bG_f(\omega^{(1)}), \bG_f(\omega^{(2)}), \bG_f(\omega^{(3)})$.
We start with the latter
\begin{align}
\bG_f(\omega^{(3)}) & \leq C\,\|d\omega\|_0\,\|\zeta\|_0\int_{B}|Df|^2 \stackrel{\eqref{e:Dir f}}{\leq}
C\,\bmo^2\, \d (H)^{2\gamma_0-2+2\delta_1}\, r^{5-2\delta_1} \|D\zeta\|_0.
\end{align}
Next
\begin{align}
\bG_f(\omega^{(2)}) & = \sum_{l<k} \sum_{i=1}^{Q}
\int \zeta^l(x) \,\left(
b_{lk,2}(x,f_i(x))\, \frac{\de f_i^k}{\de x^1} 
-b_{lk,1}(x,f_i(x))\, 
\frac{\de f_i^k}{\de x^2} \right) dx\notag\\
& = Q\sum_{l<k} 
\int \zeta^l(x) \,\left(
b_{lk,2}(0,0)\, \frac{\de (\etaa\circ f)^k}{\de x^1} 
-b_{lk,1}(0,0)\, \frac{\de (\etaa\circ f)^k}{\de x^2} \right)dx
+ \textup{Err}_3,\notag\\
& = \int \bL_1\,D(\etaa \circ f) \cdot \zeta + \textup{Err}_3
\end{align}
with
\begin{align}
|\textup{Err}_3| & \leq C \|\zeta\|_0\,\|D(d\omega)\|_0
\int_B \left(r\,|Df| + |f|\,|Df| \right)\, dx\notag \\
&\stackrel{\eqref{e:lip f}\&\eqref{e:Linf}}\leq C \|D\zeta\|_0\,\bmax_0^{1+\beta_0}\, r^{4+(2-2\delta_1)\beta_0}\, \d (H)^{(2\gamma_0-2+2\delta_1)\beta_0}
\end{align}
and $\bL_1 : \R^{n\times 2} \to \R^n$ given by
\[
\bL_1 A\cdot e_l := Q\sum_{k=1}^n \big(b_{lk,2}(0,0)\, A_{k1} 
-b_{lk,1}(0,0)\, A_{k2}\big) \quad \forall\;A=(A_{kj})_{k=1,\ldots,n}^{j=1,2}\in \R^{n\times 2}.
\]
Finally
\begin{align}
\bG_f(\omega^{(1)}) & = \sum_{l} \sum_{i=1}^{Q}
\int \zeta^l(x) \,
a_{l}(x,f_i(x))\,dx\notag\\
& = Q\sum_{l} \int \zeta^l(x) \,
\left(a_{l}(0,0) + D_x a_l(0,0) \cdot x +
\,D_y a_l(0,0) \cdot (\etaa\circ f) \right) dx + \textup{Err}_4,\notag\\
& = \int \big(\bL_2\,(\etaa\circ f)  + \bL_3\, x + \bL_4 \big) \cdot \zeta
+ \textup{Err}_4
\end{align}
where $\bL_2:\R^n \to \R^n$, $\bL_3:\R^2 \to \R^n$
 $\bL_4\in \R^n$ are given by
\begin{gather}
\bL_2 \,v \cdot e_l := \sum_{k=1}^n \frac{\de a_l}{\de y^k}(0,0) \, v^k
\quad \forall\;v\in\R^n,\;\forall\;l=1,\ldots,n\\
\bL_3\, w \cdot e_l := \sum_{j=1}^2 \frac{\de a_l}{\de x^j}(0,0) \, w^j
\quad \forall\;w\in\R^n,\;\forall\;l=1,\ldots,n\\
\bL_4 \cdot e_l := a_l(0,0)\quad\forall\;l=1,\ldots,n
\end{gather}
and arguing as above
\begin{align}
|\textup{Err}_4| \leq C \|\zeta\|_0\,[D (d\omega)]_{\eps_0}
\int_B \left(r^{1+\eps_0} + |f|^{1+\eps_0} \right) \,dx
\leq C \|D\zeta\|_0\,\bmo\, r^{4+\eps_0}.
\end{align}
In order to deduce \eqref{e:pde2} we need to compare, using \eqref{e:calE2} and \eqref{e:calE123}, 
\begin{gather*}
|\textup{Err}_0+\textup{Err}_1+\textup{Err}_2|\leq\|D\zeta\|_0 \mathcal{E}_{1} \leq \|D\zeta\|_0 \bmo \d (L)^{2(1+\beta_0)\gamma_0-2 - \beta_2}
r^{4+\beta_2}
\\
\mathcal{E}_2 = C\,\bmax_0^{1+\beta_0}\,\d (H)^{(2\gamma_0-2+2\delta_1)\beta_0}\,r^{4+(2-2\delta_1)\beta_0}
\\
\mathcal{E}_{2.5}= C\,\bmo^2\, \d (H)^{2\gamma_0-2+2\delta_1}\, r^{5-2\delta_1}
\\
\mathcal{E}_4 = C\,\bmo\, r^{4+\eps_0}
\end{gather*}
with $\bmo \d (L)^{2(1+\beta_0)\gamma_0-2 - \beta_2}
r^{4+\beta_2}$.
As before, if \eqref{e:condizione beta2} holds, then $\mathcal{E}_{2}\leq \mathcal{E}_{1}$.
Moreover, since $\mathcal{E}_4 \leq r^{4+\beta_2}$, to conclude \eqref{e:pde2} it is enough to observe that if
\begin{equation}\label{condizione_beta0}
1 \geq \beta_0(2-2\delta_1)
\end{equation} 
then $0> 2\gamma_0-2+2\delta_1 > (2\gamma_0-2+2\delta_1)(1+\beta_0) $ and 
$5-2\delta_1 > 2 + (2 - 2\delta_1) (1+ \beta_0)$, so that
\begin{align*}
\d (H)^{2\gamma_0-2+2\delta_1}\, r^{5-2\delta_1} \leq
\d (H)^{(2\gamma_0-2+2\delta_1)(1+\beta_0)}\,r^{2 + (2 - 2\delta_1) (1+ \beta_0)}\,,
\end{align*}
that is $\mathcal{E}_{2.5\&4}\leq \mathcal{E}_1$.
\end{proof}

\subsection{Tilted interpolating functions, $L^1$ and $L^\infty$ estimates} In this subsection we generalize the definition of the tilted interpolating functions $h_L$. More precisely we consider

\begin{definition}\label{d:tilted_HL}
Let $H$ and $L$ be as in Proposition \ref{p:app}, assume that the conclusions of Proposition \ref{p:pde} applies and let $\mathscr{L}_H$ and $\mathscr{F}_H$ be the corresponding operator and map as given by Proposition \ref{p:pde} in combination with \eqref{e:elliptic}, \eqref{e:ellittico} and \eqref{e:weak_form}. Let $f_{HL}$ be as in Definition \ref{d:f_HL}, $\varkappa_H$ and $\bar{f}_{HL}$ be as in Definition \ref{d:bar} and fix coordinates $(x,y,z)\in \pi_H \times \varkappa_H \times T_{p_H} \Sigma^\perp$ as in the proof of Proposition \ref{p:pde}. We then let $\bar{h}_{HL}$ be the solution of
\begin{equation}\label{e:ell_def}
\left\{
\begin{array}{l}
\mathscr{L}_H \bar{h}_{HL} = \mathscr{F}_H\\ \\
\left.\bar{h}_{HL}\right|_{\partial B_{8r_L} (p_{HL}, \pi_H)} = \etaa \circ \bar{f}_{HL}\, .
\end{array}\right.
\end{equation}
In case (b) of Definition \ref{d:semicalibrated} we then define $h_{HL} = \bar{h}_{HL}$, whereas in the other cases
we define $h_{HL} (x) = (\bar{h}_{HL} (x), \Psi_{p_H} (x, \bar{h}_{HL} (x)))$.
\end{definition}

In order to show that the maps $\bar{h}_{HL}$ are well defined, we need to show that there is a solution of the system \eqref{e:ell_def}.

\begin{lemma}\label{l:exist}
Under the assumptions of Definition \ref{d:tilted_HL}, if $\eps_2$ is sufficiently small, then the elliptic system
\begin{equation}\label{e:esiste}
\left\{
\begin{array}{l}
\mathscr{L}_H v = F\ \\ \\
\left.v\right|_{\partial B_{8r_L} (p_{HL}, \pi_H)} = g\, .
\end{array}\right.
\end{equation}
has a unique solution for every $F\in W^{-1,2}$ (the latter being the usual space of distributions which can be represented as first partial derivatives of $L^2$ functions) and every $g\in W^{1,2} (B_{8 r_L} (p_{HL}, \pi_H))$. Observe moreover that $\|Dv\|_{L^2} \leq C_0 r_L (\|F\|_{L^2} + \bmo^{\sfrac{1}{2}} \|g\|_{L^2}) + C_0 \|Dg\|_{L^2}$ whenever $F\in L^2$. 
\end{lemma}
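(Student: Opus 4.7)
The operator $\mathscr{L}_H = \Delta + \mathscr{E}_H$ has leading term $\Delta$ and a lower order perturbation $\mathscr{E}_H(v)^k = (\mathbf{L}_1)^k_{ij}\partial_j v^i + (\mathbf{L}_2)^k_i v^i$ with constant coefficients satisfying $|\mathbf{L}_1|+|\mathbf{L}_2|\leq C\bmo^{1/2}$ by Proposition~\ref{p:pde}(i). The whole argument is a standard Lax--Milgram perturbation, with the smallness of $\bmo$ (after Lemma~\ref{l:piccolezza}) absorbing the lower order coefficients.

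First I would reduce to zero boundary data: extend $g$ to a $W^{1,2}$ map on $B := B_{8r_L}(p_{HL},\pi_H)$, keep calling the extension $g$, and look for $u := v - g \in W^{1,2}_0(B)$ solving, in the weak sense derived from \eqref{e:weak_form},
\[
a(u,\phi) := \int_B \bigl(Du\,\colon\! D\phi - \mathscr{E}_H(u)\cdot \phi\bigr) = -\int_B F\cdot\phi - \int_B Dg\,\colon\! D\phi + \int_B \mathscr{E}_H(g)\cdot\phi =: \langle T_{F,g},\phi\rangle
\]
for every $\phi\in C^\infty_c(B,\varkappa_H)$; here $\langle F,\phi\rangle$ is understood in the $W^{-1,2}$--$W^{1,2}_0$ duality when $F\in W^{-1,2}$. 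Since the domain $B$ is a disk of radius $8r_L\leq 1$, the Poincar\'e inequality $\|\phi\|_{L^2}\leq C_0 r_L\|D\phi\|_{L^2}$ holds on $W^{1,2}_0(B)$ with a dimensional constant.

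Next I would verify the hypotheses of Lax--Milgram on $W^{1,2}_0(B)$. Continuity of $a$ is immediate from Cauchy--Schwarz and Poincar\'e:
\[
|a(u,\phi)|\leq \|Du\|_{L^2}\|D\phi\|_{L^2} + C\bmo^{1/2}\bigl(\|Du\|_{L^2}+\|u\|_{L^2}\bigr)\|\phi\|_{L^2}\leq C\|Du\|_{L^2}\|D\phi\|_{L^2}.
\]
For coercivity,
\[
a(u,u) \geq \|Du\|_{L^2}^2 - C\bmo^{1/2}\bigl(\|Du\|_{L^2}\|u\|_{L^2}+\|u\|_{L^2}^2\bigr)\geq (1-C\bmo^{1/2}r_L)\|Du\|_{L^2}^2\geq \tfrac{1}{2}\|Du\|_{L^2}^2,
\]
provided $\eps_2$ is chosen small enough (recall $\bmo\leq \eps_2$ and $r_L\leq 1$). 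The linear functional $\phi\mapsto \langle T_{F,g},\phi\rangle$ is bounded on $W^{1,2}_0(B)$ with norm
\[
\|T_{F,g}\|\leq \|F\|_{W^{-1,2}} + \|Dg\|_{L^2} + C\bmo^{1/2}\bigl(\|Dg\|_{L^2} + \|g\|_{L^2}\bigr)\leq C_0\bigl(\|F\|_{W^{-1,2}} + \|Dg\|_{L^2} + \bmo^{1/2}\|g\|_{L^2}\bigr).
\]
Lax--Milgram then yields a unique $u\in W^{1,2}_0(B)$ with $\|Du\|_{L^2}\leq 2\|T_{F,g}\|$, hence a unique solution $v=u+g$ of \eqref{e:esiste}.

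Finally I would read off the promised bound. Writing $\|Dv\|_{L^2}\leq \|Du\|_{L^2}+\|Dg\|_{L^2}$ and using the estimate on $\|T_{F,g}\|$, together with the elementary inequality $\|F\|_{W^{-1,2}(B)}\leq C_0 r_L\|F\|_{L^2(B)}$ when $F\in L^2$ (again by Poincar\'e on test functions), gives
\[
\|Dv\|_{L^2}\leq C_0 r_L\bigl(\|F\|_{L^2}+\bmo^{1/2}\|g\|_{L^2}\bigr) + C_0\|Dg\|_{L^2},
\]
as claimed. There is no real obstacle here beyond choosing the $W^{1,2}$ extension of $g$ so that $\|Dg\|_{L^2(B)}$ is controlled by the $W^{1,2}$-norm of the boundary trace (standard trace/extension theory on a disk), and keeping track of the Poincar\'e constant to absorb the lower-order coefficients; both are routine once $\eps_2$ is fixed small enough depending only on the dimensional constants.
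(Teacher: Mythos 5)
Your proposal is correct and takes essentially the same route as the paper: reduce to zero boundary data, verify coercivity of the bilinear form on $W^{1,2}_0$ via the Poincar\'e inequality and the smallness $|\mathbf{L}_1|+|\mathbf{L}_2|\leq C\bmo^{1/2}\leq C\eps_2$, invoke Lax--Milgram, and then read off the $W^{1,2}$ bound. One small remark: your closing worry about ``choosing the $W^{1,2}$ extension of $g$'' is unnecessary --- the lemma already gives $g\in W^{1,2}(B_{8r_L}(p_{HL},\pi_H))$ as a function on the whole disk, not merely as a boundary trace, so there is nothing to extend; the stated bound is in terms of $\|g\|_{L^2(B)}$ and $\|Dg\|_{L^2(B)}$ directly.
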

\begin{proof} As for the first assertion,
it suffices to show the Lemma for $g=0$, since we can define $w= v-g$ and solve $\mathscr{L}_H (w) = F + \mathscr{L}_H (g)$ . Setting $B = B_{8r_L} (p_{HL}, \pi_H)$, the existence and uniqueness for the latter case reduces, by Lax-Milgram, to the coercivity of the suitable quadratic form $\mathscr{Q} (v,v)$ on $W^{1,2}_0 (B)$. 
The latter follows easily from 
\begin{align*}
\mathscr{Q} (w,w) &:= \int (|Dw|^2 - \bL_1\,Dw\cdot w - \bL_2\, w\cdot w)\\
&\geq
\|Dw\|_{L^2(B)}^2 - \frac{|\bL_1|}2\,\|Dw\|_{L^2(B)}^2 - \left(\frac{|\bL_1|}2 +|\bL_2| \right)\|w\|_{L^2(B)}^2\, .
\end{align*}
Since $r_L \leq 1$, by the Poincar\'e inequality $\|w\|_{L^2}^2 \leq C_0 \|Dw\|_{L^2}^2$ for every $w\in W^{1,2}_0 (B)$.
The coercivity follows then from $|{\bf L}_1| + |{\bf L}_2| \leq C \bmo^{\sfrac{1}{2}} \leq C \eps_2$, where the constant $C$ depends only upon $M_0, N_0, C_e$ and $C_h$. In particular we can assume the coercivity factor to be $\frac{1}{2}$.

On the other hand, multiplying the equation by $w$ and integrating by parts we easily see (using the coercivity) that
\begin{align*}
\frac{1}{2} \int |Dw|^2 \leq &\int (|Dw||Dg| + |F||w|) + C \bmo^{\sfrac{1}{2}} \int (|g||w| + |w| |Dg|)\\
\leq &\frac{1}{4}\int |Dw|^2 + \frac{r_L^2}{\gamma} \int |F|^2 + \frac{2\gamma}{r_L^2} \int |w|^2
+ C \int (|Dg|^2 +\textstyle{\frac{\bmo}{\gamma}} r^2_L |g|^2)\, ,
\end{align*}
where $\gamma$ is any fixed positive number and $C$ does not depend upon it.

We choose $\gamma$ smaller than a geometric constant, so that we can use the Poincar\'e inequality to absorb the terms $\int |w|^2$ on the right hand side. We then conclude the desired estimate $\|Dw\|_{L^2} \leq C (\|Dg\|_{L^2} + \bmo^{\sfrac{1}{2}} r_L \|g\|_{L^2} + r_L \|F_L\|_{L^2})$. Since $v = w + g$, we then conclude $\|Dv\|_{L^2} \leq C (\|Dg\|_{L^2} + \bmo^{\sfrac{1}{2}} r_L \|g\|_{L^2} + C r_L \|F_L\|_{L^2})$ .
\end{proof}

Observe that $h_{HH} = h_H$. We next record three fundamental estimates, which regard, respectively, the $L^\infty$ norms of derivatives of solutions of $\mathscr{L}_H (v) = F$, the $L^\infty$ norm of $\bar{h}_{HL}- \etaa\circ \bar{f}_{HL}$ and the $L^1$ norm of $\bar{h}_{HL}-\etaa \circ \bar f_{HL}$. 

\begin{proposition}\label{p:stime}
Let $H$ and $L$ be as in Proposition \ref{p:pde} and assume the conclusions in there apply. Then the following estimates
hold for a constant $C = C(m_0, N_0, C_e, C_h)$ for $\hat{B} := B_{8r_L} (p_{HL}, \pi_H)$ and $\tilde{B}:=B_{6r_L} (p_{HL}, \pi_H)$:
\begin{align}
&\|\bar{h}_{HL} - \etaa\circ \bar{f}_{HL}\|_{L^1 (\hat{B})} \leq C \bmo \d (L)^{2(1+\beta_0) \gamma_0 -2 -\beta_2} \ell (L)^{5 + \beta_2}\label{e:stima-L1}\\
&\|\bar{h}_{HL} - \etaa \circ \bar{f}_{HL}\|_{L^\infty (\tilde{B})}\leq C \bmo \d (L)^{2(1+\beta_0) \gamma_0 -2 -\beta_2} \ell (L)^{3 + \beta_2}
+ C \bmo^{\sfrac{1}{2}} \ell (L)^2\, .\label{e:stima-Linfty}
\end{align}
Moreover, if $\mathscr{L}_H$ is the operator of Proposition \ref{p:pde}, $r$ a positive number no larger than $1$ and $v$ a solution of $\mathscr{L}_H (v) =F $ in $B_{8r} (q, \pi_H)$ for some smooth $F$, then
\begin{equation}\label{e:L1-Linfty}
\|v\|_{L^\infty (B_{6r} (q, \pi_H))} \leq \frac{C_0}{r^2} \|v\|_{L^1 (B_{8r} (q, \pi_H))} + C r^2 \|F\|_{L^\infty (B_{8r} (q, \pi_H))}\, 
\end{equation}
and, for $l\in \mathbb N$
\begin{equation}\label{e:higher}
\|D^l v\|_{L^\infty (B_{6r} (q, \pi_H))} \leq \frac{C_0}{r^{2+l}} \|v\|_{L^1 (B_{8r} (q, \pi_H))} + C r^2 \sum_{j=0}^l r^{j-l} \|D^j F\|_{L^\infty (B_{8r} (q, \pi_H))},
\end{equation}
where the latter constants depend also upon $l$. 
\end{proposition}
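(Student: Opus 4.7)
The three inequalities split into two rather different types. The bounds \eqref{e:L1-Linfty}--\eqref{e:higher} are abstract interior regularity statements for solutions of $\mathscr{L}_H v = F$, whereas \eqref{e:stima-L1}--\eqref{e:stima-Linfty} compare $\bar h_{HL}$ with the Lipschitz datum $\etaa\circ\bar f_{HL}$ whose boundary trace it realizes, and the key input is Proposition \ref{p:pde}, which says that $\etaa\circ\bar f_{HL}$ is an almost-solution of the same equation. For the abstract part, the crucial point is that $|\bL_1|,|\bL_2|\leq C\bmo^{1/2}\leq C\eps_2^{1/2}$ can be made arbitrarily small, so after rescaling to the unit ball $\mathscr{L}_H$ is a uniformly small perturbation of $\Delta$. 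Consequently \eqref{e:L1-Linfty} is the classical interior $L^1$--$L^\infty$ bound (e.g.\ via Moser iteration, or $W^{2,p}$ theory with $p>2$ plus Sobolev embedding), and \eqref{e:higher} follows by differentiating the equation—since the coefficients of $\mathscr{L}_H$ are constant, $D^j v$ solves $\mathscr{L}_H(D^j v)=D^j F$—and iterating \eqref{e:L1-Linfty} on a chain of nested balls $B_{(8-k\sigma)r}(q,\pi_H)$ for small $\sigma$.

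\textbf{The $L^1$ estimate.} Set $w:=\bar h_{HL}-\etaa\circ\bar f_{HL}\in W^{1,2}_0(\hat B,\varkappa_H)$, where $\hat B:=B_{8r_L}(p_{HL},\pi_H)$; the boundary condition in \eqref{e:ell_def} makes $w$ vanish on $\partial\hat B$. Subtracting $\mathscr{I}_H(\bar h_{HL},\zeta)=0$ from $\mathscr{I}_H(\etaa\circ\bar f_{HL},\zeta)$ gives, for every $\zeta\in C^\infty_c(\hat B,\varkappa_H)$,
\[
\int_{\hat B}\bigl(Dw:D\zeta-\mathscr{E}(w)\cdot\zeta\bigr)\,dx = -\mathscr{I}_H(\etaa\circ\bar f_{HL},\zeta).
\]
A closer look at the proof of Proposition \ref{p:pde} shows that the individual error terms on the right are bounded by prefactors times $\int|\zeta|$ or $\int|D\zeta|$; applying Poincar\'e $\|\zeta\|_{L^1}\leq Cr_L\|D\zeta\|_{L^1}$ and Cauchy--Schwarz on the ball $\hat B$ of radius $r_L$ upgrades the stated bound to
\[
|\mathscr{I}_H(\etaa\circ\bar f_{HL},\zeta)|\leq C\bmo\,\d(L)^{2(1+\beta_0)\gamma_0-2-\beta_2}\,r_L^{4+\beta_2}\,\|D\zeta\|_{L^2(\hat B)}.
\]
Testing with $\zeta=w$ and using coercivity of the bilinear form $(u,\zeta)\mapsto\int(Du:D\zeta-\mathscr{E}(u)\cdot\zeta)$ on $W^{1,2}_0$ (valid since $|\bL_i|\leq C\eps_2^{1/2}$ is arbitrarily small) yields $\|Dw\|_{L^2}\leq C\bmo\,\d(L)^{2(1+\beta_0)\gamma_0-2-\beta_2}\,r_L^{4+\beta_2}$. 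Two applications of Poincar\'e and Cauchy--Schwarz then give $\|w\|_{L^1(\hat B)}\leq Cr_L^2\|Dw\|_{L^2}\leq C\bmo\,\d(L)^{2(1+\beta_0)\gamma_0-2-\beta_2}\,r_L^{6+\beta_2}$, from which \eqref{e:stima-L1} follows because $r_L\leq C(M_0)\ell(L)\leq 1$.

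\textbf{The $L^\infty$ estimate.} The same dissection of $\mathscr{I}_H(\etaa\circ\bar f_{HL},\zeta)$ actually furnishes a representation
\[
\mathscr{L}_H w = F_1 - \mathrm{div}(G)\qquad\text{on $\hat B$ (distributionally),}
\]
with $F_1:=\mathscr{F}_H+F'$ satisfying $\|F_1\|_\infty\leq C\bmo^{1/2}$ and $G\in L^\infty(\hat B)$ satisfying $\|G\|_\infty\leq C\bmo\,\d(L)^{2(1+\beta_0)\gamma_0-2-\beta_2}\,r_L^{3+\beta_2}$. The abstract estimate \eqref{e:L1-Linfty} extends in a standard way (De Giorgi--Nash--Moser, or Green's-function/$W^{2,p}$ duality) to equations with divergence-form right-hand sides, producing
\[
\|w\|_{L^\infty(\tilde B)}\leq\frac{C_0}{r_L^2}\|w\|_{L^1(\hat B)}+C r_L^2\|F_1\|_\infty+C r_L\|G\|_\infty.
\]
Inserting the three bounds, the three contributions are respectively of size $C\bmo\,\d(L)^{2(1+\beta_0)\gamma_0-2-\beta_2}\,\ell(L)^{3+\beta_2}$, $C\bmo^{1/2}\ell(L)^2$, and $C\bmo\,\d(L)^{2(1+\beta_0)\gamma_0-2-\beta_2}\,\ell(L)^{4+\beta_2}\leq C\bmo\,\d(L)^{2(1+\beta_0)\gamma_0-2-\beta_2}\,\ell(L)^{3+\beta_2}$, which gives \eqref{e:stima-Linfty}. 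The main technical obstacle throughout is the exponent bookkeeping needed to repackage the many error terms into the clean expressions appearing in the statement; this uses the same inequalities among $\beta_0,\beta_2,\gamma_0,\delta_1$ already exploited in \eqref{e:finale} of the proof of Proposition \ref{p:pde}.
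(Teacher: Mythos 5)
Your treatment of \eqref{e:L1-Linfty} and \eqref{e:higher} is essentially the paper's argument (rescale, classical interior elliptic $L^1$--$L^\infty$ estimate, differentiate the constant-coefficient equation and iterate), so nothing to object to there. The concern is with \eqref{e:stima-L1}, where your route — test the weak formulation with $\zeta=w$ itself and exploit coercivity in $W^{1,2}_0$ — differs from the paper's and does not close.

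The step that fails is the claimed upgrade of \eqref{e:pde2} from $\|D\zeta\|_0$ to $\|D\zeta\|_{L^2}$. You assert that a closer look at the proof of Proposition~\ref{p:pde} shows every error term is bounded by a prefactor times $\int|\zeta|$ or $\int|D\zeta|$. Two terms are not of that type, and they are exactly the ones that spoil the energy method. The mass discrepancy error from \eqref{e:var_prima}, $\int|D\chi|\,d\|\bG_f-T\|$, is controlled in the paper as $\|D\zeta\|_\infty\cdot\|\bG_f-T\|(\bC)\lesssim\|D\zeta\|_\infty\, r_L^2E^{1+\beta_0}$: the measure $\|\bG_f-T\|$ has density of order one but total mass of order $r_L^2E^{1+\beta_0}$. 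Recast in terms of $\|D\zeta\|_{L^2}$ the best available bound is $\int_{B\setminus K}|D\zeta|\le|B\setminus K|^{1/2}\|D\zeta\|_{L^2}\lesssim r_LE^{(1+\beta_0)/2}\|D\zeta\|_{L^2}$: you trade the full factor $|B\setminus K|$ for its square root, and since $\beta_0<1/2$ the exponent $(1+\beta_0)/2<1$ is strictly worse than $1+\beta_0>1$. Feeding this into coercivity with $\zeta=w$ yields $\|w\|_{L^1}\lesssim r_L^3E^{(1+\beta_0)/2}\sim\bmo^{(1+\beta_0)/2}(\cdots)$, which falls short of the required $\bmo^1$ in \eqref{e:stima-L1}. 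The same happens with the cubic Taylor remainder \eqref{e:tayloraccio2}: the paper's bound is $\|D\zeta\|_\infty\|Df\|_\infty\int|Df|^2\lesssim\|D\zeta\|_\infty E^{1+\beta_0}r_L^2$, whereas Cauchy--Schwarz gives only $\|D\zeta\|_{L^2}\||Df|^3\|_{L^2}\lesssim\|D\zeta\|_{L^2}\,r_L E^{1/2+2\beta_0}$, and $1/2+2\beta_0<1+\beta_0$ again for $\beta_0<1/2$. (As a side remark, the exponent in your intermediate claim should be $r_L^{3+\beta_2}\|D\zeta\|_{L^2}$ rather than $r_L^{4+\beta_2}$ even for the well-behaved terms, but that bookkeeping point is moot given the above.)

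The paper avoids this loss by working one Lebesgue exponent away from $2$: it estimates $\|w\|_{L^{3/2}}$ via duality with $h\in L^3$, choosing the test function $\zeta=T(h)$ where $-\Delta T(h)=h$ with zero boundary data, and then the Sobolev embedding $W^{2,3}\hookrightarrow C^{1,1/3}$ supplies the decisive $\|D\zeta\|_\infty\lesssim r_L^{1/3}\|D^2T(h)\|_{L^3}\lesssim r_L^{1/3}\|h\|_{L^3}$. It is precisely this $L^\infty$ control on $D\zeta$, unavailable in the $p=2$ energy setting, that lets the sharp form of \eqref{e:pde2} be used. The same objection applies to your $L^\infty$ step: the distribution $\mathscr{L}_H w$ is determined only weakly through \eqref{e:pde2}, hence lies in the dual of $W^{1,\infty}_0$, i.e.\ is a $\operatorname{div}$ of a (signed) measure of small total variation, not of an $L^\infty$ field of the smallness you assert. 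The paper instead deduces \eqref{e:stima-Linfty} directly from \eqref{e:stima-L1} and \eqref{e:L1-Linfty}.
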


\begin{proof}
{\bf Proof of \eqref{e:L1-Linfty}}. The estimate will be proved for a linear constant coefficient operator of the form 
$\mathscr{L} = \Delta + {\bf L}_1 \cdot D + {\bf L}_2$ when ${\bf L}_1$ and ${\bf L}_2$  are sufficiently small. We can then assume $\pi_H= \R^2$ and $q=0$. Besides, if we define $u (x) := v (rx)$ we see that $u$ just satisfies $\Delta u + r \mathbf{L}_1 \cdot D u + r^2 \mathbf{L}_2 \cdot u = 0$ and thus, without loss of generality, we can assume $r=1$. We thus set $B= B_8 (0)\subset \R^2$.

We recall the following interpolation estimate on the ball of radius $1$, see \cite[Theorem 1]{Nir}. For $0\leq j\leq m$ and $\frac{j}{m}\leq a\leq 1$ we have, for a constant $C_0=C_0 (m,j,q,s)$,
 \begin{equation}\label{e:Nir}
 \|D^ju\|_{L^p(B_1)}\leq C \|D^mu\|^a_{L^s(B_1)}\;\|u\|^{1-a}_{L^q(B_1)}+C\;\|u\|_{L^q(B_1)}\, ,
 \end{equation}
 where  
 \[
  \textstyle \frac{1}{p}=\frac{j}{2}+a\big(\frac{1}{s}-\frac{m}{2}\big)+(1-a)\frac{1}{q}\, .
 \]
We apply the estimate \eqref{e:Nir} for $j=1$, $m=2$, $q=1$ and $p=s=2$, $a=\sfrac23$ and use Young's inequality and a simple scaling argument to achieve the inequality
\begin{equation}\label{e:una}
\|Du\|_{L^2 (B_\rho (x))} \leq C_0 \rho \|D^2 u\|_{L^2 (B_\rho (x))} + C_0 \rho^{-2} \|u\|_{L^1 (B_\rho (x))}\, .
\end{equation}
Moreover, by the Poincar\'e inequality:
\begin{equation}\label{e:due}
\|u\|_{L^2 (B_\rho (x))} \leq C_0 \rho \|Du\|_{L^2 (B_\rho (x))} + C_0 \rho^{-1} \|u\|_{L^1 (B_\rho (x))}\, .
\end{equation}
Next, recall the standard $L^2$ estimates for second order derivatives of solutions of the Laplace equations: if $B_{2\rho} (x)\subset B$, then
\begin{equation}\label{e:tre}
\|D^2 u\|_{L^2 (B_\rho (x))} \leq C_0 \|\Delta u\|_{L^2 (B_{2\rho} (x))} + C_0 \rho^{-3} \|u\|_{L^1 (B_{2\rho} (x))}\, .
\end{equation}
Now, recall that $\Delta u = - {\bf L}_1 \cdot Du - {\bf L}_2 \cdot u + F$. Using the fact that $|{\bf L}_1| + |{\bf L}_2| \leq C_0 \bmo^{\sfrac{1}{2}}$, we can combine all the inequalities above to conclude 
\begin{equation}\label{e:quattro}
\rho^6 \|D^2 u\|^2_{L^2 (B_\rho (x))} \leq C_0 \rho^6 \bmo \|D^2 u\|^2_{L^2 (B_{2\rho} (x))} + C_0\|u\|^2_{L^1 (B_8)} + C_0 \|F\|^2_{L^\infty (B_8)}\, .
\end{equation}
Define next
\begin{equation}
S := \sup \{ \rho^3 \|D^2 u \|_{L^2 (B_\rho (x))} : B_{2\rho (x)} \subset B_8\}
\end{equation}
and let $\varrho$ and $\xi$ be such that $B_{2 \varrho} (\xi) \subset B_8$ and 
\begin{equation}\label{e:ottimalita'}
\varrho^3 \|D^2 u\|_{L^2 (B_\varrho (\xi))} \geq \frac{S}{2}\, .
\end{equation}
We can cover $B_{\varrho} (\xi)$ with $\bar N_0$ balls $B_{\varrho/2} (x_i)$ with $x_i \in B_{\varrho} (\xi)$, where $\bar N_0$ is only a geometric constant. We then can apply \eqref{e:quattro} to conclude that
\[
\frac{S}{2} \leq C_0 \bar  N_0 \bmo^{\sfrac{1}{2}} S + C_0 \bar N_0 \|u\|_{L^1 (B_8)} + C_0 \bar N_0 \|F\|_{L^\infty (B_8)}\, .
\]
Therefore, when $\bmo^{\sfrac{1}{2}}$ is smaller than a geometric constant we conclude $S\leq C_0 \|u\|_{L^1 (B_8)}+C_0 \|F\|_{L^\infty (B_8)}$. By definition of $S$, we have reached the estimate
\[
\rho^3 \|D^2 u\|_{L^2 (B_\rho (x))} \leq C_0 \|u\|_{L^1 (B_8)} + C_0 \|F\|_{L^\infty (B_8)} \qquad \mbox{whenever $B_{2\rho} (x) \subset B_8$.}
\]
Of course, with a simple covering argument, this implies
\begin{equation}\label{e:cinque}
\|D^2 u\|_{L^2 (B_6)}\leq C_0 \|u\|_{L^1 (B_8)}  + C_0 \|F\|_{L^\infty (B_8)}\, .
\end{equation}
Next, again using the interpolation inequality \eqref{e:una} we get 
\[
\|Du\|_{L^2 (B_6)} \leq C_0 \|u\|_{L^1 (B_8)} + C_0 \|F\|_{L^\infty (B_8)}\, .
\] 
So, by Sobolev embedding 
\[
\|Du\|_{L^4 (B_6)} \leq C_0\|D u\|_{W^{1,2} (B_6)} \leq C_0\|u\|_{L^1 (B_8)} + C_0 \|F\|_{L^\infty (B_8)}\, .
\]
Again using interpolation and Sobolev we finally achieve
\[
\|u\|_{L^\infty (B_6)}\leq C_0 \|u\|_{W^{1,4} (B_6)} \leq C_0 \|u\|_{L^1 (B_8)} + C_0 \|F\|_{L^\infty (B_8)}\, .
\]

\medskip

{\bf Proof of \eqref{e:higher}.} As in the previous step, we can, without loss of generality, assume $r=1$. Note that a byproduct of the argument given above is also the estimate
\[
\|Du\|_{L^1 (B_6)} \leq C_0 \|u\|_{L^1 (B_{8})} + C_0 \|F\|_{L^\infty (B_8)}\, .
\]
In fact, by a simple covering and scaling argument one can easily see that 
\[
\|Du\|_{L^1 (B_\tau)}\leq C_0 (\tau) \|u\|_{L^1 (B_8)} + C_0 (\tau) \|F\|_{L^\infty (B_8)} \qquad \mbox{for every $\tau<8$.}
\] 
We can then differentiate the equation and use the proof of the previous paragraph to show 
\[
\|Du\|_{L^\infty (B_\sigma)} \leq C_0 (\sigma, \tau) \|Du\|_{L^1 (B_\tau)} + C_0 (\sigma, \tau) \|DF\|_{L^\infty (B_\tau)}\, .
\] 
Again, arguing as above, a byproduct of the proof is also the estimate
\[
\|D^2 u\|_{L^1 (B_\sigma)} \leq C_0 (\sigma, \tau) \|Du\|_{L^1 (B_\tau)} + C_0 (\sigma, \tau) \|DF\|_{L^\infty (B_\tau)}\, .
\] 
This can be applied inductively to get estimates for all higher derivatives.

\medskip

{\bf Proof of \eqref{e:stima-L1}.} Let $B := B_{8 r_L} (p_{HL}, \pi_H)$. We use the coordinates introduced in the proof of Proposition \ref{p:pde}. We set
$w:= \bar{h}_{HL} - \etaa\circ \bar f_{HL}$ and observe that
\[
\left\{
\begin{array}{l}
\mathscr{L} w = \mathscr{F}_H - \mathscr{L}_H (\etaa \circ \bar{f}_{HL})\\ \\
w|_{\partial B} =0
\end{array}
\right.
\]
Next, for $1<p<\infty$, we define the continuous (by Calderon-Zygmund theory) linear
operator $T: L^p(B) \to W^{1,p}_0(B) \cap W^{2,p} (B)$ by $T(g) = \psi$ where
\[
\begin{cases}
-\Delta \psi = g & \text{in } B\\ \\
\psi = 0 & \text{on } \partial B.
\end{cases}
\]
Applying the Sobolev embedding $W^{1,3}(B)\hookrightarrow C^0(B)$ to the derivative of $\zeta \in W^{2,3}\cap W^{1,3}_0$ we
conclude that
\[
\|D \zeta - {\textstyle{\mint}} D \zeta\|_0 \leq C_0 r_L^{1-\frac{2}{3}} \|D^2\zeta\|_{L^3}\, .
\]
On the other hand, by interpolation and Poincar\'e we conclude
\[
\|D\zeta\|_{L^3} \leq \|\zeta\|_{L^3}^{\sfrac{1}{2}}\|D^2 \zeta\|_{L^3}^{\sfrac{1}{2}} 
\leq \frac{\varepsilon}{2r_L} \|\zeta\|_{L^3} + \frac{r_L}{2\varepsilon}\|D^2 \zeta\|_{L^3}
\leq C_0 \varepsilon \|D\zeta\|_{L^3} +  \frac{r_L}{2\varepsilon}\|D^2 \zeta\|_{L^3}\, ,
\]
for every positive $\varepsilon$. Choosing the latter accordingly we achieve $\|D\zeta\|_{L^3} \leq C_0 r_L \|D^2 \zeta\|_{L^3}$ and
thus $\|D\zeta\|_0 \leq C_0 r_L \|D^2 \zeta\|_{L^3}$.

We now use these bounds in \eqref{e:pde2} to get 
\begin{align*}
\left|\int_B (Dw : D\zeta - \bL_1\,Dw\cdot \zeta - \bL_2\, w\cdot \zeta)\right|
\leq 
C \bmo \,\d (L)^{2(1+\beta_0)\gamma_0-2-\beta_2}\,
r_L^{4+\beta_2}\,r_L^{1-\frac{2}{3}}\,\|D^2\zeta\|_{L^3}.
\end{align*}
Then, we can estimate the $L^{\sfrac32}$-norm of $w$ as follows:
\begin{align*}
\|w\|_{L^{\sfrac32}(B)} & = \sup_{\|h\|_{L^3(B)}=1} \int_B w \,h
= - \sup_{\|h\|_{L^3(B)}=1} \int_B w \,\Delta\,T(h)\\
&\leq \sup_{\|h\|_{L^3(B)}=1} \int_B  D \,w \cdot D T(h)\\
&\leq C \bmo \,\d (L)^{2(1+\beta_0)\gamma_0-2-\beta_2}\,
r_L^{5+\beta_2-\sfrac23}\,\sup_{\|h\|_{L^3(B)}=1} \|D^2T(h)\|_{L^3}\\
&+ \sup_{\|h\|_{L^3(B)}=1} \int_B 
(- \bL_1\,Dw\cdot T(h) - \bL_2\, w\cdot T(h))\, .
\end{align*}
Recalling the Calderon-Zygmund estimates we have 
\begin{align*}
&\|D^2 T (h)\|_{L^3} \leq C_0 \|h\|_{L^3}\\ 
&\|D T (h)\|_{L^3} \leq C_0 r_L \|h\|_{L^3}\\
&\|T (h)\|_{L^3} \leq C_0 r_L^2 \|h\|_{L^3}\, . 
\end{align*}
Integrating by parts we then achieve
\begin{align*}
\|w\|_{L^{\sfrac32}(B)} &\leq C \bmo \,\d (L)^{2(1+\beta_0)\gamma_0-2-\beta_2}\,
r_L^{5+\beta_2-\sfrac23}
+\sup_{\|h\|_{L^3(B)}=1} \int_B w \cdot (\bL_1 DT(h) - \bL_2 T(h))\\
&\leq C \bmo \,\d (L)^{2(1+\beta_0)\gamma_0-2-\beta_2}\,
r_L^{5+\beta_2-\sfrac23} + C \bmo^{\sfrac{1}{2}} \|w\|_{L^{3/2} (B)}\, .
\end{align*}
Therefore, if $\bmo^{\sfrac{1}{2}}$ is sufficiently small, that is $\eps_2$ is sufficiently small, we deduce that
\begin{gather*}
\|w\|_{L^1}\leq C\,r_L^{\sfrac23}
\|w\|_{L^{\sfrac32}(B)} \leq C\,\bmo\,\dist(H)^{2(1+\beta_0)\gamma_0-2-\beta_2}\,r_L^{5+\beta_2}.\label{e:C-Z}
\end{gather*}

\medskip

{\bf Proof of \eqref{e:stima-Linfty}.} The estimate follows easily from \eqref{e:stima-L1} and \eqref{e:L1-Linfty}, recalling
that $\|\mathscr{F}_H\|_0 \leq C \bmo^{\sfrac{1}{2}}$. 
\end{proof}

\section{Main estimates on the interpolating functions}

In this section we adopt the terminology of the previous subsection and we show that

\begin{proposition}\label{p:main_est}
Assume the conclusions of Proposition \ref{p:app} applies,
let $\kappa := \sfrac{\beta_2}{4}$ and assume $\eps_2$ is sufficiently small, depending upon the other parameters.
Then there exists a constant $C=C(M_0, N_0, C_e, C_h)$ such that for any cube 
$H\in \sW\cup \sS$, the following conclusions hold.
\begin{itemize}
\item[(i)] Lemma \ref{l:tecnico3} applies and thus $g_H$ is well-defined.
\item[(ii)] The following estimates hold:
\begin{align}
&\|h_H- \p_{\pi_H}^\perp (p_H)\|_{C^0 (B_{6r_H} (p_H, \pi_H))} \leq C \bmo^{\sfrac{1}{4}} \d (H)^{\sfrac{\gamma_0}{2} - \beta_2} \ell (H)^{1+\beta_2}\label{e:sfava_tanto}\\
&\|g_H\|_{C^0}\leq C \bmo^{\sfrac14} \d(H)^{1+\sfrac{\gamma_0}{2}} \label{e:interpo_0}\\
&\|Dg_H\|_{C^0} + \d (H) \|D^2 g_H\|_{C^0} + \d (H)^2 \|D^3 g_H\|_{C^{\kappa}}\leq C \bmo^{\sfrac{1}{2}} \d (H)^{\gamma_0} \label{e:interpo_1} \\
&\|g_{H} - u (z_H, w_H)\|_{C^0}\leq C \,\bmo^{\sfrac14}\, \d (H)^{\sfrac{\gamma_0}{2}} \ell(H) + c_s \d (H)^a \label{e:interpo_3}\\
&|\pi_{H}-T_{(x, g_{H}(x))}\bG_{g_{H}}|\leq C \bmo^{\sfrac12} \d (H)^{\gamma_0-1 + \delta_1}\ell (H)^{1-\delta_1}  \qquad\qquad\forall x\in B_{4r_H} (z_H, w_H)\, .\label{e:interpo_4} 
\end{align}
\item[(iii)] If $L\in \sW\cup \sS$, $L\cap H \neq \emptyset $ and $\ell (H) \leq \ell (L) \leq 2 \ell (H)$, then
\begin{align}
&\|D^lg_{L} - D^l g_{H}\|_{C^0(B_{r_L}(z_L, w_L))} \leq C\, \bmo^{\sfrac12}\,
\d (H)^{2(1+\beta_0)\gamma_0-\beta_2 -2}\,\ell(H)^{3+\kappa-l } \quad \forall\; l=0, \ldots, 3\, .
\label{e:interpo_5}
\end{align}
\item[(iv)] If $L\in \sS\cup \sW$ and $\d (H)\leq \d (L) \leq 2 \d (H)$, then
\begin{gather}
|D^3g_{H}(z_H, w_H)- D^3g_{L}(z_L, w_L)| \leq C \,\bmo^{\sfrac12} \d (H)^{2(1+\beta_0)\gamma_0-\beta_2 -2} \, d((z_H,w_H),(z_L,w_L))^\kappa\, ,\label{e:interpolazione5}
\end{gather}
where $d ( \cdot, \cdot)$ denotes the distance in $\gira$.
\end{itemize}
\end{proposition}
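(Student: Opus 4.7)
The plan is to prove the estimates in the order (ii), (i), (iii), (iv), using Proposition \ref{p:stime} (the $L^1$ and $L^\infty$ bounds on $\bar h_{HL} - \etaa\circ \bar f_{HL}$) and the interior regularity estimates \eqref{e:L1-Linfty}--\eqref{e:higher} for $\mathscr{L}_H$ as the main engines, together with the Lipschitz/height bounds from \cite[Theorem 1.5]{DSS2} and the tilt and height estimates of Proposition \ref{p:tilting opt}.

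\textbf{Estimates on the tilted plane.} First I would prove the analogues of (ii) directly for $h_H$ on $B_{6 r_H}(p_H,\pi_H)$. Writing $w:=\bar h_H - \p_{\pi_H}^\perp(p_H)$, the height bound \ref{p:tilting opt}(vii) controls $\|\etaa\circ \bar f_H - \p_{\pi_H}^\perp(p_H)\|_\infty$, and combining this with \eqref{e:stima-Linfty} gives \eqref{e:sfava_tanto}. Since $\mathscr{L}_H w = \mathscr{F}_H'$ is still an affine source with $\|\mathscr{F}_H'\|_{C^1}\leq C\bmo^{1/2}$, the interior bound \eqref{e:higher} (after translating and rescaling to a unit ball) gives $\|D^l \bar h_H\|_{L^\infty(B_{6r_H})}\leq C r_H^{-l} \|w\|_{L^\infty} + C\bmo^{1/2} r_H^{2-l}$ for $l=1,2,3$, and Schauder estimates yield the $C^\kappa$ bound on $D^3 \bar h_H$. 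Carefully matching the resulting exponents of $\d(H)$ and $\ell(H)$ with \eqref{e:interpo_1} uses $\ell(H)\leq \d(H)$ and the hierarchy in Assumption \ref{esponenti} (in particular $\beta_2 < \gamma_0/4$).

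\textbf{From the tilted plane to $\pi_0$.} Once the graph of $h_H$ is nearly flat and $|\pi_H-\pi_0|\leq C\bmo^{1/2}\d(H)^{\gamma_0}$ by \ref{p:tilting opt}(v), a standard implicit-function argument (identical to \cite[Lemma~4.6]{DS4}) produces a smooth $g_H$ on $B_{4r_H}(z_H,w_H)\subset \gira$ with $\bG_{g_H}=\bG_{h_H}\res \bC_{4r_H}(p_H,\pi_0)$, which proves (i) and also Lemma~\ref{l:tecnico3}. Pulling back the bounds for $h_H$ through this change of variables produces \eqref{e:interpo_0}--\eqref{e:interpo_1}; \eqref{e:interpo_3} follows by comparing $g_H$ at $z_H$ with the known value $u(z_H,w_H)$ via the hypothesis $\supp(T)\subset \bV_{u,a}$ and the height estimate; \eqref{e:interpo_4} is obtained by differentiating the change of variables and combining $\|Dh_H\|_\infty$ with $|\pi_H-\pi_0|$.

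\textbf{Comparison (iii).} I would compare $g_L$ and $g_H$ by going through the intermediate function $\bar h_{HL}$, which lives on the same plane $\pi_H$ as $\bar h_{HH}$ but at scale $r_L$ near $p_{HL}$. The difference $v:=\bar h_{HL}-\bar h_{HH}$ satisfies $\mathscr{L}_H v = 0$ on the common domain, so \eqref{e:higher} for the homogeneous equation yields $\|D^l v\|_{L^\infty(B_{r_L})}\leq C r_L^{-2-l}\|v\|_{L^1}$. To estimate $\|v\|_{L^1}$ I split
\[
v=(\bar h_{HL}-\etaa\circ \bar f_{HL})+(\etaa\circ \bar f_{HL}-\etaa\circ \bar f_{HH})+(\etaa\circ \bar f_{HH}-\bar h_{HH}).
\]
The first and third terms are controlled by \eqref{e:stima-L1}; the middle one is handled as in \cite[Proposition~4.4(iv)]{DS4}: on the common good set both approximations coincide with the same restriction of $T$, while on the complement the exceptional set bound \eqref{e:aggiuntiva_1} times the height bound \ref{p:tilting opt}(vii) yields the required $L^1$ estimate. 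Finally one converts the tilted estimates on $\bar h_{LL}$ and $\bar h_{HH}$ into estimates on $g_L$ and $g_H$; this costs factors $|\pi_H-\pi_L|$ per derivative, controlled by \ref{p:tilting opt}(iv), which are absorbed into the claimed exponent. For (iv), when $H$ and $L$ are on the same annulus but not adjacent, I would connect them by a chain of $O(1)$ cubes of the same generation and iterate (iii); the Hölder continuity of $D^3 g_H$ inside a single cube is already contained in (ii).

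\textbf{Main obstacle.} The delicate point is the exponent bookkeeping: the interior elliptic estimate introduces $r_L^{-2-l}$ while \eqref{e:stima-L1} brings $\ell(L)^{5+\beta_2}$ and the middle $L^1$ bound brings a factor $\bmo^{1+\beta_0}\d(L)^{(1+\beta_0)(2\gamma_0-2+2\delta_1)}\ell(L)^{2+(1+\beta_0)(2-2\delta_1)}$ times the height; one must verify, using \eqref{e:cost_2}--\eqref{e:cost_1}, that after all cancellations the resulting exponent is at least $2(1+\beta_0)\gamma_0-\beta_2-2$ in $\d(H)$ and $3+\kappa-l$ in $\ell(H)$, exactly as stated in \eqref{e:interpo_5}. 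A second subtlety is that the change of variables from $\pi_H$ to $\pi_0$ at third-order produces many cross terms involving $D^2 \Psi_{p_H}$, all of which must be shown to be of strictly lower order than the main contribution via the parameter hierarchy.
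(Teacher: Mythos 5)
Your plan captures several of the paper's key ideas: the use of Proposition \ref{p:stime} to bound $\bar h_{HL}-\etaa\circ\bar f_{HL}$, the observation that the difference of two elliptic regularizations on the same plane $\pi_H$ solves $\mathscr{L}_H v=0$, the exceptional-set device for comparing $\etaa\circ\bar f_{HL}$ with another $\pi_H$-approximation, the reduction of (iv) to (iii) via chains, and the graphical reparametrization onto $\pi_0$ via [Lemma~B.1, DS4]. Nonetheless there are two genuine gaps.

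\emph{The derivative bound \eqref{e:interpo_1}.} You propose to feed the $L^\infty$ bound \eqref{e:sfava_tanto} on $w=\bar h_H-\p_{\pi_H}^\perp(p_H)$ directly into the interior estimate \eqref{e:higher}, obtaining $\|D^l\bar h_H\|_{L^\infty}\lesssim r_H^{-l}\|w\|_{\infty}+\bmo^{1/2}r_H^{2-l}$. Since $\|w\|_\infty\leq C\bmo^{1/4}\d(H)^{\sfrac{\gamma_0}{2}-\beta_2}\ell(H)^{1+\beta_2}$, the first term carries only $\bmo^{1/4}$; with $\ell(H)\leq\d(H)$ it reads $\bmo^{1/4}\d(H)^{\sfrac{\gamma_0}{2}+1-l}$, which is \emph{not} $\lesssim \bmo^{1/2}\d(H)^{\gamma_0+1-l}$, because $\bmo$ may be taken arbitrarily small independently of $\d(H)$ and $\ell(H)$. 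No rearrangement of the $\d,\ell$-exponents allowed by Assumption~\ref{esponenti} can repair the wrong power of $\bmo$. The paper avoids this entirely: it never applies the interior estimate to $\bar h_{HL}$ itself. Rather, for the top ancestor $L=H_{N_0}$ (where $\ell(L)\sim\d(L)=\d(H)$), it bounds $\|D\bar h_{HL}\|_{L^2}$ through the Dirichlet energy $\int|Df_{HL}|^2\lesssim r_L^2 E$ (which has the correct $\bmo^{1/2}$ scaling built in, since $E\lesssim\bmo$) via Lemma~\ref{l:exist}, then differentiates the system to get $\mathscr{L}_H\partial_j\bar h_{HL}^i=(\bL_3)^i_j$ and applies \eqref{e:L1-Linfty} with $v=D\bar h_{HL}$. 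The bound is then transferred from $H_{N_0}$ to $H$ by summing the father--son differences, which decay geometrically with the stronger $\bmo$ power of \eqref{e:1234h}.

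\emph{The cross-plane comparison in (iii).} Your argument estimates $v=\bar h_{HL}-\bar h_{HH}$, but both functions live on $\pi_H$. The interpolating function $g_L$, however, is derived from $h_L=h_{LL}$, the regularization on the \emph{different} plane $\pi_L$. The comparison between $h_{HL}$ and $h_L$ --- two elliptic regularizations on two different tilted planes, with no common elliptic equation --- is exactly the hard step, and ``costs factors $|\pi_H-\pi_L|$ per derivative'' is not an adequate substitute for the paper's argument. The paper first reduces (iii) to an $L^1$ estimate via [Lemma~C.2, DS4], reparametrizes $h_L$ onto $\pi_H$ to obtain $\hat h_L$ (this already requires the reparametrization lemma for nongraphical objects), and then runs a chain $h_H\to h_{HL}\to \bef_{HL}\to \beg\to \hat\bef_L\to \hat h_L$, where the key cross-plane step $\bef_{HL}$ versus $\hat\bef_L$ is handled by combining the exceptional-set measure, the height bound, and [Lemma~5.6, DS4]. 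Without this reparametrization machinery \eqref{e:interpo_5} cannot be concluded.
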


\subsection{Proof of (i) and (ii) in Proposition \ref{p:main_est}} iF $H\in \mathscr{C}^{N_0}$, then the estimates follow from the standard elliptic theory for the system of equations defining the interpolating functions. We start by fixing $H, L, J$ so that $H\in \sS\cup \sW$, $L$ is an ancestor of $H$ (possibly $H$ itself) and $J$ is  the father of $L$. We denote by $B'$ the ball $B_{8r_J} (p_{HJ}, \pi_H)$, by $B$ the ball $B_{8r_L} (p_{HL}, \pi_H)$, by $\bC'$ the cylinder $\bC_{8r_J} (p_J, \pi_H)$ and by $\bC$ the cylinder $\bC_{8r_L} (p_L, \pi_H)$. Observe that $B\subset B'$ (this just requires $M_0$ sufficiently large, given the estimate $|p_J-p_L|\leq 2\sqrt{2}\ell (J)$) and thus $\bC\subset \bC'$. Next, set $E:= \bE (T_L, \bC_{32 r_L} (p_L, \pi_H))$ and $E':= \bE (T_J, \bC_{32r_J} (p_J, \pi_H))$ and recalling Proposition \ref{p:tilting opt}(vii) and \eqref{e:app4} we record
\begin{align}
E \leq &C \bmo \d (L)^{2\gamma_0-2+2\delta_1} \ell (L)^{2-2\delta_1} \leq C \bmo \d (H)^{2\gamma_0-2+2\delta_1} \ell (J)^{2-2\delta_1}\\
E' \leq &C \bmo \d (J)^{2\gamma_0-2+2\delta_1} \ell (J)^{2-2\delta_1}\leq C \bmo \d (H)^{2\gamma_0-2+2\delta_1} \ell (J)^{2-2\delta_1}\\
\bh (T_L, \bC) \leq &C \bmax_0^{\sfrac14}\d (L)^{\sfrac{\gamma_0}{2} - \beta_2}\ell(L)^{1+\beta_2}\leq 
C \bmax_0^{\sfrac14} \d (H)^{\sfrac{\gamma_0}{2} - \beta_2}\ell(J)^{1+\beta_2}\\
\bh (T_J, \bC') \leq &C \bmax_0^{\sfrac14}\d (J)^{\sfrac{\gamma_0}{2} - \beta_2}\ell(J)^{1+\beta_2}
\leq C \bmax_0^{\sfrac14} \d (H)^{\sfrac{\gamma_0}{2} - \beta_2}\ell(J)^{1+\beta_2}\, .
\end{align}
Next let $\bar K$ be the projection of $\gr (f_{HL})\cap \gr (f_{HJ})$ onto $p_{HL} + \pi_H$. Since $T_J \res \bB_L = T_L$, we can estimate
\[
|B\setminus \bar K|\leq \cH^2 (\gr (f_{HL})\setminus \supp (T_L)) + \cH^2 (\gr (f_{HJ})\setminus \supp (T_J))
\]
and recalling the estimates of \cite[Theorem 1.5]{DSS2} we achieve
\[
|B\setminus \bar K| \leq C_0 r_J^2 (E^{\beta_0} (E + C_0 \bmo r_J^2) + E'^{\beta_0} (E' + C_0 \bmo r_J^2))
\leq C \bmo ^{1+\beta_0} \d (H)^{2 (1+\beta_0) \gamma_0 -2} \ell (J)^4\, .
\]
In particular $\bar K$ is certainly nonempty, provided $\eps_2$ is small enough. Using the estimates of \cite[Theorem 1.5]{DSS2} on the oscillation of $f_{HL}$ and $f_{HJ}$ we conclude that
\[
\|\etaa\circ f_{HL} - \etaa\circ f_{HJ}\|_{L^\infty (B)} \leq C \bmo^{\sfrac{1}{4}} \d (H)^{\sfrac{\gamma_0}{2} - \beta_2} \ell (J)^{1+\beta_2}\, .
\]
Set therefore $\zeta := \etaa \circ \bar f_{HL} - \etaa\circ \bar f_{HJ}$ and conclude that
\[
\|\zeta\|_{L^1 (B)}
 \leq \|\etaa\circ f_{HL} - \etaa\circ f_{HJ}\|_{L^\infty (B)} \, |B\setminus \bar K|
\leq C \bmo^{1+\beta_0+\sfrac14} \d (H)^{\sfrac{\gamma_0}{2} -\beta_2+2(1+\beta_0)\gamma_0-2} \ell(J)^{5+\beta_2}\, .
\]
If we define $\xi:= \bar h_{HL}- \bar h_{HJ}$ we can use \eqref{e:stima-L1} of Proposition \ref{p:stime} and the triangular inequality to infer
\[
\|\xi\|_{L^1 (B)} \leq C \bmo \d (H)^{2(1+\beta_0)\gamma_0-2-\beta_2} \ell(J)^{5+\beta_2}\, .
\]
In turn, again by Proposition \ref{p:stime}, this time using the fact that $\mathscr{L}_H \xi =0$ and \eqref{e:higher}, we infer
\begin{align}\label{e:1234h}
\|D^l (\bar{h}_{HL} - \bar{h}_{HJ})\|_{C^0 (\hat B)} &\leq C\bmo \d (H)^{2(1+\beta_0)\gamma_0 -2-\beta_2} \ell (J)^{3+\beta_2 - l} \notag\\
&\leq C\bmo \d (H)^{2(1+\beta_0)\gamma_0 -2-\beta_2} \ell (J)^{3+2\kappa-l} \qquad \mbox{for $l=0,1,2,3,4$,}
\end{align}
where $\hat B = B_{6r_L} (p_{HL}, \pi_H)$.
Interpolating we also get easily
\begin{equation}\label{e:interpolated}
[D^3 (\bar{h}_{HL} - \bar{h}_{HJ})]_{0,\kappa, \hat B} \leq C\bmo \d (H)^{2(1+\beta_0)\gamma_0 -2-\beta_2} \ell (J)^{\kappa}\, .
\end{equation}

In case (b) of Definition \ref{d:semicalibrated} we have $h_{HL}= \bar{h}_{HL}$ and $h_{HJ} = \bar{h}_{HJ}$. In case (a) and (c), using the system of coordinates introduced in the proof of Proposition \ref{p:stime} we have 
\begin{align*}
h_{HL} (x)  &= (\bar h_{HL} (x), \Psi_{p_H} (x, \bar h_{HL} (x)))\\
h_{HJ} (x) &= (\bar h_{HJ} (x), \Psi_{p_H} (x, \bar h_{HJ} (x)))
\end{align*}
and we use the chain rule and the regularity of $\Psi_{p_H}$ to achieve the corresponding estimates
\begin{align}
&\|D^l (h_{HL} - h_{HJ})\|_{C^0 (\hat B)} \leq C\,\bmo \d (H)^{2(1+\beta_0)\gamma_0 -2 - \beta_2} \ell (J)^{3 + 2 \kappa-l} \qquad \mbox{for $l=0,1,2,3$.}\label{e:induttive_1}\\
&[D^3 (h_{HL}- h_{HJ})]_{0,\kappa, \hat B} \leq C \,\bmo \d (H)^{2(1+\beta_0)\gamma_0 -2-\beta_2} \ell (J)^{\kappa}\label{e:induttive_2}\, .
\end{align}
Fix now a chain of cubes $H = H_j \subset H_{j-1} \subset \ldots \subset H_{N} =: L$, where each $H_{i-1}$ is the father of $H_i$.
Summing the estimates above and using the fact that $\ell (H_j) = 2^{-j} \d (H)$ and $\ell(H) \leq \d (H)=\d (H_{N_0}) $, we infer
\begin{align}
&\|D^l (h_{HL} - h_H)\|_{C^0 (\tilde{B})} \leq C \,\d (H)^{2(1+\beta_0)\gamma_0+1 -l}  \qquad \mbox{for $l=0,1,2,3$}\\
&[D^3 (h_{HL} - h_H)]_{0, \kappa, \tilde{B}} \leq C\, \d (H)^{2(1+\beta_0)\gamma_0-\beta_2 + \kappa -2} \, ,
\end{align}
where $\tilde B = B_{6r_H} (p_H, \pi_H)$. 
Observe that, assuming that we have fixed coordinates so that $p_H = (0,0,0)$ we also know, arguing as in the proof of Proposition \ref{p:pde}, that, if we set $\bar{B} := B_{8r_L} (p_{HL}, \pi_H)$, then
\[
\|\etaa\circ \bar{f}_{HL}\|_{L^\infty (\bar{B})} \leq C \bmo^{\sfrac{1}{4}} \d (H)^{1+\sfrac{\gamma_0}{2}}\, .
\]
In particular, applying \eqref{e:stima-Linfty} of Proposition \ref{p:stime}, we conclude
\[
\|\bar h_{HL}\|_{C^0(\hat B)} \leq  C \bmo^{\sfrac{1}{4}} \d (H)^{1+\sfrac{\gamma_0}{2}}\,.
\]
Recalling the height estimate Proposition \ref{p:tilting opt}(vii) (taking into account that $p_H = 0 \in \supp (T)$) we have
\[
\|\etaa \circ \bar f_H\|_{C^0} \leq C \bmo^{\sfrac{1}{4}} \d (H)^{\sfrac{\gamma_0}{2} - \beta_2} \ell (H)^{1+\beta_2}\,
\] 
and thus
\[
\|\etaa\circ \bar f_H\|_{L^1 (B_{8r_H} (p_H, \pi_H))} \leq C \bmo^{\sfrac{1}{4}} \d (H)^{\sfrac{\gamma_0}{2} - \beta_2} \ell (H)^{3+\beta_2}
\]
Using \eqref{e:stima-L1} we conclude 
\[
\|\bar h_H\|_{L^1 (B_{8r_H} (p_H, \pi_H))} \leq C \bmo^{\sfrac{1}{4}} \d (H)^{\sfrac{\gamma_0}{2} - \beta_2} \ell (H)^{3+\beta_2}\, 
\]
and with the help of \eqref{e:L1-Linfty} we achieve
\begin{equation}\label{e:h-Linfty_1}
\|\bar h_H\|_{L^\infty (B_{6r_H} (p_H, \pi_H))} \leq C  \bmo^{\sfrac{1}{4}} \d (H)^{\sfrac{\gamma_0}{2} - \beta_2} \ell (H)^{1+\beta_2}\, .
\end{equation}
Using the estimates upon $\Psi_{p_H}$ and the fact that $\Psi_{p_H} (0) =0$, $D\Psi_{p_H} (0)=0$ we easily conclude 
\begin{equation}\label{e:h-Linfty_2}
\|h_H\|_{L^\infty (B_{6r_H} (p_H, \pi_H))} \leq C  \bmo^{\sfrac{1}{4}} \d (H)^{\sfrac{\gamma_0}{2} - \beta_2} \ell (H)^{1+\beta_2}\, ,
\end{equation}
which in fact is \eqref{e:sfava_tanto}.

We next estimate the derivatives of $h_{HL}$. Let $E:= \bE (T_L, \bC_{32 r_L} (p_L, \pi_H))$
and recall the discussion above and the estimates of \cite[Theorem 1.5]{DSS2} to conclude that
\begin{equation}
\int_{\bar{B}} |Df_{HL}|^2 \leq C_0 r_L^2 E \leq C \bmo \d (H)^{2\gamma_0 -2 + 2 \delta_1} \ell (L)^{4-2\delta_1}\,. 
\end{equation}
We thus conclude that $\|D\etaa\circ \bar{f}_{HL}\|_{L^2 (\bar{B})} \leq C \bmo^{\sfrac{1}{2}} 
\d (H)^{\gamma_0-1+\delta_1}\,\ell(H)^{2-\delta_1}$. An analogous estimate can be derived for $\|\etaa\circ \bar f_{HL}\|_{L^2 (\bar B)}$ using the $L^\infty$ bound already derived.  
We can now use Lemma \ref{l:exist} to estimate $\|D \bar{h}_{HL}\|_{L^2}\leq C \bmo^{\sfrac{1}{2}} 
\d (H)^{\gamma_0-1+\delta_1}\,\ell(H)^{2-\delta_1}$ (recall the estimate on $\mathcal{F}_H$ derived in the previous section) and thus
$\|D \bar{h}_{HL}\|_{L^1 (\bar B)} \leq C \bmo^{\sfrac{1}{2}} \d (H)^{\gamma_0-1+\delta_1}\,\ell(H)^{3-\delta_1}$.  If we differentiate the equation defining $\bar{h}_{HL}$ we then find
\[
\mathscr{L}_H \partial_j \bar{h}_{HL}^i = ({\mathbf L}_3)^i_{j}
\]
and we can thus apply \eqref{e:L1-Linfty} of Proposition \ref{p:stime}, with $v=D\bar h_{HL}$, to conclude that 
\begin{equation}\label{e:capotribu}
\|D^l \bar{h}_{HL}\|_{L^\infty (B_{6r_L})} \leq C \bmo^{\sfrac{1}{2}} \d (H)^{\gamma_0-1+\delta_1} \ell(L)^{2-\delta_1-l}\leq C \bmo^{\sfrac{1}{2}} \d (H)^{\gamma_0+1-l} \qquad \mbox{for $l =1,2,3,4$,}
\end{equation}
where we used that, for the starting cubes $L=H_{N_0}$, $\d(H)=\d(L)\leq C (M_0) \ell(L)$.

Arguing as above we achieve a similar estimate for $h_{HL}$. We observe however that the condition $D\Psi_{p_H} (0,0) =0$ plays an important role (assuming to have moved the origin so that it coincides with $p_H$). For instance we have
\[
D h_{HL} = (D \bar{h}_{HL}, D_x \Psi_{p_H} (x, \bar{h}_{HL} (x)) + D_y \Psi_{p_H} (x, \bar{h}_{HL} (x)) D \bar{h}_{HL} (x))\, .
\]
Thus we can easily estimate
\begin{equation}\label{e:capotribu2}
|Dh_{HL} (x)|\leq C \bmo^{\sfrac{1}{2}} \d (H)^{\gamma_0} + |D\Psi_{p_H} (x, \bar{h}_{HL} (x))|\, .
\end{equation}
Now, the second summand in \eqref{e:capotribu2} is estimated with $\|D^2 \Psi_{p_H}\| \ell (H) \leq C \bmo^{\sfrac{1}{2}} \d (H)$, precisely because
$D \Psi_{p_H} (0,0)=0$.

\medskip

It follows by \eqref{e:induttive_1}, \eqref{e:induttive_2}, \eqref{e:capotribu} and the triangular inequality that we have the uniform estimates 
\begin{align}\label{e:7.20}
&\|Dh_H\|_{C^0 (B)} + \d (H) \|D^2 h_H\|_{C^0 (B)} + \d (H)^2 \|D^3 h_H\|_{C^\kappa (B)} \leq C \bmo^{\sfrac{1}{2}} \d (H)^{\gamma_0}
\end{align}

Recall now that, by Proposition \ref{p:tilting opt} we have $|\pi_H - \pi_0| \leq C \bmo^{\sfrac{1}{2}} \d (H)^{\gamma_0}$. 
We can therefore apply \cite[Lemma B.1]{DS4} to the rescaling $k_H (x) := \d (H)^{-1} h_H ( \d (H) x)$ and conclude the existence of the interpolating functions $g_H$ and that the estimates \eqref{e:interpo_1}  hold. 
Moreover, combining \eqref{e:sfava_tanto} with \cite[Lemma B.1]{DS4} we also get
\begin{equation}\label{e:C0-100}
\|g_H - \p_{\pi_0}^\perp (p_H)\|_{C^0} \leq C \bmo^{\sfrac{1}{4}} \d (H)^{\sfrac{\gamma_0}{2}} \ell (H)\, .
\end{equation}
On the other hand $\p_{\pi_0} (p_H) = z_H$ and since $p_H\in \supp (T_H)\cup \bV_{u,a}$, we conclude immediately $|\p_{\pi_0}^\perp (p_H) - u (z_H, w_H)|\leq c_s \d (H)^a$. Combining this last estimate with \eqref{e:C0-100} we conclude \eqref{e:interpo_3}.

Finally, recall that, if $E:= \bE (T, \bC_{32 r_H} (p_H, \pi_H))$, then
\[
\int_{B_{8r_H} (p_H, \pi_H)} |Df_H|^2 \leq C \bmo \d (H)^{2\gamma_0-2+2\delta_2} \ell (H)^{4-2\delta_1}\, ,
\] 
from which clearly we get
\[
\int_{B_{8r_H} (p_H, \pi_H)} |D \etaa \circ f_H|^2 \leq C \bmo \d (H)^{2\gamma_0-2+2\delta_1} \ell (H)^{4-2\delta_1}\, .
\]
By the last estimate in Lemma \ref{l:exist} (recalling again the bounds on $\mathcal{F}_H$ and $\etaa\circ f_H$), we deduce
\[
\int_{B_{8r_H} (p_H, \pi_H)} |D \bar{h}_H|^2 \leq C \bmo \d (H)^{2\gamma_0-2+2\delta_1} \ell (H)^{4-2\delta_1}\, .
\]
Thus we conclude the existence of a point $p\in B_{8r_H} (p_H, \pi_H)$ such that 
\begin{equation}\label{e:bound_tangente}
|D\bar h_H (p)|\leq C \bmo^{\sfrac{1}{2}} \d (H)^{\gamma_0-1+\delta_1} \ell (H)^{1-\delta_1}\,.
\end{equation}
Assume now to be in the case (a) or (c) of Definition \ref{d:semicalibrated} and  shift the origin so that it coincides with $p_H$. Given the bound \eqref{e:7.20} on $D^2\bar{h}_H$ we then conclude
\[
|D\bar h_H (0)|\leq C \bmo^{\sfrac{1}{2}} \d (H)^{\gamma_0-1+\delta_1} \ell (H)^{1-\delta_1}
\] 
and, since $D \Psi_{p_H} (0)=0$, we also have $|D h_H (0)|\leq C \bmo^{\sfrac{1}{2}} \d (H)^{\gamma_0-1+\delta_1} \ell (H)^{1-\delta_1}$. Hence using the bound on $\|D^2 h_H\|_0$, we finally conclude $|D\bar{h}_H (q)| \leq C \bmo^{\sfrac{1}{2}} \d (H)^{\gamma_0-1+\delta_1} \ell (H)^{1-\delta_1}$ for all $q$'s in the domain of $\bar{h}_H$.  This implies the estimate
\[
|T_p \bG_{h_H} - \pi_H| \leq C \bmo^{\sfrac{1}{2}} \d (H)^{\gamma_0 -1 + \delta_1} \ell (H)^{1-\delta_1}\, 
\qquad \forall p\in \gr (h_H)\cap \bC_{6r_H} (p_H, \pi_H)\, .
\]

Since however $\gr (g_H) \subset  \gr (h_H)\cap \bC_{6r_H} (p_H, \pi_H)$, we then conclude \eqref{e:interpo_4}. The same conclusion for case (b) in Definition \ref{d:semicalibrated} follows directly from \eqref{e:bound_tangente}. 

\subsection{Proof of (iii) and (iv)} We observe first that (iv) is a rather simple consequence of (iii). Indeed fix $H$ and $L$
as in the statements and consider $H = H_i \subset H_{i-1} \subset \ldots \subset H_{N_0}\in \mathscr{C}^{N_0}$ and $L = L_j \subset L_{j-1}\subset \ldots \subset L_{N_0}\in \mathscr{C}^{N_0}$ so that
$H_l$ is the father of $H_{l+1}$ and $L_l$ is the father of $L_{l+1}$. We distinguish two cases:
\begin{itemize}
\item[(A)] If $H_{N_0}\cap L_{N_0}\neq \emptyset$, we let $i_0$ be the smallest index so that $H_{i_0}\cap L_{i_0}\neq \emptyset$;
\item[(B)] $H_{N_0} \cap L_{N_0} =\emptyset$.
\end{itemize}
In case (A) observe that $\max \{\ell (H_{i_0}), \ell (L_{i_0})\}\leq d ((z_H, w_H), (z_L, w_L)) := d$. On the other hand, recalling that $\d (H_l)= \d (H)$, $\d (L_i)= \d (L)$ and  $\d (L) \leq 2 \d (H)$, by (iii) with $l=3$ we have
\begin{align*}
&|D^3 g_H (z_H, w_H) - D^3 g_{H_{i_0}} (z_{H_{i_0}}, w_{H_{i_0}})|\leq \sum_{l=i_0}^{i-1} |D^3 g_{H_l} (z_{H_l}, w_{H_l}) - D^3 g_{H_{l+1}} (z_{H_{l+1}}, w_{H_{l+1}})|\\
\leq & C \bmo^{\sfrac{1}{2}} \d (H)^{2(1+\beta_0)\gamma_0-\beta_2-2} \ell (H_{i_0})^\kappa \sum_{l=i_0}^{i-1} 2^{(i_0-l)\kappa} \leq C \bmo^{\sfrac{1}{2}} \d (H)^{2(1+\beta_0)\gamma_0-\beta_2 -2} d^\kappa\\
&|D^3 g_L (z_L, w_L) - D^3 g_{L_{i_0}} (z_{L_{i_0}}, w_{L_{i_0}})|\leq \sum_{l=i_0}^{j-1} |D^3 g_{L_l} (z_{L_l}, w_{L_l}) - D^3 g_{L_{l+1}} (z_{L_{l+1}}, w_{L_{l+1}})|\\
\leq &C \bmo^{\sfrac{1}{2}} \d (L)^{2(1+\beta_0)\gamma_0-\beta_2 -2} \ell (L_{i_0})^\kappa \sum_{l=i_0}^{j-1} 2^{(i_0-l)\kappa} \leq C \bmo^{\sfrac{1}{2}} \d (H)^{2(1+\beta_0)\gamma_0-\beta_2 -2} d^\kappa 
\end{align*}
\begin{align*}
&|D^3 g_{L_{i_0}} (z_{L_{i_0}}, w_{L_{i_0}}) - D^3 g_{H_{i_0}} (z_{H_{i_0}}, w_{H_{i_0}})| \leq C \bmo^{\sfrac{1}{2}} \d (H_{i_0})^{2(1+\beta_0)\gamma_0-\beta_2  -2} \ell (H_{i_0})^{\kappa}\\
\leq &\bmo^{\sfrac{1}{2}} \d (H)^{2(1+\beta_0)\gamma_0-\beta_2  -2} d^\kappa\, .
\end{align*}
The triangle inequality implies then the desired estimate.

In case (B) we first notice that by the very same argument we have the estimates
\begin{align*}
|D^3 g_H (z_H, w_H) - D^3 g_{H_{N_0}} (z_{H_{N_0}}, w_{H_{N_0}})|&\leq C \bmo^{\sfrac{1}{2}} \d (H)^{2(1+\beta_0)\gamma_0-\beta_2 -2} d^\kappa\\
|D^3 g_L (z_L, w_L) - D^3 g_{L_{N_0}} (z_{L_{N_0}}, w_{L_{N_0}})|&\leq  C \bmo^{\sfrac{1}{2}} \d (H)^{2(1+\beta_0)\gamma_0-\beta_2 -2} d^\kappa\, .
\end{align*}
Next we find a chain of cubes $H_{N_0} =J_0, J_1, \ldots, J_N = L_{N_0}$, all distinct and belonging to $\sS^{N_0}$, so that 
\begin{itemize}
\item $\d (H) \leq \d (J_l) \leq \d (L) \leq 2 \d (H)$;
\item $J_l\cap J_{l+1}\neq \emptyset$ and thus $\ell (H_{N_0}) \leq \ell (J_l)\leq \ell (L_{N_0})$;
\item $N$ is smaller than a constant $C (N_0, \bar{Q})$. 
\end{itemize}
Using again (iii) and arguing as above we conclude
\begin{align*}
&| D^3 g_{H_{N_0}} (z_{H_{N_0}}, w_{H_{N_0}}) - D^3 g_{L_{N_0}} (z_{L_{N_0}}, w_{L_{N_0}})|\\
\leq &\sum_{l=1}^{N} |D^3 g_{J_l} (z_{J_l}, w_{J_l}) - D^3 g_{J_{l-1}} (z_{J_{l-1}}, w_{J_{l-1}})|
\leq C N \bmo^{\sfrac{1}{2}} \d (H)^{2(1+\beta_0)\gamma_0-\beta_2 -2} d^\kappa\, .
\end{align*}
Again, using the triangular inequality we conclude (iv).

\medskip

We now come to (iii). Fix therefore two cubes $H$ and $L$ as in the statement and set $r:= r_L$. Observe that, by (ii) and \cite[Lemma C.2]{DS4}, it suffices to show that $\|g_H- g_L\|_{L^1 (B)} \leq C \bmo^{\sfrac{1}{2}} \d (H)^{\sfrac{\gamma_0}{2} -2} \ell (H)^{5+\kappa}$. where $B=B_r (z_L, \pi_0)$. Consider now the two corresponding tilted interpolating functions, namely
$h_L$ and $h_H$. Given the estimate on the Lipschitz constant and the $C^0$ norm upon $h_L$ proved in the previous paragraph, we can find a function $\hat{h}_L: B_{7 r} (p_{HL}, \pi_H) \to \pi_H^\perp$ such that
$\bG_{\hat{h}_L} = \bG_{h_L} \res \bC_{6r} (p_L, \pi_H)$ applying \cite[Lemma B.1]{DS4} (in this paragraph $\hat\cdot$ will always denote the reparametrization on $\pi_H$). By (i) clearly $\bG_{\hat{h}_L} \res \bC_{r} (z_L, \pi_0)= \bG_{g_L}$. We can therefore apply \cite[Lemma B.1]{DS4} to conclude that
\[
\|g_H - g_L\|_{L^1 (B)} \leq C \|h_H - \hat{h}_L\|_{L^1 (B_{5r} (p_L, \pi_H))}\, .
\]
Consider next the tilted interpolating function $h_{HL}$ and observe that, by \eqref{e:1234h} and the usual estimates on $\Psi$, we know 
\[
\|h_H - h_{HL}\|_{L^1 (B_{5r} (p_H, \pi_H))} \leq C \bmo^{\sfrac{1}{2}} \d (H)^{2(1+\beta_0)\gamma_0-\beta_2 -2} \ell (H)^{5+\beta_2}\, .
\]
Indeed, by \eqref{e:stima-L1} and the usual estimates on $\Psi$,
it is enough to see that
\begin{align*}
\|\etaa\circ f_{HH} - \etaa\circ f_{HL}\|_{L^1 (B_{5r} (p_H, \pi_H))} 
& \leq \|\etaa\circ f_{HH} - \etaa\circ f_{HL}\|_{L^\infty (B_{5r} (p_H, \pi_H))} \, |B_{5r} (p_H, \pi_H)\setminus \bar K|\\
&\leq 
C \bmo^{1+\beta_0+\sfrac14} \d (H)^{\sfrac{\gamma_0}{2} -\beta_2+2(1+\beta_0)\gamma_0-2} \ell(H)^{5+\beta_2}\, ,
\end{align*}
where, as above, we used the estimates of \cite[Theorem 1.5]{DSS2} on the oscillation of $f_{HH}$ and $f_{HL}$ (with $\bar K$ the projection of $\gr (f_{HH})\cap \gr (f_{HL})$ onto $p_{HL} + \pi_H$) and $B_{5r} (p_H, \pi_H) \subset  B_{8r} (p_{HL}, \pi_H)$.
Hence, since $\beta_2 \geq \kappa$, we are reduced to show
\begin{equation}\label{e:goal}
\|h_{HL} - \hat{h}_L\|_{L^1 (B_{5r} (p_H, \pi_H))}\leq C \bmo^{\sfrac{1}{2}} \d (H)^{2(1+\beta_0)\gamma_0-\beta_2 -2} \ell (H)^{5+\kappa}\, .
\end{equation}
In turn, consider the $\pi_H$-approximating function $f_{HL}$ and the $\pi_L$-approximating function $f_{LL} = f_L$. 
In the $\pi_H \times \varkappa_H\times T_{p_H} \Sigma^\perp$ coordinates we set
\[
\bef_{HL} (x) = (\p_{\varkappa_H} (\etaa\circ f_{HL} (x)), \Psi_{p_H} (x,\p_{\varkappa_H} (\etaa\circ f_{HL} (x))))
\]
and recall that, by Proposition \ref{p:stime}, we have
\begin{equation}\label{e:pezzo_1}
\|h_{HL}- \bef_{HL}\|_{L^1 (B_{8r_L} (p_{HL}, \pi_H))} \leq C  \bmo^{\sfrac{1}{2}} \d (L)^{2(1+\beta_0)\gamma_0-\beta_2 -2} \ell (L)^{5+\beta_2}\, .
\end{equation}
Similarly, in the $\pi_L \times \varkappa_L\times T_{p_L} \Sigma^\perp$ coordinates we set
\[
\bef_{L} (x) = (\p_{\varkappa_L} (\etaa\circ f_{L} (x)), \Psi_{p_L} (x,\p_{\varkappa_L} (\etaa\circ f_{L} (x))))
\]
and get
\[
\|h_{L}- \bef_{L}\|_{L^1 (B_{8r_L} (p_L, \pi_L))} \leq C  \bmo^{\sfrac{1}{2}} \d (L)^{2(1+\beta_0)\gamma_0-\beta_2 -2} \ell (L)^{5+\beta_2}\, .
\]
Next we denote by $\hat{\bef}_L$ the map $\hat{\bef}_L : B_{6r_L} (p_{HL}, \pi_H)\to \pi_H^\perp$ such that $\bG_{\hat{\bef}_L} = \bG_{\bef_L} \res \bC_{6r_L} (p_L, \pi_H)$
and we use again \cite[Lemma B.1]{DS4} to infer
\begin{equation}\label{e:pezzo_2}
\|\hat h_{L}- \hat \bef_{L}\|_{L^1 (B_{6r_L} (p_{HL}, \pi_H))} \leq C \|h_{L}- \bef_{L}\|_{L^1 (B_{8r_L} (p_L, \pi_L)}
\leq C  \bmo^{\sfrac{1}{2}} \d (L)^{2(1+\beta_0)\gamma_0-\beta_2 -2} \ell (L)^{5+\beta_2}\, .
\end{equation}
In view of \eqref{e:pezzo_1} and \eqref{e:pezzo_2}, \eqref{e:goal} is then reduced to
\begin{equation}\label{e:goal_2}
\|\bef_{HL} - \hat{\bef}_L\|_{L^1 (B_{5r_L} (p_{HL}, \pi_H))}\leq C \bmo^{\sfrac{1}{2}} \d (H)^{2(1+\beta_0)\gamma_0-\beta_2 -2} \ell (H)^{5+\kappa}\, .
\end{equation}
Consider now the map $\hat{f}_L: \bB_{6r_L} (p_{HL}, \pi_H)\to \Iq (\pi_H^\perp)$ such that $\bG_{\hat{f}_L} = \bG_{f_L}\res \bC_{6r_L} (p_L, \pi_H)$. Let $A$ and $\hat{A}$ be the projections on $p_H+\pi_H = p_{HL} +\pi_H$ of the Borel sets $\gr (f_{HL}))\setminus \supp (T)$ and $\gr (\hat f_{L})\setminus \supp (T) \subset \gr (f_L)\setminus \supp (T)$. We know that
\begin{align*}
|A\cup A'| \leq &\|\bG_{f_{HL}} - T\| (\bC_{8r_L} (p_L, \pi_H)) + \|\bG_{f_L} - T \| (\bC_{8r_L} (p_L, \pi_L))\\
 \leq &C \bmo^{1+\beta_0} \d (H)^{2(1+\beta_0) \gamma_0-2} \ell (H)^{4}\, .
\end{align*}
On the other hand, thanks to the height bound Proposition \ref{p:tilting opt}(vii) and the estimates of 
\cite[Theorem 1.5]{DSS2}, that $\|\etaa\circ f_{HL} - \etaa \circ {\hat{f}_L}\|_{L^\infty (B_{7r_L} (p_{HL}, \pi_H))} \leq C \bmo^{\sfrac{1}{4}} \d (H)^{\sfrac{\gamma_0}{2} - \beta_2} \ell (H)^{1+\beta_2}$. We thus conclude that
\[
\|\etaa\circ f_{HL} - \etaa \circ{\hat{f}_L}\|_{L^1 (B_{6r_L} (p_{HL}, \pi_H))} \leq C \bmo^{\sfrac{1}{2}} \d (H)^{2(1+\beta_0)\gamma_0-\beta_2 -2} \ell (H)^{5+\beta_2}\, .
\]
Define in the $\pi_H\times \varkappa_H \times T_{p_H} \Sigma^\perp$ coordinates the function
\[
\beg (x) := (\p_{\varkappa_H} (\etaa \circ \hat{f}_L (x)), \Psi_{p_H} (x, \p_{\varkappa_H} (\etaa \circ \hat{f}_L (x))))\, .
\]
We can thus conclude that
\begin{equation}\label{e:pezzo_3}
\|\bef_{HL}- \beg\|_{L^1 (B_{6r_L} (p_{HL}, \pi_L))}
\leq C  \bmo^{\sfrac{1}{2}} \d (L)^{2(1+\beta_0)\gamma_0-\beta_2 -2} \ell (L)^{5+\beta_2}\, .
\end{equation}
Thus, \eqref{e:goal_2} is now reduced to
\begin{equation}\label{e:goal_3}
\|\beg - \hat{\bef}_L\|_{L^1 (B_{5r_L} (p_{HL}, \pi_H))}\leq C \bmo^{\sfrac{1}{2}} \d (H)^{2(1+\beta_0)\gamma_0-\beta_2  -2} \ell (H)^{5+\kappa}\, .
\end{equation}
Denoting by ${\rm An}$ the distance $|\pi_H-\pi_L|$, by $\hat{B}$ the ball $B_{6r_L} (p_{HL}, \pi_H)$ and by $\tilde{B}$ the ball $B_{8r_L} (p_L, \pi_L)$, we then have, by \cite[Lemma 5.6]{DS4}
\[
\|\beg - \hat{\bef}_L\|_{L^1 (\hat B)}\leq C_0 ({\rm osc}\, (f_L) + r_L {\rm An}) \left(\int |Df_L|^2 + r_L^2 (\|D\Psi_{p_L}\|^2_{C^0 (\tilde{B})} + {\rm An}^2)\right)\, .
\]
Recall that $D\Psi_{p_L} (p_L) =0$ and thus $\|D\Psi_{p_L}\|^2_{C^0 (\tilde{B})}\leq C_0 \bmo r_L^2$. Recalling the estimate of Proposition \ref{p:tilting opt} on $|\pi_H-\pi_L|$ and upon the Dirichlet energy of $f_L$ (namely \eqref{e:Dir f}), we then conclude
\[
\int |Df_L|^2 + r_L^2 (\|D\Psi_{p_L}\|^2_{C^0 (\tilde{B})} + {\rm An}^2)\leq C \bmo \d (L)^{2\gamma_0 -2 + 2\delta_1} \ell (H)^{4-2\delta_1}\, .
\]
On the other hand
\[
{\rm osc}\, (f_L) + r_L {\rm An}\leq C \bmo^{\sfrac{1}{4}} \d(H)^{\sfrac{\gamma_0}{2} - \beta_2} \ell (H)^{1+\beta_2}\, .
\]
Thus \eqref{e:goal_3} follows by our choice of the various parameters, in particular $\beta_2-2\delta_1\geq \sfrac{\beta_2}{4}=\kappa$.

\section{Proof of Theorem \ref{t:cm}}

\subsection{Proof of (i)} As in all the proofs so far, we will use $C_0$ for geometric constants and $C$ for constants which depend upon $M_0, N_0, C_e$ and $C_h$. Define $\chi_H := \vartheta_H/ (\sum_{L\in \sP^j} \vartheta_L)$ for each $H\in \sP^j$(cf. Definition \ref{d:glued})
and observe that 
\begin{align}
\sum_{H\in\sP^j} \chi_H = 1 \;\; \mbox{on $\mathcal{A}_k$ $\forall k\in\N$}\qquad \text{and}\qquad
\|\chi_H\|_{C^i} &\leq C_0 \,\ell (H)^{-i} \quad \forall i\in \{0,1,2,3,4\}\label{e:p_unita}\, .
\end{align}
Fix any $H\in \sP^j$ and let $k$ be such that $H\subset \mathcal{A}_k$.
Set $\sP^j (H):=\{L\in \sP^j : L\cap H\neq\emptyset\}\setminus \{H\}$ for each $H\in \sP^j$. 
By construction $\frac{1}{2} \ell (L) \leq \ell (H) \leq 2\, \ell (L)$ and $2^{-k-1} \leq \d (L) \leq 2^{-k+1}$ for every $L\in \sP^j (H)$. Moreover the cardinality of $\sP^j(H)$
is at most $12$. 
Fix a point $p= (z,w)\in H$ and observe that $C_0^{-1} 2^{-k} \leq |z| \leq C_0 2^{-k}$. From \eqref{e:interpo_0} of Proposition \ref{p:main_est} we then conclude
\[
|\hat\varphi_j (z,w)|\leq   C \bmo^{\sfrac{1}{4}} \d (H)^{1+\sfrac{\gamma_0}{2}}\leq C \bmo^{\sfrac{1}{4}} |z|^{1+\sfrac{\gamma_0}{2}}\, .
\]
Recall now that $\Psi (0) =0$, $D\Psi (0) = 0$ and $\|D^2\Psi\|_{C^0} \leq C \bmo^{\sfrac{1}{2}}$. Considering that
\begin{equation}\label{e:formula_per_phi}
\varphi_j (z,w) = (\bar\varphi_j (z,w), \Psi (z, \bar\varphi_j (z,w)))
\end{equation}
(where $\bar\varphi_j (z,w)$ is the vector consisting of the first $\bar{n}$ components of $\hat\varphi_j (z,w)$), we easily conclude
\[
|\varphi_j (z,w)| \leq C \bmo^{\sfrac{1}{4}} |z|^{1+\sfrac{\gamma_0}{2}} + C \|D^2\Psi\|_{C^0} |z|^2 \leq C \bmo^{\sfrac{1}{4}} |z|^{1+\sfrac{\gamma_0}{2}}\, .
\]
This gives \eqref{e:stima_C0} and the continuity of $\varphi_j$, since by definition $\varphi_j (0,0)=0$.

For $(z,w)\in H$ we next write
\begin{align}
\hat\varphi_j (z,w) &=\Big(g_H \chi_H  + \sum_{L\in \sP^j (H)} 
g_L \chi_L\Big) (z,w) = g_H (x) + \sum_{L\in \sP^j (H)} (g_L - g_H) \chi_L\,  (z,w)\, ,
\end{align}
because $H$ does not meet the support of $\vartheta_L$ for any $L\in \sP^j$ which does not meet $H$.
Using the Leibniz rule, \eqref{e:p_unita} and the estimates of 
Proposition~ \ref{p:main_est}, for $l\in \{1,2,3\}$ we get
\begin{align*}
&\|D^l \hat\varphi_j - D^l g_H\|_{C^0 (H)} + C_0 \sum_{0\leq i \leq l} 
\sum_{L\in \sP^j (H)} \|g_L-g_H\|_{C^i (H)} \ell (L)^{i-l}\\ 
\leq & C \bmo^{\sfrac{1}{2}} \d (H)^{\gamma_0+1-l} + C \bmo^{\sfrac{1}{2}} \d (H)^{2(1+\beta_0)\gamma_0-\beta_2-2} \sum_{0\leq i\leq l} \ell (H)^{3+\kappa-i} \ell (H)^{i-l}\\
\leq&  C \bmo^{\sfrac{1}{2}} \d (H)^{\gamma_0+1-l}\, . 
\end{align*}
Again using the formula \eqref{e:formula_per_phi} and the estimate $\|\Psi\|_{C^{3, \eps_0}}\leq \bmo^{\sfrac{1}{2}}$ (together with $D\Psi (0)=0$ and $\Psi (0)=0$) we easily reach
\eqref{e:stima_C3}.  In fact we can also see that
\begin{equation}\label{e:aggiuntiva}
\|D^l \varphi_j - D^l g_H\|_{C^0 (H)} \leq C \bmo^{\sfrac{1}{2}} \d (H)^{2(1+\beta_0)\gamma_0-\beta_2-2} \ell (H)^{3+\kappa-l}\, .
\end{equation}
With an argument entirely similar we obtain
\begin{equation}\label{e:Holder_locale}
[D^3 \varphi_j]_{\kappa, H} \leq C \bmo^{\sfrac{1}{2}} \d (H)^{\gamma_0-2}\, .
\end{equation}
Thus, pick any two points $(z,w), (z'w')\in \mathcal{A}_k$. If they belong to the same cube $H\in \sP^j$ with $H\subset \mathcal{A}_k$, then
\begin{align}
|D^3 \varphi_j (z,w)- D^3 \varphi_j (z',w')| \leq & C \bmo^{\sfrac{1}{2}} \d (H)^{-2} d ((z',w'), (z,w))^\kappa\nonumber\\
\leq & C \bmo^{\sfrac{1}{2}} 2^{2k} d ((z',w'), (z,w))^\kappa\, .\label{e:Holder1}
\end{align}
If they do not belong to the same cube, then let $H, L\in \sP^j$ be two cubes contained in $\mathcal{A}_k$ such that
$(z,w)\in H$ and $(z',w')\in L$.
Next observe that, by our choice of the cut-off functions $\vartheta_J$, $\varphi_j = g_H$ in a neighborhood of $(z_H, w_H)$ and $\varphi_j = g_L$ in a neighborhood of $(z_L, w_L)$.   We can then estimate, using Proposition \ref{p:main_est}(iv) and \eqref{e:Holder_locale}
\begin{align}
&|D^3 \varphi_j (z,w)-D^3\varphi_j (z',w')|\leq |D^3 \varphi_j (z,w) - D^3 g_H (z_H, w_H)|\nonumber \\
&\qquad\qquad + |D^3 g_H (z_H, w_H) - D^3 g_L (z_L, w_L)| + |D^3 \varphi_j (z_L, w_L) - D^3 \varphi_j (z',w')|\nonumber\\
\leq& C \bmo^{\sfrac{1}{2}} \d (H)^{-2} \left(\ell (H)^\kappa + d ((z_H, w_H), (z_L, w_L))^\kappa + \ell (L)^\kappa)\right)\nonumber\\ 
\leq &C \bmo^{\sfrac{1}{2}} \d (H)^{-2} d ((z,w), (z',w'))^\kappa\leq C \bmo^{\sfrac{1}{2}} 2^{2k} d ((z',w'), (z,w))^\kappa\, .\label{e:Holder2}
\end{align}
From \eqref{e:Holder1} and \eqref{e:Holder2} we conclude \eqref{e:stima_Hoelder} and thus the proof of Theorem \ref{t:cm}(i). 

\subsection{Proof of (ii)} The first statement is an obvious consequence of the construction algorithm: indeed note that, if $i,j,k$, $L$ and $H$ are as in the statement
then $\sP^j (L) = \sP^k (L)$ and moreover $\chi_J = 0$ on $H$ for any $J\in \sP^j\setminus \sP^j (L)$ and for any $J\in \sP^k\setminus \sP^k (L)$. Then it turns out that 
$\hat\varphi_j = \hat\varphi_k$ on $H$, which in turn obviously implies that $\varphi_j$ and $\varphi_k$ coincide on $H$.

As for the second statement (ii), if we can show that there is a uniform limit $\phii$ for $\varphi_j$, the $C^3$ convergence and the regularity of $\phii$ will follow from the estimates
of point (i). Fix a point $(z,w)\neq 0$ and let $H\in \sP^j$ which contains it. If $H\in \sW^i$ and $i\leq j-2$, then $\hat\varphi_{j+1}$ and $\hat\varphi_j$ coincide on it. Otherwise we can assume that $H\in \sC^{j-1}\cup \sC^j$. In this case we can estimate
\[
|\varphi_j (z,w) - \varphi_j (z_H, w_H)|\leq C \bmo^{\sfrac{1}{2}} \d (H)^\kappa \ell (H) \leq C 2^{-j}\, .
\]
A similar estimate holds for $\varphi_{j+1}$: notice that we can choose $L\in \sP^{j+1}$ such that $(z,w)\in L$ and $L$ is either $H$ or a son of $H$. Moreover, we can estimate
\[
|\varphi_{j+1} (z,w)- \varphi_{j+1} (z_L, w_L)|\leq C 2^{-j}\, .
\]
Next, recall that $\varphi_{j} (z_H, w_H) = g_H (z_H, w_H)$ and that  $\varphi_{j+1} (z_L, w_L) = g_L (z_L, w_L)$. Since moreover $L= H$ or $L$ is a son of $H$, by Proposition \ref{p:main_est} we achieve
\[
|\varphi_{j+1} (z_L, w_L) - \varphi_j (z_H, w_H)| \leq C_0 \|D g_H\|_{C^0} \ell (H) + C \|g_H-g_L\|_{C^0} \leq C 2^{-j}\, .
\] 
Summarizing, we conclude that
\[
\|\varphi_{j+1}-\varphi_j\|_{C^0} \leq C 2^{-j}\, .
\]
The latter estimate gives that $\varphi_j$ is a Cauchy sequence in $C^0$ and thus that it converges uniformly to some $\phii$. 

We record in particular an important consequence which will be useful later: If $L\in \sW$, then
\begin{equation}\label{e:aggiuntiva_10}
\|\phii - g_L\|_{C^j (L)} \leq C \bmo^{\sfrac{1}{2}} d(L)^{2(1+\beta_0) \gamma_0 - \beta_2} \ell (L)^{3+\kappa}\, .
\end{equation}
Indeed we have already shown the estimate \eqref{e:aggiuntiva} (which corresponds to \eqref{e:aggiuntiva_10} with $\varphi_j$ in place of $\phii$) whenever $j\geq k$ and $L\in \sW^k$, but on the other hand we know now that $\phii = \phi_j$ on $L$ for $j$ large enough.
Finally, recall that the graph of $g_L$ coincides with the graph of $h_L$ and that the graph of $\phii$ coincide with that of $g_L$ in an open subset of $L$. By the estimates on $h_L$, this implies that on such set the distance between any tangent $\pi$ to the graph of $\phii$ and $\pi_L$ is bounded by $\bmo^{\sfrac{1}{2}}$. Given the bounds on the second derivative of $\phii$ we easily conclude that
\begin{equation}\label{e:aggiuntiva_11}
|\pi - \pi_L| \leq C \bmo^{\sfrac{1}{2}} \d (L)^{\sfrac{\gamma_0}{2} - 1} \ell(L) \qquad \mbox{$\forall L\in \sW$ and any tangent $\pi$ to $\gr (\phii|_{B_{64 \ell (L)} (p_L)})$.}
\end{equation}

\subsection{Proof of (iii)} Observe first that, if $(z,w)$ does not belong to some $H\in \sW$, then $\Phii (z,w)$ is necessarily a point in the support of
$T$ and we can estimate
\begin{equation}\label{e:nell'intorno}
|\phii (z,w) - u (z,w)|\leq c_s |z|^a\, .
\end{equation}
To see this note that for every $j\geq N_0$ there is $H_j\in \sS^j$ such that $(z,w)\in H_j$.
Observe that $\varphi_j (z_{H_j}, w_{H_j}) = g_{H_j} (z_{H_j}, w_{H_j})$ and that 
\[
\lim_{j\to \infty} \big(d ((z_{H_j}, w_{H_j}), (z,w)) + | g_{H_j} (z_{H_j}, w_{H_j}) - \phii (z,w)|\big) = 0\, .
\]
But we also have, by \eqref{e:sfava_tanto},
\[
\lim_{j\to\infty} |(z_{H_j}, g_{H_j} (z_{H_j}, w_{H_j})) - p_{H_j}| = 0\, .
\]
On the other hand, since
\[
|p_{H_j} - (z_{H_j}, u (z_{H_j}, w_{H_j}))|\leq c_s |z_{H_j}|^a\, ,
\]
we then conclude \eqref{e:nell'intorno} taking the limit in $j\to \infty$.

From now on we therefore assume that $(z,w)\in H$ for some $H\in \sW$.

\medskip

{\bf Step 1.} In this step we show that 
\begin{equation}\label{e:stop}
64 r_H \leq \frac{1}{2} \d (H)^{(b+1)/2}\, .
\end{equation}
 In fact we claim that this is the case
for any $H\in \sW$. First of all we observe that it suffices to show it for $H\in \sW_e\cup \sW_h$: given indeed
any $H\in \sW_n$ we find a chain of cubes $H= H_l, H_{l-1}, \ldots, H_i$ with the properties that 
\begin{itemize}
\item $H_k \cap H_{k+1} \neq \emptyset$;
\item $\ell (H_{k})= 2\ell (H_{k+1})$;
\item $H_l\in \sW_n$ for any $l\geq i+1$ and $H_i\in \sW_e\cup \sW_h$.
\end{itemize}
It is easy to see that, provided $N_0$ is larger than a geometric constant, $\frac{1}{2} \d (H)\leq \d (H_i) \leq 2 \d (H)$. 
Since $\ell (H) \leq \frac{1}{2} \ell (H_i)$, it suffices to show $64 r_H \leq \frac{1}{16} \d (H_i)^{(b+1)/2}$.

Next, assume $H\in \sW_e$. Then we know that
\begin{equation}\label{e:eccesso_da_sotto}
\bE (T_H, \bB_H)> C_e \bmo \d (H)^{2\gamma_0-2 + 2\delta_1} \ell (H)^{2-2\delta_1} \geq C_e \bmo^{\sfrac{1}{2}} \d (H)^{2\gamma_0-2} \ell (H)^2\, .
\end{equation}
Now recall that $d=|z_H|\leq 2\sqrt{2} \d (H)$ and that, for $N_0$ large enough depending on $M_0$, we have $64 r_H \leq \frac{d}{2}$.  Moreover, if $r_H$ were larger than $\frac{1}{16} d^{(b+1)/2}$, then by \eqref{e:effetto_energia} there would be
a $\pi$ such that (recall that $C_i^2\leq \bmo$)
\[
\bE (T_H, \bB_H, \pi) \leq \bmo \d (H)^{2\gamma-2} r_H^2 \, .
\]
By Lemma \ref{l:tecnico}(i), we then would have
\begin{equation}\label{e:eccesso_da_sopra}
\bE (T_H, \bB_H )\leq C_0 \bmo \d (H)^{2\gamma- 2} r_H^2 + C_0 \bmo r_H^2\leq C (M_0)\bmo \d (H)^{2\gamma_0-2} \ell (H)^2\, 
\end{equation}
(recall that $\gamma_0 < \gamma$). Thus we conclude that \eqref{e:eccesso_da_sopra} contradicts \eqref{e:eccesso_da_sotto}, provided $C_e \geq C(M_0)$. 

It remains to show \eqref{e:stop} when $H\in \sW_h$. Assume therefore that $r_H \geq \frac{1}{2} d^{(b+1)/2}$. Notice that, by \eqref{e:effetto_energia}, we know
\begin{align}
\bE (T_H, \bB_H, \pi_H)= &\bE (T_H, \bB_H) \leq \bar C \bmo \d (H)^{2\gamma-2} \ell (H)^2
\end{align}
where the constant $\bar{C}$ does not depend on $H$. We thus conclude from Lemma \ref{l:tecnico}(iii) that
\begin{equation}\label{e:tilt}
|\pi - \pi_H|\leq \bar C \bmo^{\sfrac{1}{2}} \d (H)^{\gamma-1} \ell (H)\, .
\end{equation}
We next wish to estimate $\bh (T_H, \bB_H, \pi)$. $\pi$ is tangent to $\bG_u$ at $q_H := (z_H, u(z_H,w_H))$. Recall that $|p_H-q_H|\leq c_s |d|^a$. Fix a point $p\in \bB_H \cap \supp (T_H)$ and recall that there is a point $p'$ in $\gr (u)\cap \bV_H$ such that
$|p-p'|\leq 2^a \bmo^{\sfrac{1}{2}} d^a$, since $|\p_{\pi_0} (p')|\geq \frac{d}{2}$. Obviously $|\p_{\pi} (p')|\leq 2 r_H$ and since $\pi$ is tangent to $\gr (u)$ at $q_H$, we have the estimate
\[
|\p_\pi^\perp (p')|\leq C_0 \bmo^{\sfrac{1}{2}} d^{\alpha-1} |\p_\pi (p')|^2 \leq \bar{C} \bmo^{\sfrac{1}{2}} \d (H)^{\alpha-1} \ell (H)^2\, .
\]
We can therefore estimate
\[
|\p_\pi^\perp (p)| \leq \bar{C} \bmo^{\sfrac{1}{2}} \d(H)^{\alpha-1} \ell (H)^2 + \bar{C} \bmo^{\sfrac{1}{2}} \d (H)^a\, .
\]
This implies the estimate
\begin{equation}\label{e:height_stop}
\bh (T_H, \bB_H, \pi) \leq \bar{C} \bmo^{\sfrac{1}{2}} \d(H)^{\alpha-1} \ell (H)^2 + \bar{C} \bmo^{\sfrac{1}{2}} \d (H)^a\, .
\end{equation}
Using now Lemma \ref{l:tecnico} and \eqref{e:tilt} we then estimate
\begin{equation}\label{e:height_stop2}
\bh (T_H, \bB_H) \leq \bar{C} \bmo^{\sfrac{1}{2}} \d(H)^{\alpha-1} \ell (H)^2 + \bar{C} \bmo^{\sfrac{1}{2}} \d (H)^a + \bar C\bmo^{\sfrac{1}{2}} \d (H)^{\gamma-1} \ell (H)^2 \, ,
\end{equation}
where $\bar{C}$ depends upon $M_0, N_0$ and $C_e$, but not upon $C_h$.

On the other hand, since $H\in \sW_h$, we then have
\begin{equation}\label{e:height_stop3}
\bh (T_H, \bB_H)> C_h \bmo^{\sfrac{1}{4}} \d (H)^{\gamma_0-\beta_2}\ell (H)^{1+\beta_2}\, .
\end{equation}
By our choice of the exponents it is obvious that the first and third summand in \eqref{e:height_stop2} are smaller than a fraction (say $\frac{1}{4}$) of $C_h \bmo^{\sfrac{1}{4}} \d (H)^{\gamma_0-\beta_2}\ell (H)^{1+\beta_2}$, provided that $C_h$ is chosen large enough. Recalling that we are assuming $\ell (H)\geq \bar{C} \d (H)^{(1+b)/2}$,
to achieve the same conclusion with the second summand we need
\[
\frac{1+b}{2} (1+\beta_2) - \beta_2 + \gamma_0 < a\, .
\]
However, since $a> b$, the latter inequality is implied by \eqref{e:cost_0}, and we reach a contradiction.

\medskip

{\bf Step 2}. Recall that we have fixed $(z,w)\in H$ with $H\in \sW$ and that our aim is to establish \eqref{e:cornuto!}. From the previous step we know
that $\ell (H) \leq C |z|^{(1+b)/2}$ and that $\d (H) \leq C_0 |z|$. Assume $H\in \sW^j$ and pick any $k\geq j+2$. By Theorem \ref{t:cm}(ii), we know that $\phii= \varphi_k$ on $H$.
Recalling the arguments above (in particular \eqref{e:interpo_5}), we also have
\[
\|\varphi_j - g_H\|_{C^0} \leq \sum_{L\in \sP^k (H)} \|g_H-g_L\|_{C^0} \leq C \bmo^{\sfrac{1}{2}} \d (H)^{\gamma_0-2-\beta_2} \ell (H)^{3+\kappa} 
\leq C \bmo^{\sfrac{1}{2}} d^{\gamma_0-2 + (3+\kappa) (b+1)/2}
\]
Since $\gamma_0-2 + (3+\kappa) (b+1)/2 > \gamma_0 +3\frac{b}{2} - \frac{1}{2} > \gamma_0+ b$, it suffices then to show that
\begin{equation}\label{e:cornuto2}
|u (z,w)- g_H (z,w)|\leq C \bmo^{\sfrac{1}{4}} |z|^{a'}\, .
\end{equation}
We next consider both $u$ and $g_H$ as two functions defined on $\pi_0$ and having defined the ball $B := B_{r_H} (z_H, \pi_0)$, our goal is indeed to show that
\[
\|u - g_H\|_{C^0 (B)} \leq  C \bmo^{\sfrac{1}{4}} \d (H)^{a'}\, .
\]
Recall next that the graph of $g_H$ is indeed a subset of the graph of the tilted interpolating function $h_H$. If $v:B_{8r_H} (p_H, \pi_H)\to \pi_H^\perp$ is the function which gives 
the graph of $u$ in the system of coordinates $\pi_H\times \pi_H^\perp$ and we set $B' := B_{6r_H} (p_H, \pi_H)$, we then claim that it suffices to show
\begin{equation}\label{e:cornuto3}
\|v-h_H\|_{C^0 (B')} \leq C \bmo^{\sfrac{1}{4}} \d (H)^{a'}\, .
\end{equation}
In fact let $p = (\zeta, g_H (\zeta))\in \pi_0\times \pi_0^\perp$ and let $\omega\in \pi_H$ be such that $p= (\omega, h_H (\omega))\in \pi_H\times \pi_H^\perp$.
Consider also $q= (\zeta, u (z))$ and $q'= (\omega, v (\omega))$ and let $\zeta'\in \pi_0$ such that $q'= (\zeta', u (\zeta'))$. Let $\mathcal{T}$ be the triangle with vertices
$q$, $p$ and $q'$. The angle $\theta_p$ at $p$ can be assumed to be small, because $|\pi_H-\pi_0|\leq C \bmo^{\sfrac{1}{2}}$. On the other hand the angle
$\theta_q$ at $q$ is close to $\frac{\pi}{2}$, since the Lipschitz constant of $u$ is small. Thus the angle $\theta_{q'}$ is also close to $\frac{\pi}{2}$. From the Law of Sines
applied to the triangle $\mathcal{T}$ we then conclude
\begin{equation}
|u (\zeta)-g_H (\zeta)| = |p-q| = \frac{\sin \theta_{q'}}{\sin \theta_q} |p-q'|\, .
\end{equation}
By choosing $\eps_2$ small we then reach
\[
\|u-g_H\|_{C^0 (B)} \leq 2 \|v-h_H\|_{C^0 (B')}\, .
\]
As usual, we assume now to have shifted the origin so that $p_H = 0$. Recall that $\Psi_{p_H} (0)=0$ and $D \Psi_{p_H} (0) =0$, so that we can estimate
\[
\|h_H - \etaa\circ f_H\|_{C^0 (B')} \leq C_0 \|\bar{h}_H - \etaa \circ \bar{f}_H\|_{C^0} + C \bmo^{\sfrac{1}{2}} \ell (H)^2\, .
\]
Using now Proposition \ref{p:stime} we then conclude
\begin{equation}\label{e:C0-con-media}
\|h_H - \etaa\circ f_H\|_{C^0 (B')} \leq C \bmo \d (H)^{2 \gamma_0 -2} \ell (H)^3 + C \bmo^{\sfrac{1}{2}} \ell (H)^2\, .
\end{equation}
Since $\ell (H) \leq \d (H)^{(1+b)/2}$, we again see that \eqref{e:cornuto3} can be reduced to the estimate
\begin{equation}\label{e:cornuto4}
\|\etaa \circ f_H - v\|_{C^0 (B')} \leq C \bmo^{\sfrac{1}{4}} \d (H)^{a'}\, .
\end{equation}
We will in fact show such estimate in the ball $\hat{B} := B_{8r_H} (p_H, \pi_H)$. Consider a point $p\in \supp (T_H)\cap \bC_{8r_H} (p_H, \pi_H)$ and
let $p= (\zeta, \eta)\in \pi_0\times \pi_0^\perp$. We also let $q$ be the point $(\zeta, u (\zeta))$ and $q' = (\omega, v (\omega))\in \pi_H\times \pi_H^\perp$, 
where $\omega=\p_{H} (p)$. The argument above can be applied literally to the triangle $\mathcal{T}$ with vertices $p$, $q$ and $q'$ to conclude that
\[
|p-q'|\leq 2 |p-q| \leq C \bmo^{\sfrac{1}{2}} \d (H)^a\, .
\]
Recall that, except for a set of points $\omega\in A$ of measure no larger than $C \bmo \d (H)^{2\gamma_0-2} \ell (H)^4$, the slice $\langle T, \p_{\pi_H}, \omega\rangle$
coincides with the slice $\langle \bG_{f_H}, \p_{\pi_H}, \omega\rangle$. Thus on the set $A$ we obviously have
\[
|\etaa\circ f_H (\omega) - v (\omega)|\leq C \bmo^{\sfrac{1}{2}} \d(H)^a\, .
\]
Now, for any point $\omega\not\in A$ there is a point $\omega'\in A$ at distance at most $d (H)^{\gamma_0-1} \ell (H)^2$. Since both $\Lip (v)$ and $\Lip (\etaa \circ f_H)$ are controlled by $\bmo^{\sfrac{1}{2}}$, this gives the estimate
\[
\|\etaa\circ f_H-v\|_{C^0 (B')}\leq C \bmo^{\sfrac{1}{2}} \d (H)^a + C \d (H)^{\gamma_0-1} \ell (H)^2\, .
\]
On the other hand, since $\ell (H) \leq C \d (H)^{(b+1)/2}$ and $a > b+\gamma_0$ (recall \eqref{e:cost_2}), we easily see that
\[
\|\etaa \circ f_H -v\|_{C^0 (B')} \leq C \bmo^{\sfrac{1}{2}} \d (H)^{\gamma_0 + b}\, .
\]
This completes the proof of \eqref{e:cornuto4} and hence of \eqref{e:cornuto!}

\section{The construction of the approximating map $N$}

In this section we prove Corollary \ref{c:cover} and Theorem \ref{t:approx}.

\subsection{Proof of Corollary \ref{c:cover}} Statement (i) is an obvious consequence of \eqref{e:separation} and \eqref{e:cornuto!}. 
As for statement (ii), the argument is the same given in the proof of Lemma \ref{l:density} for the existence of the nearest point projection $\p: \bV_{u,a} \cap \bC_1 \to \gr (u)$.

For what concerns (iii), let $L\in \sW$, denote by $p_L = (z_L, w_L)$ its center and set $p := \Phii (q)$ We start by observing that $\supp (\langle T, \p, p\rangle)\subset \supp (T_J)$ for the ancestor $J\in \sC^{N_0}$ of $L$, given estimates on $u$ and the definition of $T_J$. We next claim that
\begin{equation}\label{e:claim_intorno}
\supp (\langle T, \p, p\rangle)\subset \bB_{r_L} (p)\, .
\end{equation}
Assuming this for the moment, recall that, by \eqref{e:aggiuntiva_10},
\[
\|\phii - g_L\|_{C^0 (L)} \leq C\, \bmo^{\sfrac12}\,
\d (L)^{2(1+\beta_0)\gamma_0-\beta_2 -2}\,\ell(L)^{3+\kappa }\leq C \bmo^{\sfrac{1}{2}} \d (L)^{\sfrac{\gamma_0}{2} - \beta_2}
\ell (L)^{1+\beta_2}
\]
(where in the last inequality we have used that $\ell (L) \leq \d (L)$ and $\frac{\gamma_0}{4} > \beta_2$).  Recall also that the graph of $g_L$ coincides with that of $h_L$ and, by \eqref{e:sfava_tanto}, 
\[
\|h_L - \eta\|_{C^0 (B_{6 r_H} (p_H, \pi_H))} \leq C \bmo^{\sfrac{1}{4}} \d (L)^{\sfrac{\gamma_0}{2}-\beta_2} \ell (L)^{1+\beta_2}\, ,
\]
where $(\xi, \eta) \in \pi_L \times \pi_L^\perp$ are the coordinates for $p_L$, cf. \eqref{e:sfava_tanto}. Since $\supp (T_J) \cap \bC_{8r_L} (p_L, \pi_L)\subset \supp (T_L)$ for every ancestor $J$ of $L$, we must then have $\supp (\langle T, \p, p\rangle)\subset \supp (\langle T, \p, p\rangle)\cap \bB_{r_L}(p)\subset  \supp (T_L) \cap \bC_{8r_L} (p_L, \pi_L)$. Recalling 
that $p_L\in \supp (T_L)$ and that, by Proposition \ref{p:tilting opt}, we have the bound
\[
\bh (T_L, \bC_{8r_L} (p_L, \pi_L)) \leq C \bmo^{\sfrac{1}{4}} \d (L)^{\sfrac{\gamma_0}{2} - \beta_2} \ell (L)^{1+\beta_2}\, ,
\] 
we conclude that no point of $\supp (\langle T, \p, p\rangle)$ can be at distance larger than 
\[
C \bmo^{\sfrac{1}{4}} \d (L)^{\sfrac{\gamma_0}{2} - \beta_2} \ell (L)^{1+\beta_2}
\] 
from the graph of $h_L$. Putting all these estimates together, no point of $\supp (\langle T, \p, p\rangle)$ can be at a distance larger than $C \bmo^{\sfrac{1}{4}} \d (L)^{\sfrac{\gamma_0}{2} - \beta_2} \ell (L)^{1+\beta_2}$ from $\gr (\phii)$. Since for every $p'\in \supp (\langle T, \p, p\rangle)$ the point $p$ is the closest in the graph of $\phii$, this completes the proof of (iii), provided we show \eqref{e:claim_intorno}.

If \eqref{e:claim_intorno} is false, there is a $p'\in \supp (\langle T, \p, p)$ and an ancestor $J$ of $L$ with largest sidelength among those for which $|p'-p|\geq r_J/2$. Let $\pi$ be the tangent to $\cM$ at $p$ and observe that we have the estimates $|\pi-\pi_J| \leq C \bmo^{\sfrac{1}{2}}$ and $|\pi-\pi_0|\leq C \bmo^{\sfrac{1}{2}}$. The second bound is a trivial consequence
of the estimates on $\phii$, whereas the first is a consequence of \eqref{e:aggiuntiva_11} and $|\pi_L-\pi_J|\leq C \bmo^{\sfrac{1}{2}}$, which in turn follows from Proposition \ref{p:tilting opt}.
If $J$ were an element of $\sS^{N_0}$, Assumption \ref{induttiva}(Hor) would imply $|p'-p|\leq C\bmo^{\sfrac{1}{4}}\,r_J^{1+\gamma_0}$. If $J\not\in \sS^{N_0}$ and we let $H$ be the father of $J$, we then conclude that $q,p,p'\in \bB_H$ and thus we have 
$|p'-p|\leq C_0 \bh (T, \bB_H)\leq C \bmo^{\sfrac{1}{4}} \ell (H)^{1+\beta_2}$ by \eqref{e:ht_whitney}. In both cases this would be incompatible with $|p'-p|\geq r_J=\sfrac{r_H}{2}$, provided $\eps_2 \leq c (\beta_2, \delta_2, M_0, N_0, C_e, C_h)$.

\medskip

We next prove (iv). Fix a point $(z,w)\in \gira$ which belongs to $\bGam$ and set $p:= (z, \phii (z,w)) = \Phii (z,w)$. To prove our statement
we claim in fact that:
\begin{align}
&\mbox{$Q\a{T_p \cM}$ is the unique tangent cone to $T$ at $p$}\label{e:claim-1000}\\
&\supp (T) \cap \p^{-1} (\{p\}) = \{p\}\label{e:claim-1001}.
\end{align}
By construction there is an infinite chain $L_{N_0}\supset L_{N_0+1} \supset \ldots \supset L_i \supset \ldots$ where $(z,w)\in L_i\in \sS^i$ for every $i$.
Set $\pi_i := \pi_{L_i}$. By our construction and the estimates of the previous sections, it is obvious that $\pi_{L_i}\to \pi = T_p \cM$. In fact since
$|\pi_{L_i} - \pi_{L_{i+1}}|\leq C \bmo^{\sfrac{1}{2}} |z|^{\gamma_0+\delta_1-1} \ell (L_i)^{1-\delta_1}$ by Proposition \ref{p:tilting opt}(iv), we easily infer
\begin{equation}\label{e:bound-1002}
|\pi-\pi_{L_i}|\leq C \bmo^{\sfrac{1}{2}} |z|^{\gamma_0+\delta_1-1} \ell (L_i)^{1-\delta_1}\, .
\end{equation}
On the other hand by the height and excess bounds \eqref{e:ex_whitney} and \eqref{e:ht_whitney}, it is also obvious that $T_{p_{L_i}, r_{L_i}}$ converges, in $\bB_1$, to $Q \a{\pi}$. Since $r_{L_i}/r_{L_{i+1}} = 2$ and $p_{L_i}\to p$ (in fact $|\Phii(z,w)-p_{L_i}|\leq C 2^{-i}$), \eqref{e:claim-1000} is then obvious.

Assume now that \eqref{e:claim-1001} is false and let $p'\in \supp (\langle T, \p, p\rangle)$. Again by the width of $\bV$ it turns out that $p'\in \supp (T_{L_{N_0}})$. Let $j$ be the integer such that $2^{-j-1} |z| \leq |p-p'|\leq 2^{-j} |z|$. From Assumption \ref{induttiva}(Hor) it follows that, if $\eps_2$ is sufficiently small, then certainly $j\geq N_0+2$. This means that there is an $L_i$ such that $p'\in \bB_{L_i}$ and obviously $\ell (L_i) \leq C |z| 2^{-j}$. Recall that $\supp (T_{L_{N_0}})\cap \bB_{L_i}\subset \supp (T_{L_i})$ On the other hand, by \eqref{e:bound-1002}, we have 
\[
|p-p'|\leq (1+ C |\pi_{L_i} - \pi|) \bh (T_{L_i}, \bB_{L_i}) \leq C \bmo^{\sfrac{1}{4}} \d (L_i)^{\sfrac{\gamma_0}{2} - \beta_2} \ell (L_i)^{1+\beta_2}\leq C \bmo^{\sfrac{1}{4}} |z|^{1+\sfrac{\gamma_0}{2}} 2^{-j}\, .
\]
Since the constant $C$ depends upon the parameters $C_h, C_e, M_0$ and $N_0$, but not upon $\eps_2$, the latter bound contradicts $|p-p'|\geq 2^{-j-1} |z|$ provided $\eps_2$ is chosen sufficiently small. 

\subsection{Proof of Theorem \ref{t:approx}: Part I} We set $F(p) = Q\a{p}$ for $p\in \Phii (\bGam)$. 
For every $L\in \sW^j$ consider the $\pi_L$-approximating function $f_L: \bC_{8r_L} (p_L, \pi_L)\to
\Iq (\pi_L^\perp)$ of Definition~\ref{d:pi-approximations}
and $K_L\subset B_{8r_L} (p_L, \pi_L)$ the projection on $p_L+\pi_L$ of $\supp (T_L)\cap \gr (f_L)$.
In particular we have $\bG_{f_L\vert_{K_L}}= T_L\res(K_L \times \pi_L^\perp)$.
We then denote by $\mathscr{D} (L)$ the portions of the supports of $T_L$ and $\gr (f_L)$ which differ:
\[
\mathscr{D} (L) := (\supp (T_L) \cup \gr (f_L))\cap \big[(B_{8r_L} (p_L, \pi_L) \setminus K_L)\times \pi_L^\perp\big]\, . 
\]
Observe that, by \cite[Theorem 1.5]{DSS2} and Proposition \ref{p:app} and our choice of the parameters, 
we have, for $E:= \bE (T_L, \bC_{32 r_L} (p_L, \pi_L))$,
\begin{align}\label{e:poco_scazzo}
\cH^m (\mathscr{D} (L)) \leq \|T\| (\mathscr{D} (L)) &\leq C E^{\beta_0} (E + \ell (L)^2 \bmo) \ell(L)^2 \notag\\
&\leq C \bmo^{1+\beta_0} \d (L)^{(1+\beta_0)(2\gamma_0-2+2\delta_1)} \ell (L)^{2+(1+\beta_0)(2-2\delta_1)}\, .
\end{align}
Let $\cL$ be the Whitney region in Definition~\ref{d:cm} and
set $\cL':= \Phii (J)$ where $J$ is the cube concentric to $L$ with
$\ell (J) = \frac{9}{8} \ell (L)$.
Observe that the graphical structure of $\Phii$, our choice of the constants and condition (NN) ensure that 
\begin{align}
&L\cap H = \emptyset \quad \iff \quad
\mathcal{L}' \cap \mathcal{H}' = \emptyset\qquad \forall H, L \in \sW\, ,\label{e:non_si_toccano}\\
&\Phii (\bGam) \cap \cL' = \emptyset \qquad \forall L\in \sW\, .\label{e:nonsitoccano_2}
\end{align}
We then apply \cite[Theorem 5.1]{DS2} to the map $f_L$, the plane $\pi_L$ and the (appropriate portion of the) center manifold $\cM$ as a graph over $\pi_L$
 to obtain Lipschitz maps $F_L: \cL' \to \Iq (\bU)$, $N_L : \cL'\to \Iq (\R^{m+n})$ with the following properties:
\begin{itemize}
\item $F_L (p) = \sum_i \a{p+(N_L)_i (p)}$,
\item $(N_L)_i (p) \perp T_p \cM$ for every $p\in \cL'$ 
\item and $\bG_{f_L}\res (\p^{-1} (\cL')) = \bT_{F_L} \res (\p^{-1} (\cL'))$.
\end{itemize}
For each $L$ consider the set $\sW (L)$ of elements in $\sW$ which have a nonempty intersection with $L$.
We then define the set $\cK$ in the following way:
\begin{equation}\label{e:def_cK}
\cK = (\cM \cap \bC_{2r}) \setminus \Big(\bigcup_{L\in \sW} \Big(\cL' \cap \bigcup_{M\in \sW (L)} \p (\mathscr{D} (M))\Big)\Big)\, . 
\end{equation}
In other words $\cK$ is obtained from $\cM$ by removing in each $\cL'$ those points $x$ for which there is
a neighboring cube $M$ such that the
slice of $\bT_{F_M}$ at $x$ (relative to the projection $\p$) does
not coincide with the slice of $T$.
Observe that, by \eqref{e:nonsitoccano_2}, $\cK$ contains necessarily $\bGam$.
Moreover, recall that $\Lip(\p)\leq C$, that the cardinality of $\sW (L)$ is at most 12 and that each element of $\sW (L)$ has side-length at most twice that
of $L$. Thus \eqref{e:poco_scazzo} implies
\begin{align}
|\cL\setminus \cK|\leq |\cL'\setminus \cK|\leq &\sum_{M\in \sW (L)} \sum_{H\in \sW (M)} \| T_H\| (\mathscr{D} (H))\nonumber\\
\leq & C \bmo^{1+\beta_0} \d (L)^{(1+\beta_0)(2\gamma_0-2+2\delta_1)} \ell (L)^{2+(1+\beta_0)(2-2\delta_1)} \, .\label{e:bound_cK}
\end{align}
By \eqref{e:non_si_toccano}, if $J$ and $L$ are such that $\cJ'\cap \cL'\neq \emptyset$, then $J\in \sW (L)$ and therefore $F_L= 
F_J$ on $\cK\cap (\cJ'\cap \cL')$. We can therefore define a unique map on $\cK$ by simply setting
$F (p) = F_L (p)$ if $p\in \cK\cap \cL'$.
Notice that $\bT_F = T\res \p^{-1} (\cK)$, which implies two facts. First, by Corollary \ref{c:cover}(iii) we also have that $N (p) := \sum_i \a{F_i (p) -p}$ enjoys the bound
\[
\|N|_{\cL\cap \cK}\|_{C^0} \leq C \bmo^{\sfrac14}\, \d (L)^{\sfrac{\gamma_0}{2}-\beta_2}\, \ell (L)^{1+\beta_2}.
\]
Secondly,
\begin{align}
\|T\| (\p^{-1} (\cL\setminus \cK)) \leq &\sum_{M\in \sW (L)} \sum_{H\in \sW (M)} \| T_H\| (\mathscr{D} (H))\nonumber\\
\leq & C \bmo^{1+\beta_0} \d (L)^{(1+\beta_0)(2\gamma_0-2+2\delta_1)} \ell (L)^{2+(1+\beta_0)(2-2\delta_1)}\, .\label{e:bound_scazzo_corrente}
\end{align}
Finally, notice that, by the $C^2$ estimate on $\phii$ and \eqref{e:interpo_4}, $\cM$ is given on $\bC_{r_L} (p_L, \pi_L)$ as the graph of a map $\phii': B_{r_L} (p_L, \pi_L)\to \pi_L^\perp$ with $\|D\phii'\|_{C^0} \leq C \bmo^{\sfrac{1}{2}} \d (H)^{\gamma_0-1+\delta_1} \ell (H)^{1-\delta_1}$ and $\|D^2 \phii'\|_{C^0} \leq C \bmo^{\sfrac{1}{2}} \d (H)^{\gamma_0-1}$. Hence, the Lipschitz constant of $N_L$ can be 
estimated using \cite[Theorem 5.1]{DS2} as
\begin{equation}\label{e:Lip_1}
 \Lip(N_L)\leq C\, \left(\|D^2\phii'\|_{C^0}\,\|N\|_{C^0}+\|D\phii'\|_{C^0}+\Lip(f_L)\right)
\leq C\,\left(\bmo\,\d(L)^{\gamma_0}\,\ell(L)^{\gamma_0}\right)^{\beta_0}\,,
\end{equation}
so that our map has the Lipschitz bound of \eqref{e:Lip_regional}.
We next 
extend $F$ and $N$ to the whole center manifold and conclude \eqref{e:err_regional}
from \eqref{e:bound_scazzo_corrente} and \eqref{e:bound_cK}.
The extension is achieved in three steps:
\begin{itemize}
\item we first extend the map $F$ to a map $\bar{F}$ taking values
in $\Iq (\bV)$;
\item we then modify $\bar{F}$ to achieve the form $
\hat{F} (x) = \sum_i \llbracket x+\hat{N}_i (x)\rrbracket$
with $\hat{N}_i (x) \perp T_x \cM$ for every $x$;
\item in the cases (a) and (c) of Definition \ref{d:semicalibrated} we finally modify $\hat{F}$ to reach the desired extension $F (x) =
\sum_i \a{x+ N_i (x)}$, with $N_i (x) \perp T_x \cM$ and
$x+ N_i (x) \in \Sigma$ for every $x$.
\end{itemize}

\medskip

{\bf First extension}. We use on $\cM$ the coordinates
induced by its graphical structure,
i.e.~we work with variables in flat domains.
Note that the domain parameterizing the
Whitney region for $L\in \sW$ is then the cube concentric to $L$ and
with side-length $\frac{17}{16} \ell (L)$.  
The multivalued map $N$ is extended to a multivalued $\bar{N}$ inductively to appropriate
neighborhoods of the skeleta of the Whitney decomposition (a similar argument has been
used in 
\cite[Section 1.2.2]{DS1}). The extension of $F$ will obviously be 
$\bar{F} (x) = \sum_i \llbracket x+\bar{N}_i (x)\rrbracket$.
The neighborhoods of the skeleta are defined in this way:
\begin{enumerate}
\item if $p$ belongs to the $0$-skeleton, we let $L\in \sW$ be (one of) the smallest cubes 
containing it and define $U^p := B_{\ell(L)/16} (p)$;
\item if $\sigma= [p,q]\subset L$ is the edge of a cube and $L\in \sW$ is (one of) the smallest cube intersecting $\sigma$, we then define $U^\sigma$ to be the neighborhood of size $\frac{1}{4}\frac{\ell(L)}{16}$ of $\sigma$ minus the
closure of the unions of the $U^r$'s, where $r$ runs in the $0$-skeleton.
\end{enumerate} 
Denote by $\bar{U}$ the closure of the union of all
these neighborhoods and let $\{V_i\}$ be the connected components of the complement.
For each $V_i$ there is a $L_i\in \sW$ such that $V_i\subset L_i$.
Moreover, $V_i$ has distance $c_0 \ell(L)$ from $\partial L_i$, where $c_0$ is a geometric constant. 
It is also clear that if $\tau$ and $\sigma$ are two distinct facets of the same cube $L$ with the same
dimension, then the distance between any pair of points $x,y$ with $x\in U^\tau$ and $y\in U^\sigma$ is at least $c_0 \ell (L)$.
In Figure \ref{f:whitney} the various domains are shown in a piece of a $2$-dimensional decomposition.

\begin{figure}[htbp]
\begin{center}
\input{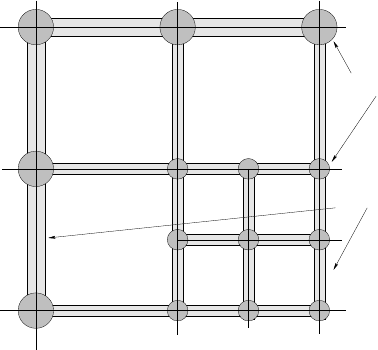_t}
\caption{The sets $U^p$, $U^\sigma$ and $V_i$.}
\label{f:whitney}
\end{center}
\end{figure}

At a first step we extend $N$ to a new map $\bar{N}$ separately on each $U^p$, where $p$ are 
the points in the $0$-skeleton. In particular, fix $p\in L$ and let ${\rm St} (p)$ be the union of all cubes which contain $p$. Observe that the Lipschitz constant of 
$N|_{\cK\cap {\rm St} (p)}$ is smaller than 
\[
C \left(\bmo\,\d (L)^{\gamma_0} \, \ell (L)^{\gamma_0}\right)^{\beta_0}
\]
and that 
\[
|N| \leq C \bmo^{\sfrac14} \d (L)^{\sfrac{\gamma_0}{2}-\beta_2}
\ell (L)^{1+\beta_2}\, .
\]
We can therefore extend the map $N|_{\cK\cap {\rm St} (p)}$ to $U^p \cup (\cK \cap {\rm St} (p))$ at the price
of enlarging the Lipschitz constant and the height bound by a multiplicative constant, using \cite[Theorem 1.7]{DS1}. 
Being the $U^p$ disjoint, the resulting map, for which we use the symbol $\tilde{N}$, is well-defined.

It is obvious that this map has the desired height bound in each Whitney region. We therefore 
want to estimate its
Lipschitz constant.
Consider $L\in \sW$ and $H$ concentric to $L$ with side-length $\ell (H) = \frac{17}{16} \ell (L)$.
Let $x,y\in H$. If $x,y\in U^p \cup (\cK \cap {\rm St} (p))$ for some $p$, then there is nothing to check.
If $x\in U^p$ and $y\in U^q$ with $p\neq q$, observe however that this would imply that $p,q$ are both vertices of $L$. Given
that $L\setminus \cK$ has much smaller measure than $L$ there is at least one point $z\in L\cap \cK$. It
is then obvious that 
\[
\cG (\bar{N} (x), \bar{N} (y)) \leq \cG (\bar{N} (x), \bar{N} (z)) +
\cG (\bar{N} (z), \bar{N} (y)) \leq C \left(\bmo \d(L)^{\gamma_0}\,\ell(L)^{\gamma_0}\right)^{\beta_0}  \ell (L),
\]
and, since $|x-y|\geq c_0 \ell (L)$, the desired bound readily follows. 
Observe moreover that, if $x$ is in the closure of some $U^q$, then we can extend the map continuously to it.
By the properties of the Whitney decomposition it follows that the union of the closures of the $U^q$ and of $\mathcal{K}$ is closed and thus, w.l.o.g., we can assume that the domain of this new $\bar{N}$ is in fact closed.

We can repeat this procedure with the edges of the skeleta, that is in the argument above we simply replace 
points $p$ with $1$-dimensional faces $\sigma$, defining ${\rm St} ( \sigma)$ as the union of the cubes which
contain $\sigma$. In the final step we then extend over the domains $V_i$'s:
this time ${\rm St} (V_i)$
will be defined as the union of the cubes which intersect the cube $L_i\supset V_i$. 
The correct height and Lipschitz bounds follow from  the same arguments. Since the algorithm is applied
$3$ times, the original constants have been enlarged by a geometric factor.

\medskip

{\bf Second extension.}
For each $x\in \cM$ let $\p^\perp (x, \cdot) : \R^{m+n}\to \R^{m+n}$ 
be the orthogonal projection on $(T_x\cM)^\perp$ and set
$\hat{N} (x) = \sum_i \llbracket \p^\perp (x,\tilde{N}_i (x))\rrbracket$. Obviously $|\hat{N} (x)| \leq |\tilde{N} (x)|$,
so the $L^\infty$ bound is trivial. We now want
to show the estimate on the Lipschitz constant. 
To this aim, fix two points $p,q$ in the same Whitney region associated to $L$ and parameterize the corresponding
geodesic segment $\sigma\subset \cM$ by arc-length $\gamma :[0, d (p,q)] \to\sigma$, where $d(p,q)$ 
denotes the geodesic distance on $\cM$. Use \cite[Proposition~1.2]{DS1}
to select $Q$ Lipschitz functions $N'_i: \sigma \to \bU$ such that
$\tilde{N}|_{\gamma} = \sum \a{N'_i}$ and 
$\Lip (N'_i) \leq \Lip (\tilde{N})$. Fix a frame $\nu_1, \ldots, \nu_n$
on the normal bundle of $\cL\subset \cM$ with the property that $\|\nu_i\|_{C^0 (\cL)}\leq C \|D\phii\|_{C^0}
\leq C\,\bmo^{\sfrac12}\,\d (L)^{\gamma_0}$ and $\|D \nu_i\|_{C^0 (\cL)}\leq C \|D^2\phii\|_{C0}\leq
\bmo^{\sfrac12} \d (L)^{\gamma_0-1}$ (which is possible by~\cite[Appendix A]{DS2}, indeed we do this in $\cM\setminus \{0\}$, where our manifold is $C^{3,\gamma_0}$). We have
$\hat{N} (\gamma (t)) = \sum_i \llbracket\hat{N}_i (t)\rrbracket$, where
\[
\hat{N}_i (t) = \sum [\nu_j (\gamma (t)) \cdot N'_i (\gamma (t))] \, \nu_j (\gamma(t)).
\]
Hence we can estimate
\begin{align*}
\left|\frac{d \hat{N}_i}{dt}\right| &\leq  C \sum_j[ \|D\nu_j\| \|N'_i\|_{C^0}+\Lip(N'_i)]
\leq C \left(\bmo\,\d (L)^{\gamma_0}\,\ell(L)^{\gamma_0}\right)^{\beta_0}\,.
\end{align*}
Integrating this inequality we find
\[
\cG (\hat{N} (p), \hat{N} (q)) \leq 
\sum_{i=1}^Q |\hat{N}_i (d (p,q))-\hat{N}_i(0)|\leq C \left(\bmo\,\d (L)^{\gamma_0}\,\ell(L)^{\gamma_0}\right)^{\beta_0} d (p,q) \, .
\]
Since $d(p,q)$ is comparable to $|p-q|$, we achieve the desired
Lipschitz bound.

\medskip

{\bf Third extension and conclusion.} We still need to modify the map $\hat{N}$ in the cases (a) and (c) of Definition \ref{d:semicalibrated}. 
For each $x\in \cM\subset \Sigma$ consider the orthogonal
complement $\varkappa_x$ of $T_x \cM$ in $T_x \Sigma$. Let $\cT$ be the fiber bundle 
$\bigcup_{x\in \cM\setminus \{0\}} \varkappa_x$ and observe that, by the regularity of both $\cM\setminus\{0\}$
 and $\Sigma$, there is a $C^{2,\gamma_0}$ trivialization (argue as in \cite[Appendix A]{DS2}).
It is then obvious that there is a $C^{0,\gamma_0}$ map $\Xi: \cT \to \R^{m+n}$
with the following property: for each $(x,v)$, $q:= x+\Xi (x,v)$ is the only point in $\Sigma$ which is
orthogonal to $T_x \cM$ and such that $\p_{\varkappa_x} (q-x) = v$. Let us
denote by $\Omega (x, q)$ the map $\Xi (x, \p_{\varkappa_x} (q))$. This map extends to a $C^{0, \gamma_0}$ 
map at the origin with the estimates
\begin{align}
|D_x \Omega (x, q)|\leq C \bmo^{\sfrac{1}{2}} |x|^{\gamma_0-1} \qquad \forall x\in \gira\setminus \{0\}\quad \forall q \;\mbox{with $|q|\leq 1$}\label{e:bound_degenere_1}\\
|D^2_x \Omega (x, q)|\leq C \bmo^{\sfrac{1}{2}} |x|^{\gamma_0-2} \qquad \forall x\in \gira\setminus \{0\}\quad \forall q \;\mbox{with $|q|\leq 1$}\label{e:bound_degenere_2}
\end{align}
We then set $N (x) = \sum_i \llbracket\Omega (x, \hat{N}_i (x))\rrbracket$. 
Obviously, $N(x) = \hat{N} (x) = 0$ for $x\in \cK$, simply because in this case $x+\hat N_i (x) = x$ belongs
to $\Sigma$. 

In order to show the Lipschitz bound, notice that, by the regularity of $\mathcal{M}$ and $\Sigma$,
\begin{equation}\label{e:Lip_1bis}
|\Omega (x, q) - \Omega (x, p)|\leq C\, |q-p|\, .
\end{equation}
Moreover, since $\Omega (x, 0) = 0$ for every $x\in \cM\subset \Sigma$, we have $D_x \Omega (x, 0) =0$. 
We therefore conclude that $|D_x \Omega (x, q)|\leq C\bmo^{\sfrac{1}{2}} |x|^{\gamma_0-1} |q|$ and hence that
\begin{equation}\label{e:Lip_2}
|\Omega (x, q) - \Omega (y, q)|\leq C\bmo^{\sfrac{1}{2}} |x|^{\gamma_0-1}  |q| |y-x|\, .
\end{equation}
Thus, fix two points $x,y$ in any Whitney region $\cL$ and let us assume that $\cG (\hat{N} (x), \hat{N} (y))^2 =
\sum_i |\hat{N}_i (x) - \hat{N}_i (y)|^2$ (which can be achieved by a simple relabeling).
We then conclude
\begin{align}
\cG (N (x), N(y))^2 &\leq 2 \sum_i |\Omega (x, \hat{N}_i (x)) - \Omega (x, \hat{N}_i (y))|^2 
+ 2 \sum_i |\Omega (x, \hat{N}_i (y)) - \Omega (y, \hat{N}_i (y))|^2\nonumber\\
&\leq C \,\bmo^{\sfrac12}\, \cG (\hat{N} (x), \hat{N} (y))^2 + C\,|x|^{2\gamma_0-2} \sum_i |\hat{N}_i (y)|^2 |x-y|^2\nonumber\\
&\leq C \left(\bmo \,\d(L)^{\gamma_0}\,\ell(L)^{\gamma_0}\right)^{2\beta_0} |x-y|^2\\
&\qquad+ C \bmo  \d (L)^{2\gamma_0-2+\gamma_0-2\beta_2} \ell (L)^{2+2\beta_2} |x-y|^2\notag\\
&\leq C\,\left(\bmo \,\d(L)^{\gamma_0}\,\ell(L)^{\gamma_0}\right)^{2\beta_0} |x-y|^2 \,,
\end{align}
which proves the desired Lipschitz bound. 
Finally, using the fact that
$\Omega (x,0) =0$, we have $|\Omega (x,v)|\leq C |v|$ and the $L^\infty$ bound readily follows. 

\subsection{Proof of Theorem \ref{t:approx}, Part II} In this section we show the estimates \eqref{e:Dir_regional} and \eqref{e:av_region}. We start with the first one. Fix a Whitney region $\cL$ and a corresponding square $L\in \sW$. First consider
the cylinder $\bC:= \bC_{8r_L} (p_L, \pi_L)$, the interpolating function $g_L$ and the tilted interpolating function $h_L$. Denote by $\vec{\cM}$ the unit $m$-vector 
orienting $T\cM$ and by $\vec\tau$ the one orienting $T \bG_{h_L} = T\bG_{g_L}$. 
Recalling that $g_L$ and $\phii$ coincide in a neighborhood of $(z_L, w_L)$ of $L$, by 
Theorem \ref{t:cm} we have
\[
\sup_{p\in \cM\cap \bC} |\vec{\tau} (z_L, g_L (z_L, w_L)) -\vec{\cM} (p)|\leq C \|D^2 \phii\|_{C^0} \,\ell (L)
\leq C \bmo^{\sfrac{1}{2}} \d (L)^{\gamma_0-1}\ell (L).
\]
On the other hand recalling \eqref{e:interpo_4} in Proposition \ref{p:main_est}, we have
\[
|\pi_L - \vec{\tau} (z_L, g_L (z_L, w_L))| \leq C \bmo^{\sfrac{1}{2}} \d (L)^{\gamma_0-1+\delta_1} \ell (L)^{1-\delta_1}\, .
\] 
This in turn implies that 
\begin{equation}\label{e:planesss}
\sup_{\bC\cap \cM} |\vec{\cM}-\pi_L| \leq C \bmo^{\sfrac12} \d (L)^{\gamma_0-1 +\delta_1} \ell (L)^{1-\delta_1}. 
\end{equation}
Therefore, we can estimate
\begin{align}
\int_{\p^{-1} (\cL)} |\vec{\bT}_F (x) -& \vec{\cM} (\p (x))|^2 \,d\|\bT_F\| (x)\nonumber\\ 
\leq\; &C \int_{\p^{-1} (\cL)} |\vec{T} (x) - \vec{\cM} (\p (x))|^2 \,d\|T\| (x) + C \bmo^{1+\beta_0} \d (L)^{2(1+\beta_0)\gamma_0-2}
\ell (L)^{4}\nonumber\\
\leq\; &C \int_{\p^{-1} (\cL)} |\vec{T} (x) - \pi_L|^2\, d\|T\| (x) + C \bmo \d(L)^{2\gamma_0-2+2\delta_1} \ell (L)^{4 -2\delta_1}\, .\label{e:eccesso_storto}
\end{align}
Since $\p^{-1} (\cL) \cap \supp (T_{L}) \subset \bC$, the integral in \eqref{e:eccesso_storto} 
is bounded by $C \ell (L)^2 \bE (T_L, \bC, \pi_L)$.
By \cite[Proposition 3.4]{DS2} and using Proposition \ref{p:app} we then conclude
\begin{align*}
\int_{\cL} |DN|^2 &\leq C \int_{\p^{-1} (\cL)} |\vec{\bT}_F (x) - \vec{\cM} (\p (x))|^2 \,d\|\bT_F\| (x) + 
C \|A_{\cM}\|_{C^0 (\bC_{\d (L))}\setminus \bC_{\d (L)/4})}^2 \int_{\cL} |N|^2\\
&\leq C \bmo \, \d (L)^{2\gamma_0-2+2\delta_1}\,\ell(L)^{4-2\delta_1} + C \bmo\,\d (L)^{2\gamma_0-2} \ell (L)^{4 +2\beta_2}\, ,
\end{align*}
where we have used $\|A_{\cM}\|_{C^0 (\bC_{\d (L))}\setminus \bC_{\d (L)/4})} \leq C \bmo^{\sfrac{1}{2}}\dist(L,0)^{\gamma_0-1}$.
This shows \eqref{e:Dir_regional}.

\medskip

We finally come to \eqref{e:av_region}. First observe that, by \eqref{e:Lip_regional} and \eqref{e:err_regional},
\begin{align}\label{e:sfava1}
\int_{\cL \setminus \cK} |\etaa \,\circ N|\, 
&\leq C \, \bmo^{\sfrac14} \d (L)^{\sfrac{\gamma_0}{2}-\beta_2} \ell(L)^{1+\beta_2} |\cL\setminus \cK|\notag\\
&\leq C \bmo^{1+\beta_0+\sfrac14 } \d (L)^{(1+\beta_0)(2\gamma_0-2+2\delta_1)+(\sfrac{\gamma_0}{2}-\beta_2)}\ell(L)^{3+\beta_2+(1+\beta_0)(2-2\delta_1)}\, .
\end{align}
Fix now $p\in \cK$. 
Recalling that $F_L (p) = \sum_j \a{p+ N_j (p)}$ is given
by \cite[Theorem 5.1]{DS2} applied to the map $f_L$, we can use \cite[Theorem 5.1(5.4)]{DS2}
to conclude
\begin{align}
|\etaa \circ N_{L} (p)| \leq {}& C\, |\etaa \circ f_{L} (\p_{\pi_{L}}(p)) - \p_{\pi_{L}}^\perp (p)| + 
C \, \Lip (f_L) \, |T_p \cM - \pi_{L}| \, |N_{L}| (p)\nonumber\\
\stackrel{\eqref{e:planesss}}{\leq} {}& C |\etaa \circ f_{L} (\p_{\pi_{L}}(p)) - \p_{\pi_{L}}^\perp (p)| \notag\\
&+ C \bmo^{\sfrac{1}{2} + \beta_0}\, \d (L)^{\beta_0(2\gamma_0-2+2\delta_1)+\gamma_0-1+\delta_1} \ell (L)^{1-\delta_1+\beta_0(2-2\delta_1)}\\
&\qquad \cdot \big[\cG (N_{L}(p), Q \a{\etaa \circ N_{L} (p)}) + Q|\etaa \circ N_{L}| (p)\big]\, .\nonumber
\end{align}
Note that with a slight abuse of notation we have denoted by $\p_{\pi_L}$ the orthogonal projection onto $p_L+\pi_L$ (rather than onto $\pi_L$; alternatively we could shift the origin so that $p_L=0$). 
For $\eps_{2}$ sufficiently small (depending only on $\beta_2, \gamma_2, M_0, N_0, C_e, C_h$), we then conclude that
\begin{align}
|\etaa \circ N_{L} (p)|& \leq  C\, |\etaa \circ f_{L} (\p_{\pi_{L}}(p)) - \p_{\pi_{L}^\perp} (p)|\nonumber \\
&+ C  \bmo^{\sfrac{1}{2} + \beta_0}\,
\d (L)^{\beta_0(2\gamma_0-2+2\delta_1)+\gamma_0-1+\delta_1}\,\ell (L)^{1-\delta_1+\beta_0(2-2\delta_1)}
\cG (N_L(p), Q \a{\etaa \circ N_L (p)}) \label{e:da_integrare_10}
\end{align}
Let next $\phii': p_L+\pi_{L}\to \pi_{L}^\perp$ such that $\bG_{\phii'} = \cM$. 
Applying \cite[Lemma B.1]{DS4} we conclude that
\[
\int_{\cK \cap \cV} |\etaa\circ f_{L} (\p_{\pi_{L}}(p)) - \p_{\pi_{L}^\perp} (p))| \leq 
\int_{\p_{\pi_{L}} (\cK \cap \cV)} |\etaa \circ f_{L} (x) - \phii' (x)|
\leq C\|g_{L}- \phii\|_{L^1 (H)}\, ,
\]
where $H$ is a cube concentric to $L$ with side-length $\ell (H) = \frac{9}{8} \ell (L)$.
Next assume $L\in \sW^j$ and let $k\geq j+2$.
Consider the subset $\sP^k (L)$ of all cubes in $\sP^k$ which intersect $L$ and recall that $\phii$ coincides with the map $\varphi_k$ on $H$ (recall Definition \ref{d:glued} and Theorem \ref{t:cm}). Thus we can estimate
\begin{align}\label{e:L1_importante}
\|\phii - g_{L}\|_{L^1 (H)} 
&\leq C \sum_{L'\in \sP^k (L)} \|g_{L'} - g_{L}\|_{L^1(B_{r_L} (p_{L}, \pi_0))} \notag\\
&\leq C \bmo\,\d (L)^{2(1+\beta_0)\gamma_0-2-\beta_2} \ell (L)^{5+\kappa}\, ,
\end{align}
where in the last inequality we used \eqref{e:interpo_5}. We then conclude
\[
\|\phii- g_{L}\|_{L^1(H)} \leq C \bmo \dist(L)^{2(1+\beta_0)\gamma_0-2-\beta_2} \ell (L)^{5+\kappa} 
\]
and \eqref{e:av_region} follows integrating \eqref{e:da_integrare_10} over $\cV\cap \cK$ and using \eqref{e:sfava1}.

\section{Separation and splitting before tilting}

\subsection{Vertical separation} 
In this section we prove Proposition \ref{p:separ} and Corollary \ref{c:domains}.

\begin{proof}[Proof of Proposition \ref{p:separ}]
Let $J$ be the father of $L$. 
By Lemma \ref{l:tecnico2} and Proposition \ref{p:whitney}, Theorem \ref{t:height_bound} can
be applied to the cylinder $\bC := \bC_{32r_J} (p_J, \pi_J)$. 
Moreover, 
$|p_J-p_L|\leq C_0 \ell (J)$, where $C_0$ is a geometric constant, and $r_J = 2 r_L$. 
Thus, if $M_0$ is
larger than a geometric constant, 
we have
$\bB_L \subset \bC_{31 r_J} (p_{J}, \pi_{J})$. 
Denote by $\q_{L}$, $\q_{J}$ the projections $\p_{\pi_{L}^\perp}$
and $\p_{\pi_{J}^\perp}$ respectively. Since $L \in \sW_h$, 
there are two points $p_1, p_2\in
\supp (T_{L}) \cap \bB_{L}$ such that 
\[
|\q_{L} (p_1-p_2)| \geq C_h \bmo^{\sfrac{1}{4}}\,\d (L)^{\sfrac{\gamma_0}{2}-\beta_2} \ell (L)^{1+\beta_2}\,.
\]
On the other hand, recalling Proposition \ref{p:tilting opt}(iv), $|\pi_{J}- \pi_{L}|\leq \bar C \d (L)^{\gamma_0-1+\delta_1}
\ell (L)^{1-\delta_1}$, where $\bar C$ depends upon all the parameters except $C_h$ and $\eps_2$. Thus,
\begin{align*}
|\q_{J} (p_1-p_2)| &\geq |\q_{L} (p_1-p_2)| - C_0 |\pi_{L}-\pi_{J}| |p_1-p_2|\\
&\geq C_h \bmo^{\sfrac{1}{4}} \d (L)^{\sfrac{\gamma_0}{2}-\beta_2}\, \ell (L)^{1+\beta_2} - 
\bar C \bmo^{\sfrac{1}{2}} \d (L)^{\gamma_0-1+\delta_1}\,\ell (L)^{2-\delta_1}\\
&\geq C_h \bmo^{\sfrac{1}{4}} \d (L)^{\sfrac{\gamma_0}{2}-\beta_2}\, \ell (L)^{1+\beta_2} - 
\bar C \bmo^{\sfrac{1}{2}} \d (L)^{\sfrac{\gamma_0}{2}-\beta_2}\,\ell (L)^{1+\beta_2}\, ,
\end{align*}
where $C_0$ is a geometric constant and $\bar{C}$ a constant which does not depend on $C_h$ and $\eps_2$.
Hence, if $\eps_2$ is sufficiently small, we actually conclude
\begin{equation}
|\q_{J} (p_1-p_2)| \geq \frac{15}{16} C_h \bmo^{\sfrac{1}{4}} \d (L)^{\sfrac{\gamma_0}{2}-\beta_2}\, \ell (L)^{1+\beta_2}\, .
\end{equation}
Set $E:= \bE (T_J, \bC_{32 r_J} (p_{J}, \pi_{J}))$ and apply Theorem \ref{t:height_bound} to $T_J$ and $\bC$: the union of the corresponding
``stripes'' $\bS_j$ contain the set $\supp (T_J) \cap \bC _{32 r_{J} (1 - C E^{\sfrac{1}{24}} |\log E|)} (p_{J}, \pi_{J}))$,
where $C$ is a geometric constant. We can therefore assume that they contain
$\supp (T_{L}) \cap \bC_{31r_{J}} (p_{J}, \pi_{J})$. The width of these stripes is bounded as follows:
\begin{align*}
\sup \big\{|\q_{J} (x-y)| :x,y\in \bS_j\big\}
&\leq C_0 \,E^{\sfrac{1}{4}} r_{J} \leq C_0\, C_e^{\sfrac{1}{4}} \bmo^{\sfrac{1}{4}} \d (L)^{(2\gamma_0-2+2\delta_1)/4} \ell (L)^{1+ (2-2\delta_1)/4}\\
&\leq C_0 C_e^{\sfrac14}\bmo^{\sfrac14} \d (L)^{\sfrac{\gamma_0}{2}-\beta_2}\, \ell(L)^{1+\beta_2}
\end{align*}
where $C_0$ is a geometric constant. So, if $C^\sharp$ is chosen large enough, we actually conclude that $p_1$
and $p_2$ must belong to two different stripes, say $\bS_1$ and $\bS_2$. 
By Theorem \ref{t:height_bound}(iii) we conclude that all points in $\bC_{31 r_{J}} (p_{J}, \pi_J)$ have density $\Theta$ strictly
smaller than $Q-1$ (because in our case we know that $\Theta$ is {\em everywhere} integer-valued), thereby implying (S1). Moreover, by choosing $C^\sharp$ appropriately, we achieve that
\begin{equation}\label{e:vert_separ}
|\q_{J} (x-y)| \geq  \frac{7}{8} C_h \bmo^{\sfrac{1}{4}} \,\d (L)^{\sfrac{\gamma_0}{2}-\beta_2}\, \ell (L)^{1+\beta_2} 
\quad \forall x\in \bS_1, y\in \bS_2\, .
\end{equation} 
Assume next there is $H\in \sW_n$ with $\ell (H) \leq \frac{1}{2} \ell (L)$ and
 $H\cap L\neq\emptyset$. 
From our construction and Proposition \ref{p:tilting opt}(iv) it follows that $\ell (H) = \frac{1}{2} \ell (L)$, $\d (H) \leq 2 \d (L)$, $\bB_{H}\subset
\bC_{31 r_J} (p_{J}, \pi_{J})$ and $|\pi_{H} -\pi_{J}|\leq \bar C\bmo^{\sfrac{1}{2}}\, \d (L)^{\gamma_0-1+\delta_1} \ell (H)^{1-\delta_1}$, 
with $\bar C$ which does not depend upon $C_h$ and $\eps_2$. Hence choosing $\eps_2$ sufficiently small we conclude
\begin{align}
|\q_H (x-y)| 
&\geq  \frac{3}{4} C_h \bmo^{\sfrac{1}{4}}\,\d(L)^{\sfrac{\gamma_0}{2}-\beta_2}\, \ell (L)^{1+\beta_2}\notag \\
&\geq \frac{3}{2}\Bigl(\frac{1}{2}\Bigr)^{\sfrac{\gamma_0}{2}} C_h \bmo^{\sfrac{1}{4}}\, \d (H)^{\sfrac{\gamma_0}{2}-\beta_2} \ell (H)^{1+\beta_2} \notag \\
&\geq \frac{5}{4} C_h \bmo^{\sfrac{1}{4}}\, \d (H)^{\sfrac{\gamma_0}{2}-\beta_2} \ell (H)^{1+\beta_2}\qquad \forall x\in \bS_1, y\in \bS_2\,\label{e:vert_separ_10},
\end{align}
where the latter inequality holds because $\gamma_0\leq \log_2 \frac{6}{5}$. Now, recalling Proposition \ref{p:tilting opt}, if $\eps_{2}$ is sufficiently small, $\bC_{30 r_{H}} (p_{H}, \pi_{H}) \cap
\supp (T_{H}) \subset \bB_{H}$ and $\supp (T_J)\cap \bB_H \subset \supp (T_H)$. Moreover, by Theorem \ref{t:height_bound}(ii) ,
\[
(\p_{\pi_{J}})_\sharp (T_{J}\res (\bS_j\cap \bC_{30 r_{H}} (p_{H}, \pi_{J}))) = Q_j \a{B_{30 r_{H}} (p_{H}, \pi_{J})}
\quad \text{for } \, j=1,2,\;\; Q_j\geq 1.
\] 
A simple argument already used several other times allows to conclude that indeed
 \[
(\p_{\pi_{H}})_\sharp (T_{H}\res (\bS_j\cap \bC_{30 r_{H}} (p_{H}, \pi_{H}))) = Q_j \a{B_{30 r_{H}} (p_{H}, \pi_{H})}
\quad \text{for } \, j=1,2,\;\; Q_j\geq 1.
\] 
Thus, $\bB_{H}\cap \supp (T_H)$ must necessarily contain two points $x,y$ with 
\[
|\q_H (x-y)|\geq \frac{5}{4} 
C_h \bmo^{\sfrac{1}{4}}\, \d(H)^{\sfrac{\gamma_0}{2}-\beta_2} \ell (H)^{1+\beta_2}.
\]
But then the refining in $H$ should have stopped because of condition (HT) and so $H$ cannot belong to $\sW_n$.

\medskip

Coming to (S3), set $\Omega:= \Phii (B_{8 \ell (L)} ((z_L,w_L))$ and observe that $\p_\sharp (T\res (\p^{-1} (\Omega)\cap \bS_i)) = Q_i \a{\Omega}$. Thus, for each $p\in \cK\cap \Omega$,
the support of $p + N(p)$ must contain at least
one point $p+N_1 (p) \in \bS_1$ and at least one point $p+N_2 (p)\in \bS_2$. 
Now,
\begin{equation}\label{e:separazione parametrica}
|N_1 (p)- N_2 (p)| \geq  \frac{7}{8} C_h \bmo^{\sfrac{1}{4}} \d (L)^{\sfrac{\gamma_0}{2}-\beta_2} \ell (L)^{1+\beta_2} - C_0 \ell (L) \,
|T_p \cM - \pi_{J}|\, .
\end{equation}
Recalling, however, Proposition~\ref{p:main_est} and
that $\cM$ and $\gr (g_{J})$ coincide on a nonempty open set, we easily conclude that (see for instance the proof of \eqref{e:Dir_regional})
$|T_p \cM - \pi_{J}|\leq C \bmo^{\sfrac{1}{2}} \d (L)^{\gamma_0-1 + \delta_1} \ell (L)^{1-\delta_1}$
and, via \eqref{e:separazione parametrica},
\[
\cG \big(N (p), Q \a{\etaa \circ N (p)}\big) \geq \frac{1}{2} |N_1 (p)- N_2 (p)|\geq \frac{3}{8} C_h \bmo^{\sfrac{1}{4}}\d (L)^{\sfrac{\gamma_0}{2}-\beta_2} \ell (L)^{1+\beta_2}\, .
\]
Next observe that, by the property of the Whitney decomposition, any cube touching $B_{4 \ell (L)} ((z_L, w_L))$ has sidelength at most $4 \ell (L)$. Thus the sum of $\ell (H)^2$ over all such $H$ is the $2$-dimensional measure of a region of diameter comparable to $\ell (L)$ and from \eqref{e:err_regional} we infer
\[
|\Omega\setminus \cK| \leq C \bmo^{1+\beta_0} \d (L)^{(1+\beta_0)(2\gamma_0-2+2\delta_1)}\ell (L)^{2+(1+\beta_0)(2-2\delta_1)}\, .
\]
So, for every point $x\in \Omega$ there exists $q\in \cK\cap \Omega$ which has geodesic distance to $x$ at most 
$C \bmo^{\sfrac{1}{2}+\sfrac{\beta_0}{2}}\,\d (L)^{(1+\beta_0)(\gamma_0-1+\delta_1)}  \ell(L)^{1+ (1+\beta_0)(1-\delta_1)}$. Given the Lipschitz bound for $N$ and the choice $\beta_2 \leq \frac{1}{4}$,
we then easily conclude (S3):
\begin{align*}
\cG (N (x), Q \a{\etaa \circ N (x)}) 
\geq &\frac{3}{8} C_h \bmo^{\sfrac{1}{4}}\,\d (L)^{\sfrac{\gamma_0}{2}-\beta_2} \ell (L)^{1+\beta_2}\\
&\quad- C \bmo^{\sfrac{1}{2}+\sfrac{3\beta_0}{2}}  \d (L)^{(\sfrac{3\beta_0}{2}+1)\gamma_0-\beta_2}\ell (L)^{1+\beta_2}\\
\geq & \frac{1}{4} \,C_h \bmo^{\sfrac14}\,\d (L)^{\sfrac{\gamma_0}{2}-\beta_2} \ell (L)^{1+\beta_2}\, ,
\end{align*}
where again we need $\eps_2 < c (\beta_2, \delta_2, M_0, N_0, C_e, C_h)$ for a sufficiently small $c$. 
\end{proof}

\begin{proof}[Proof of Corollary \ref{c:domains}]
The proof is straightforward. Consider any $H\in \sW^j_n$. By definition it has
a nonempty intersection with some cube $J\in\sW^{j-1}$: this cube cannot belong to $\sW_h$ by
Proposition \ref{p:separ}. It is then either an element of $\sW_e$ or an element
$H_{j-1}\in \sW^{j-1}_n$. Proceeding inductively, we then find a chain $H = H_j, H_{j-1}, \ldots,
H_i =: L$, where $H_{\bar{l}}\cap H_{\bar{l}-1} \neq\emptyset$ for every $\bar{l}$, $H_{\bar l} \in \sW^{\bar{l}}_n$ for every $\bar{l}>i$
and $L= H_i\in \sW^i_e$. 
Observe also that
\[
|x_H-x_L| \leq \sum_{\bar{l}=i}^{j-1} |x_{H_{\bar{l}}} - x_{H_{\bar{l}+1}}| \leq \sqrt{2}\, \ell (L) 
\sum_{\bar{l}=0}^\infty 2^{-\bar{l}}
\leq 2\sqrt{2} \,\ell (L)\, . 
\]
It then follows easily that $H\subset B_{3\sqrt{2} \ell (L)} (L)$.
\end{proof}

\subsection{Unique continuation for Dir-minimizers}
We recall for completeness the following two Propositions, whose proof can be found in \cite{DS4}, cf. Lemmas 7.1 and 7.2 therein.

\begin{lemma}[Unique continuation for $\D$-minimizers]\label{l:UC}
Let $n\in \mathbb N\setminus \{0\}$ be fixed.
For every $\eta \in (0,1)$ and $c>0$, there exists $\gamma>0$ with the following property.
If $w: \R^2\supset B_{2\,r} \to \Iq(\R^n)$ is Dir-minimizing,
$\D(w, B_r)\geq c$ and  $\D(w, B_{2r}) =1$,
then
\[
\D (w, B_s (q)) \geq \gamma \quad \text{for every 
$B_s(q)\subset B_{2r}$ with $s \geq \eta\,r$}.
\]
\end{lemma}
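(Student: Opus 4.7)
The plan is to argue by contradiction and compactness, reducing to the classical unique continuation property for $\D$-minimizers, which states that a $\D$-minimizer constant on an open subset of a connected domain must be constant everywhere.

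First I would normalize. Suppose toward contradiction that the lemma fails for some $\eta$ and $c$. Then, for each $k\in\N$, there exist $r_k>0$, a $\D$-minimizer $w_k\colon B_{2r_k}\to \Iq(\R^n)$ satisfying $\D(w_k,B_{r_k})\geq c$ and $\D(w_k,B_{2r_k})=1$, and a ball $B_{s_k}(q_k)\subset B_{2r_k}$ with $s_k\geq \eta\,r_k$ such that $\D(w_k,B_{s_k}(q_k))\leq 2^{-k}$. Rescaling $w_k$ by replacing it with $x\mapsto w_k(r_k x)$ (which preserves the Dir-minimizing property and all Dirichlet integrals), I reduce to the case $r_k=1$: thus $w_k\colon B_2\to\Iq(\R^n)$ is Dir-minimizing, $\D(w_k,B_1)\geq c$, $\D(w_k,B_2)=1$, and there exists $B_{s_k}(q_k)\subset B_2$ with $s_k\geq \eta$ and $\D(w_k,B_{s_k}(q_k))\to 0$. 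Up to subsequence, $s_k\to s_\infty\in[\eta,2]$ and $q_k\to q_\infty$, so that $B_{s_\infty/2}(q_\infty)\subset B_2$.

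Next, I would invoke the compactness theorem for $\D$-minimizers from \cite{DS1}. Since $\D(w_k,B_2)=1$ is bounded, and since the traces on $\partial B_2$ are uniformly bounded in $W^{1,2}$ after a small retraction (or, alternatively, using locally the interior compactness), up to subsequence $w_k$ converges locally uniformly and strongly in $W^{1,2}_{\rm loc}(B_2)$ to a $\D$-minimizer $w_\infty\colon B_2\to\Iq(\R^n)$. The strong $W^{1,2}$ convergence on the compactly contained sets $B_1\subset B_2$ and $B_{s_\infty/2}(q_\infty)\subset B_2$ yields
\[
\D(w_\infty,B_1)=\lim_k \D(w_k,B_1)\geq c>0,\qquad \D(w_\infty,B_{s_\infty/2}(q_\infty))=\lim_k \D(w_k,B_{s_\infty/2}(q_\infty))=0.
\]
In particular $w_\infty$ is a constant $Q\a{p_0}$ on the open set $B_{s_\infty/2}(q_\infty)$.

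The main step is then to invoke the unique continuation principle for $\D$-minimizers (cf.\ \cite{DS1}): since $w_\infty$ is $\D$-minimizing on the connected open set $B_2$ and is identically the constant $Q\a{p_0}$ on the open subset $B_{s_\infty/2}(q_\infty)$, it must equal $Q\a{p_0}$ throughout $B_2$. But this contradicts $\D(w_\infty,B_1)\geq c>0$. The contradiction proves the lemma. The genuinely non-trivial ingredient here is the unique continuation theorem, which relies on Almgren's frequency function monotonicity and has been established in the multivalued setting in \cite{DS1}; the rest is a routine compactness argument.
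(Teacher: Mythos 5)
Your argument is structurally the same as the one in \cite{DS4}, to which the paper defers for the proof: contradiction, rescaling to $r=1$, compactness of Dir-minimizers, and unique continuation. Two points should be tightened. First, the compactness step needs a normalization: $\D(w_k,B_2)=1$ alone gives no $L^2$ control (translates of a fixed minimizer have the same energy), so before extracting a limit replace $w_k$ by $\sum_i\a{(w_k)_i - p_k}$ for suitable constants $p_k$; this preserves Dir-minimality and every Dirichlet integral, and the $Q$-valued Poincar\'e inequality then gives uniform $L^2$ bounds on compact subsets, after which the strong $W^{1,2}_{\rm loc}$ compactness of Dir-minimizers from \cite{DS1} applies. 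The parenthetical remark about traces on $\partial B_2$ is not quite the right mechanism; interior compactness after normalization is what you want.

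Second, the unique continuation statement you invoke is not, to my knowledge, stated in \cite{DS1} as a ready-made theorem; what \cite{DS1} proves is the monotonicity of Almgren's frequency function and the ensuing polynomial lower bound on $\int_{\partial B_s(x)}|w|^2$ in terms of the frequency at an outer radius. Extracting ``a Dir-minimizer which equals $Q\a{p_0}$ on an open subset of a connected domain is identically $Q\a{p_0}$'' still takes a short open-closed argument: set $w'=\sum_i\a{(w_\infty)_i-p_0}$ and let $U$ be the interior of $\{w'=Q\a{0}\}$; if $U$ were a proper open subset you could choose $x_1\in U$ with $\rho:=\dist(x_1,\partial U)<\dist(x_1,\partial\Omega)$, and the vanishing of $\int_{\partial B_s(x_1)}|w'|^2$ for all $s<\rho$, combined with the frequency decay estimate, forces $\D(w',B_r(x_1))=0$ for every $r<\dist(x_1,\partial\Omega)$, enlarging $U$ and giving a contradiction. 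This is essentially what \cite{DS4} does. Neither point is a conceptual gap --- your approach is the right one --- but the unique continuation step should be derived rather than cited as folklore.
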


In the sequel we fix $\lambda>0$ such that 
\begin{equation}\label{e:lambda}
(1+\lambda)^{4}< 2^{\delta_1}\, .
\end{equation}

\begin{proposition}[Decay estimate for $\D$-minimizers]\label{p:harmonic split}
Let $n\in \mathbb N \setminus \{0\}$.
For every $\eta>0$, there is $\gamma >0 $ with the following property.
Let $w: \R^2 \supset B_{2r} \to \Iq(\R^n)$ be Dir-minimizing 
in every $\Omega'\subset\subset B_{2r}$ such that
\begin{equation}\label{e.no decay}
\int_{B_{(1+\lambda) r}} \cG \big(Dw, Q \a{D (\etaa \circ w) (0)}\big)^2 \geq 2^{\delta_1-4} \D (w, B_{2r})\, .
\end{equation}
Then, if we set $\tilde{w} = \sum_i \a{w_i - \etaa\circ w}$, we have
\begin{align}
\gamma \, \D (w, B_{(1+\lambda) r}) & \leq \int_{B_s (q)} \cG (Dw, Q \a{D (\etaa \circ w)})^2\nonumber\\
& \leq \frac{1}{\gamma \, r^2} \int_{B_{s}(q)} \cG (w, Q \a{\etaa \circ w})^2  \quad\forall \;B_s(q) \subset B_{2\,r} \;\text{with }\, s\geq \eta \,r\, .\label{e.harm split 2}
\end{align}
\end{proposition}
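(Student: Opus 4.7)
My plan is first to note that $\tilde w$ is itself a Dir-minimizing $Q$-valued map on $B_{2r}$ with $\etaa\circ\tilde w\equiv 0$. Indeed, any compactly supported variation of $\tilde w$ that keeps $\etaa\circ\tilde w=0$ corresponds to a variation of $w$ with the same average $\etaa\circ w$; coupled with the pointwise identity $\sum_i D\tilde w_i\equiv 0$, which yields the orthogonal energy split
\begin{equation*}
\D(w,B) \;=\; \D(\tilde w,B) \;+\; Q\int_B |D(\etaa\circ w)|^2,
\end{equation*}
the Dir-minimality of $w$ transfers to $\tilde w$. In particular Lemma~\ref{l:UC} is available for $\tilde w$, and so is the Caccioppoli (reverse Poincar\'e) inequality for Dir-minimizers.

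\textbf{Left inequality.} Expanding with the cancellation $\sum_i D\tilde w_i=0$ gives the pointwise identity
\begin{equation*}
\cG\bigl(Dw(x),\,Q\a{D(\etaa\circ w)(0)}\bigr)^{2} \;=\; |D\tilde w(x)|^{2} \;+\; Q\,|D(\etaa\circ w)(x)-D(\etaa\circ w)(0)|^{2}.
\end{equation*}
Since $u:=\etaa\circ w$ is harmonic in $B_{2r}$ (as $w$ is Dir-min), so is $Du$; the harmonic-polynomial expansion $Du-Du(0)=\sum_{l\ge 1}P_{l}$ (with $P_l$ homogeneous of degree $l$) gives $\int_{B_\rho}|Du-Du(0)|^2=\sum_{l\ge 1}c_l\rho^{m+2l}$, hence for $\rho=(1+\lambda)r<2r$
\begin{equation*}
\int_{B_{(1+\lambda)r}}|Du-Du(0)|^{2} \;\le\; \Bigl(\tfrac{1+\lambda}{2}\Bigr)^{m+2}\!\!\int_{B_{2r}}|Du-Du(0)|^{2} \;\le\; \Bigl(\tfrac{1+\lambda}{2}\Bigr)^{m+2}\!\!\int_{B_{2r}}|Du|^{2},
\end{equation*}
the last step from the mean-value identity $\int_{B_{2r}}|Du-Du(0)|^{2}=\int_{B_{2r}}|Du|^{2}-\omega_{m}(2r)^{m}|Du(0)|^{2}$. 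Substituting in the hypothesis and using the energy split, one extracts
\begin{equation*}
\D(\tilde w,B_{(1+\lambda)r}) \;\ge\; 2^{\delta_{1}-m-2}\D(\tilde w,B_{2r}) \;+\; \tfrac{2^{\delta_{1}}-(1+\lambda)^{m+2}}{2^{m+2}}\,Q\!\int_{B_{2r}}|Du|^{2}.
\end{equation*}
Since $m=2$ in this paper, condition \eqref{e:lambda} reads $(1+\lambda)^{m+2}=(1+\lambda)^{4}<2^{\delta_{1}}$, so both coefficients are strictly positive; their minimum $\gamma_{1}=\gamma_{1}(\delta_{1},m,\lambda)$ then satisfies $\D(\tilde w,B_{(1+\lambda)r})\ge\gamma_{1}\D(w,B_{2r})\ge\gamma_{1}\D(w,B_{(1+\lambda)r})$, which is the first inequality.

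\textbf{Right inequality.} The first inequality combined with $\D(\tilde w,B_{2r})\le\D(w,B_{2r})$ forces $\D(\tilde w,B_{(1+\lambda)r})\ge\gamma_{1}\D(\tilde w,B_{2r})$. Hence Lemma~\ref{l:UC}, applied to the Dir-minimizer $\tilde w$ with constant $c=\gamma_{1}$ and with $\eta/2$ in place of $\eta$, supplies $\gamma_{2}=\gamma_{2}(\eta)>0$ such that
\begin{equation*}
\D(\tilde w,B_{s/2}(q))\;\ge\;\gamma_{2}\,\D(\tilde w,B_{2r}) \qquad\forall\,B_{s}(q)\subset B_{2r},\;\;s\ge\eta r.
\end{equation*}
I would then invoke the Caccioppoli inequality for Dir-minimizers on the concentric pair $B_{s/2}(q)\subset B_{s}(q)$,
\begin{equation*}
\D(\tilde w,B_{s/2}(q)) \;\le\; \frac{C}{s^{2}}\int_{B_{s}(q)}|\tilde w|^{2},
\end{equation*}
and chain the two displays. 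Using $s\ge\eta r$ and $\D(\tilde w,B_{(1+\lambda)r})\le\D(\tilde w,B_{2r})$,
\begin{equation*}
\D(\tilde w,B_{(1+\lambda)r}) \;\le\; \D(\tilde w,B_{2r}) \;\le\; \frac{C}{\gamma_{2}\eta^{2}\,r^{2}}\int_{B_{s}(q)}|\tilde w|^{2},
\end{equation*}
which is the desired second inequality. The final $\gamma$ in the statement is the minimum of $\gamma_{1}$ and a fixed multiple of $\gamma_{2}\eta^{2}$.

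\textbf{Main obstacle.} The only non-routine ingredient is the Caccioppoli inequality for multi-valued Dir-minimizers: admissible variations must respect the $Q$-valued structure, so one cannot directly test with $\zeta^{2}\tilde w$ as in the scalar case. I would use the outer variation $\tilde w\mapsto\sum_{i}\a{(1-\zeta^{2})\tilde w_{i}}$, which is admissible and preserves $\etaa\circ\tilde w=0$; plugging it into the outer first variation identity from \cite{DS1} and absorbing the resulting cross term $\int\zeta\,|D\tilde w|\,|\tilde w|\,|D\zeta|$ by Cauchy--Schwarz delivers the estimate above. All remaining pieces --- the pointwise orthogonal decomposition, the harmonic-polynomial decay, the Dir-minimality of $\tilde w$, and the invocation of Lemma~\ref{l:UC} --- are standard.
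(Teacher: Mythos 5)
The paper itself does not prove this proposition (it is quoted from \cite{DS4}), so your argument has to stand on its own. The first half does: the pointwise splitting $\cG\big(Dw,Q\a{D(\etaa\circ w)(0)}\big)^2=|D\tilde w|^2+Q\,|D(\etaa\circ w)-D(\etaa\circ w)(0)|^2$ (valid because $\sum_i D\tilde w_i=0$), the harmonicity of $\etaa\circ w$, the monotonicity of $\rho\mapsto\rho^{-m-2}\int_{B_\rho}|D(\etaa\circ w)-D(\etaa\circ w)(0)|^2$ and the mean value identity yield $\D(\tilde w,B_{(1+\lambda)r})\geq 2^{-m-2}\min\{2^{\delta_1},\,2^{\delta_1}-(1+\lambda)^{m+2}\}\,\D(w,B_{2r})$, which is positive by \eqref{e:lambda} in the relevant case $m=2$; this is the left inequality, in fact in a stronger form. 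The Dir-minimality of $\tilde w$ is a standard fact (cf.\ \cite{DS1}), though your justification should test minimality against \emph{all} competitors $v$ agreeing with $\tilde w$ on the boundary (use $\sum_i\a{v_i-\etaa\circ v+\etaa\circ w}$ as a competitor for $w$), not only average-free variations; and your Caccioppoli inequality via the outer variation $\psi(x,u)=-\zeta(x)^2u$ is correct.

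The genuine gap is the invocation of Lemma~\ref{l:UC}. Its hypothesis requires a definite fraction of the energy of $\tilde w$ in the \emph{half} ball, $\D(\tilde w,B_r)\geq c\,\D(\tilde w,B_{2r})$, whereas what you have established is concentration only in the larger ball $B_{(1+\lambda)r}$; since $B_r\subset B_{(1+\lambda)r}$, this is strictly weaker, and nothing you have proved rules out that essentially all of the energy of $\tilde w$ sits in the annulus $B_{(1+\lambda)r}\setminus B_r$. Hence the step ``apply Lemma~\ref{l:UC} with $c=\gamma_1$'' is not licensed by the lemma as stated. The implication from concentration in $B_{(1+\lambda)r}$ to concentration in $B_r$ (equivalently, the variant of Lemma~\ref{l:UC} with inner ball $B_{(1+\lambda)r}$, or any fixed fraction of the outer radius) is itself a unique-continuation-type statement: it is true, but it must be supplied, either by rerunning the compactness proof of Lemma~\ref{l:UC} with the ratio $(1+\lambda)/2$, or by a pigeonhole plus a chain-of-balls propagation inside $B_{2r}$ using the stated lemma repeatedly at scale comparable to $\eta r$. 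It cannot be obtained by elementary manipulation of the hypothesis. Once such a variant is in place, the remainder of your chain (lower bound for $\D(\tilde w,B_{s/2}(q))$, Caccioppoli on $B_{s/2}(q)\subset B_s(q)$, and $s\geq\eta r$) does deliver the right inequality and a final $\gamma$ depending only on $\eta$ and the fixed parameters.
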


\subsection{Splitting before tilting: Proof of Proposition \ref{p:splitting}}\label{ss:splitting}
As customary we use the convention that constants denoted by $C$ depend upon all the parameters but $\eps_2$, whereas constants denoted by $C_0$ depend only upon $n,\bar{n}$ and $Q$.

Given $L\in \sW^j_e$, let us consider
its ancestors $H\in \sS^{j-1}$ and $J\in \sS^{j-6}$, which exists thanks to Proposition \ref{p:whitney}. 
Set $\ell = \ell(L)$, $\pi = \pi_H$ and $\bC := \bC_{8r_J} (p_J, \pi)$,  and
let $f:B_{8r_J} (p_{HJ}, \pi)\to \Iq (\pi^\perp)$ be the $\pi$-approximation of Definition \ref{d:pi-approximations},
which is the result of \cite[Theorem 1.5]{DSS2} applied to $T_J$ in $\bC_{32r_J} (p_J, \pi)$
(recall that Proposition~\ref{p:tilting opt} ensures the applicability of \cite[Theorem 1.5]{DSS2} in the latter cylinder).

The following are simple consequences of Proposition~\ref{p:tilting opt} and Proposition~\ref{p:app}:
\begin{gather}
E := \bE (T_{J}, \bC_{32 r_J} (p_{J}, \pi)) \leq
C \bmo\, \d (L)^{2\gamma_0-2+2\delta_1} \,\ell^{2-2\delta_1}\, ,\label{e:eccesso J}\\
\bh (T_J, \bC, \pi_{H}) 
\leq C\,\bmo^{\sfrac{1}{4}}\,\d (L)^{\sfrac{\gamma_0}{2}-\beta_2} \ell^{1+\beta_2},\label{e:altezza J}\\
c\,C_e\,\bmo\,\d (L)^{2\gamma_0-2+2\delta_1} \ell^{2-2\delta_1} \leq E,\label{e:eccesso alto}
\end{gather}
where \eqref{e:eccesso alto} follows from $\bB_{L} \subset \bC$, $L\in \sW_e$ and $\sfrac{r_L}{r_J}=2^{-6}$. 
In particular the positive constants $c$ and $C$ do not depend on $\eps_{2}$.
We divide the proof of Proposition~\ref{p:splitting} in three steps.

\medskip

{\bf Step 1: decay estimate for $f$.}
Let $2\rho:= 64 r_H - \underline{C} \bmo^{\sfrac{1}{4}} \d (L)^{\sfrac{\gamma_0}{2}-\beta_2} \ell^{1+\beta_2}$:
since $p_{H} \in \supp (T_{J})$, it follows from \eqref{e:altezza J} that, upon having chosen $\underline{C}$ appropriately,
$\supp (T_J) \cap \bC_{2\rho} (p_{H}, \pi)\subset \supp (T_H)\cap \bB_{H}\subset \bC$. Observe in particular that $\underline{C}$
does not depend on $\eps_2$, although it depends upon the other parameters.
Thus, setting $B=B_{2\rho}(p_H, \pi)$, using the Taylor expansion in \cite[Corollary 3.3]{DS2} and the estimates
in \cite[Theorem 1.5]{DSS2},  we get
\begin{align}
\D (B, f) &\leq 2 |B|\, \bE (T_J, \bC_{2\rho} (p_H, \pi)) +
C \bmo^{1+\beta_0}\,\d (L)^{(1+\beta_0)(2\gamma_0-2+2\delta_1)} \ell^{2+(1+\beta_0)(2-2\delta_1)}\nonumber\\
& \leq 2 \omega_2 (2\rho)^2 \bE (T_H, \bB_{H}) + C\bmo^{1+\beta_0}\d (L)^{(1+\beta_0)(2\gamma_0-2+2\delta_1)} \ell^{2+(1+\beta_0)(2-2\delta_1)}\, .\label{e:Dir-Ex}
\end{align}
Recall that $|p_H-p_{HL}|\leq |p_{H}-p_{L}| \leq C_0 \ell (H)$, where $C_0$ is a geometric constant (cf.~Proposition \ref{p:tilting opt}),
and set $\sigma:= 64 r_L+ C \ell (H) =  32 r_H + C \ell (H)$.
If $\lambda$ is as in \eqref{e:lambda} and $M_0$ is sufficiently
large (thus fixing a lower bound for 
$M_0$ which depends only on $\delta_1$) we reach
\[
\sigma \leq \left(\frac{1}{2} +\frac{\lambda}{4}\right) \,64\, r_H
\leq \left(1+\frac{\lambda}{2}\right) \rho + \bar{C} \bmo^{\sfrac14}\,\d (L)^{\sfrac{\gamma_0}{2}-\beta_2}\, \ell^{1+\beta_2}\, .
\]
In particular, choosing $\eps_2$ sufficiently small we get $\sigma \leq (1+\lambda) \rho$ and $\bB_{L}\subset
\bC_{(1+\lambda)\rho}(p_{L},\pi)=:\bC'$. Define $B'=B_{(1+\lambda)\rho}(p_H,\pi)$. Set
$A := \mint_{B'} D (\etaa \circ f)$,
$\bar{A}: \pi_{H}\to \pi_{H}^\perp$ the linear map $x\mapsto A\cdot x$
and $\bar \pi$ for the plane corresponding to $\bG_{\bar{A}}$.
Using \cite[Theorem 3.5]{DS2} and \cite[Theorem 5.2]{DSS2}, we estimate
\begin{align}
& {\textstyle{\frac{1}{2}}} \int_{B'} \cG (Df, Q \a{A})^2 \geq  |B'| \, \bE (T_{J}, \bC', \bar \pi) -  C\bmo^{1+\beta_0}\d (L)^{(1+\beta_0)(2\gamma_0-2+2\delta_1)} \ell^{2+(1+\beta_0)(2-2\delta_1)}\nonumber\\
\geq\; & |B'| \bE (T_J, \bB_{L}, \bar \pi) -  C\bmo^{1+\beta_0}\d (L)^{(1+\beta_0)(2\gamma_0-2+2\delta_1)} \ell^{2+(1+\beta_0)(2-2\delta_1)}\nonumber\\
\geq\; & \omega_2 ((1+\lambda)\rho)^2 \bE (T_L, \bB_{L})
 -  C\bmo^{1+\beta_0}\d (L)^{(1+\beta_0)(2\gamma_0-2+2\delta_1)} \ell^{2+(1+\beta_0)(2-2\delta_1)}. \label{e:da_sotto}
\end{align}
Next, considering that $\bB_{H}\supset
\bB_{L}$ and that, by $L\in \sW^j_e$,
\[
\bE(T_{L}, \bB_{L}) \geq C_e \bmo\,\d (L)^{2\gamma_0-2+2\delta_1} \ell^{2-2\delta_1},
\]
we conclude from \eqref{e:Dir-Ex} and \eqref{e:da_sotto} that 
\begin{align}
\D (B, f) &\leq 2 \omega_2 (2\rho)^2 (1+ C \bmo^{\beta_0}) \bE (T_{H}, \bB_{H}) \, .\label{e:Dir-Ex-bis}\\
\int_{B'} \cG (Df, Q \a{A})^2&\geq 2 \omega_2 ((1+\lambda)\rho)^2 (1- C \bmo^{\beta_0})
\bE (T_{L}, \bB_{L})\, .\label{e:da_sotto_bis}
\end{align}

\medskip

{\bf Step 2: harmonic approximation.} From now on, to simplify our notation,
we use $B_s (y)$ in place of $B_s (y, \pi)$ (and recall that $\pi = \pi_H$). Consistently with \cite{DSS1,DSS2, DS3} we introduce the parameter $\bOmega$, which equals 
\begin{itemize} 
\item $\bA = \|A_\Sigma\|_{C^0}$ in case (a) of Definition \ref{d:semicalibrated};
\item $\max \{\|d\omega\|_{C^0}, \|A_\Sigma\|_{C^0}\}$ in case (b);
\item $C_0 R^{-1}$ in case (c).
\end{itemize}
From \eqref{e:eccesso alto} we infer that, for any $\eps_{32}>0$, if $\bar{r}$ is chosen sufficiently small, we have
\begin{equation}\label{e:problematica}
8 r_J\, \bOmega \leq C \ell (L) \bmo^{\sfrac{1}{2}} \leq 
\varepsilon_{32}\,C_e^{\sfrac12} \bmo^{\sfrac{1}{2}}\, \d (L)^{\gamma_0-1+\delta_1} \ell (L)^{1-\delta_1}\leq \eps_{32} E^{\sfrac12},
\end{equation}
because $\ell (L) \leq \d (L) \leq \bar{r}$.
Therefore, for every positive $\bar{\eta}$,
we can apply \cite[Theorem 1.6]{DS3} (in case (a) of Definition \ref{d:semicalibrated}) and \cite[Theorem 3.1]{DSS2} (in the cases (b) and (c) of Definition \ref{d:semicalibrated}) to the cylinder $\bC$
and achieve a map $w: B_{8r_J} (p_{HJ}, \pi)
\to \Iq (\pi^\perp)$ of the form $w=(u,\Psi(y,u))$ (in fact $w=u$ in case (b) of definition \ref{d:semicalibrated}) for a  $\D$-minimizer $u$ and such that
\begin{gather}
(8\,r_J)^{-2} \int_{B_{8r_J} (p_{HJ})} \cG (f,w)^2 + \int_{B_{8r_J} (p_{HJ})} (|Df|-|Dw|)^2 \leq \bar{\eta} \,E \,(8\,r_J)^2,\label{e:armonica_vicina}\\
\int_{B_{8r_J} (p_{HJ})} |D (\etaa \circ f) - D (\etaa \circ w)|^2 \leq \bar{\eta}\, E \, (8\,r_J)^2\, .
\label{e:armonica_vicina_2}
\end{gather}
In the cases (a) and (c) of Definition \ref{d:semicalibrated},
by the chain rule we have $D(\Psi(y,u(y)))=\sum_j\a{D_x\Psi(y,u_j(y))+D_v\Psi(y, u_j(y))\cdot Du_j(y)}$, so that
\begin{equation}\label{e:aggiuntiva_20}
 \int_{B_{(1+\lambda)\rho}(p_H)}|D(\Psi(y,u))|^2\leq C_0 \bmo \int_{B_{(1+\lambda)\rho}(p_H)}|Du|^2+C_0 \bmo \rho^{4}, 
\end{equation}
where $C_0$ is a geometric constant.
Consider now $\tilde A :=\mint_{B'} D(\etaa\circ w)$, and observe that, since 
$D\, (\etaa\circ u)=\etaa\circ Du$ is harmonic, we have $A':= D\, (\etaa\circ u) (p_H)=\mint_{B'} \etaa\circ Du$, where $B'= B_{(1+\lambda) \rho} (p_H)$.  
We can use \eqref{e:armonica_vicina}, \eqref{e:armonica_vicina_2} and \eqref{e:aggiuntiva_20}, together with \eqref{e:da_sotto_bis} to infer, for $\eps_2$ small enough,
\begin{align}
& \int_{B'} \cG \big(Du, Q \a{\etaa\circ Du (p_H)}\big)^2\notag\\
= & \int_{B'}\cG \big(Du, Q\a{A'})^2\big)^2
\stackrel{\eqref{e:aggiuntiva_20}}{\geq} \int_{B'} \cG (Dw, Q \llbracket\tilde A\rrbracket)^2 - C_0 \bmo \rho^4\notag\\ 
\stackrel{\eqref{e:armonica_vicina}\&\eqref{e:armonica_vicina_2}}{\geq} & \int_{B'}\cG \big(Df, Q {\a{A}}\big)^2- C_0 \bmo \rho^{4}-C_0 \bar{\eta} E\rho^2\nonumber\\
\stackrel{\eqref{e:da_sotto_bis}}{\geq} & 2 \omega_2 ((1+\lambda)\rho)^2 (1- C \bmo^{\beta_0})\bE(T_{L},\bB_{L})- C_0 \bmo \rho^{4}- C_0 \bar{\eta} E\rho^2.\label{e:non_decay_10}
\end{align}
Analogously, using  \eqref{e:Dir-Ex-bis} and \eqref{e:armonica_vicina}, we easily deduce
\begin{equation}\label{e:armonica_dall_alto}
\int_{B_{2\rho}(p_H)}|Du|^2\leq 2 \omega_2 (2\rho)^2 (1+ \bmo^{\beta_0}) \bE (T_{H}, \bB_{H})+C_0 \bmo \rho^{4}+ C_0 \bar{\eta} E\rho^2
\end{equation}
Now recall that, since $\d (L)= \d (H) = \d (J)$, and $L\in \sW_e$,
\begin{align*}
 \bE(T_{L},\bB_{L})
 &\geq C_e\bmo \d (L)^{2\gamma_0-2+2\delta_1}\ell(L)^{2-2\delta_1} \geq 2^{2 \delta_1-2}\bE(T_H, \bB_{H})\, ,
 \end{align*}
and combining this with \eqref{e:armonica_dall_alto} and \eqref{e:non_decay_10} we achieve
\begin{equation}\label{e:concl_11}
\int_{B'} \cG \big(Du, Q \a{D(\etaa \circ u) (p_H)}\big)^2
\geq  (2^{2\delta_1-4}-C \bmo^{\beta_0})\,\int_{B_{2\rho}(p_H)}|Du|^2  - C_0 \bmo \rho^{4}- C_0 \bar{\eta} E\rho^2  \, .
\end{equation}
To estimate the last two errors in terms of the energy of $u$ we use again we use \eqref{e:non_decay_10} to estimate
\[
E\rho^2\leq C_0 \rho^2 \,\bE(T_L,B_L) \leq C_0 \int_{B'} |Du|^2+C_0\,\bmo\,\rho^4+C_0 \bar{\eta} E\rho^2 
\]
so that, for $\bar \eta\leq \sfrac {1}{2C_0}$ we have
\begin{equation}\label{e:err_arm_1}
E\rho^2\leq C_0 \,\rho^2 \bE(T_L,B_L) \leq C_0 \int_{B'} |Du|^2+C_0\,\bmo\,\rho^4\,.
\end{equation}
Next, using once again, $L\in \sW_e$ and this last inequality,
\begin{align*}
 \bmo \rho^4 
&\leq \frac{C_0\rho^2}{C_e} \d (L)^{2-2\gamma_0-2\delta_1} \bE(T_{L},\bB_{L})
\leq \frac{C_0}{C_e} \int_{B'}|Du|^2 + \frac{C_0}{C_e} \bmo \rho^{4}\, ,
\end{align*}
which for $C_e$ bigger than a geometric constant implies
\begin{equation}\label{e:err_arm_2}
\bmo \rho^4 \leq \frac{C_0}{C_e} \int_{B'}|Du|^2\,.
\end{equation}
We can therefore combine \eqref{e:concl_11} with \eqref{e:err_arm_1} and \eqref{e:err_arm_2} to achieve
\begin{align}
\int_{B_{(1+\lambda) \rho} (p_H)} \cG \big(Du, Q \a{D(\etaa \circ u) (p_H)}\big)^2
&\geq  \Bigl(2^{2\delta_1-4}-\frac{C_0}{C_e}-C \bmo^{\beta_0}- C_0\bar{\eta}\Bigr)\int_{B_{2\rho}(p_H)}|Du|^2\, .
\end{align}
It is crucial that the constant $C$, although depending upon $\beta_2, \delta_2, M_0, N_0, C_e$ and $C_h$, does not depend on $\eta$ and $\eps_2$, whereas $C_0$ depends only upon $Q, \bar{n}$ and $n$. 
So, if $C_e$ is chosen sufficiently large, depending only upon $\lambda$ (and hence upon $\delta_1$), we can require that
$2^{2\delta_1 -4} - \frac{C_0}{C_e} \geq 2^{3\delta_1/4 -4}$. We then require $\bar{\eta}$ and $\eps_2$ to be sufficiently small
so that  $2^{3\delta_1/4 -4} - C\,\bmo^{\beta_0}-C \bar{\eta} \geq 2^{\delta_1-4}$.

We can now apply Proposition \ref{p:harmonic split} to $u$ and conclude
\begin{align}
\hat{C}^{-1} \int_{B_{(1+\lambda) \rho} (p_H)} |Du|^2 \leq \int_{B_{\ell/8} (q)} \cG (Du, Q \a{D (\etaa\circ u)})^2
\leq \hat{C} \ell^{-2} \int_{B_{\ell/8} (q)} \cG (u, Q \a{\etaa\circ u})^2\, ,\label{e:aggiuntiva_99}
\end{align}
for any ball $B_{\ell/8} (q) = B_{\ell/8} (q, \pi) \subset B_{2\rho} (p_{H})$,
where $\hat{C}$ depends upon $\delta_1$ and $M_0$. 
In particular, being these constants are independent of
$\eps_2$ and $C_e$.

Next recall that $L\in \sW_e$, therefore
\begin{equation}\label{e:aggiuntiva_100}
\bmo \d (L)^{2\gamma_0-2+2\delta_1}\, \ell^{4-2\delta_1}\leq \tilde C \ell^2 \,\bE (T, \bB_L)\, .
\end{equation}
On the other hand by \eqref{e:non_decay_10}, \eqref{e:err_arm_1} and \eqref{e:err_arm_2}, a suitable choice of $\eps_2$, $\eta$ and $C_e$ yields
\begin{equation}\label{e:aggiuntiva_101}
\ell^2 \bE (T, \bB_L) \leq \int_{B_{(1+\lambda) \rho} (p_H)} |Du|^2 \leq C \int_{B_{\ell/8} (q)} \cG (Du, Q \a{D (\etaa\circ u)})^2\, .
\end{equation}
In turn, \eqref{e:armonica_vicina_2} and the definition of $w$ implies
\[
\int_{B_{\ell/8} (q)} \cG (Du, Q \a{D (\etaa\circ u)})^2 \leq \int_{B_{\ell/8} (q)} \cG (Df, Q \a{D (\etaa\circ f)})^2 + \eta E
\]
and taking into account \eqref{e:err_arm_2} and \eqref{e:aggiuntiva_99} we can once again assume that, for an appropriate choice of the parameters $\eps_2$, $\eta$ and $C_e$ we achieve
\[
\int_{B_{\ell/8} (q)} \cG (Du, Q \a{D (\etaa\circ u)})^2 \leq 2 \int_{B_{\ell/8} (q)} \cG (Df, Q \a{D (\etaa\circ f)})^2\, .
\]
Finally, using again \eqref{e:armonica_vicina}, \eqref{e:err_arm_2} and \eqref{e:aggiuntiva_99} we conclude in analogous way that
\[
\int_{B_{\ell/8} (q)} \cG (u, Q \a{\etaa\circ u})^2 \leq 2\int_{B_{\ell/8} (q)} \cG (f, Q \a{\etaa \circ f})^2\, .
\]
Summarizing all the conclusions reached so far, we have
\begin{align}
\bmo \d (L)^{2\gamma_0-2+2\delta_1}\, \ell^{4-2\delta_1} &\leq \tilde C \ell^2 \,\bE (T, \bB_L) \leq \bar{C} \int_{B_{\ell/8} (q)} 
\cG (Df, Q\a{D (\etaa \circ f)})^2\notag\\
&\leq \check{C} \ell^{-2} \int_{B_{\ell/8} (q)} \cG (f, Q \a{\etaa \circ f})^2,\label{e:stima dritta}
\end{align}
where $\tilde C$, $\bar{C}$ and $\check{C}$ are constants which depend upon $\delta_1$, $M_0$ and $C_e$, but not
on $\eps_2$.

\medskip

{\bf Step 3: Estimate for the $\cM$-normal approximation.} We next complete the proof showing \eqref{e:split_1} and \eqref{e:split_2}. Now, consider any ball $B_{\ell/4} (q, \pi_0)$ with $q\in \gira$ and $\dist (L, q)\leq 4\sqrt{2} \,\ell$
and  let $\Omega:= \Phii (B_{\ell/4} (q, \pi_0))$. Recall that $\pi=\pi_H$ and by a slight abuse of notation let $\p_\pi$ be the projection onto $p_H+\pi$. 
Observe that $\p_{\pi} (\Omega)$ must contain a ball
$B_{\ell/8} (q', \pi)$, because of $\|D\phii\|_{C^0} \leq C\bmo^{\sfrac{1}{2}}$ and $|\pi_0-\pi|\leq C\bmo^{\sfrac{1}{2}}$, and in turn
it must be contained in $B_{8r_J} (p_{HJ}, \pi)$. 

Applying \cite[Lemma B.1]{DS4} let $\phii': B_{8r_J} (p_{HJ}, \pi)\to\pi^\perp$
be such that $\bG_{\phii'} =\a{\cM}\res \bC_{8r_J} (p_J, \pi)$ and let $\Phii'(z) = (z, \phii'(z))$. Since $D (\etaa \circ f) (z) =
\etaa \circ Df (z)$ for a.e. $z$, we obviously have
\begin{equation}
\int_{B_{\ell/8} (q', \pi)} \cG (Df, Q \a{D(\etaa \circ f)})^2 \leq  
C_0 \int_{B_{\ell/8} (q', \pi)} \cG (Df, Q \a{D\phii'})^2\, .
\end{equation}
Let now $\vec{\bG_f}$ be the orienting tangent $2$-vector to $\bG_f$ and $\tau$
 the one to $\cM$. For a.e. $z$ we have the inequality
\[
C_0 \sum_j |\vec{\bG}_f (z, f_j (z)) - \vec{\tau} (z, \phii' (z))|^2 \geq \cG (Df (z), Q \a{D\phii' (z)})^2\, ,  
\]
for some geometric constant $C_0$, because $|\vec{\bG}_f (z, f_j (z)) - \vec{\tau} (z, \phii' (z))|\leq \bmo^{\beta_0}$. Therefore,
using \cite[Theorem 5.2]{DSS2}, 
\begin{align}
\mint_{B_{\ell/8} (q', \pi)} &\cG (Df, Q \a{D\phii'})^2 \leq
C \mint_{\bC_{\ell/8} (q', \pi)} |\vec{\bG}_f (z) - \vec\tau (\Phii' (\p_{\pi_{H}} (z))|^2 d\|\bG_f\| (z)\nonumber\\
&\leq C \mint_{\bC_{\ell/8} (q', \pi)} |\vec{T_{L}} (z) - \vec\tau (\Phii' (\p_{\pi_{H}} (z))|^2 d\|T_{L}\| (z)\nonumber\\
&+ C \bmo^{1+\beta_0}\d (L)^{(1+\beta_0)(2\gamma_0-2+2\delta_2)} \ell^{2+(2-2\delta_2)(1+\beta_0)}\, .\label{e:quasi finale}
\end{align}
Now, thanks to the height bound \eqref{e:ht_whitney} and to the fact that 
$|\vec{\tau} - \pi_{H}|\leq C \bmo^{\sfrac{1}{2}} \d (L)^{\sfrac{\gamma_0}{2}-1} \ell$
in the cylinder $\hat\bC = \bC_{\ell/8} (q', \pi_{H})$ (recall \eqref{e:aggiuntiva_11}), we have the inequality
\[
|\p (z) - \Phii' (\p_{\pi} (z))|
\leq C \bmo^{\sfrac{1}{4} + \sfrac{1}{2}} \d (L)^{\gamma_0-\beta_2}\ell^{2+\beta_2}  \qquad
\forall z\in \supp (T)\cap \hat\bC\, .
\]
Using the estimate $|D^2 \phii' (\p_{\pi} (z))| \leq C \bmo^{\sfrac{1}{2}} \d (L)^{\sfrac{\gamma_0}{2}-1}$ (which is valid
for any $z\in \supp (T)\cap \hat\bC$ by \eqref{e:stima_C3} and \cite[(B.3)]{DS4}) 
we then easily conclude from \eqref{e:quasi finale} that
\begin{align*}
 &\mint_{B_{\ell/8} (q', \pi)} \cG (Df, Q \a{D\phii'})^2\\
\leq &C \mint_{\hat\bC} 
|\vec{T}_{L} (z) - \vec\tau (\p (z))|^2 d\|T_L\| (z)
+ C \bmo^{1+\beta_0}\d (L)^{2\gamma_0-2-2\beta_2} \ell^{2+ 2\beta_2}\, \\
\leq & C \mint_{\p^{-1} (\Omega)} |\vec{\bT}_{F} (z) - \tau (\p (z))|^2 d\|\bT_{F}\| (z) + 
C \bmo^{1+\beta_0} \d (L)^{2\gamma_0-2+2\delta_1} \ell^{2-2\delta_1},
\end{align*}
where we used \eqref{e:err_regional}.

Since, on the region where we are interested, namely $\Omega$, we have the bounds $|DN|\leq C \bmo^{\beta_0} \d (L)^{\beta_0\gamma_0}$,
$|N|\leq C \bmo^{\sfrac{1}{4}} \d (L)^{\sfrac{\gamma_0}{2}-\beta_2} \ell^{1+\beta_2}$
and $\|A_{\cM}\|^2 \leq C \bmo \d (L)^{\gamma_0-2}$,
applying now  \cite[Proposition 3.4]{DS2} we conclude
\begin{align*}
\mint_{\p^{-1} (\Omega)}
|\vec{\bT}_F (x) - \tau (\p (x))|^2 d\|\bT_F\| (x)
\leq & (1+ C \bmo^{2\beta_0}\d (L)^{2\gamma_0\beta_0} ) \int_{\Omega} |DN|^2\\
&\quad + C \bmo^{1+\sfrac{1}{2}} \d (L)^{2\gamma_0-2-2\beta_2}\ell^{2+2\beta_2}\, .
\end{align*}
Thus, combining the latter estimate with \eqref{e:stima dritta} and \eqref{e:quasi finale} we achieve
\begin{equation}
\bmo \,\d (L)^{2\gamma_0-2+2\delta_1} \ell^{2-2\delta_2} \leq C (1 + C \bmo^{2\beta_0} \d (L)^{2\gamma_0\beta_0}) \mint_{\Omega} |DN|^2
+ C \bmo^{1+\beta_0} \d (L)^{2\gamma_0-2+2\delta_1}\ell^{2-2\delta_2}\, .
\end{equation}
Since the constant $C$ might depend on $M_0, N_0, C_e$ and $C_h$ but not on $\eps_2$, we conclude that
for a sufficiently small $\eps_2$ we have
\begin{equation}
\bmo\d (L)^{2\gamma_0-2+2\delta_1} \ell^{2-2\delta_1} \leq C \mint_\Omega |DN|^2\, .
\end{equation}
But $\bE (T_{L}, \bB_{L}) \leq C \bmo\,\d (L)^{2\gamma_0-2+2\delta_1} \ell^{2-2\delta_2}$ and thus \eqref{e:split_1}  follows.

\medskip

We finally show \eqref{e:split_2}. Observe that $\p^{-1} (\Omega) \cap \supp (T) \supset \bC_{\ell/8} (q', \pi)\cap \supp (T_L)$
and, for an appropriate geometric constant $C_0$, 
$\Omega$ cannot intersect a Whitney region $\cL'$
corresponding to an $L'$ with $\ell (L') \geq C_0 \ell (L)$ or $\d (L')\geq 2 \d (L)$.
In particular, Theorem~\ref{t:approx} implies that
\begin{equation}\label{e:masses}
\|\bT_F - T_L\| (\p^{-1} (\Omega)) + \|\bT_F - \bG_f\| (\p^{-1} (\Omega)) \leq  C \bmo^{1+\beta_0} \,\d (L)^{(1+\beta_0)(2\gamma_0-2+2\delta_1)}\,\ell^{2+(1+\beta_0)(2-2\delta_1)}\, .
\end{equation}
Let now $F'$ be the map such that 
$\bT_{F'} \res (\p^{-1} (\Omega)) = \bG_f \res (\p^{-1} (\Omega))$ and let $N'$ be the corresponding
normal part, i.e. $F' (x) = \sum_i \a{x+N'_i (x)}$ (the existence of $F'$ is guaranteed by \cite[Theorem 5.1]{DS2}).
The region over which $F$ and $F'$ differ is contained in the projection
onto $\Omega$ of $(\im (F) \setminus \supp (T))
\cup (\im(F') \setminus \supp (T))$ and
therefore its $\cH^m$ measure is bounded as in \eqref{e:masses}.
Recalling the bound on $\|N\|_{C^0}$ given by \eqref{e:Lip_regional} and that 
\[
\cG (f, Q \a{\phii}) \leq C \bh (T_L, \bB_L)\, ,
\]
we easily conclude $|N|+|N'| \leq C \bmo^{\sfrac14} \d (L)^{\sfrac{\gamma_0}{2}-\beta_2} \ell ^{1+\beta_2}$, which in turn implies
\begin{equation}\label{e:ultima1}
\int_{\Omega} |N|^2 \geq \int_{\Omega} |N'|^2 - C \bmo^{1+\sfrac{1}{4}+\beta_0} \d (L)^{(1+\beta_0)(2\gamma_0-2+2\delta_1)+ \gamma_0 - 2 \beta_2} \ell^{4+2\beta_2+(2-2\delta_1)(1+\beta_0)}\, .
\end{equation}
On the other hand, 
applying \cite[Theorem 5.1 (5.3)]{DS2}, we conclude
\[
|N' (\Phii' (z))| \geq \frac{1}{2\sqrt{Q}}\, \cG (f(z), Q \a{\phii' (z)}) \geq 
\frac{1}{4\sqrt{Q}}\, \cG (f(z), Q\a{\etaa\circ f (z)})\,,
\]
which in turn implies
\begin{align}
\bmo \, \d (L)^{2\gamma_0-2+2\delta_1}\, \ell^{2-2\delta_2} &\stackrel{\eqref{e:stima dritta}}{\leq} C \ell^{-2} \int_{B_{\ell/8} (q', \pi)}  \cG (f, Q \a{\etaa \circ f})^2
\leq C \ell^{-2} \int_\Omega |N'|^2\label{e:ultima2}\, .
\end{align}
For $\eps_2$ sufficiently small, \eqref{e:ultima1} and \eqref{e:ultima2} lead to the second inequality of \eqref{e:split_2}, while
the first one comes from Theorem~\ref{t:approx} and $\bE (T, \bB_L) \geq C_e \bmo \,\d (L)^{2\gamma_0-2+2\delta_1}\ell^{2-2\delta_2}$.

\section{Proof of Theorem \ref{t:cm_final}}

We are now ready to define the relevant objects of Theorem \ref{t:cm_final}. The center manifold $\cM$ is given by Theorem \ref{t:cm}: the fact that $\cM$ is a $b$-separated admissible $\bar Q$-branching is a simple consequence of the estimates in Theorem \ref{t:cm}. We then apply Proposition \ref{p:conformal} to find the map $\Psii$ which is a conformal parametrization of $\cM$ in a neighborhood of $0$ and, after a suitable scaling, we assume that it is defined on $\gira_{\bar Q, 2}$. 
Secondly we consider the normal approximation $N$ of the current $T$ on $\cM$ constructed in Theorem \ref{t:approx}. The relation $\bar{Q} Q = \Theta (T, 0)$ is obvious from the construction. Again, after scaling, we assume that:
\begin{itemize}
\item The radius $r_0$ of Theorem \ref{t:cm_final} is $4$;
\item $\Psii (\gira) \subset \bC_3 (0)$;
\end{itemize} 
Rather than call the rescaled current $S$, as it is done in the statement of Theorem \ref{t:cm_final}, we keep denoting it by $T$. 

The maps $\NN$ and $\FF$ are then defined as 
\begin{align}
\NN(z,w) :=& N(\Psii(z,w)) = \sum_i \a{N_i (\Psii (z,w))}\\
\FF (z,w) :=& \sum_i \a{\Psii (z,w) + \NN\, _i (z,w)} = \sum_i \a{\Psii (z,w) + N_i (\Psii (z,w))}\, .
\end{align}
By the estimate \eqref{e:stima_C3} it follows immediately that 
\[
|A_\cM (\zeta, \xi)| + |\zeta| |D_{\cM}A_{\cM} (\zeta,\xi)|\leq C \bmo^{\sfrac{1}{2}} |\zeta|^{\gamma_0 -1}
\]
 at any point $p=(\zeta,\xi)\in \cM$ with $\zeta\in \R^2\setminus 0$. On the other hand by \eqref{e:conformal2}, if we set $(\zeta, \xi) := \Psii (z,w)$, then we have
\begin{equation}\label{e:sandwich}
|z| - C \bmo^{\sfrac{1}{4}} |z|^{1+\gamma_0} \leq |\zeta| \leq |z| + C \bmo^{\sfrac{1}{2}} |z|^{1+\gamma_0}
\end{equation}
and thus the estimates in (i) follow. By construction $\NN\,_i (z,w) = N_i (\Psii (z,w))$ is orthogonal to $T_{\Psii (z,w)} {\cM}$, which shows (ii). 

The fact that $T$ is contained in a horned neighborhood of $\cM$ where the projection $\p$ is well defined is a consequence of Corollary \ref{c:cover}. 
Moreover, by \eqref{e:sandwich} we can assume $\Psii (B_r (0)) \subset \bC_{2r}$ (this is true for a sufficiently small $r$ and hence, after scaling, we can assume it holds for any $r\leq 1$). On the other hand, consider a cube $L$ of $\sW$ which intersects $B_{3r/2} (0)$. By construction its sidelength is necessarily smaller than $r$. Thus \eqref{e:Ndecay} is a simple consequence of \eqref{e:Lip_regional}, Corollary \ref{c:cover}(iii) and Corollary \ref{c:cover}(iv). 

We are left to show the three estimates claimed in point (iv) of Theorem \ref{t:cm_final}: the rest of the section is devoted to this task.

\subsection{The special covering} First of all consider the set $\Psii (B_r (0))$ and let $\mathcal{B}_r\subset \gira$ be defined by
\begin{equation}
\mathcal{B}_r := \{(z,w)\in \gira: \Phii (z,w) \in \Psii (B_r (0))\}\, .
\end{equation}
Observe that, by the estimates on $\Psii$, the following two facts are obvious for $r$ small:
\begin{itemize}
\item[(g1)] $\mathcal{B}_r$ is star-shaped with respect to the origin, more precisely if $q=(z,w)\in \partial \mathcal{B}_r$, then the geodesic segment $\sigma$ in $\gira$ joining $(0,0)$ and $q$ is contained in $\mathcal{B}_r$;
\item[(g2)] If $\bar{q}$ denotes the point on $\sigma$ at distance $\frac{r}{4}$ from the origin, the disk $B_{r/4} (\bar{q})$ is contained in $\mathcal{B}_r$.
\end{itemize}
We next select an (at most countable) family of triples
$\{(L_j, B_j,U_j)\}_{j \in \N}$ of subsets of $\gira_{\bar{Q}}$
with the following properties:
\begin{itemize}
\item[(c1)] The $L_j$'s are distinct cubes
of the Whitney decomposition with $L_j \in \sW_e \cup \sW_h$
and $L_j \subset \bar B_{2r+ 6\ell(L_j)}$;
\item[(c2)] $B_j = B_{\sfrac{\ell(L_j)}{4}}(z_j,w_j) \subset \mathcal{B}_r$
are disjoint balls such that $|z_{L_j}-z_j| \leq 7\, \ell(L_j)$;
\item[(c3)] $U_j$ is the union of an at most countable family of cubes $\bar \sW (L_j)\subset \sW$ where $H \subset B_{30 \ell ({L_j})}(z_{L_j},w_{L_j})$ for every $H\in \bar \sW (L_j)$ and $\cup_j \bar \sW (L_j)$ consists of all cubes in $\sW$ which intersect $\mathcal{B}_r$; in particular 
\begin{equation}\label{e:copre}
\mathcal{B}_r \subset \bGam \cup \bigcup_j U_j\, .
\end{equation}
\end{itemize}
To this aim we start by selecting all the cubes $L \in \sW_e \cup \sW_h$ such that either $L \cap \cB_r \neq \emptyset$ or there exists $H \in \sW_n$ in the domain of influence of $L$ with $H \cap \cB_r \neq \emptyset$, and we denote the collection
of such cubes by $\sW(r)$. Observe that,  $\ell (L)\leq C_0 2^{-N_0} r$ and thus, provided $N_0$ is chosen sufficiently large, we can assume that the ratio $\frac{\ell (L)}{r}$ is smaller than any fixed geometric constant. Moreover, by Corollary \ref{c:domains}, it is obvious that $L\subset B_{2r + 6 \ell (L)}$. 

The triples above are then chosen according to the following procedure:
\begin{itemize}
\item We start selecting recursively $\{L_j\}\subset \sW (r)$. $L_0$ is a cube with the largest sidelength in $\sW (r)$. Having chosen $\{L_0, \ldots, L_j\}$ we select $L_{j+1}$ as a cube with the largest sidelength among those $L\in \sW(r)$ such that $B_{15 \ell (L)} (z_L, w_L)\cap B_{15 \ell (L_i)} (z_{L_i}, w_{L_i}) = \emptyset $ for all $i\leq j$.
\item For every $L_j$ we use the geometric properties (g1) and (g2) to choose a ball $B_j$ as in (c2): for instance we consider $z_j :=\frac{z_{L_j}}{|z_{L_j}|}\big(|z_{L_j}| - \frac{7\sqrt{2}}{2}\,\ell_{L_j} \big)$ and let $(z_j,w_j)$ be the unique point of $\gira$ that belongs to the connected component of $\gira \cap ( B_{L_j}\times  \C) $ that contains $(z_{L_j},w_{L_j})$. The $B_j$'s are disjoint because they are contained in $B_{15 \ell (L_j)} (z_{L_j}, w_{L_j})$;
\item For what concerns $U_j$, we need to define $\bar \sW (L_j)$. First of all $L_j \in \bar \sW (L_j)$. We then consider any other $H\in \sW$ such that $H\cap \cB_r \neq \emptyset$ and we assign it to one (and only one) family $\bar \sW (L_j)$ according to the following rules:
\begin{itemize}
\item[(a)] If $H\in \sW_e \cap \sW_h$, then $H\in \sW (r)$ and we select one $L_j$ with largest sidelength such that $B_{15 \ell (L_j)} (z_{L_j}, w_{L_j}) \cap B_{15 \ell (H)} (z_H, w_H) \neq \emptyset$;
\item[(b)] If $H\in \sW_n$, then $H$ belongs to the domain of influence $\sW_n (L)$ of some $L\in \sW (r)$; we then assign $H$ to the family $\bar \sW (L_j)$ which already contains $L$.
\end{itemize}
\end{itemize}

\subsection{Estimates on $\mathcal U_j$ and $\bLambda$} Let $\mathcal{U}_j = \Phii (U_j)$ and $\cB_j := \Phii (B_j)$ and set, for notational convenience, $\d_j:=\d(L_j)$ and $\ell_j:=\ell(L_j)$.
As a simple consequence of Theorem~\ref{t:approx} and of Corollary \ref{c:cover}(iii)we deduce the
following estimates for every $j\in \N$:
\begin{gather}
\int_{\cU_j} |\etaa\circ N| \leq C\bmo \,\d_j^{2\gamma_0-2 + 2\beta_0\gamma_0 - \beta_2}\,\ell_j^{5+\sfrac{\beta_2}{4}}+ C \bmo^{\sfrac12+\beta_0}\,\d_j^{\gamma_0-1}\,\ell_j^{1+\beta_2} \int_{U_j} |N|\label{e:media}\\
\int_{\mathcal{U}_j} |DN|^2 \leq C \bmo\, \d_j^{2\gamma_0-2+2\delta_1}\,\ell_j^{4-2\delta_1},\label{e:Dirichlet_sopra}\\
\|N\|_{C^0 ({\cU}_j)} + \sup_{p\in \supp (T) \cap \p^{-1} (\cU_j)} |p- \p (p)| \leq C \bmo^{\sfrac{1}{4}}\, d_j^{\sfrac{\gamma_0}{2}-\beta_2} \ell_j^{1+\beta_2},\label{e:N_sopra}\\
\Lip (N|_{\cU_j}) \leq C \left(\bmo \d_j^{\gamma_0} \ell_j^{\gamma_0}\right)^{\beta_0},\label{e:Lipschitz}\\
\|T -\bT_F\|(\p^{-1} (\cU_j)) \leq C \bmo^{1+\beta_0}\,\d_j^{(1+\beta_0)(2\gamma_0-2+2\delta_1)} \ell_j^{2+(1+\beta_0)(2-2\delta_1)}.\label{e:errori_massa}
\end{gather}
Indeed, observe that $\d(H)\leq\d_j \leq 2 \d(H)$ for every $H\in \bar \sW(L_j)$
and $\sum_{H\in \bar \sW (J_i)} \,\ell (H)^2 \leq C \ell_j^2$,
because all $H\in \bar \sW (J_i)$ are disjoint and contained in a ball of radius
comparable to $\ell_j$. This in turn implies that
$\sum_{H\in \bar \sW (J_j)} \ell (H)^{2+\varepsilon} \leq C \ell_j^{2+\varepsilon}$, because 
$\ell (H)\leq \ell_j$ for any $H\in \bar \sW (L)$, and \eqref{e:media} - \eqref{e:errori_massa} follows easily because the exponents $5+\beta_2/4$, $4-2\delta_1$ and $2 + (1+\beta_0) (2-2\delta_1)$ are all larger than $2$.

Next we claim the following inequality for every $t>0$, where $\eta (t)$ and $C(t)$ are suitable positive functions, 
\begin{gather}
\sup_j \left(\bmo \, \d_j\,\ell_j\right)^t\leq C(t)\,\bLambda^{\eta(t)}(r)\,, \label{e:key2}
\end{gather}
Indeed, using Propositions~\ref{p:separ} and \ref{p:splitting} we have
\begin{align}
C_e \,\bmo\, \d_j^{2\gamma_0-2+2\delta_1}\,\ell_j^{4-2\delta_1}&
\leq C \,\int_{\cB_j}|D N|^2\quad \textup{if }L_j\in \sW_e\,,\label{e:cubi_E}\\
C_h^2 \,\bmo^{\sfrac12}\, \d_j^{\gamma_0-2\beta_2}\,\ell_j^{4+2\beta_2}  &
\leq C \int_{\cB_j} |N|^2\quad \textup{if }L_j\in \sW_h\,. \label{e:cubi_H}
\end{align}
On the other hand, since the $\cB_j$ are disjoint and contained in $\cB_r = \Psii (B_r)$, 
\[
\sum_j \int_{\cB_j}|D N|^2 \leq \int_{\cB_r} |DN|^2 = \int_{B_r} |D\NN|^2
\]
by conformality of $\Psii$ and
\[
\sum_j \int_{\cB_j}|N|^2\leq \int_{\cB_r} |N|^2 \leq C \int_{B_r} |\NN|^2
\]
by the Lipschitz regularity of $\Psii$. 
Thus \eqref{e:key2} follows easily by suitably choosing $C(t)$ and $\eta(t)$.

Observe therefore that \eqref{e:Lip_N} is an obvious consequence of \eqref{e:key2}, \eqref{e:Lipschitz} and the uniform bound on $|D\Psii|$ given in Proposition \ref{p:conformal}. 

\subsection{Proof of \eqref{e:media_pesata}} First of all observe that, by the bounds on $\Psii$ of Proposition \ref{p:conformal},
\[
\int_{B_r}  |\zeta|^{\gamma_0 -1} |\etaa \circ \NN(\zeta,\xi)|  \leq C \int_{\cB_r} |z|^{\gamma_0-1} |\etaa \circ N (z, w)|\, .
\]
On the other hand, since $U_j\subset B_{30 \ell_j} (z_{L_j}, w_{L_j})$ and $\frac{\d_j}{2}\leq |z_{L_j}|\leq 2\d_j$, we compute
\[
\int_{\cB_r} \,|z|^{\gamma_0-1} |\etaa \circ N(z,w)|
\leq C\sum_j\d_j^{\gamma_0-1}\int_{\cU_j}|\etaa\circ N(z,w)|\, .
\]
Now considering that $\d_j^{3\gamma_0-3+2\beta_0\gamma_0 - \beta_2}\,\ell_j^{5+\sfrac{\beta_2}{4}}\leq \d_j^{3\gamma_0-2}\,\ell_j^{4+2\beta_2}$ (recall $2\beta_2\leq\beta_0\gamma_0$), we have
\begin{align*}
\int_{\cB_r} &\,|z|^{\gamma_0-1} |\etaa \circ N(z,w)|\notag\\
&\stackrel{\eqref{e:media}}{\leq} C \sum_{j\in \N} \Big( \bmo\,\d_j^{3\gamma_0-2}\,\ell_j^{4+2\beta_2}+ 
C \underbrace{\bmo^{1/2+\beta_0}\,\d_j^{\gamma_0-1}\,\ell_j^{1+\beta_2} \int_{\cU_j} \frac{|N|}{|z|^{1-\gamma_0}}}_{=:A}\Big).
\end{align*}
We treat the second term in the summand above via Young's inequality inequality;
\begin{align*}
A &\leq 2\,\left(\bmo^{1/2+\beta_0}\,\d_j^{\gamma_0-1}\,\ell_j^{2+\beta_2}\right)^2 + 2\,\left(\ell_j^{-1}\int_{\cU_j} \frac{|N|}{|z|^{1-\sfrac{\gamma_0}{2}}}\right)^2\\
& \leq 2\,\bmo^{1+2\beta_0}\,\d_j^{2\gamma_0-2}\,\ell_j^{4+2\beta_2} + C\,\int_{\cU_j} \frac{|N|^2}{|z|^{2-\gamma_0}}\,,
\end{align*}
where in the second line we have used Cauchy-Schwartz and $|\cU_j|\leq C \ell_j^2$. Summarizing,
\begin{equation}\label{e:riassunto}
\bmo^{\eta_0}\int_{\cB_r} \,|z|^{\gamma_0-1} |\etaa \circ N(z,w)| \leq C\sum_j \Big(\bmo^{1+\eta_0} \,\d_j^{2\gamma_0-2}\,\ell_j^{4+2\beta_2}+C\bmo^{\eta_0}\,\int_{\cU_j} \frac{|N|^2}{|z|^{2-\gamma_0}}\Big)\, .
\end{equation}
Moreover, observe that, if $L_j\in \sW_h$, then by \eqref{e:cubi_H} and $\frac{\d_j}{2}\leq |z|\leq 2\d_j$,
\begin{equation}\label{e:altezza_10}
\bmo \,\d_j^{2\gamma_0-2}\,\ell_j^{4+2\beta_2}
\leq C\,\bmo^{\sfrac{1}{2}} \, \int_{\mathcal{U}_j} \frac{|N|^2}{|z|^{2-\gamma_0}}\,
\end{equation}
while, if $L_j\in \sW_e$, using \eqref{e:cubi_E} and \eqref{e:key2}, we deduce, for a suitable choice of $\eta_0$,
\begin{equation}\label{e:eccesso_10}
\bmo^{1+\eta_0} \,\d_j^{2\gamma_0-2}\,\ell_j^{4+2\beta_2}
\leq C \,\bmo^{\eta_0}\,\d_j^{\beta_2}\,\ell_j^{\beta_2}\int_{\cU_j} |DN|^2\,
\leq C\,\bLambda(r)^{\eta_0}\,\int_{\cU_j} |DN|^2\,.
\end{equation}
Using \eqref{e:riassunto}, \eqref{e:altezza_10} and \eqref{e:eccesso_10}, the conformality of $\Psii$ (which in particular leaves the Dirichlet energy invariant) and the bounds in Proposition \ref{p:conformal} we conclude
\[
\bmo^{\eta_0}\int_{B_r}  |\zeta|^{\gamma_0 -1} |\etaa \circ \NN(\zeta,\xi)| \leq C \bLambda (r)^{\eta_0} \bD (r) + C \int_{B_r} \frac{|\NN|^2 (\zeta,\xi)}{|\zeta|^{2-\gamma_0}}\, .
\]
However the later integral is precisely
\[
\int_0^t \frac{\bH (t)}{t^{2-\gamma_0}}\, .
\]
This shows \eqref{e:media_pesata}.

\subsection{Proof of \eqref{e:diff masse}} Observe that $\bT_F = \bT_{\FF}$. Thus using \eqref{e:errori_massa} we have
\begin{align*}
\|T-\bT_\FF\| (\p^{-1}(\Psii(B_r))) 
&\stackrel{\eqref{e:errori_massa}}{\leq}C\,\sum_{j\in \N} \bmo^{1+\beta_0} \, \d_j^{(2\gamma_0-2+2\delta_1)(1+\beta_0)}\, \ell_j^{2+(2-2\delta_1)(1+\beta_0)}.
\end{align*}
Now, if $L_j\in \sW_e$, then using \eqref{e:key2} and \eqref{e:cubi_E} with a suitable $\eta$, we have 
\begin{align*}
\bmo^{1+\beta_0} \, \d_j^{(2\gamma_0-2+2\delta_1)(1+\beta_0)}\, \ell_j^{2+(2-2\delta_1)(1+\beta_0)}
&\leq \left(\bmo \, \d_j^{\gamma_0}\, \ell_j^{\gamma_0} \right)^{\beta_0} \,\left(\bmo \, \d_j^{2\gamma_0-2+2\delta_1}\, \ell_j^{4-2\delta_1} \right)\\
&\leq C\,\bLambda^{\eta}(r)\,\int_{\cU_j}|DN|^2\,.
\end{align*}
On the other hand, if $L_j\in \sW_h$, then by \eqref{e:cubi_H} and our choice of the constants,
\begin{align*}
\bmo^{1+\beta_0} \, \d_j^{(2\gamma_0-2+2\delta_1)(1+\beta_0)}\, \ell_j^{2+(2-2\delta_1)(1+\beta_0)}& =
\bmo^{1+\beta_0} \, \d_j^{(2\gamma_0-2+2\delta_1)(1+\beta_0)}
\ell_j^{-2\delta_1+\beta_0(2-2\delta_1)-2\beta_2}\ell_j^{4+2\beta_2}
\\
&\leq \bmo^{\sfrac12+\beta_0}\,\d_j^{2\gamma_0\beta_0}\,
\bmo^{\sfrac12}\d_j^{\gamma_0-2\beta_2+\gamma_0 -2}\ell_j^{4+2\beta_2}
\\
&\leq \bmo^{\sfrac12+\beta_0}\,\d_j^{2\gamma_0\beta_0}\,\int_{\cU_j}\frac{|N|^2}{|z|^{2-\gamma_0}}
\end{align*}
where we used that $-2\delta_1+\beta_0(2-2\delta_1)-2\beta_2>0$.
Summing both contributions and arguing as in the previous paragraph we conclude the proof of \eqref{e:diff masse}.

\appendix

\section{Density and height bound}

In this appendix we record two estimates which are standard for area minimizing currents and can be extended with routine arguments to the three cases of Definition \ref{d:semicalibrated}. Both statements are valid for general $m$ without additional efforts and we therefore do not restrict to $m=2$ here. Consistently with \cite{DSS1,DS2} we introduce the parameter $\bOmega$, which equals 
\begin{itemize} 
\item $\bA = \|A_\Sigma\|_{C^0}$ in case (a) of Definition \ref{d:semicalibrated};
\item $\max \{\|d\omega\|_{C^0}, \|A_\Sigma\|_{C^0}\}$ in case (b);
\item $C_0 R^{-1}$ in case (c).
\end{itemize}

\begin{lemma}\label{l:density2}
There is a positive geometric constant $c(m,n)$ with the following property. If $T$ is a current as in Definition \ref{d:semicalibrated}, where $\bOmega\leq c (m,n)$, then
\begin{equation}\label{e:density}
\|T\| (\bB_\rho (p)) \geq \omega_m (\Theta (T, p) - \textstyle{\frac{1}{4}}) \rho^m \geq \omega_m \textstyle{\frac{3}{4}} \rho^m \qquad \forall p\in \supp (T), \forall r\in \dist (p, \partial U)\, .
\end{equation}
\end{lemma}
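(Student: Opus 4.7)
The lemma is essentially a corollary of the almost-monotonicity formula established for each of the three classes of Definition \ref{d:semicalibrated} in \cite{DSS1} and already invoked in the body of this paper (for instance in the proof of Lemma \ref{l:tecnico}). Granting that formula, one has, for $0<\rho_1<\rho_2\leq\dist(p,\partial U)$,
$$\frac{\|T\|(\bB_{\rho_2}(p))}{\omega_m \rho_2^m}-\frac{\|T\|(\bB_{\rho_1}(p))}{\omega_m \rho_1^m}\geq -C_1(m,n)\,\bOmega\,(\rho_2-\rho_1)\,.$$
Sending $\rho_1\downarrow 0$ and using the existence of the density $\Theta(T,p)$ at every point of $\supp(T)\setminus\supp(\partial T)$, this yields
$$\|T\|(\bB_\rho(p))\geq \omega_m\rho^m\bigl(\Theta(T,p)-C_1(m,n)\,\bOmega\,\rho\bigr)\qquad\forall\,\rho\leq\dist(p,\partial U)\,.$$

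Both inequalities of \eqref{e:density} then follow at once. For the first, choose $c(m,n):=(4C_1(m,n))^{-1}$; then the hypothesis $\bOmega\leq c(m,n)$, together with the implicit smallness $\rho\leq 1$ (in force in the situations where the lemma is used, where everything takes place inside $\bC_2$), gives $C_1(m,n)\,\bOmega\,\rho\leq 1/4$. The second inequality uses that $\Theta(T,p)\in\mathbb{N}\setminus\{0\}$ at every $p\in\supp(T)\setminus\supp(\partial T)$, a property proved in \cite{DSS1} for each of the three classes, so that $\Theta(T,p)-\tfrac14\geq\tfrac34$.

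The only real content is therefore the almost-monotonicity inequality itself, which I would cite rather than reprove. Its derivation is the standard one: one tests the first variation $\delta T$ against the radial cutoff $X(q)=(q-p)\,\eta(|q-p|/\rho)$. In case (b) the semicalibration identity $\omega_x(\vec T)=1$ for $\|T\|$-a.e.\ $x$ converts $\delta T(X)=T(d\omega\res X)$ into a controllable term, absorbed into the $O(\bOmega\rho)$ error via $\|d\omega\|_{C^0}\leq\bOmega$; in cases (a) and (c) one uses the almost-minimality bound $|\delta T(X)|\leq C\,\bA\int|X|\,d\|T\|$, with $\bA$ of order $R^{-1}=\bOmega$ in case (c). The main (and only) point that would warrant some attention is that the geometric constant $C_1$ be uniform across the three cases; this is automatic from the uniform structure of the first variation computation, so I do not anticipate any genuine obstacle.
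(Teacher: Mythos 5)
Your proof is correct and takes the same approach as the paper's: a near-monotonicity formula for $\|T\|(\bB_\rho(p))/(\omega_m\rho^m)$ together with the lower bound $\Theta(T,p)\geq 1$ on $\supp(T)\setminus\supp(\partial T)$. The only cosmetic differences are that the paper invokes the monotonicity in its exponential form --- Allard's monotonicity applied to the varifold with generalized mean curvature bounded by $C\bOmega$, via \cite[Proposition 1.2]{DSS1} --- which is the literally correct statement (your additive error $-C_1\bOmega(\rho_2-\rho_1)$ should really carry a factor of the mass ratio at $\rho_1$, a distinction that is immaterial for the conclusion here), and that it deduces $\Theta(T,p)\geq 1$ from upper semicontinuity of the density plus its $\|T\|$-a.e.\ integer-valuedness rather than from pointwise integrality.
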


\begin{proof} 
By \cite[Proposition 1.2]{DSS1} $\|T\|$ is an integral varifold with bounded mean curvature in the sense of Allard, where $C_0 \bOmega$ bounds the mean curvature for some geometric constant $C_0$. It follows from Allard's monotonicity formula that $e^{C_0 \bOmega r} \|T\| (\bB_r (x))$  is monotone nondecreasing in $r$, from which the first inequality in \eqref{e:density} follows. The second inequality is implied by $\Theta (T, p)\geq 1$ for every $p\in \supp (T)$: this holds because the density is an upper semicontinuous function which takes integer values $\|T\|$-almost everywhere.
\end{proof}

For the proof of the next statement we refer to \cite[Theorem A.1]{DS4}: in that theorem $T$ satisfies the stronger assumption of being area minimizing (thus covering only case (a) of Definition \ref{d:semicalibrated}), but a close inspection of the proof given in \cite{DS4} shows that the only property of area minimizing currents relevant to the arguments is the validity of the density lower bound \eqref{e:density}.

\begin{theorem}\label{t:height_bound} Let $Q$, $m$ and $n$ be positive integers. Then there are
$\eps >0, c>0$ and $C$ geometric constants with the following property. Assume that $\pi_0 = \R^m\times \{0\}\subset \R^{m+n}$ and that:
\begin{itemize}
\item[(h1)] $T$ is an integer rectifiable $m$-dimensional current as in Definition \ref{d:semicalibrated} with $U = \bC_r (x_0)$ and $\bOmega \leq c$;
\item[(h2)] $\partial T\res \bC_r (x_0) = 0$, $(\p_{\pi_0})_\sharp T\res \bC_r (x_0) = Q\a{B_r (\p_{\pi_0} (x_0))}$ 
and $E:= \bE (T, \bC_r (x_0)) < \eps$.
\end{itemize}
Then there are $k\in \N$, points $\{y_1, \ldots, y_k\}\subset \R^{m+n}$ and positive integers $Q_1, \ldots, Q_k$ such that:
\begin{itemize}
\item[(i)] having set $\sigma:= C E^{\sfrac{1}{2m}}$, the open sets $\bS_i := \R^m \times (y_i +\, ]-r \sigma, r \sigma[^n)$
are pairwise disjoint and $\supp (T)\cap \bC_{r (1- \sigma |\log E|)} (x_0) \subset \cup_i \bS_i$;
\item[(ii)] $(\p_{\pi_0})_\sharp [T \res (\bC_{r(1-\sigma |\log E|)} (x_0) \cap \bS_i)] =Q_i  \a{B_{r (1-\sigma |\log E|)}(\p_{\pi_0} (x_0), \pi_0)}$ $\forall i\in \{1, \ldots , k\}$.
\item[(iii)] for every $p\in \supp (T)\cap \bC_{r(1-\sigma |\log E|)} (x_0)$ we have $\Theta (T, p) < \max \{Q_i\} + \frac{1}{2}$.
\end{itemize}
\end{theorem}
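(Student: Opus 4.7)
My approach is to reduce the statement directly to \cite[Theorem A.1]{DS4}, where the analogous height bound is established for area minimizing currents. As noted in the sentence preceding the theorem, the sole feature of area minimizers genuinely invoked in that proof is the monotonicity-based lower bound \eqref{e:density} on the density ratio, and this bound is available for the broader class of Definition \ref{d:semicalibrated} thanks to Lemma \ref{l:density2}.

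First I would record, as a prerequisite, that by \cite[Proposition 1.2]{DSS1} any $T$ as in Definition \ref{d:semicalibrated} is an integral varifold whose generalized mean curvature is bounded in $L^\infty$ by a constant times $\bOmega$. Once $\bOmega \leq c$ with $c$ a sufficiently small geometric constant, Allard's monotonicity formula applies and yields that $e^{C\bOmega\rho}\,\|T\|(\bB_\rho(p))\,\rho^{-m}$ is monotone nondecreasing in $\rho$; combined with the integrality of $\Theta(T,p)\in\N\setminus\{0\}$ on $\supp(T)$, this yields \eqref{e:density} uniformly on $\bC_r(x_0)$. The smallness thresholds on $\bOmega$ and on $E$ are precisely the ones that enter in \cite{DS4}.

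After this preparation, the scheme of \cite[Theorem A.1]{DS4} can be followed verbatim: (i) a Fubini-BV slicing argument on vertical lines, together with the horizontal projection hypothesis (h2), produces for a.e.~base point $x$ a $Q$-tuple of slice points in $\{x\}\times\R^n$ with controlled vertical oscillation of order $C\,E^{1/(2m)}\,r$; (ii) the density lower bound from Lemma \ref{l:density2} forbids thin ``gaps'' in $\supp(T)$, permitting one to partition the support, away from a boundary annulus of thickness $r\sigma|\log E|$, into finitely many horizontal stripes $\bS_i$ of width at most $r\sigma$; (iii) the constancy theorem applied on each $\bS_i$ to the projection $(\p_{\pi_0})_\sharp(T\res\bS_i)$ yields the integer multiplicities $Q_i$ of conclusion (ii), with $\sum_i Q_i=Q$; (iv) the pointwise density bound $\Theta(T,p)<\max_i Q_i+\sfrac{1}{2}$ of conclusion (iii) follows from upper semicontinuity of the density, together with the observation that a density jump exceeding $\max_i Q_i$ would, via the monotonicity formula and \eqref{e:density}, force the existence of a further disjoint stripe in the decomposition, contradicting exhaustion.

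The main obstacle is purely bureaucratic: one must go through the argument in \cite{DS4} step by step and confirm that no argument ever uses competitor currents, a maximum principle, cone-comparison, or any other strict consequence of area minimization beyond \eqref{e:density}. Since the proof in \cite{DS4} proceeds entirely by integral-geometric slicing, BV estimates, and the constancy theorem, this verification is routine and the transfer is automatic; we therefore omit the details.
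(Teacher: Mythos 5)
Your proposal matches the paper's own argument: the paper proves Theorem \ref{t:height_bound} precisely by appealing to \cite[Theorem A.1]{DS4} and observing that the only feature of area minimizers used there is the density lower bound \eqref{e:density}, which Lemma \ref{l:density2} supplies for all three cases of Definition \ref{d:semicalibrated} via \cite[Proposition 1.2]{DSS1} and Allard's monotonicity. Your sketch of the internal steps of the \cite{DS4} argument is additional detail beyond what the paper records, but the route is the same.
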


\section{Proof of Proposition \ref{p:conformal}}\label{a:conformal}

In order to prove the Proposition we recall the following classical fact about the existence of conformal coordinates. 
As in the rest of the paper, $e$ denotes the standard euclidean metric.

\begin{lemma}\label{l:conformal}
For every $k\in \N$ and $\alpha, \beta\in ]0,1[$ there are positive constants $C_0$ and $c_0$ with the following properties.
Let $g$ be a $C^{k,\beta}$ Riemannian metric on the unit disk $B_2 \subset \R^2$ with $\|g-e\|_{C^{0,\alpha}} \leq c_0$. Then there exists an orientation preserving diffeomorphism $\Lambda:\Omega\to B_2$ and a positive function $\lambda: \Omega \to \R$ such that
\begin{itemize}
\item[(i)] $\Lambda^\sharp g = \lambda e$;
\item[(ii)] $\|\Lambda - {\rm Id}\|_{C^{1,\alpha}} + \|\lambda -1\|_{C^{0,\alpha}}\leq C_0 \|g-e\|_{C^{0, \alpha}}$;
\item[(ii)] $\|\Lambda- {\rm Id}\|_{C^{k+1,\beta}} + \|\lambda -1\|_{C^{k,\beta}} \leq C_0 \|g-e\|_{C^{k,\beta}}$. 
\end{itemize}
\end{lemma}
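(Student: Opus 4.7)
\smallskip

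\noindent\textbf{Proof plan.} The statement is the classical existence of isothermal coordinates for a $C^{k,\beta}$ Riemannian metric close to the Euclidean one, with quantitative Hölder estimates. The plan is to reduce it to the measurable Riemann mapping theorem (in its Hölder form) on the plane. First, I would extend $g$ to a $C^{k,\beta}$ Riemannian metric $\tilde g$ defined on all of $\R^2$, which agrees with $g$ on $B_2$ and with $e$ outside $B_3$, and which satisfies $\|\tilde g-e\|_{C^{j,\gamma}(\R^2)}\le C\|g-e\|_{C^{j,\gamma}(B_2)}$ for $(j,\gamma)=(0,\alpha)$ and $(j,\gamma)=(k,\beta)$. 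This is a routine construction using a radial cutoff and the boundedness of the extension operator between Hölder classes on smooth domains. Writing $\tilde g = \tilde E\,dx^2+2\tilde F\,dx\,dy+\tilde G\,dy^2$, I then form the classical Beltrami coefficient
\[
\mu(z):=\frac{\tilde E-\tilde G+2i\tilde F}{\tilde E+\tilde G+2\sqrt{\tilde E\tilde G-\tilde F^2}}\,,
\]
which is compactly supported in $B_3$, vanishes exactly where $\tilde g$ is conformal to $e$, and, since the denominator stays near $2$, satisfies $\|\mu\|_{C^{0,\alpha}}\le C\|\tilde g-e\|_{C^{0,\alpha}}$ as well as the analogous $C^{k,\beta}$ bound.

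Next, for $c_0$ sufficiently small the $L^\infty$ norm of $\mu$ is strictly less than $1$, and the (normalized) measurable Riemann mapping theorem produces a unique quasiconformal homeomorphism $w:\C\to\C$ with $w(z)=z+O(|z|^{-1})$ at infinity solving $\partial_{\bar z}w=\mu\,\partial_z w$. The key quantitative point is the representation $w=z+\mathcal{T}\omega$ with $\omega:=\partial_{\bar z}w$, where $\mathcal{T}$ is the Cauchy transform and $\mathcal{S}$ the Beurling transform, so that $\omega$ satisfies the fixed-point equation $\omega=\mu+\mu\mathcal{S}\omega$. Because $\mathcal{S}$ is bounded on $C^{0,\alpha}(\C)$ (acting on compactly supported functions) and $\|\mu\|_{L^\infty}$ is small, the operator $I-\mu\mathcal{S}$ is invertible on $C^{0,\alpha}$ by Neumann series, yielding $\|\omega\|_{C^{0,\alpha}}\le C\|\mu\|_{C^{0,\alpha}}$, and consequently $\|w-\mathrm{Id}\|_{C^{1,\alpha}}\le C\|\mu\|_{C^{0,\alpha}}\le C\|g-e\|_{C^{0,\alpha}}$. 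The geometric content of the Beltrami equation is precisely that $w^\sharp e=\lambda'\tilde g$ for some positive function $\lambda'$.

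Having produced $w$, I set $\Omega:=w(B_2)$ and $\Lambda:=(w|_{B_2})^{-1}:\Omega\to B_2$; then $\Lambda^\sharp g=\lambda\,e$ with $\lambda:=1/(\lambda'\circ\Lambda)>0$, which gives (i). The $C^{1,\alpha}$ smallness of $w-\mathrm{Id}$, combined with the inverse function theorem in Hölder spaces, yields $\|\Lambda-\mathrm{Id}\|_{C^{1,\alpha}}\le C\|g-e\|_{C^{0,\alpha}}$, and from the formula $\lambda = g(\partial_u\Lambda,\partial_u\Lambda)\circ$something (more precisely $\lambda$ is an algebraic expression in $D\Lambda$ and the components of $g\circ\Lambda$) one gets $\|\lambda-1\|_{C^{0,\alpha}}\le C\|g-e\|_{C^{0,\alpha}}$, which is (ii). For (iii), the bootstrap uses that $\mu\in C^{k,\beta}$ and that $\mathcal{S}$ is bounded on $C^{j,\gamma}$ for every $(j,\gamma)$; inductively applying the fixed-point identity $\omega=\mu+\mu\mathcal{S}\omega$ gives $\omega\in C^{k,\beta}$ with $\|\omega\|_{C^{k,\beta}}\le C\|\mu\|_{C^{k,\beta}}$, and the Cauchy transform improves regularity by one derivative, so $w-\mathrm{Id}\in C^{k+1,\beta}$ with the corresponding linear estimate. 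The inverse $\Lambda$ and the algebraic expression for $\lambda$ inherit these bounds.

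The main obstacle is the quantitative Hölder mapping theory for the Beltrami equation: verifying that the Neumann inversion of $I-\mu\mathcal{S}$ produces a \emph{linear} estimate of $\omega$ by $\mu$ in the $C^{0,\alpha}$ norm (and its higher analogues), uniformly for all $\mu$ with $\|\mu\|_{C^{0,\alpha}}\le c_0$. This is classical and can be extracted from Ahlfors' lectures on quasiconformal mappings or from the Iwaniec–Martin treatment of the Beurling transform on Hölder spaces; once these mapping properties are quoted, the remaining steps (extension, Beltrami coefficient, inversion of $w$, reading off $\lambda$) are mechanical.
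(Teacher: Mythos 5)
The paper does not actually prove Lemma \ref{l:conformal}: it records it as a known quantitative version of the existence of isothermal coordinates and refers for a complete proof to the appendix of \cite{DIS} (and to Spivak's treatment). Your route --- extend $g$ to $\R^2$, form the Beltrami coefficient $\mu=\frac{E-G+2iF}{E+G+2\sqrt{EG-F^2}}$, solve $w_{\bar z}=\mu w_z$ via $w=z+\mathcal{T}\omega$, $\omega=\mu+\mu\mathcal{S}\omega$, and invert $w$ on $B_2$ --- is exactly the classical Korn--Lichtenstein/Ahlfors--Bers argument, i.e.\ the same strategy as the cited reference, and the reduction of (i)--(ii) to the $C^{0,\alpha}$ mapping property of the Beurling transform is correct (minor slip: the denominator of $\mu$ is close to $4$, not $2$, which is irrelevant).

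The one step you should not wave through is the higher-order estimate (iii). You phrase it as ``inductively applying the fixed-point identity'' together with Neumann inversion of $I-\mu\mathcal{S}$ ``and its higher analogues''; but a Neumann series for $I-\mu\mathcal{S}$ on $C^{k,\beta}$ would need $\|\mu\|_{C^{k,\beta}}$ small, whereas the lemma assumes smallness only of $\|g-e\|_{C^{0,\alpha}}$ and asserts a bound \emph{linear} in $\|g-e\|_{C^{k,\beta}}$ with a constant depending only on $k,\alpha,\beta$. The correct way to close the induction is through the tame product estimate $\|fh\|_{C^{k,\beta}}\leq C_k\big(\|f\|_{C^{k,\beta}}\|h\|_{C^0}+\|f\|_{C^0}\|h\|_{C^{k,\beta}}\big)$: writing $\|\mu\,\mathcal{S}\omega\|_{C^{k,\beta}}\leq C\big(\|\mu\|_{C^{k,\beta}}\|\mathcal{S}\omega\|_{C^0}+\|\mu\|_{C^0}\|\mathcal{S}\omega\|_{C^{k,\beta}}\big)$, the first term is controlled linearly by $\|\mu\|_{C^{k,\beta}}$ because $\|\mathcal{S}\omega\|_{C^0}\leq C\|\omega\|_{C^{0,\alpha}}\leq Cc_0$, and the top-order term is absorbed into the left-hand side using $\|\mu\|_{C^0}\leq Cc_0$, with $c_0$ allowed to depend on $k,\beta$. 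The same tame composition/quotient estimates are needed to get $\|\mu\|_{C^{k,\beta}}\leq C\|g-e\|_{C^{k,\beta}}$ and to transfer the bounds to $\Lambda=(w|_{B_2})^{-1}$ and to $\lambda$. With this correction your outline is complete and coincides in substance with the proof the paper delegates to \cite{DIS}.
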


Although the statement above is a well-known fact (and it follows, for instance, from the treatment of the problem given in \cite[Addendum 1 to Chapter 9]{Spivak4}),  we have not been able to find a classical reference for it. However a complete proof can be found in the Appendix of \cite{DeInSz}. 

\begin{proof}[Proof of Proposition \ref{p:conformal}] After rescaling we can assume that $\rho \geq 2^Q$.
We fix $Q$ and drop subscripts in $\gira_{Q, 2}$. Observe also that, if we rescale by a large factor $R$, the constants $C_i$ in Definition \ref{d:admissible} can then replaced by the constants $C_i R^{-\alpha}$. Hence, without loss of generality we can assume that $C_i$ is sufficiently small.

Let $\Phii: \gira \to \R^{n+2}$ be the graphical parametrization of the branching and recall that $g= \Phii^\sharp e$.
Fix a point $(z_0,w_0)\in \gira\setminus \{0\}$, let $r:= |z_0|/2$ and observe that on $B_r (z_0,w_0)$ we can use $z$ as a chart and  compute the metric tensor explicitely
as
\[
g_{ij} (z,w) = \delta_{ij} + \partial_i u (z,w) \partial_j u (z,w) =: \delta_{ij} + \sigma_{ij} \, .
\]
It then follows easily that
\begin{align}
|D^j \sigma (z)|\leq & C_0 C_i^2 |z|^{2\alpha - j}\qquad \mbox{for $j\in \{0,1,2\}$}\label{e:est_g_2}\\
[D^2 \sigma]_{\alpha, B_r (z_0, w_0)} \leq & C_0 C_i^2 r^{\alpha -2}\, .\label{e:est_g_3}
\end{align}

\medskip

{\bf Step 1.}  Next consider the map $\bW\colon \bC= \R^2\supset B_2 \to \gira$ defined by $\bW (z):=(z^{Q},z)$.  We set
\[
\bar{g} = \bW^\sharp g = (\Phii \circ \bW)^\sharp e\, .
\]
We then infer that (following Einstein's convention on repeated indices)
\[
\bar{g}_{ij}  (z) = Q^2 |z|^{2Q-2} \delta_{ij} + \sigma_{kl} (\bW (z)) \partial_i \bW_l \partial_j \bW_k\, ,
\]
and we set
\[
\tau (z) := (Q^2 |z|^{2Q-2})^{-1} \bar{g} (z)\, .
\]
We then easily see that
\[
|\tau (z) - e| \leq C_0|z|^{-(2Q-2)} |D \bW (z)|^2 |\bW (z)| \leq C_0 C_i^2 |z|^{2Q\alpha}\, .
\]
Differentiating the identity which defines $\tau$ we also get
\begin{align*}
|D\tau (z)| \leq &C_0 |z|^{-(2Q-1)} |D \bW (z)|^2 |\sigma (\bW (z))| + C_0 |z|^{-(2Q-2)} |D^2 \bW (z)| |D \bW (z)| |\sigma (\bW (z))|\\
& + C_0 |z|^{-(2Q-2)} |D\bW (z)|^2 |D\sigma (\bW (z))| |z|^{Q-1}\\
\leq & C_0 C_i^2 |z|^{2Q\alpha-1}\, .
\end{align*}
Analogous computations lead then to the estimates 
\begin{align}
|D^j(\tau-e)|(z)\leq & C_0 C_i^2 |z|^{2 Q\alpha-j}\qquad \mbox{for $j\in \{0,1,2\}$}\label{e:stimetau}\\
[D^2 \tau]_{\alpha, B_s (z)} \leq & C_0 C_i^2 |z|^{2 Q\alpha -2-\alpha}\qquad \mbox{for $s = |z|/2$.}\label{e:stimetau_2}
\end{align}
Interpolating between the $C^1$ and the $C^0$ bound, we easily conclude that
\[
[\tau]_{2Q\alpha, B_{2r}\setminus B_r}\leq C_0 C_i^2\, .
\]
Note in particular that $\tau$ (unlike $g$) can be extended to a nondegenerate $C^{0,Q\alpha}$ metric to the origin. 

Since $C_i$ can be assumed sufficiently small, we can apply Lemma \ref{l:conformal} to find an orientation preserving diffeomorphism $\Lambda\colon \Omega \to B_2$ and a function $\lambda: \Omega \to \R^+$ such that
\begin{align}
\Lambda^\sharp \tau = &\bar \lambda e\label{e:conforme}\\
\|\Lambda - {\rm Id}\|_{C^{1, 2Q\alpha}} + \|\bar \lambda - 1\|_{C^{0, 2Q\alpha}} \leq &C_0 C_i\label{e:conf_Hoelder}\, .
\end{align}
Observe that, without loss of generality, we can assume that $0\in \Omega$ and $\Lambda (0)=0$. In particular \eqref{e:conf_Hoelder} implies that,
for $C_i$ suitably small, $B_1 \subset \Omega$ and hence we will regard $\Lambda$ and $\lambda$ as defined on $B_1$. Next divide $\Lambda$ by $\bar \lambda (0)^{\sfrac{1}{2}}$ and keep, by abuse of notation, the same symbols for the resulting map and the resulting conformal factor in \eqref{e:conforme}. After this normalization we achieve that $\bar \lambda (0)=1$ and that the estimates \eqref{e:conf_Hoelder} still hold with a larger $C_0$. Moreover, $\bar\lambda (0)=1$ implies that $D\Lambda (0) \in SO (2)$: composing $\Lambda$ with an appropriate rotation we can then assume that $D \Lambda (0)$ is the identity. This implies that
\begin{align}
|\bar \lambda (z)-1| \leq & C_0 C_i |z|^{2Q\alpha}\label{e:factor_decay_1}\\
|D^j (\Lambda (z) - z)| \leq & C_0 C_i |z|^{1+2Q\alpha -j} \qquad \mbox{for $j\in \{0,1\}$}\, .\label{e:conf_decay_1}
\end{align}

\medskip

{\bf Step 2.} We next wish to estimates the higher derivatives of both $\Lambda$ and $\bar \lambda$. We adopt the following procedure. We fix a point
$p\neq 0$ and let $r:= |p|/2$. We then apply a simple scaling argument to rescale $B_r (p)$ to a ball of radius $2$ so that we can apply Lemma \ref{l:conformal}. If we rescale back to $B_r (p)$ it is then easy to see that we find maps $\Lambda_p: \Omega_p \to B_r (p)$, $\lambda_p: \Omega_p \to \R^+$ with the properties
properties:
\begin{align}
\Lambda_p^\sharp \tau = & \lambda_p g \label{e:conforme_2}\\
\|\Lambda_p - {\rm Id}\|_{C^{1,2Q\alpha}} + \|\lambda_p-1\|_{C^{0,2Q\alpha}}\leq & C_0 C_i \label{e:conf_Hoelder_2}\\
[\Lambda_p - {\rm Id}]_{3,\alpha} + [\lambda_p -1]_{2,\alpha} \leq & C_0 C_i r^{2Q\alpha-2-\alpha}\label{e:conf_higher_1}\, .
\end{align}
Define $\Xi := \Lambda \circ \Lambda_p^{-1}$
Moreover, its domain is $B_r (p)$. Since
\[
\sup_{z\in B_r (p)} |\partial_z (\Xi (z)- z)| \leq C_0 r^{2 Q\alpha}\, ,
\]
we easily conclude the higher derivative estimates
\begin{align*}
\|\partial^k_z (\Xi (z) -z)\| \leq C_0 C_i r^{2 Q\alpha -k} \qquad \mbox{for $k\in \{1,2,3,4\}$}\, ,
\end{align*}
which, by holomorphicity, are actually estimates on the full derivatives.
Since $\Lambda= \Xi \circ \Lambda_p$ we then easily conclude that
\begin{align}
|D^{j+1} \Lambda (z)| + |D^j (\bar \lambda (z) -1)|\leq & C_0 C_i |z|^{2Q\alpha -j} \qquad \mbox{for $j\in \{0,1,2\}$}\\
[D^3 \Lambda]_{\alpha,B_r (z)} + [D^2 \bar \lambda]_{\alpha , B_r (z)} \leq & C_0 C_i r^{2Q\alpha -2-\alpha} \qquad \mbox{for $r= |z|/2 >0$}\, .
\end{align}
Finally notice that
\begin{equation}\label{e:vera_relazione}
(\Lambda^\sharp \bar{g})\,  (z) = Q^2 |\Lambda (z)|^{2Q-2} \bar\lambda (z) e\, .
\end{equation}

\medskip

{\bf Step 3.} We are finally ready to define $\Psii := \Phii \circ \bW \circ \Lambda \circ \bW^{-1}$. First of all observe that
\[
(\Psii^\sharp e) (z,w) = ((\bW^{-1})^\sharp \Lambda^\sharp \bar{g} ) (z,w) =  \frac{|\Lambda (\bW^{-1} (z,w))|^{2Q-2}}{|z|^{2-2/Q}} \bar \lambda (\bW^{-1} (z,w)) e_Q =: \lambda (z,w) e_Q \,  .
\]
Since $|\bW^{-1} (z,w)| = |z|^{1/Q}$, we can also estimate
\begin{align*}
|\lambda (z,w) -1|\leq &\frac{|\Lambda (\bW^{-1} (z,w))|^{2Q-2}}{|z|^{2-2/Q}} |\bar{\lambda} (\bW^{-1} (z,w))-1|
+ C \frac{|\Lambda (\bW^{-1} (z,w))|^{2Q-2} - |z|^{2-2/Q}}{|z|^{2-2/Q}}\\
\leq& C_0 C_i^2 |\bW^{-1} (z,w)|^{2Q\alpha} + C_0 |z|^{-1/Q}
\left(|\Lambda (\bW^{-1} (z,w))| - |\bW^{-1} (z,w)|\right)\\
\leq & C_0 C_i^2 |z|^{2\alpha} + C_0 C_i^2 |z|^{-1/Q} |\bW^{-1} (z,w)|^{1+2Q\alpha}
\leq C_0 C_i^2 |z|^{2\alpha}\, .
\end{align*}
Similarly
\begin{align*}
|D\lambda (z,w)| \leq & C_0 |D\bar{\lambda} (\bW^{-1} (z,w))||z|^{-1} + C_0 \left|D \frac{|\Lambda  (\bW^{-1} (z,w))|^{2Q-2}}{|\bW^{-1} (z,w)|^{2Q-2}}\right|\\
\leq & C_0 C_i^2 |z|^{2\alpha-1} + C_0 \left| D \frac{|\Lambda (\bW^{-1} (z,w)|}{|\bW^{-1} (z,w)|}\right|
\end{align*}
and observe that
\begin{align*}
\left|D \frac{|\Lambda (\bW^{-1})|}{|\bW^{-1} |}\right| = &\left|\left(\frac{D\Lambda (\bW^{-1})}{|\Lambda (\bW^{-1})||\bW^{-1}|}
- \frac{|\Lambda (\bW^{-1})|}{|\bW^{-1}|^3}{\rm Id}\right) D\bW^{-1} \bW^{-1}\right|\\
\leq & C_0 |D\bW^{-1}| |\bW^{-1}|^{-1} \left(|D\Lambda (\bW^{-1}) - {\rm Id}| +  |\bW^{-1}|\left(|\Lambda (\bW^{-1}) - (\bW^{-1})|\right)\right)\\
\leq & C_0 C_i^2 |D\bW^{-1}||\bW^{-1}|^{2Q\alpha-2}\, .
\end{align*}
Recalling that $|D\bW^{-1} (z,w)|\leq |z|^{1/Q-1}$, $|\bW^{-1} (z,w)|= |z|^{1/Q}$, we conclude
\[
|D \lambda (z,w)|\leq C_0 C_i^2 |z|^{2\alpha-1}\, .
\]
The estimates on the second derivative and its H\"older norm follow from similar computations. 

We now come to the estimates on $\Psii$. Let $\bar \Lambda := \bW \circ \Lambda \circ \bW^{-1}$. Fix $(z_0, w_0)\neq 0$, let $r:= |z_0|/2$ and use $z$ as a local chart. It will then suffice to show that 
\begin{align}
|D^j (\bar \Lambda (z) -z)|\leq & C_0 C_i |z|^{1+\alpha -l}\qquad \mbox{for $j \in \{0,1,2,3\}$}\\
[D^3\bar \Lambda]_{\alpha, B_r (z_0, w_0)} \leq & C_0 C_i |z|^{-2}\, .
\end{align}
On the other hand since $\bar \Lambda (0,0) = (0,0)$, it actually suffices to show the first estimate for $j=1$ to obtain it in the case $j=0$.

We start computing the first derivatives:
\[
D \bar \Lambda = D \bW (\Lambda \circ \bW^{-1}) D\Lambda (\bW^{-1}) D\bW^{-1}\, .
\]
Recalling that $D\bW (\bW^{-1})D\bW^{-1} = {\rm Id}$, we estimate
\begin{align*}
|D\bar \Lambda (z) - {\rm Id}| \leq &
|D\bW (\Lambda (\bW^{-1} (z))) - D \bW (\bW^{-1} (z))| |D\Lambda (\bW^{-1} (z))| |D\bW^{-1} (z)|\\
&+|D\bW (\bW^{-1} (z))| |D\Lambda (\bW^{-1} (z))-{\rm Id}||D\bW^{-1} (z)|\\
\leq & C_0 |\bW^{-1} (z)|^{Q-1}|\Lambda (\bW^{-1} (z)) - \bW^{-1} (z)| |z|^{1/Q-1}\\
\leq&+C_0 C_i^2 |\bW^{-1} (z)|^{Q-1} ||\bW^{-1} (z)|^{2Q\alpha} |z|^{1/Q-1}\\
\leq&C_0 C_i^2 |\bW^{-1} (z)|^{Q+2Q\alpha} |z|^{1/Q-1} + C_0 C_i^2 |z|^{2\alpha}
\leq C_0 C_i^2 |z|^{2\alpha}\, .
\end{align*}
Similar computations give the estimates on the higher derivatives.  
\end{proof}

\bibliographystyle{plain}

\begin{thebibliography}{10}

\bibitem{Alm}
Frederick~J. Almgren, Jr.
\newblock {\em Almgren's big regularity paper}, volume~1 of {\em World
  Scientific Monograph Series in Mathematics}.
\newblock World Scientific Publishing Co. Inc., River Edge, NJ, 2000.

\bibitem{Be3}
Costante Bellettini.
\newblock Almost complex structures and calibrated integral cycles in contact
  5-manifolds.
\newblock {\em Adv. Calc. Var.}, 6(3):339--374, 2013.

\bibitem{BeRi}
Costante Bellettini and Tristan Rivi{\`e}re.
\newblock The regularity of special {L}egendrian integral cycles.
\newblock {\em Ann. Sc. Norm. Super. Pisa Cl. Sci. (5)}, 11(1):61--142, 2012.

\bibitem{Chang}
Sheldon Xu-Dong Chang.
\newblock Two-dimensional area minimizing integral currents are classical
  minimal surfaces.
\newblock {\em J. Amer. Math. Soc.}, 1(4):699--778, 1988.

\bibitem{DeInSz}
Camillo {De Lellis}, Dominik {Inauen}, and L\'aszl\'o {Sz{\'e}kelyhidi}, Jr.
\newblock {A Nash--Kuiper theorem for $C^{1,\frac{1}{5}-\delta}$ immersions of
  surfaces in $3$ dimensions}.
\newblock {\em ArXiv e-prints. To appear in Rev. Mat. Iberoam.}, October 2015.

\bibitem{DS1}
Camillo De~Lellis and Emanuele Spadaro.
\newblock {$Q$}-valued functions revisited.
\newblock {\em Mem. Amer. Math. Soc.}, 211(991):vi+79, 2011.

\bibitem{DS3}
Camillo De~Lellis and Emanuele Spadaro.
\newblock Regularity of area minimizing currents {I}: gradient {$L^p$}
  estimates.
\newblock {\em Geom. Funct. Anal.}, 24(6):1831--1884, 2014.

\bibitem{DS2}
Camillo De~Lellis and Emanuele Spadaro.
\newblock Multiple valued functions and integral currents.
\newblock {\em Ann. Sc. Norm. Super. Pisa Cl. Sci. (5)}, 14(4):1239--1269,
  2015.

\bibitem{DS4}
Camillo De~Lellis and Emanuele Spadaro.
\newblock Regularity of area minimizing currents {II}: center manifold.
\newblock {\em Ann. of Math. (2)}, 183(2):499--575, 2016.

\bibitem{DS5}
Camillo De~Lellis and Emanuele Spadaro.
\newblock Regularity of area minimizing currents {III}: blow-up.
\newblock {\em Ann. of Math. (2)}, 183(2):577--617, 2016.

\bibitem{DSS2}
Camillo De~Lellis, Emanuele Spadaro, and Luca Spolaor.
\newblock {Regularity theory for $2$-dimensional almost minimal currents I:
  Lipschitz approximation}.
\newblock {\em ArXiv e-prints. To appear in Trans. Amer. Math. Soc.}, August
  2015.

\bibitem{DSS4}
Camillo De~Lellis, Emanuele Spadaro, and Luca Spolaor.
\newblock Regularity theory for $2$-dimensional almost minimal currents {III}:
  blowup.
\newblock 2015.

\bibitem{DSS1}
Camillo De~Lellis, Emanuele Spadaro, and Luca Spolaor.
\newblock Uniqueness of tangent cones for two-dimensional almost-minimizing
  currents.
\newblock {\em Comm. Pure Appl. Math.}, 70(7):1402--1421, 2017.

\bibitem{Nir}
Louis Nirenberg.
\newblock An extended interpolation inequality.
\newblock {\em Ann. Scuola Norm. Sup. Pisa (3)}, 20:733--737, 1966.

\bibitem{PuRi}
David Pumberger and Tristan Rivi{\`e}re.
\newblock Uniqueness of tangent cones for semicalibrated integral 2-cycles.
\newblock {\em Duke Math. J.}, 152(3):441--480, 2010.

\bibitem{RT1}
Tristan Rivi{\`e}re and Gang Tian.
\newblock The singular set of {$J$}-holomorphic maps into projective algebraic
  varieties.
\newblock {\em J. Reine Angew. Math.}, 570:47--87, 2004.

\bibitem{Sim}
Leon Simon.
\newblock {\em Lectures on geometric measure theory}, volume~3 of {\em
  Proceedings of the Centre for Mathematical Analysis, Australian National
  University}.
\newblock Australian National University Centre for Mathematical Analysis,
  Canberra, 1983.

\bibitem{Spivak4}
Michael Spivak.
\newblock {\em A comprehensive introduction to differential geometry. {V}ol.
  {IV}}.
\newblock Publish or Perish Inc., Wilmington, Del., second edition, 1979.

\bibitem{Wh}
Brian White.
\newblock Tangent cones to two-dimensional area-minimizing integral currents
  are unique.
\newblock {\em Duke Math. J.}, 50(1):143--160, 1983.

\bibitem{Zorich}
Anton Zorich.
\newblock Flat surfaces.
\newblock In {\em Frontiers in number theory, physics, and geometry. {I}},
  pages 437--583. Springer, Berlin, 2006.

\end{thebibliography}

\end{document}